\definecolor{darkred}{RGB}{100,0,0}
\definecolor{darkgreen}{RGB}{0,100,0}
\definecolor{darkblue}{RGB}{0,0,150}
\newtheorem{prp}{Proposition}
\newtheorem{lem}{Lemma}
\newtheorem{cor}{Corollary}
\def\beq{\begin{equation}}
\def\eeq{\end{equation}}
\def\beqn{\begin{eqnarray*}}
\def\eeqn{\end{eqnarray*}}
\def\bitem{\begin{itemize}}
\def\eitem{\end{itemize}}
\def\benum{\begin{enumerate}}
\def\eenum{\end{enumerate}}
\def\bmult{\begin{multline*}}
\def\emult{\end{multline*}}
\def\bcenter{\begin{center}}
\def\ecenter{\end{center}}
\newcommand{\ol}[1]{\overline{#1}}
\DeclareMathOperator*{\argmax}{arg\, max}
\DeclareMathOperator*{\argmin}{arg\, min}
\def\cA{\mathcal{A}}
\def\cB{\mathcal{B}}
\def\bC{\mathcal{C}}
\def\cE{\mathcal{E}}
\def\cF{\mathcal{F}}
\def\cI{\mathcal{I}}
\def\cL{\mathcal{L}}
\def\bN{\mathcal{N}}
\def\cP{\mathcal{P}}
\def\cS{\mathcal{S}}
\def\cT{\mathcal{T}}
\def\cU{\mathcal{U}}
\def\bB{\mathbf{B}}
\def\bC{\mathbf{C}}
\def\bE{\mathbf{E}}
\def\bI{\mathbf{I}}
\def\bN{\mathbf{N}}
\def\bP{\P}
\def\bU{\mathbf{U}}
\def\bY{\mathbf{Y}}
\def\ba{\mathbf{a}}
\def\bt{\mathbf{t}}
\def\bu{\mathbf{u}}
\def\1{{\mathbf 1}}
\newcommand{\btau}{{\boldsymbol\tau}}
\newcommand{\bmu}{{\boldsymbol\mu}}
\newcommand{\btheta}{{\boldsymbol\theta}}
\newcommand\bepsilon{{\boldsymbol\epsilon}}
\newcommand\bPi{{\boldsymbol\Pi}}
\newcommand\wh{\widehat}
\def\bbE{\mathbb{E}}
\def\bbL{\mathbb{L}}
\def\bbP{\mathbb{P}}
\def\bbR{\mathbb{R}}
\newcommand{\ac}[1]{\left\{ \left. #1 \right. \right\}}
\newcommand{\abs}[1]{\left\lvert #1 \right\rvert} 
\newcommand{\E}{\operatorname{\mathbb{E}}}
\renewcommand{\P}{\operatorname{\mathbb{P}}}
\def\pen{\mathrm{pen}}
\def\Cr{\mathrm{Cr}}
\DeclareMathOperator{\sign}{sign}
\newcommand{\nv}[1]{\textcolor{magenta}{[Nicolas: #1]}}
\newcommand{\mf}[1]{\textcolor{blue}{ #1}}
\title{Optimal Change-Point Detection and Localization}
\author{Nicolas Verzelen, Magalie Fromont, Matthieu Lerasle, and Patricia Reynaud-Bouret}
\begin{document}

\maketitle

\begin{abstract}
Given a times series ${\bf Y}$ in $\mathbb{R}^n$, with a piece-wise contant mean and independent components, the twin problems of change-point detection and change-point localization respectively amount to detecting the existence of times where the mean varies and estimating the positions of those change-points.  In this work, we tightly  characterize optimal rates for both problems and uncover the phase transition phenomenon from a global testing problem to a local estimation problem. Introducing a suitable definition of the energy of a change-point, we first establish in the single change-point setting  that the optimal detection threshold is $\sqrt{2\log\log(n)}$. When the energy is just above the detection threshold, then the problem of localizing the change-point becomes purely parametric: it only depends  on the difference in means and not on the position of  the change-point anymore. Interestingly, for most change-point positions, including all those  away from the endpoints of the time series, it is possible to detect and localize them at a much smaller energy level. In the multiple change-point setting, we establish the energy detection threshold and show similarly that the optimal localization error of a specific change-point becomes purely parametric. Along the way, tight optimal rates for Hausdorff and $l_1$ estimation losses of the vector of all change-points positions are also established. 
Two procedures achieving these optimal rates are introduced. The first one is a least-squares estimator with a new multiscale penalty that favours well spread change-points. The second one is a two-step multiscale post-processing procedure whose computational complexity can be as low as $O(n\log(n))$. Notably, these two procedures accommodate with the presence of possibly many low-energy and therefore undetectable change-points and are still able to detect and localize high-energy change-points even with the presence of those nuisance parameters.
\end{abstract}

\section{Introduction}\label{sec:intro}

Following a long historical line of work, starting with Wald \cite{Wald1945}, Girshick and Rubin \cite{Girshick1952}, Page \cite{Page1954}, and Fisher \cite{Fisher1958} in the 1940-1950's, leading to a huge bibliography including several prominent monographs such as \cite{Basseville1993,Carlstein1994,Csorgo1997,Brodsky2013,Brodsky2013bis,Tartakovsky2014}, and which is still vivid (e.g.~\cite{fryzlewicz2020detecting,cho2019localised,wang2020}), we consider the prototypical problem of univariate change-point analysis.  Let  $\bY=(Y_1,\ldots, Y_n)$ be a time series with values in $\mathbb{R}^n$, with unknown mean vector $\btheta=(\theta_1,\ldots,\theta_n)$ in $\mathbb{R}^n$. Change-point analysis amounts to studying possible variations in the mean vector $\btheta$. 
In this work, we focus our attention on the independent observation setting 
\beq\label{eq:univariate_model}
Y_i= \theta_i + \epsilon_i\enspace ,  \quad i=1,\ldots, n\enspace,
\eeq
where the \emph{noise} random vector $\bepsilon=(\epsilon_1,\ldots,\epsilon_n)$ is made of independent mean-zero random variables satisfying a sub-Gaussian condition 
\[
 \E[e^{t\epsilon_i}]\leq e^{\sigma^2 t^2/2} \textrm{ for all } t \text{ in }\mathbb{R} ,\quad \text{ for }i=1,\ldots,n\enspace ,
\]
where $\sigma$ is known. By homogeneity and standardization, we assume henceworth that $\sigma=1$. Our general objective is twofold: first, we carefully analyze the intrinsic difficulty of several detection and localization problems, thereby closing long-standing  gaps between the early asymptotic results and recent non-asymptotic ones. This allows us to define desirable specifications for a change-point estimation method. Second, we introduce two procedures achieving these specifications.

\subsection{Model and change-point procedures}

Since we view the mean vector $\btheta$ as piece-wise constant vector, we can define it through the change points.  Given $\btheta$ in $\mathbb{R}^n$, there exists an integer $0\leq K\leq n-1$, a  vector of integers $\btau^*=(\tau^*_1,\ldots,\tau^*_K)$ satisfying  
$1=\tau^*_0< \tau^*_1<\ldots<\tau^*_K<\tau_{K+1}^*=n+1$, a vector $\bmu=(\mu_1,\ldots,\mu_{K+1})$ in $\bbR^{K+1}$ satisfying  $\mu_k\ne\mu_{k+1}$ for all $k$ in $\{1,\ldots,K\}$ such that 
\beq\label{eq:definition_theta}
\theta_i = \sum_{k=1}^{K+1}\mu_k  \1_{\tau^*_{k-1}\leq i<  \tau^*_{k}}
\enspace.
\eeq
Then, $\tau^*_k$ is called the \emph{position} of the $k$-th change-point (or sometimes for the sake of simplicity the $k$-th change-point) and $\Delta_k= \mu_{k+1}-\mu_k$ is called the \emph{height} of the $k$-th change-point in $\btheta$. 
It follows from~\eqref{eq:definition_theta} that $\btheta$ is uniquely defined by $\btau^*$ and $\bmu$. As a consequence, one may easily deduce an estimator of $K$, $\btau^*$, and $\bmu$ from an estimator of $\btheta$. Conversely, any estimator $\widehat{\btau}$ with length $\widehat{K}$ of the change-points positions leads to an estimator $\widehat{\btheta}$ by simply plugging the empirical mean on the corresponding partition, that is $\widehat{\theta}_i= \sum_{k=1}^{\widehat{K}+1}\1_{\widehat{\tau}_{k-1}\leq i\leq \widehat{\tau}_{k}-1}(\widehat{\tau}_{k}-\widehat{\tau}_{k-1})^{-1}  [\sum_{j= \widehat{\tau}_{k-1}}^{\widehat{\tau}_{k}-1} Y_i ] $ for $i=1,\ldots, n$. Hence, any change-point estimation method may be indifferently interpreted as an estimator of $\btheta$ and $\btau^*$.

\medskip 
In general, the number $K$ of change-points is supposed to be unknown. 
Still, for historical and mathematical reasons to be discussed below, the literature usually divides into two settings: the {\it single change-point} setting\footnote{It is sometimes referred as at most one change-point setting in the literature.} where one assumes that  $K \leq 1$ and the  {\it multiple change-point} setting where $K$ is possibly arbitrarily large. In the next paragraphs, we provide a short account of classical  approaches for multiple change-point estimation, and then we explain the connections between those approaches in the single change-point setting. In the sequel, we write $\Theta_K\subset \bbR^n$ for the collection  of vectors $\btheta$ with $K$ change-points exactly.

\subsection{Main approaches for change-point analysis}
It is beyond the scope of this paper to give an exhaustive survey of existing methods and we refer the interested reader to~\cite{Niu2016,truong2020selective}. Still, we can roughly divide most procedures into two general categories: the ones based on minimization of (penalized) least-squares criteria and the ones based on test statistics, mostly related to the CUSUM statistic.

 \paragraph{Penalized least-squares minimization}
 If the number $K$ of change-points is known and if the noise vector follows an homoscedastic Gaussian distribution, then the maximum likelihood estimator of $\btheta$ is equal to the least-squares estimator  $\widehat{\btheta}_{LS,K}= \arg\min_{\btheta'\in \Theta_K}\sum_{i=1}^n (Y_i-\theta'_i)^2$. While $\Theta_K$ is not convex, it has been observed in the pioneering work of Bellman~\cite{bellman1961approximation} that $\widehat{\btheta}_{LS,K}$ can be efficiently computed by a dynamic programming algorithm, whose worst-case complexity is $O(n^2)$ operations. In practice, $K$ is unknown and it is natural to select it by adding a penalty term, such as BIC~\cite{YaoAu1989} or more involved forms of penalties~\cite{Lebarbier2005} arising from the general theory of model selection~\cite{birge2001gaussian}. On the computational aspect, the quadratic complexity turns out to  be prohibitive for some large-scale  problems arising for instance in genomics. To address this issue, Killick et al.~\cite{killick2012optimal} (see also~\cite{rigaill2010pruned}) have developed pruning techniques that accelerate the dynamic programming algorithm and lead to a quasi-linear time complexity in favorable situations.
 
 More generally, penalized least-squares criteria of the form $\arg\min_{\btheta'\in \Theta_K}\sum_{i=1}^n (Y_i-\theta'_i)^2+ \pen(\btheta')$ may involve penalty terms $\pen(\btheta')$ that do not only depend on the number $K$ of change-points of $\btheta'$. 
 For instance, Zhang and Siegmund~\cite{zhang2007modified} argue for a penalty $\pen(\btheta')$  that also depends on the spacing between the change-points. Alternatively, choosing     
 $\pen(\btheta')$ proportional to the total variation norm  $\|\btheta'\|_{TV}= \sum_{i=2}^n |\theta'_i-\theta'_i|$ corresponds to the Fused Lasso estimator~\cite{tibshirani2005sparsity}.  Efficient solvers compute the Fused Lasso estimator in quasi-linear time~\cite{hoefling2010path}.

\paragraph{Binary segmentation, CUSUM, and multiscale methods.}

Beside penalty-based approaches, the other broad class of methods is based on the CUSUM statistics.  In the sequel, $\cT_3$ refers to the collection of triads, that is the set of all triplets of integers $\bt=(t_1,t_2,t_3)$ such that $1\leq t_1< t_2< t_3\leq  n+1$. Given $\bt=(t_1,t_2,t_3)$ in $\cT_3$, the CUSUM statistic at $\bt$ is defined as the weighted difference of empirical means on $[t_1,t_2)$ and $[t_2,t_3)$
\beq\label{eq:definition_cusum}
\bC(\bY,\bt)=  \left[\frac{\sum_{i=t_2}^{t_3-1}Y_i}{t_3-t_2}- \frac{\sum_{i=t_1}^{t_2-1}Y_i}{t_2-t_1}\right]\sqrt{ \frac{(t_2-t_1)(t_3-t_2)}{t_3-t_1}}\enspace .
\eeq
For homoscedastic Gaussian noise, this  statistic also corresponds to likelihood ratio test statistic of the null hypothesis {\it \{$\btheta$ is constant over $[t_1,t_3)$\}} versus {\it \{$\btheta$ has one change-point at $t_2$\}}. Under the null hypothesis, $\bC(\bY,\bt)$ follows a standard normal distribution. In principle, one then could adopt a multiple-testing perspective and apply all tests based on all CUSUM statistics. However, there are two main caveats with this naive approach. First, $|\cT_3|$ is of the order of $n^3/6$ which can lead to a prohibitive $O(n^3)$ computational complexity. Second, there is no straightforward way of transforming a collection of $|\cT_3|$ $p$-values into a single change-point estimator $\widehat{\btau}$. This is why most earlier CUSUM-based change-point procedures follow greedy approaches. 
CUSUM statistics have a long history in the single change-point literature. Originally, Hinkley~\cite{hinkley1970} maximizes the   CUSUM $\bC({\bf Y}, (1,t,n+1))$ over all possible change-point positions $t=2,\ldots, n+1$. Binary Segmentation (BS)~\cite{scott1974cluster} algorithm for multiple change-points detection amounts to recursively cut the time series into two parts by applying Hinkley's method at each step. However, BS does not consistently estimate the change-points which lead 
to the introduction of many variants of BS including Wild Binary Segmentation~\cite{fryzlewicz2014wild,fryzlewicz2018tail,Wang2018,wang2020,kovacs2020seeded,fryzlewicz2020narrowest}. Some of these procedure exhibit a quasi-linear time complexity~\cite{fryzlewicz2018tail}. 
 While the connection is less clear, other methods based on moving sums such as MOSUM~\cite{eichinger2018mosum} may also be interpreted as an aggregation procedure of the CUSUM statistics with symmetric windows $\bt=(\tau-h,\tau,\tau+h)$ where $1\leq \tau-h< \tau+h\leq n+1$. An important aspect of many of these procedures is that they consider CUSUM~\eqref{eq:definition_cusum} statistics at many different scales $(t_3-t_1)$ thereby being able to detect close change-points with large heights and change-points whose heights are small but which are distant from any other change-point.
 \medskip 
 
 In the single change-point setting where the statistician knows that $K=1$, the estimator $\widehat{\tau}$ maximizing the 
CUSUM equals the change-point of  the least-squares estimator $\widehat{\btheta}_{LS,1}$, so that both procedures are equivalent. Hence, the distinction between  CUSUM-based and least-squares-based procedures is not clear in that setting.

\paragraph{Post-processing methods.} The change-points estimator $\widetilde{\btau}$ produced by one of the previous methods is sometimes refined  in a second step by removing spurious estimated change-points~\cite{fryzlewicz2018tail} or/and improving the precision of the estimated change-points positions by a local CUSUM maximization (see e.g. Sect.3.2 in~\cite{fryzlewicz2014wild}). See also Lin et al.~\cite{lin2016approximate}  for another recent post-processing method.

\subsection{Statistical Problems}

There are several ways of assessing the quality of a change-point procedure, whose choice mainly depends on the question of interest. As mentioned earlier, one can easily deduce an estimator of $\btau^*$ from an estimator $\btheta$ (and conversely). However, $\btau^*$ is not a continuous function of $\btheta$ and  a near perfect estimator of $\btheta$ does not necessarily lead to a precise estimation of the number of change-points. We can broadly summarize the statistical objectives into three classes, that are detailed below.
\begin{enumerate}
  \item[(a)] {\bf Signal Denoising}. Here, one mainly aims at estimating the mean $\btheta$ in $\mathbb{R}^n$ from ${\bf Y}$ taking into account the side information that $\btheta$ is  a piece-wise constant vector. 

 \item[(b)] {\bf Change-points Detection}. The objective  is now to detect the existence of change-points in $\btheta$. It is easier to state this problem in the single change-point setting where $K\leq 1$. Indeed, this boils down to testing the hypothesis $\{\btheta\in \Theta_0\}$ (there is no change-point) versus $\{\btheta\in \Theta_1\}$ (there is exactly one change-point). Obviously, detecting one change-point is feasible only if its height $\Delta_1=\mu_2-\mu_1$ is not too small (in absolute value). As a consequence, a suitable detection procedure should have a small type I error probability as well as be able to detect with high probability a change-point whose height is not too small. 
 We shall further formalize this problem in the next subsection. For multiple change-points problems, one can possibly interpret the problem of change-points detection as that of estimating the number $K$ of change-points~(e.g. \cite{frick2014multiscale,fryzlewicz2014wild,fryzlewicz2020detecting}). Unfortunately, stating this as an estimation problem of the functional $K$ may hide the fact that, in a vector $\btheta$, some change-points may be easier to detect than some others. One of our contribution in this work is to propose an alternative formalization of this problem.   
 
 \item[(c)] {\bf Change-point Localization}. Again, we start with the single change-point setting where $\btheta$ is in $\Theta_1$. One aims at building an estimator $\widehat{\tau}$ of $\tau_1$  such that $|\widehat{\tau}-\tau_1|$ is the smallest possible. Intuitively, localization is only feasible if the change-point has been detected so that some minimal assumption on the height has to be done. In the multiple change-points setting, there are several ways of measuring the localization error that depend whether one is interested into estimating a specific change-point, a subset of significant change-points or the whole set of change-points.  Given a vector $\btau$ with length $|\btau|$ and with coordinates $(\tau_1,\ldots,\tau_{|\btau|})$ and one change-point $\tau'$, let  the distance $d_{H,1}(\btau,\tau')=\min_{i=1,\ldots, |\btau|}|\tau_i-\tau'|$ between $\tau'$ and its closest element in $\btau$. Hence, for any $1\leq k\leq K$,  $d_{H,1}(\widehat{\btau},\tau^*_k)$ quantifies to what extent $\widehat{\btau}$ estimates well the true change-point $\tau^*_k$. We refer to this as a \emph{point-wise} loss. Besides, we define 
 \beq\label{eq:definition_Haussdorf}
 d_{H,1}(\btau,\btau^*)=\max_{j=1,\ldots, |\btau^*|}d_{H,1}(\btau,\tau^*_j)\enspace ; \quad\quad d_H(\btau,\btau^*)=\max \left\{d_{H,1}(\btau,\btau^*), d_{H,1}(\btau^*,\btau)\right\}  \enspace,
\eeq
which respectively correspond to  the screening distance and Hausdorff distance between the sets $\{\tau^*_i,\ i=1,\ldots, |\tau^*|\}$  and $\{\tau_i,\ i=1,\ldots, |\tau|\}$ (see \cite{lin2016approximate} for instance). 
For simplicity, we will call them the screening distance from $\btau^*$ to $\btau$, and the Hausdorff distance between $\btau^*$ and $\btau$, thereby  confusing  vectors and sets of their values. In some sense, the Hausdorff distance $d_H(\btau,\btau^*)$ should be understood as an {\it uniform} error measure between $\btau^*$ and $\btau$.
Finally, when the change-points vectors $\btau$ and $\btau^*$ have the same length ($|\btau|=|\btau^*|$), one considers
\beq\label{eq:definition_wasserstein}
d_W(\btau, \btau^*)= \sum_{j=1}^{|\btau|} |\tau_j-\tau^*_j| \enspace,
\eeq
that corresponds, up to the re-normalization by $|\btau|$, to the $\bbL_1$-Wasserstein distance between the empirical probability measures associated with the vectors. This can be interpreted an {\it $l_1$ -loss} and is referred henceforth to as the Wasserstein distance between $\btau^*$ and $\btau$.

\end{enumerate}

In this manuscript, we mainly focus on the two latter problems where the statistician is more interested on change-points in themselves than on the signal $\btheta$. Before explaining our contributions, we summarize how these testing and estimation problems are considered in the literature as well as the best known bounds.

\subsection{State of the art}

Although we are not specifically interested in signal denoising, we briefly discuss the literature as this viewpoint falls into the well established field of nonparametric statistic and is well understood. In particular, Gao et al.~\cite{Gao2020} have studied the optimal (in the minimax sense) risk  $\E[\|\widehat{\btheta}-\btheta\|^2]$ achievable by any estimator when $\btheta$ belongs to $\Theta_K$. For $K\geq 2$, the optimal risk is (up to a multiplicative numerical constant) of the order of $K\log(2n/K)$ (see also \cite{Lebarbier2005,Arlot2019}) whereas, for $K=1$, the optimal risk is qualitatively different and is of the order of $\log\log(16n)$. All these bounds are achieved by least-squares estimators $\widehat{\btheta}_{LS,K}$~\cite{Gao2020}.  The penalized least-squares estimator of Lebarbier~\cite{Lebarbier2005} also achieves such bounds for all $K\geq 2$ without requiring the knowledge of $K$. By constrast,  the Fused LASSO achieves similar near-optimal risk bounds for $K\geq 2$  if we further restrict our attention to evenly spaced change-points~\cite{guntuboyina2020adaptive}. Unfortunately, for some other $\btheta$ in $\Theta_K$ with non-even spaced change-points, the risk of the Fused LASSO is much large; see Theorem 4.1 in~\cite{fan2018approximate}. 

In fact, most works dedicated to such penalized least-squares procedures for signal estimation provide, as in \cite{Gao2020}, theoretical risk bounds for the targeted mean vector estimation and then, empirically evaluate their performances in terms of change-points detection and localization error (see e.g.~\cite{Lebarbier2005,Harchaoui2010,lin2016approximate,Arlot2019}). Intuitively, near optimal signal estimation risk bounds should suggest that the procedures detect most change-points and localize them well. While this heuristic argument is appealing, tentative formalizations of it lead to quite pessimistic results~\cite{lin2016approximate,Arlot2019} both in terms of detection and localization.

\medskip 

Since the difficulty of signal denoising qualitatively changes between the single change-point  and the multiple change-points settings, the remainder of the literature review is organized according to these two settings.

\paragraph{Detection of a single change-point.}
As mentioned in the previous section, we need to formalize the significance of a single change-point $\tau_1^*$. Given $\btheta
$ in $\Theta_1$, we define its energy 
\beq\label{def:energy1}
\bE_1(\btheta)=|\Delta_1| \sqrt{\frac{(\tau_1^*-1)(n+1-\tau_1^*)}{n}}\enspace .
\eeq
The  energy $\bE_1(\btheta)$ is equal to the  $l_2$ distance between $\btheta$ and $\Theta_0$ and thereby quantifies the difficulty of assessing the existence of $\tau_1^*$. As an alternative to the energy $\bE_1(\btheta)$, some authors~(e.g.\cite{Gao2020}) consider the quantity $\bE^2_{\min}(\btheta)= \Delta_1^2 [(\tau_1^*-1)\wedge (n+1-\tau_1^*)]$, where $x\wedge y$ stands for the minimum of $x$ and $y$.  Both quantities are equivalent ($\bE^2_1(\btheta)\leq   \bE^2_{\min}(\btheta)\leq 2\bE_1^2(\btheta)$), but we use $\bE_1(\btheta)$ as it is more intrinsic to the change-point problem. Testing the null hypothesis  \{$\btheta\in \Theta_0$\} versus  \{$\btheta \in \Theta_1$\} has been extensively studied since the seminal work of~\cite{Page1955, Page1957, Hawkins1977}. Cs\"org\"o and Horv\'ath~\cite[][eq.(3.5.22)]{Csorgo1997} have proved that the test rejecting the null for large values of  scan CUSUM statistic $\max_{2\leq t \leq n}|\bC({\bf Y};(1,t,n+1))|$ is asymptotically powerful when $\bE_1(\btheta)/\sqrt{\log\log(n)}\rightarrow \infty$. Recently,  Gao et al.~\cite{Gao2020} established a non-asymptotic counterpart of this result as well as a minimax lower bound, stating that no test is able to reject the null with high probability simultaneously for all $\btheta$ in $\Theta_1$ such that $\bE_1(\btheta)\geq c_1 \sqrt{\log\log(16n)}$ where $c_1$ is a small numerical constant. 

\paragraph{Localization of a single change-point.} Recall that, in this setting, both the least-squares estimator and the max CUSUM estimator are equal and are referred to  $\widehat{\tau}$ in this paragraph. Early work from~\cite{hinkley1970, hinkleyhinkley1970} considered an asymptotic setting where $\Delta_1$ is constant and established that $|\widehat{\tau}- \tau_1^*|= O_P(1)$. Later, D\"umbgen~\cite{dumbgen1991} and Cs\"org\"o and Horv\'ath~\cite{Csorgo1997} have worked out the asymptotic distribution of $|\widehat{\tau}- \tau_1^*|$ under the assumption that the change-point energy is high-enough so that  $\bE_1(\btheta)/\sqrt{\log\log(n)}\rightarrow \infty$ and under the restriction that $\tau_1^*$ is proportional to $n$. In particular, one deduces from this asymptotic distribution that $|\widehat{\tau}- \tau_1^*|= O_P(1/\Delta_1^2)$. When $\Delta_1^2=O(1)$, this bound is minimax optimal (see Proposition 10 in ~\cite{Wang2018}). Up to our knowledge, there are few non-asymptotic localization results. Still, we can easily deduce from Gao et al~\cite{Gao2020} that, with positive probability, $|\widehat{\tau}- \tau_1^*|\leq c \log\log(n)/\Delta_1^2$ as long as $\bE_1(\btheta)\geq c' \sqrt{\log\log(n)}$, where $c$ and $c'$ are positive constants.

\paragraph{Detection for multiple change-points.} Early works typically considered an asymptotic setting where $\theta_i= g(i/n)$ and $g$ is a fixed $(K+1)$-step function defined on $[0,1]$. Notably, Yao and Au~\cite{YaoAu1989} proved that the least-squares estimator with a BIC penalty selects a number $\widehat{K}$ converging in probability to $K$. To formalize the detection problem in a non-asymptotic setting, and define the significance of each change-point, we extend the notion of change-point energy  $\bE_1(\btheta)$ for $\btheta$ in $\Theta_1$.  Given a vector $\btheta$, with $K$ change-points and an integer $1\leq k\leq K$, we define the energy $\bE_k(\btheta)$ of the $k$-th change-point by 
\beq\label{eq:defintion_Ek}
\bE_k(\btheta)= |\Delta_k| \sqrt{\frac{(\tau_{k+1}^*-\tau_k*)(\tau_k^* -\tau^*_{k-1}) }{\tau^*_{k+1}- \tau_{k-1}^*}}\ ,
\eeq
so that this matches \eqref{def:energy1} when $k=K=1$. We show later that $\bE^2_k(\btheta)$ equals the squared $l_2$ distance between $\btheta$ and its best approximation by a vector $\btheta'$ with $(K-1)$ change-points $(\tau^*_1,\ldots, \tau^*_{k-1}, \tau^*_{k+1},\ldots, \tau^*_K)$. As in the single change-point case, one easily checks that $\bE_k^2(\btheta)$ is equivalent (up to a factor 2) to $\Delta_k^2 [(\tau_{k+1}^*-\tau^*_k)\wedge (\tau_k^* -\tau^*_{k-1})]$. The non-asymptotic counterpart of the detection problem would then correspond to establishing minimal condition on $\bE^2_k(\btheta)$ so that a change-point procedure detects $\widehat{K}=K$ change-points with high probability. As an alternative to $\min_{1\leq k\leq K}\bE^2_k(\btheta)$, most recent papers in the literature (see e.g. \cite{frick2014multiscale,fryzlewicz2018tail,Wang2018,wang2020}) use the following quantity
\beq\label{eq:definition_energie_min}
\bE_{min}(\btheta)=\left[\min_{1\leq k\leq K} |\Delta_k|\right] \left[\min_{0\leq k\leq K} |\tau_{k+1}^*-\tau_{k}^*|^{1/2}\right]\enspace.
\eeq
When all $|\Delta_k|$ are equal and all change-points $\tau^*_k$ are equi-spaced, then $\bE^2_{min}(\btheta)\geq \min_{k}\bE^2_k(\btheta)$ but $\bE^2_{min}(\btheta)$ is possibly much smaller than $\min_{k}\bE^2_k(\btheta)$ for heterogeneous jumps. In most of the modern analyses of change-point procedures~\cite{frick2014multiscale,fryzlewicz2018tail,wang2020}, authors usually aim at establishing that $\widehat{K}=K$ with high probability as long as $\bE^2_{\min}(\btheta)\geq c\log(n)$ (for some $c>0$). Such a detection property is achieved by some penalized least-squares procedures~\cite{wang2020} and CUSUM-based procedures (e.g.~\cite{baranowski2019narrowest,wang2020}).
Notably, Frick et al.~\cite{frick2014multiscale} prove that their SMUCE procedure consistently estimates $K$ under  the tighter condition $\bE^2_{\min}(\btheta)\geq c\log(n/\min_{k}(\tau^*_{k+1}-\tau_k^*))$ which is of the order of $\log(K+1)$ when there are few equi-spaced change-points.  Conversely, Chan and Walther \cite{Chan2013} (see~\cite{arias2005near} for earlier results) have established in the particular setting of change-point detection that no procedure is able to consistently estimate $K$ unless $\min_{k}\bE_k^2(\theta) \geq 2\log[n/\min_{k}(\tau^*_{k+1}-\tau_k^*)\wedge (\tau^*_{k}-\tau_{k-1}^*)]$.

\paragraph{Localization of several change-points.} In their asymptotic setting where $\theta_i=g(i/n)$ for a fixed function $g$, Yao and Au~\cite{YaoAu1989} established the asymptotic distribution of $(\widehat{\tau}_k-\tau^*_k)$, $k=1,\ldots, K$ where the vector $\widehat{\btau}$ is that of the least-squares estimator $\widehat{\btheta}_{LS,K}$. The differences $\widehat{\tau}_k-\tau^*_k$
are thus proved to be asymptotically independent and to have limiting sub-exponential  distributions with parameter $1/\Delta_k^2$. This result was later extended by Bai and Perron~\cite{Bai1998} and Lavielle and Moulines~\cite{lavielle2000least} to asymptotic settings where $K$ is still fixed but the $|\Delta_k|$'s are allowed to converge to zero. In the non-asymptotic setting, Frick et al.~\cite{frick2014multiscale}, Wang et al.~\cite{wang2020}, Baranowski et al.~\cite{baranowski2019narrowest} established that, as soon as $\bE^2_{\min}(\btheta)\geq c \log(n)$, the estimator $\widehat{\btau}$ satisfies $d_H(\widehat{\btau}, \btau^*)\leq c'\log(n)/[\min_{k}\Delta_k^2]$ with high probability. It follows from the analysis of the single change-point case that this upper bound is optimal up to a (possible) $\log(n)$ term. In an almost concomitant but independent work, Cho and Kirch~\cite{cho2019localised} have recently proved that their multiscale MOSUM procedure satisfies the tighter bound $d_H(\widehat{\btau}, \btau^*)\leq c'\log(K+1)/[\min_{k}\Delta_k^2]$.

\subsection{Our contribution}

We now describe our main results.
\paragraph{Pinpointing minimal conditions for change-point detection.} In the single change-point setting, we establish that the uniform detection threshold for the energy $\bE_1(\btheta)$  is at $\sqrt{2\log\log(n)}$. Importantly, change-points $\tau_1^*$ that are away from the endpoints of the time series can be detected at energy level $\bE_1(\btheta)$ of the order $\sqrt{2\log\log({16n}/(\tau_1^*\wedge (n+1-\tau_1^*)))}$, which can be as small as a constant for $\tau^*_1$ proportionnal to $n$. Regarding the multiple change-points setting, we introduce a new way of assessing the detection of change-points. An estimator $\widehat{\btau}$ is said to detect $\tau_k^*$ if some estimated change-point $\widehat{\tau}_l$ belongs to $[(\widehat{\tau}_k^*+\widehat{\tau}_{k-1}^*)/2,(\widehat{\tau}_k^*+\widehat{\tau}_{k+1}^*)/2)$. Conversely, $\widehat{\btau}$ does not detect any spurious change-point if the interval $[(\widehat{\tau}_k^*+\widehat{\tau}_{k-1}^*)/2,(\widehat{\tau}_k^*+\widehat{\tau}_{k+1}^*)/2)$ contains at most one estimated change-point $\widehat{\tau}_l$. Any change-point $\tau^*_k$ is detectable  by a procedure $\widehat{\btau}$ whose probability of estimating spurious change-point is small  as soon as 
 \beq\label{eq:high_ener}
 \bE^2_k(\theta)\geq  c\log\left( \tfrac{c'n}{(\tau^*_{k+1}-\tau_k^*)\wedge (\tau^*_{k}-\tau_{k-1}^*)}\right)\ , 
 \eeq
thereby matching (up to constants) the minimax rate for the simpler problem of segment detection~\cite{Chan2013}. What matters here is that we prove such detectability results in settings where we allow an arbitrarily large number of change-points to have a low energy. This implies that the presence of many arbitrarily small change-points does not make high-energy change-points much harder to detect. 

\paragraph{Transition from a global to a local estimation problem.} As soon as a change-point $\tau^*_k$ is detectable (as its energy is above the appropriate threshold, then the change-point $\tau^*_k$ can be estimated at a sub-exponential rate with scale $\Delta_k^2$. As a consequence, the error rate for estimating a specific change-point is purely local and neither depends on its energy (as long as it is high enough) nor on $n$. Our non-asymptotic analysis bridges the gap between the asymptotic expansions of~\cite{Csorgo1997} and \cite{lavielle2000least} and known non-asymptotic bounds. Besides, we recover that the respective positions estimation errors of two high-energy change-points behave like nearly independent variables~\cite{YaoAu1989, lavielle2000least}. In turn, this allows us to establish tight risk optimality results with respect to both the Hausdorff~\eqref{eq:definition_Haussdorf}  and  Wasserstein~\eqref{eq:definition_wasserstein} distances. Finally, we note that this {\it global to local transition phenomenon}  also occurs in the presence of multiple low-energy change-points.

\paragraph{Penalized least-squares estimation with a multiscale penalty.} We introduce two multiple change-points procedures achieving all these optimality properties. The first one is a penalized least-squares type estimator with a multiscale penalty that promote equi-spaced change-points positions. As the corresponding penalty is additive, this estimator is easily computed by (pruned) dynamic programming~\cite{killick2012optimal}. In contrast to the BIC-type penalty studied recently in~\cite{wang2020}, this allows us to recover the optimal logarithmic terms as well as to handle settings where low-energy change-points are present. 
 
 \paragraph{Optimal post-processing procedure based on aggregation of CUSUM tests.} As an alternative to the penalized least-squares estimator, we promote a two-step method based on the aggregation of many CUSUM tests. This method can either serve as a self-standing procedure or as a post-processing procedure to improve the detection and localization properties of a preliminary estimator. It is shown to satisfy the same optimality property as the previous multiscale procedure, whereas its computational complexity can be taken as low as $n\log(n)$ operations. 
 
 \bigskip 
 
 From a technical perspective, our main results are based on a novel simultaneous control of all CUSUM statistic $\bC({\bf Y};{\bf t})$ in Lemma~\ref{lem:concentration:N_t} that can be of independent interest. Despite the fact that we introduce as least minimax formalism as possible, our viewpoint and arguments heavily borrow from the literature on minimax testing separation rates~\cite{Ingster1993I,baraud02}.

\subsection{Notation and organization of the paper}

In the sequel, we use bold letters for vectors (e.g. $\btheta, \bmu, \bY,\btau, \ldots$), whereas  $\|.\|$ stands for the Euclidean norm. For any finite set $S$, we write $|S|$ for its cardinality.
As usual, we denote by $\Phi$ the probability distribution function of a standard univariate normal distribution. Besides, $\overline{\Phi}$ stands for the corresponding tail distribution function.\\
In the sequence, $c$, $c'$, $c_1$ stand for positive numerical constant whose value may change from line to line. Given some quantity $L$, we write $c_L$ for a positive function that only depends on $L$.
 We write $u \lesssim v$ (resp. $ u \gtrsim v $) when there exists a numerical constant $c>0$ such that $u\leq cv$ (resp. $u\geq c v$). The notation $u\asymp v$ means that both $u\lesssim v$ and $u\gtrsim v$.
 Given $x$ in $\mathbb{R}$, we write $x_+=\max(x,0)$ for the positive part of $x$, and $\lfloor x\rfloor$ (resp. $\lceil x \rceil$) for the largest (resp. smallest) integer smaller than (resp. larger than) or equal to $x$.
 Given two real numbers $x$ and $y$, $x\vee y$ (resp. $x\wedge y$) denotes the maximum (resp. minimum) value between $x$ and $y$. When it is clearer, we sometimes use $\max(x,y)$ and $\min(x,y)$.\\ Given a vector $\btheta$,  we write $\P_{\btheta}$ for the distribution of ${\bf Y}$. 

\medskip 

Section~\ref{sec:one_change_point} is dedicated to testing and estimation problems when $\btheta$ contains at most one single change-point.  Turning to the multiple change-points problem, we establish impossibility results in Section~\ref{sec:lower_multiple}, which lead us to defining desiderata for a suitable change-point procedure. In Section~\ref{sec:least_squares}, we establish that penalized least-squares change-point procedures with a multiscale penalty achieve the desired specifications, whereas we prove similar results for a post-processing procedure based on the CUSUM statistic in Section~\ref{sec:post_proc}. Some extensions and open problems are discussed in Section~\ref{sec:discussions}. The proofs are postponed to the end of the manuscript.

\section{Single change-point analysis}\label{sec:one_change_point}
This section deals with the case where the mean vector $\btheta$ contains at most one change-point, that is when $\btheta$ belongs to $\Theta_0\cup\Theta_1$.  The corresponding model \eqref{eq:univariate_model} has often been called the At Most One Change (AMOC) model in the change-point literature (see  e.g. \cite{Csorgo1997}). 
To alleviate the notation, when $\btheta$ belongs to $\Theta_1$, we simply write $\tau^*$ for $\tau^*_1$ and $\Delta$ for $\Delta_1=\mu_2-\mu_1$ in this section and the corresponding proofs.
Recall (see \eqref{def:energy1}) that the energy $\bE_1(\btheta)$ of the  change-point is  defined by
\[
\bE_1(\btheta)= |\Delta|\sqrt{\frac{(\tau^*-1)(n+1-\tau^*)}{n}}\enspace.
\]
As explained in the introduction, the change-point detection problem when $\btheta$ belongs to $ \Theta_0\cup \Theta_1$ is formalized as the problem of testing the null hypothesis $(H_0)\ \{\btheta\in\Theta_0\}$ versus the alternative $(H1)\  \{\btheta\in \Theta_1\}$, 
while the change-point localization problem is treated as a problem of estimation of $\tau^*$ when $\btheta$ is assumed to belong to $\Theta_1$.

We first state impossibility results in form of minimax lower bounds and then show that matching upper bounds can be obtained from test statistics and estimators based on penalized least-squares criteria. More precisely,  we derive tight lower and upper bounds for the detection rate with the tight constants. Finally, we built optimal  confidence intervals for $\tau^*$. 

\subsection{Impossibility results for the detection and localization problems}\label{sec:single:lower}

The impossibility results are established in the specific setting where the noise is Gaussian. Hence, we assume throughout this subsection that $\boldsymbol{\epsilon}\sim \mathsf{N}(0,\bI_n)$. This assumption is henceforth referred to as $(\mathcal{A}_G)$.

\subsubsection{Minimax lower bound for the detection problem}\label{sec:lower:single:test}

Considering the problem of testing $(H_0)\ \{\btheta\in\Theta_0\}$ versus $(H1)\  \{\btheta\in \Theta_1\}$, we want to assess the minimal energy $\bE_1(\btheta)$ required so that a test is able to reject the null with high probability. We start with a simple and known observation, see e.g.~\cite{enikeeva2018high}.
Fix an integer $\tau$ in $\{2,\ldots,n\}$ and a real number $\delta \ne 0$, then $\Theta_1$ contains 
\[
\Theta_{1}[\tau,\delta]=\{\btheta\in \Theta_1: \tau^*=\tau,\; \Delta=\delta\}\enspace.
\]
Testing  $(H_0)$ versus the alternative $(H_{1,\tau,\delta})$ $\{\btheta\in\Theta_{1}[\tau,\delta]\}$ is arguably simpler than testing $(H_0)$ versus $(H_1)$ and the minimal energy requirement for rejecting with high probability is therefore smaller. 
The level-$\alpha$ Likelihood Ratio Test (LRT) for this simple (but unrealistic) testing problem rejects $(H_0)$ in favor of the alternative $(H_{1,\tau,\delta})$ when the CUSUM statistic $\bC(\bY;(1, \tau,n+1))$ with
\[
\bC(\bY,(1,\tau,n+1))=\left(\frac{\sum_{i=\tau}^nY_i}{n+1-\tau}- \frac{\sum_{i=1}^{\tau-1}Y_i}{\tau -1}\right) \left(\frac{1}{\tau -1}+ \frac{1}{n+1-\tau}\right)^{-1/2}\enspace,
\]
has an absolute value larger than a critical value depending on $\alpha$. Since under the null hypothesis $(H_0)$, $C(\bY,(1,\tau,n+1))$ follows a standard normal distribution, this critical value can be taken equal to $t_\alpha=\overline{\Phi}^{-1}(\alpha/2)$. Besides, since under the alternative $(H_{1,\tau,\delta})$, $\bC(\bY,(1,\tau,n+1))$ has a $\mathsf{N}(b_{n,\tau},1)$ distribution with a bias $b_{n,\tau,\delta}$ such that $|b_{n,\tau,\delta}|=\bE_1(\btheta)$, the type II error probability of the LRT equals  $\overline{\Phi}(|b_{n,\tau,\delta}|-t_\alpha) +\overline{\Phi}(|b_{n,\tau,\delta}|+t_\alpha)$. This probability is small only if $\bE_1(\btheta)$ is large enough (compared to the critical value $t_\alpha$). From the fundamental Neyman-Pearson Lemma, it is the smallest possible type II error probability for any $\alpha$-level test. As a consequence, noticing that when $\alpha$ is small, $\overline{\Phi}^{-1}(\alpha)$ is of the order of $\sqrt{2\log(1/\alpha)}$, a necessary condition for a change-point to be reliably detected by a level-$\alpha$ test is that its energy is  large compared to $\sqrt{2\log(1/\alpha)}$.

In a more realistic setting where the position and the height of the change-point are unknown to the statistician, that is when considering the initial problem of testing $(H_0)$ versus $(H_1)$, a slightly higher energy is necessary for a change-point to be reliably detected, as formalized  in the next proposition.

\begin{prp}\label{prp:lower_test_one_change-point}
Assume that $\bepsilon$ in~\eqref{eq:univariate_model} satisfies ($\mathcal{A}_G$).
There exist positive numerical constants $c$ and $n_0$ such that for all $\kappa$ in $(0,2/3)$, for all $n\geqslant n_0$, and for any test $\mathscr{T}$ of $(H_0)$ versus $(H_1)$, one has
\beq\label{eq:lower_one_change}
\sup_{\btheta\in \Theta_0}\P_{\btheta}[\mathscr{T}=1] + \sup_{\btheta\in \Theta_1,\ \bE_1(\btheta)\geqslant \sqrt{2(1-\kappa)(1-n^{-1/2})\log \log (n)}}\P_{\btheta}[\mathscr{T}=0]\geqslant 1- c \left(\log (n)\right)^{-\frac{\kappa^2}{8(1-\kappa)}}\enspace . 
\eeq
\end{prp}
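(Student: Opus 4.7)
The plan is to apply a Bayesian version of Le Cam's two-point lower bound. Set $E = \sqrt{2(1-\kappa)(1-n^{-1/2})\log\log n}$. For any prior $\pi$ supported on $\{\btheta\in\Theta_1 : \bE_1(\btheta)\geq E\}$, writing $\P_\pi = \int \P_\btheta\,d\pi(\btheta)$, one has
\[
\inf_\mathscr{T}\Big[\sup_{\btheta\in\Theta_0}\P_\btheta[\mathscr{T}=1] + \sup_{\btheta\in\Theta_1,\, \bE_1(\btheta)\geq E}\P_\btheta[\mathscr{T}=0]\Big] \geq 1 - \|\P_{\mathbf 0} - \P_\pi\|_{TV},
\]
so it suffices to exhibit $\pi$ for which $\|\P_{\mathbf 0} - \P_\pi\|_{TV} \leq c(\log n)^{-\kappa^2/(8(1-\kappa))}$.

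I will take $\pi$ uniform over $M$ single change-point alternatives placed at \emph{widely} spaced scales. Fix a spacing factor $R_0 \asymp E^4 \asymp (\log\log n)^2$, set $M = \lfloor \log(n)/\log(R_0)\rfloor \asymp \log n /\log\log\log n$, and pick integer positions $\tau_k \in \{2,\ldots,n\}$ with $\tau_{k+1}/\tau_k \geq R_0$. Let $\btheta^{(k)}\in\Theta_1$ be the centered piecewise constant vector with unique change-point at $\tau_k$ and $\|\btheta^{(k)}\|^2 = \bE_1(\btheta^{(k)})^2 = E^2$. A direct calculation yields, for $k\leq k'$,
\[
\langle \btheta^{(k)}, \btheta^{(k')}\rangle \;=\; E^2 \sqrt{\frac{(\tau_k-1)(n+1-\tau_{k'})}{(n+1-\tau_k)(\tau_{k'}-1)}} \;\leq\; E^2 R_0^{-(k'-k)/2} \;\leq\; 1 \quad(k'>k),
\]
so distinct alternatives are nearly orthogonal. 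Under $\P_{\mathbf 0}$ and $(\mathcal A_G)$, the likelihood ratio is $L = M^{-1}\sum_k W_k$ with $W_k = \exp(\langle\btheta^{(k)},\bY\rangle - E^2/2)$; each $\log W_k$ is $\mathsf{N}(-E^2/2, E^2)$.

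A naive chi-square bound fails whenever $\kappa < 1/2$, since the diagonal of $\bbE_{\mathbf 0}[L^2] = M^{-2}\sum_{k,k'} e^{\langle\btheta^{(k)},\btheta^{(k')}\rangle}$ equals $e^{E^2}/M \asymp (\log n)^{1-2\kappa}\log\log\log n$. I therefore employ a truncated likelihood ratio. Choose $c>0$, set $\tilde L = M^{-1}\sum_k W_k \1_{W_k \leq c}$ and bound
\[
\|\P_{\mathbf 0} - \P_\pi\|_{TV} \;\leq\; \tfrac{1}{2}\sqrt{\operatorname{Var}_{\mathbf 0}(\tilde L)} + \bbE_{\mathbf 0}[L-\tilde L].
\]
The off-diagonal contribution to the variance is $\leq M^{-2}\sum_{k\neq k'}(e^{\langle\btheta^{(k)},\btheta^{(k')}\rangle}-1) = O(1/M)$ by the wide spacing, leaving only the diagonal and the truncation tail. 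Writing $\log c = (1+\eta)E^2/2$ for $\eta\in(0,2)$, Gaussian change-of-measure identities for log-normal variables give
\[
\bbE_{\mathbf 0}[L-\tilde L] \;=\; \overline{\Phi}(\eta E/2) \;\leq\; \tfrac{1}{2}e^{-\eta^2 E^2/8}, \qquad \bbE_{\mathbf 0}[W_1^2 \1_{W_1\leq c}] \;\leq\; \tfrac{1}{2} e^{E^2 - (2-\eta)^2 E^2/8}.
\]

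The main technical step is the joint optimization over $\eta$. The choice $\eta = \kappa/(1-\kappa)$ produces the identity $2(1-\kappa)[1 - (2-\eta)^2/8] = 1 - \kappa^2/(4(1-\kappa))$, so the diagonal of $\operatorname{Var}_{\mathbf 0}(\tilde L)$ is $\lesssim (\log n)^{-\kappa^2/(4(1-\kappa))}$, while the truncation tail $\bbE_{\mathbf 0}[L - \tilde L] \lesssim (\log n)^{-\kappa^2/(4(1-\kappa))}$ is strictly smaller. Taking the square root via the Cauchy-Schwarz-type bound above halves the exponent and yields exactly $(\log n)^{-\kappa^2/(8(1-\kappa))}$. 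The main obstacle is indeed this balancing: the truncation level must be tight enough for the truncated second moment to acquire the Gaussian decay factor, yet loose enough that the mass cut off matches the target rate, and the precise value $\eta = \kappa/(1-\kappa)$ falls out of requiring the tail and diagonal exponents to coincide. The corrective factor $(1-n^{-1/2})$ in the definition of $E$ absorbs the residual discretization losses when rounding the scales $\tau_k$ to integers in $\{2,\ldots,n\}$.
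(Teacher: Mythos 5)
Your strategy is the same as the paper's in its essentials: Le Cam with a uniform mixture over single change-point alternatives at different scales, a truncated likelihood ratio to circumvent the failure of the plain chi-square bound when $\kappa<1/2$, log-normal moment computations, and a choice of truncation level $\eta$ so that the truncation tail and the truncated diagonal share the same polylog exponent. The algebra is correct: your identity $2(1-\kappa)\bigl[1-(2-\eta)^2/8\bigr]=1-\kappa^2/(4(1-\kappa))$ with $\eta=\kappa/(1-\kappa)$ checks out, and the Gaussian tail computations for $\E_0[L-\tilde L]$ and $\E_0[W_1^2\1_{W_1\leq c}]$ are right.

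The gap is in the size of the prior support. To make all cross-correlations $\langle\btheta^{(k)},\btheta^{(k')}\rangle\leq 1$ you impose spacing $R_0\asymp E^4\asymp(\log\log n)^2$, which caps $M$ at $\asymp \log n/\log\log\log n$. The truncated-diagonal contribution to $\operatorname{Var}_0(\tilde L)$ is then $M^{-1}\E_0[\tilde W_1^2]\asymp M^{-1}(\log n)^{1-\kappa^2/(4(1-\kappa))}\asymp (\log\log\log n)(\log n)^{-\kappa^2/(4(1-\kappa))}$, and after the square root you get $\|\P_0-\P_\pi\|_{TV}\lesssim\sqrt{\log\log\log n}\,(\log n)^{-\kappa^2/(8(1-\kappa))}$, which is strictly larger than the stated rate by the unbounded factor $\sqrt{\log\log\log n}$. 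This cannot be absorbed into the numerical constant $c$ of the proposition, since $c$ must be uniform in $n$. You cannot fix it by shrinking $R_0$, because then $\langle\btheta^{(k)},\btheta^{(k+1)}\rangle$ becomes of order $\log\log n$ and $e^{\langle\cdot\rangle}-1$ for adjacent scales blows up, breaking the ``off-diagonal is $O(1/M)$'' claim. The paper resolves this tension by keeping the dense grid ($M\asymp\log n$, dyadic spacing) and splitting pairs into near (scale ratio in $[1/16,16]$) and far: near pairs are handled via Young's inequality $\E_0[\tilde L_\tau\tilde L_{\tau'}]\leq(\E_0[\tilde L_\tau^2]+\E_0[\tilde L_{\tau'}^2])/2$ so the truncation controls them too, while far pairs use the untruncated $\E_0[L_\tau L_{\tau'}]=e^{\langle\cdot\rangle}$ directly, with a further split at scale ratio $\log^2 n$. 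That decomposition is what lets $M$ be as large as $\asymp\log n$ and produces the exact exponent $\kappa^2/(8(1-\kappa))$. Your proof needs that near/far decomposition (or an equivalent device) to hit the stated rate.
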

The lower bound~\eqref{eq:lower_one_change} implies that the sum of the type I and II error probabilities for vectors $\btheta$ with $\bE_1(\btheta)\geqslant \sqrt{2(1-\kappa)(1-n^{-1/2})\log \log (n)}$ is close to one. This means that no test performs better than random guess. 
Let us interpret this proposition in an asymptotic setting. Taking e.g. $\kappa=\log^{-1/3}(n)$, we deduce from~\eqref{eq:lower_one_change} that
any level-$\alpha$ test $\mathscr{T}$ of $\ac{\btheta\in \Theta_0}$ versus $\ac{\btheta\in \Theta_1}$ is not able to detect  a change-point with energy $\sqrt{2(1-o(1))\log\log(n)}$ with probability uniformly  higher than $1-\alpha-o(1)$. Adopting the separation rate terminology of the minimax testing literature, this enforces that the energy minimax separation rate is of the order of $\sqrt{2\log\log(n)}$. We establish in the next subsection that the leading  constant $2$ is tight.

 Gao et al.~\cite{Gao2020} have established a similar impossibility result in an asymptotic framework but with a suboptimal leading constant $c < \sqrt{2}$.  In an independent work, Han et al.~\cite{liu2019minimax} have recently considered the counterpart of the single change-point detection problem for multivariate time series $\bY$ with values in $\mathbb{R}^p$. Letting both $p$ and $n$ go to infinity, they provide a sharp characterization of the minimal energy for change-point detection. 

\medskip

Closely examining the proof of Proposition~\ref{prp:lower_test_one_change-point}, we see that the result of \eqref{eq:lower_one_change} is still valid even if we restrict our attention to change-point positions $\tau^*$ that are smaller than $\sqrt{n}$. Intuitively, this $\sqrt{2\log\log(n)}$ price rather arises because there are $\log(n)$ possible order of magnitudes for $\tau^*$. We come back to this point below when we build an optimal test.

\subsubsection{Minimax lower bound for the localization problem}\label{sec:lower:loc:one}

Let us now focus on a minimax lower bound for the localization problem, viewed as a problem of estimation of the true change-point position $\tau^*$ of $\btheta$, once it is assumed to belong to $\Theta_1$.
The arguments provided below are standard and can be found e.g. in~\cite{Wang2018}. They are repeated here for the sake of completeness and because a slightly tighter version than~\cite{Wang2018} is required to handle the multiple change-points case. 

Throughout this subsection, we use the notation $\btheta(\tau^*,\bmu)$ to stress the dependency of $\btheta$ on these two quantities.

\begin{lem}\label{lem:LBCPE1}
Assume that $\bepsilon$ in~\eqref{eq:univariate_model} satisfies ($\mathcal{A}_G$). 
Let $\bmu=(\mu_1,\mu_2)$ denote a couple of real numbers such that $\mu_1\neq\mu_2$, and $\Delta=\mu_2-\mu_1$. 
There exist positive constants $c$ and $ c'$ such that, for $n>4\Delta^{-2}$ and for any  $x$ in $[1/2,n/2-1-2\Delta^{-2})$, 
\[
\inf_{\widehat\tau}\sup_{\tau^*\in\{2,\ldots,n\}}\bbP_{\btheta(\tau^*,\bmu)} \left(|\widehat{\tau}-\tau^*|\geq 2\Delta^{-2}+x\right)\geqslant ce^{-c'x\Delta^2}\enspace,
\]
where the infimum is taken over all possible estimators $\widehat{\tau}$ of $\tau^*$.
\end{lem}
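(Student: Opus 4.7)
My plan is a standard two-point Le~Cam reduction. I would compare two candidate positions $\tau^*_1<\tau^*_2$ separated by a distance $m\approx 4\Delta^{-2}+2x$, so that no estimator $\widehat\tau$ can be within $m/2$ of both simultaneously, by the triangle inequality. The vectors $\btheta(\tau^*_1,\bmu)$ and $\btheta(\tau^*_2,\bmu)$ coincide everywhere except on the $m$ coordinates $i\in[\tau^*_1,\tau^*_2)$, where they differ by $\pm\Delta$; thus their squared Euclidean distance is exactly $m\Delta^2$. Under assumption $(\mathcal{A}_G)$, the two data distributions are Gaussian translates with identity covariance, yielding the clean formula
$$KL\bigl(\bbP_{\btheta(\tau^*_1,\bmu)}\,\bigl\|\,\bbP_{\btheta(\tau^*_2,\bmu)}\bigr)=\frac{1}{2}\|\btheta(\tau^*_1,\bmu)-\btheta(\tau^*_2,\bmu)\|^2=\frac{m\Delta^2}{2}.$$

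\textbf{Two-point lower bound.} Concretely I would take $m=\lceil 4\Delta^{-2}+2x\rceil$, $\tau^*_1=2$ and $\tau^*_2=2+m$; the hypotheses $n>4\Delta^{-2}$ and $x<n/2-1-2\Delta^{-2}$ ensure that $\tau^*_2\leq n$ (this is the only place where the integer rounding needs a short verification). Introducing the event $A=\{|\widehat\tau-\tau^*_1|<m/2\}$, the equality $|\tau^*_2-\tau^*_1|=m$ combined with the triangle inequality gives $A\subseteq\{|\widehat\tau-\tau^*_2|>m/2\}$. Applying the Bretagnolle--Huber inequality
$$\bbP(A^c)+\bbQ(A)\ \geq\ 1-\|\bbP-\bbQ\|_{TV}\ \geq\ \frac{1}{2}\exp\bigl(-KL(\bbP\|\bbQ)\bigr)$$
to $\bbP=\bbP_{\btheta(\tau^*_1,\bmu)}$ and $\bbQ=\bbP_{\btheta(\tau^*_2,\bmu)}$ and passing to the maximum of the two terms, I obtain
$$\sup_{\tau^*\in\{2,\ldots,n\}}\bbP_{\btheta(\tau^*,\bmu)}\bigl(|\widehat\tau-\tau^*|\geq m/2\bigr)\ \geq\ \frac{1}{4}\,e^{-m\Delta^2/2}.$$

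\textbf{Constant tuning and main obstacle.} Since $m/2\geq 2\Delta^{-2}+x$, the event in the statement contains the one controlled above, so the localization probability is lower bounded by $\tfrac{1}{4}e^{-m\Delta^2/2}$. For the exponent, $m\leq 4\Delta^{-2}+2x+1$ gives $m\Delta^2/2\leq 2+x\Delta^2+\Delta^2/2$, and the hypothesis $x\geq 1/2$ allows one to absorb the slack via $\Delta^2/2\leq x\Delta^2$, yielding $\tfrac{1}{4}e^{-m\Delta^2/2}\geq \tfrac{e^{-2}}{4}\,e^{-2x\Delta^2}$, which is of the announced form $ce^{-c'x\Delta^2}$ with $c=e^{-2}/4$ and $c'=2$. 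The only mildly delicate point---and really the only way the argument could go wrong---is that the additive $\Delta^2/2$ slack coming from the ceiling would blow up for large $\Delta$; the constraint $x\geq 1/2$ is precisely what is needed to fold it into the $x\Delta^2$ exponent, keeping the constants $c$ and $c'$ independent of $\Delta$. The sharper variant required for the multiple change-points case (alluded to in the paragraph preceding the lemma) should then be obtainable by localising the same two-point construction inside a single segment of the true signal, a step that I expect to be routine once the single change-point argument is in place.
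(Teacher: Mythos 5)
Your proof is correct and follows the same two-point Le~Cam reduction as the paper: fix one hypothetical change-point position, shift it by roughly $4\Delta^{-2}+2x$, and note that the two means differ on exactly that many coordinates so the statistical distance is controlled by $(4\Delta^{-2}+2x)\Delta^2$. The only difference is in the tool used to convert the statistical distance into a lower bound on the minimax testing risk: you invoke Bretagnolle--Huber together with the Gaussian KL formula, which immediately yields $\tfrac14 e^{-m\Delta^2/2}$, whereas the paper reduces to a one-dimensional testing problem via Neyman--Pearson, computes the likelihood-ratio test risk exactly as $2\overline{\Phi}(\sqrt{r}\,|\Delta|/2)$, and then lower-bounds the Gaussian tail. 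The paper's route produces a sharper exponent (of order $r\Delta^2/8$ rather than $m\Delta^2/2$) at the cost of having to absorb the polynomial prefactor $1/(\sqrt{r}|\Delta|)$ into the exponential; your route is slightly more lossy in constants but avoids that step entirely. Both are valid since the statement only asserts existence of $c, c'$, and your constant-tuning step (using $x\ge 1/2$ to absorb the $\Delta^2/2$ slack from the ceiling) is the same device the paper uses.
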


The probability $\mathbb{P}_{\btheta(\tau^*,\bmu)} (|\widehat{\tau}- \tau^*|\geq 4\Delta^{-2}+x)$ is therefore at best exponential in $x$ with rate $\Delta^2$, which in particular leads to the following lower bound for the minimax risk:
$$\inf_{\widehat\tau}\sup_{\tau^*\in\{2,\ldots,n\}}\bbE_{\btheta(\tau^*,\bmu)} \left[|\widehat{\tau}-\tau^*|\right]\geqslant \frac{c}{ \Delta^{2}} \enspace,$$
where $\bbE_{\btheta(\tau^*,\bmu)}$ denotes the expectation with respect to the probability distribution $\bbP_{\btheta(\tau^*,\bmu)}$. As a consequence, no estimator can estimate $\tau^*$ at a rate smaller than $1/\Delta^2$ when the mean values vector $\bmu=(\mu_1,\mu_2)$ is fixed, with difference $\Delta$. 
\subsection{Optimal detection and localization by penalized least-squares}\label{sec:least_one}

In this section, we construct a change-point detection procedure and a change-point estimator both based on the classical  penalized least-squares minimization approach.  We do not restrict the model to the Gaussian assumption $(\cA_G)$ anymore, and we therefore consider an observed random vector $\bY$ such that \eqref{eq:univariate_model} holds with $\btheta$ in $\Theta_0\cup \Theta_1$, whose probability distribution is still denoted by $\P_{\btheta}$.

Let $L>0$ and define for any $\tau$ in $\{2,\ldots,n\}$, the following  penalized least-squares criterion 
\beq\label{eq:definition_criterion_penalized}
\Cr_1(\bY, \tau):= \|\bY - \bPi_{\tau}\bY\|^2+  L \pen_1(\tau)\enspace,
\eeq  
where $\bPi_{\tau}$ denotes the orthogonal projection onto the linear subspace of $\Theta_0\cup\Theta_1$, composed of vectors $\btheta$ in $\bbR^n$ with a single change-point $\tau$. The penalty term $\pen_1(\tau)$, chosen as
\beq\label{eq:multiscale_penalty}
\pen_1(\tau)= 2\log\log\left(e\max\left\{\left(\tau\wedge  \frac{n}{\tau}\right),\left( (n+1-\tau)\wedge\frac{n}{n+1-\tau} \right)\right\}\right)\enspace,
\eeq
is of multiscale type. It is worth noticing that $\pen_1(\tau)$ is bounded from above by  $2\log\log (en)=2\log \log (n)+o_n(1)$ which roughly corresponds to the squared lower bound for the minimax detection rate obtained in Section \ref{sec:lower:single:test}.  Nevertheless, this upper  bound is sometimes  pessimistic. In particular, for $\tau\asymp 1$, $n-\tau\asymp 1$, or  $\tau \asymp n$,  $\pen_1(\tau)$ is of the order of a constant.

 \subsubsection{Detection}\label{sec:CPDTest}

Let us come back to the problem of detecting the existence of a change-point, that is of  testing  $(H_0)\ \{\btheta\in\Theta_0\}$ versus $(H_1)\ \{\btheta\in \Theta_1\}$.
Let the penalty parameter $L$ be in $(1,2]$ and consider the test statistic:
\beq\label{eq:definition:stat_global_1_change-point}
T(\bY) =\min_{\tau\in \{2,\ldots,n\}} \left\{- \|(\bPi_{\tau}-\bPi_{0})\bY\|^2 + L^2\pen_1(\tau)\right\}=\min_{\tau\in \{2,\ldots,n\}} \Cr_1(\bY, \tau) - \|\bY\|^2 +  \|\bPi_{0}\bY\|^2 \enspace. 
\eeq
For $\alpha$ in $(0,1)$, we can now introduce the test $\mathscr{T}_\alpha$ defined  by 
\beq\label{eq:definition:test_global_1_change-point}
\mathscr{T}_\alpha=\1_{T(\bY)\leq -L^2(C_\alpha+C_L)}\enspace,\quad \textrm{with} \quad C_\alpha=6\log \left(\frac{12}{\alpha}\right)\quad \textrm{and}\quad C_L=\frac2{L}\log\left(\frac{L}{L-1}\right)-2\log\log (L)\enspace.
\eeq
The test $\mathscr{T}_{\alpha}$ can be interpreted as the aggregation of a collection of tests of $\{\btheta \in \Theta_0\}$ versus $\{\btheta\in \Theta_1 \textrm{  with }\tau^*= \tau\}$  over $\tau=2,\ldots, n$. As for the estimation procedure introduced above, the threshold $-L^2\pen_1(\tau)-  L^2(C_\alpha+C_L)$ depends on $\tau$, thereby giving a multiscale taste to the procedure. One  deduces from simple linear algebra that $\|(\bPi_{\tau}-\bPi_{0})\bY\|^2= \bC^2(\bY;(1,\tau,n+1))$. Hence,  $\mathscr{T}_{\alpha}$ interprets as a max penalized CUSUM statistic, with position-dependent penalties $\pen_1(\tau)$. This position-dependent penalization approach differs from the usual max (non-penalized) CUSUM testing procedure in the literature~\cite{Csorgo1997}.

\begin{prp}\label{prp:test_one_change-point}
For any $\alpha$ in $(0,1)$, the test $\mathscr{T}_\alpha$ of $\{\btheta\in\Theta_0\}$ versus $\{\btheta\in \Theta_1\}$ defined by \eqref{eq:definition:stat_global_1_change-point} and \eqref{eq:definition:test_global_1_change-point} with $L$ in $(1,2]$, is of level $\alpha$. Moreover, for any $\beta$ in $(0,1)$, and any $\btheta$ in $\Theta_1$ such that
 \beq\label{eq:condition_lower_one_change-point}
 \bE^{2}_1(\btheta) \geqslant L^3\pen_1(\tau^*) + \frac{2L}{L+1}\log\left(\frac{2}{\beta}\right)+L^3(C_\alpha+C_L) \enspace,
 \eeq
 $\mathscr{T}_\alpha$ has type II error probability  smaller than $\beta$.
 \end{prp}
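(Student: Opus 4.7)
The plan handles the level and the power bounds separately, each reducing to a precise concentration statement.

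\textbf{Type I (level $\alpha$).} The rejection event rewrites as
\[
\max_{\tau\in\{2,\ldots,n\}}\Big\{\|(\bPi_\tau-\bPi_0)\bY\|^2 - L^2\pen_1(\tau)\Big\} \geq L^2(C_\alpha+C_L).
\]
Under $H_0$, the vector $\bY$ has constant mean, and for each $\tau$ the statistic $\bC(\bY;(1,\tau,n+1))$ is a unit linear functional of $\bepsilon$, hence $1$-sub-Gaussian, and $\|(\bPi_\tau-\bPi_0)\bY\|^2 = \bC(\bY;(1,\tau,n+1))^2$. I would invoke the multiscale concentration result announced at the end of the introduction (\lemref{concentration:N_t}), applied to the whole family of CUSUM statistics. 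The lemma replaces the (non-summable) naive union bound by a dyadic/peeling argument: grouping the positions $\tau$ by the scale $m(\tau)=\max\{\tau\wedge n/\tau,(n+1-\tau)\wedge n/(n+1-\tau)\}$, the penalty $\pen_1(\tau)=2\log\log(e\,m(\tau))$ is essentially constant within each dyadic band of $m(\tau)$, and the residual series $\sum_{j\geq 1}j^{-L^2}$ is finite for $L>1$. Its exact control, together with the bound $\sum_{j}j^{-L^2}\leq \int_{\log L}^{+\infty}e^{-L^2 s/1}\cdot\ldots\,ds$-type estimate, produces $C_L=\frac{2}{L}\log(L/(L-1))-2\log\log(L)$. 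The extra shift $C_\alpha=6\log(12/\alpha)$ translates the residual exponential tail into an $\alpha$-quantile.

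\textbf{Type II.} Under $H_1$ with true change-point $\tau^*$, it suffices to exhibit one $\tau$ for which the penalized CUSUM passes the threshold; the natural choice is $\tau=\tau^*$. Writing $\bu$ for the unit vector spanning the one-dimensional range of $\bPi_{\tau^*}-\bPi_0$ and decomposing $\bY=\btheta+\bepsilon$, one obtains $\|(\bPi_{\tau^*}-\bPi_0)\bY\|^2=(\sign(\Delta)\,\bE_1(\btheta)+\xi)^2$, where $\xi=\langle\bepsilon,\bu\rangle$ is centered and $1$-sub-Gaussian. Writing $b=\bE_1(\btheta)$ and $u=\pen_1(\tau^*)+C_\alpha+C_L$, the event of interest becomes $(b+\xi)^2\geq L^2 u$. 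Without loss of generality $b>0$; since the energy condition readily implies $b\geq L\sqrt{u}$, the sub-Gaussian one-sided tail gives
\[
\P\big((b+\xi)^2<L^2 u\big)\leq \P\big(\xi<L\sqrt{u}-b\big)\leq e^{-(b-L\sqrt{u})^2/2}.
\]
The algebraic core is to verify that the energy condition $b^2\geq L^3 u + \frac{2L}{L+1}\log(2/\beta)$ implies a lower bound on $(b-L\sqrt{u})^2$ sufficient to make the right-hand side $\leq \beta$. This uses a Young-type inequality $2L\sqrt{u\cdot 2\log(2/\beta)}\leq \alpha L^2 u+\frac{2\log(2/\beta)}{\alpha}$, with the parameter $\alpha$ tuned so that $L^3 u$ absorbs $L^2 u+\alpha L^2 u$ exactly; this balance produces the sharp constant $\frac{2L}{L+1}$ on the logarithmic term.

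\textbf{Main obstacle.} The Type I control is by far the harder half. The process $(\bC(\bY;(1,\tau,n+1)))_\tau$ is strongly correlated across $\tau$, and a direct Bonferroni inflates the threshold by a factor $\log n$ (far more than $\log\log n$), so the multiscale peeling built into the calibration of $\pen_1$ is essential; this is precisely what \lemref{concentration:N_t} accomplishes. By contrast, the Type II step is a single-coordinate sub-Gaussian concentration, and the only subtlety is selecting the Young parameter so that the constant $\frac{2L}{L+1}$ comes out sharply.
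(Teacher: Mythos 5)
Your Type I argument invokes the wrong concentration lemma, and this is a genuine gap. \lemref{concentration:N_t} controls $\bN(\bt)$ uniformly over \emph{all} triads $\bt=(t_1,t_2,t_3)\in\cT_3$, and the price it pays (writing $\delta_1=t_2-t_1$, $\delta_2=t_3-t_2$) is of order $\sqrt{2\log\!\big(n(\delta_1+\delta_2)/(\delta_1\delta_2)\big)}$. Specialized to $\bt=(1,\tau,n+1)$ this becomes roughly $\sqrt{2\log\!\big(n/[(\tau-1)\wedge(n+1-\tau)]\big)}$, which near $\tau\approx\sqrt{n}$ is of order $\sqrt{\log n}$ --- far larger than $\sqrt{\pen_1(\tau)}\approx\sqrt{2\log\log n}$. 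That overcharge is unavoidable when $t_1$ and $t_3$ are free: the family indexed by $\cT_3$ is genuinely much richer, and no post-hoc regrouping by $m(\tau)$ recovers the double logarithm, because \lemref{concentration:N_t} has already paid the single-log price internally. The level-$\alpha$ claim needs the fact that $t_1=1$ and $t_3=n+1$ are pinned, so the relevant object is the martingale family $(Z_{1:\tau})_\tau$; that is exactly what the non-asymptotic law-of-iterated-logarithm Lemma~\ref{lem:log_itere1} (built on the Doob-type maximal inequality Lemma~\ref{lem:supsumGauss_log_iterated_0}) delivers. Applied with its internal tuning parameter set to $L-1$, it yields $|\bN(t_\tau)|\le L\sqrt{\pen_1(\tau)+6x+C_L^*}$ simultaneously over $\tau$ with probability at least $1-12e^{-x}$, and the level follows at $x=\log(12/\alpha)$. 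Your dyadic-band intuition with the $\sum_j j^{-L^2}$ series is the right mental picture for \emph{that} lemma, but you have attributed it to a result that cannot produce the double logarithm.

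Your Type II route is a legitimate alternative to the paper's. The paper applies a Young-type inequality directly to the square in $\varphi_1(\tau^*)=-(\bB(t_{\tau^*})+\bN(t_{\tau^*}))^2+L^2\pen_1(\tau^*)$ and then uses the elementary tail $\P\big(\bN^2(t_{\tau^*})\le 2x\big)\ge 1-2e^{-x}$ at $x=\log(2/\beta)$; you instead control the one-sided tail of $\bN(t_{\tau^*})$ at the square-root scale and push the Young calibration one step later. Both reduce to the fact that $\bN(t_{\tau^*})$ is $1$-sub-Gaussian, so either is acceptable --- but carry the Young step through explicitly and compare it to~\eqref{eq:condition_lower_one_change-point}: the exact coefficient on $\log(2/\beta)$ does not fall out for free and depends delicately on how the Young parameter is balanced against $L$.
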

Choosing $L$ close to one in Condition \eqref{eq:condition_lower_one_change-point} and letting $n$ go to infinity entail that a change-point can be detected with high probability if its energy $\bE_1(\btheta)$ is higher than $(1+o(1))\sqrt{2\log\log(n)}$.
This matches the impossibility result of Proposition~\ref{prp:lower_test_one_change-point} and means that $\mathscr{T}_\alpha$ is minimax optimal. In the minimax separation rate formalism, Proposition ~\ref{prp:lower_test_one_change-point} and Proposition \ref{prp:test_one_change-point} together imply that the energy minimax separation rate is equivalent to $\sqrt{2\log\log(n)}$. In the fact, the same rate  $\sqrt{2\log\log(n)}$ was already obtained by  \cite{enikeeva2018high,liu2019minimax}  using the max CUSUM statistic, which is equivalent to setting $\pen_1(\tau)$ to $0$ in our test. However, Proposition~\ref{prp:test_one_change-point} is much more optimistic than~\cite{enikeeva2018high,liu2019minimax}. Although we pay a $\sqrt{2\log\log(n)}$ price for $\tau^*= n^{\zeta}$ with $\zeta$ in $(0,1)$, which is unavoidable, the energy requirement~\eqref{eq:condition_lower_one_change-point} is  weaker at some other positions.  In particular, it is of constant order when either $\tau^*\asymp n$ or $\tau^*\asymp 1$. In other words, for most change-point positions, the requirement $\bE_1(\btheta)> \sqrt{2\log\log(n)}$ can be mitigated. This phenomenon is due to careful choice $\pen_1(\tau)$ of the CUSUM statistic in the test $\mathscr{T}_{\alpha}$. Regarding considerations on the LRT of $(H_0)$ versus $(H_{1,\tau,\delta})$ in Section~\ref{sec:lower:single:test}, the requirement that $\bE_1(\btheta)$ is large compared to $\sqrt{\log(1/\alpha)}$ (involved in $C_\alpha$) is also unavoidable.

\subsubsection{Localization}

When $\btheta$ is assumed to belong to $\Theta_1$ and the energy $\bE_1(\btheta)$ is large enough, we have proved above that the change-point $\tau^*$ can be reliably detected, so that one can aim at localizing  $\tau^*$. 
Consider the estimator $\widehat{\tau}$ minimizing among all $\tau$ in $\{2,\ldots, n\}$ the penalized least-squares criterion $\Cr_1(\bY, \tau)$ introduced in \eqref{eq:definition_criterion_penalized} with the multiscale penalty \eqref{eq:multiscale_penalty}.

\begin{prp}\label{prp:tau_hat} Fix the tuning parameter $L$ in $(1,2]$ and consider  the  minimizer $\widehat{\tau}$ of $\Cr_1(\bY, \tau)$. There exist positive constants $c$, $c_L$, and $c'_L$ such that the following holds for all $n\geqslant c'_L$. 
If
\beq\label{eq:condition_signal}
\bE^{2}_1(\btheta)>L^2\pen_1(\tau^*) + c_L\enspace,
\eeq
then, for any $x$ in $(0, (\bE^{2}_1(\btheta)- L^2\pen_1(\tau^*)-c_L)/c_L)$, with probability larger than $1-32e^{-x}$,
\beq\label{eq:error_bound_localization}
|\widehat{\tau}-\tau^*|\leq c\frac{1\vee x}{ \Delta^2} \quad \text{ and }\quad \frac{(\widehat{\tau}-1)(n+1-\tau^*)}{(\tau^*-1)(n+1-\widehat{\tau})}\in (1/2,2)\enspace.
\eeq

\end{prp}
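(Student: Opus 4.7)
The plan is to start from the defining inequality of $\widehat{\tau}$, namely $\Cr_1(\bY,\widehat{\tau})\leq\Cr_1(\bY,\tau^*)$. Writing $\bY=\btheta+\bepsilon$ and using $\bPi_{\tau^*}\btheta=\btheta$, this expands (since $\|\bY-\bPi_{\tau}\bY\|^2=\|\bY\|^2-\|\bPi_{\tau}\bY\|^2=\|\bY\|^2-\|\bPi_{0}\bY\|^2-\bC^2(\bY;(1,\tau,n+1))$) into the basic inequality
\[
f^2(\tau^*)-f^2(\widehat{\tau})\ \leq\ 2g(\widehat{\tau})f(\widehat{\tau})-2g(\tau^*)f(\tau^*)+g^2(\widehat{\tau})-g^2(\tau^*)+L\bigl[\pen_1(\tau^*)-\pen_1(\widehat{\tau})\bigr],
\]
where $f(\tau):=\bC(\btheta;(1,\tau,n+1))$ and $g(\tau):=\bC(\bepsilon;(1,\tau,n+1))$, noting $f^2(\tau^*)=\bE_1^2(\btheta)$.

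First I would compute the deterministic signal gap explicitly. A short computation gives, for $\tau\leq\tau^*$,
\[
f^2(\tau^*)-f^2(\tau)\ =\ \Delta^2\,\frac{(\tau^*-\tau)(n+1-\tau^*)}{n+1-\tau},
\]
and symmetrically $\Delta^2(\tau-\tau^*)(\tau^*-1)/(\tau-1)$ for $\tau\geq\tau^*$. Note that this quantity behaves like $\Delta^2|\widehat{\tau}-\tau^*|$ precisely when $\widehat{\tau}$ and $\tau^*$ have comparable ``endpoint distances'', which is exactly the regime captured by the ratio inclusion in~\eqref{eq:error_bound_localization}.

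Next, I would invoke the uniform concentration of CUSUM statistics (the lemma announced as Lemma~\ref{lem:concentration:N_t}) to work on an event $\mathcal{E}_x$ of probability at least $1-32e^{-x}$ on which, simultaneously,
\[
g^2(\tau)\ \leq\ L\,\pen_1(\tau)+c_L(1+x)\ \text{ for every }\tau\in\{2,\ldots,n\},\qquad |g(\tau^*)|\leq c_L\sqrt{1+x}.
\]
Combined with the basic inequality and the crude bound $2g(\widehat{\tau})f(\widehat{\tau})\leq \tfrac{1}{2}f^2(\widehat{\tau})+2g^2(\widehat{\tau})$, this yields a stage-one estimate of the form
\[
\tfrac{1}{2}\bigl(f^2(\tau^*)-f^2(\widehat{\tau})\bigr)\ \leq\ 3L\pen_1(\widehat{\tau})+L\pen_1(\tau^*)-L\pen_1(\widehat{\tau})+c_L(1+x)-\tfrac{1}{2}f^2(\tau^*)+\tfrac{1}{2}f^2(\tau^*).
\]
Under the signal condition~\eqref{eq:condition_signal}, this forces $f^2(\widehat{\tau})$ to be close to $f^2(\tau^*)=\bE_1^2(\btheta)$, which via the explicit form of the deterministic gap translates into the ratio inclusion $(\widehat{\tau}-1)(n+1-\tau^*)/[(\tau^*-1)(n+1-\widehat{\tau})]\in(1/2,2)$. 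This stage-one step is where the multiscale shape of $\pen_1$ is decisive: it must dominate the fluctuations of $g^2(\tau)$ at every scale of $\tau$ so that no spurious minimizer can appear far from $\tau^*$.

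The main obstacle is the stage-two refinement, which is what actually produces the $1/\Delta^2$ rate. Once the ratio inclusion holds, $\pen_1(\widehat{\tau})-\pen_1(\tau^*)$ is bounded by a constant $c_L$, and the deterministic gap satisfies $f^2(\tau^*)-f^2(\widehat{\tau})\gtrsim \Delta^2|\widehat{\tau}-\tau^*|$. The cross term $2g(\widehat{\tau})f(\widehat{\tau})-2g(\tau^*)f(\tau^*)$ now has to be rewritten as an \emph{increment} of the noise process around $\tau^*$, which on the ratio-inclusion event concentrates like a sub-Gaussian with variance proportional to $|\widehat{\tau}-\tau^*|\Delta^2$; a peeling or chaining argument across dyadic scales of $|\widehat{\tau}-\tau^*|$ then yields a uniform bound of the form $\bigl|g(\widehat{\tau})f(\widehat{\tau})-g(\tau^*)f(\tau^*)\bigr|\lesssim\Delta\sqrt{(1\vee x)|\widehat{\tau}-\tau^*|}$. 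Plugging this into the basic inequality and solving gives $\Delta^2|\widehat{\tau}-\tau^*|\lesssim 1\vee x$, which is the desired localization bound.
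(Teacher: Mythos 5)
Your basic inequality and the explicit deterministic gap $f^2(\tau^*)-f^2(\tau)=\Delta^2(\tau^*-\tau)(n+1-\tau^*)/(n+1-\tau)$ are both correct, and the overall two-stage architecture (first pin down $\widehat{\tau}$ to a regime where the ratio inclusion holds, then bootstrap to the $1/\Delta^2$ rate) is the right shape. But the stage-one step as written has a genuine gap. After applying $2g(\widehat{\tau})f(\widehat{\tau})\leq \tfrac12 f^2(\widehat{\tau})+2g^2(\widehat{\tau})$ and your claimed uniform bound $g^2(\tau)\leq L\pen_1(\tau)+c_L(1+x)$, the residual coming from the $\widehat{\tau}$-dependent terms is $2g^2(\widehat{\tau})-L\pen_1(\widehat{\tau})\geq L\pen_1(\widehat{\tau})+\text{const}$, not $O(1+x)$. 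Since $\pen_1(\widehat{\tau})$ can be as large as $2\log\log(en)$ while the signal condition~\eqref{eq:condition_signal} only requires $\bE_1^2\gtrsim L^2\pen_1(\tau^*)+c_L$ (and $\pen_1(\tau^*)$ can be of constant order for $\tau^*\asymp n$), your stage-one estimate does not force $f^2(\widehat{\tau})$ anywhere near $f^2(\tau^*)$, so the ratio inclusion does not follow. The obstruction is structural: for $L\in(1,2]$ the LIL-type uniform bound has leading constant $\geq1$ in front of $\pen_1$, so a Young inequality with coefficient $2$ on $g^2$ asks for $g^2\leq \tfrac{L}{2}\pen_1+\ldots$, which is unattainable. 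The paper avoids this by choosing the Young coefficient to be $1+\kappa_1=\sqrt L$ (and $\kappa_2=(1-L^{-1})/2$), together with a LIL bound calibrated at the level $\sqrt L\,\bN^2(t_\tau)\leq L\pen_1(\tau)+6Lx+C_L^*$, so that the $\pen_1(\widehat{\tau})$ terms cancel \emph{exactly} up to $O(1+x)$ residuals. This careful tuning exploiting the slack $L>1$ is the crux of Case~1 in the paper's proof and cannot be replaced by the crude split you use.

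Your stage-two is also only a sketch rather than an argument, and it diverges from the paper's route. The paper introduces the two-breakpoint projector $\bPi_{\tau,\tau^*}$ and writes $\Cr_1(\tau,\bY)-\Cr_1(\tau^*,\bY)=\|(\bPi_{\tau,\tau^*}-\bPi_{\tau})\bY\|^2-\|(\bPi_{\tau,\tau^*}-\bPi_{\tau^*})\bY\|^2+L(\pen_1(\tau)-\pen_1(\tau^*))$, which localizes the noise contribution to the partial sums $Z_{\tau:\tau^*}$, $Z_{\tau^*:n+1}$, $Z_{1:\tau^*}$, and then controls $Z_{\tau:\tau^*}$ uniformly in $\tau$ via the law-of-iterated-logarithm bound of Lemma~\ref{lem_lil_localiser}. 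Your ``increment of the noise process'' heuristic is pointing in the same direction, but the increment $g(\widehat{\tau})f(\widehat{\tau})-g(\tau^*)f(\tau^*)$ is not literally an increment of a single sub-Gaussian process at scale $\Delta^2|\widehat{\tau}-\tau^*|$ (it mixes a deterministic $f$ with a correlated noise field), and the bound you announce omits the unavoidable $\log\log(\Delta^2|\widehat{\tau}-\tau^*|)$ factor that the LIL produces. The paper shows that this $\log\log$ is harmless because for $\Delta^2|\widehat{\tau}-\tau^*|$ above a threshold it is dominated by a constant times $1\vee x$, but that reduction needs to be made explicit; as written your stage-two statement is an assertion, not a proof.
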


Proposition \ref{prp:tau_hat} is proved in Section~\ref{Sec:Proof:PropPenLS1Jump}.
Condition~\eqref{eq:condition_signal} requires that the squared energy $\bE^2_1(\btheta)$ is larger than $L^2 \pen_1(\tau^*)$. 
Choosing $L$ close to $1$, we have already seen that this condition is  sharp as it corresponds to the sufficient condition for the arguably simpler problem of detecting the existence of the change-point in Proposition~\ref{prp:lower_test_one_change-point}.
Thus, the above result entails than once a change-point is detectable, it can also be estimated by $\widehat{\tau}$ with a rate of order $\Delta^{-2}$, which is optimal as discussed in Section~\ref{sec:lower:loc:one}. In fact, the tail distribution of the error is optimal. Indeed, Proposition \ref{prp:tau_hat} implies that $|\widehat{\tau}-\tau^*|\lesssim \Delta^{-2}+x$ with probability higher than $1-e^{-c'x\Delta^{2}}$, which is the optimal dependency in $x$ for this probability according to Lemma~\ref{lem:LBCPE1}.
In the specific case where the height is high ($|\Delta|\gtrsim 1$), we use that both $\widehat{\tau}$ and $\tau^*$ are integers to conclude that $\widehat{\tau}=\tau^*$ with probability higher than $1- ce^{-c'\Delta^2}$, which is again optimal by Lemma~\ref{lem:LBCPE1}.

\subsection{Confidence intervals for $\tau^*$}

To conclude the present single change-point analysis, we build a confidence interval $I_{\widehat{\tau}}$ for $\tau^*$. 
Its construction can be heuristically decomposed in two steps. First, one tests the existence of a change-point. This step relies on a statistic that slightly differs from \eqref{eq:definition:test_global_1_change-point}. The second step then depends on the conclusion of the test. If no change-point is detected, then $\tau^*$ cannot be localized and we set $I_{\widehat{\tau}}:= \{2,\ldots, n\}$.  
If a change-point has been detected, we build a confidence interval $I_{\widehat{\tau}}$ whose width is of order $\Delta^{-2}$ according to the error bound of Proposition \ref{prp:tau_hat}. As the error bound depends on the unknown height $\Delta$, this step requires to estimate $\Delta$. 

\medskip

Fix $L$ and $\kappa$ in $(1,2)$, and let
\[T_{IC}(\bY)=  \inf_{\tau\in
\{2,\ldots,n\}}  \left
\{- \|(\bPi_{\tau}-\bPi_{0})\bY\|^2 + (1+\kappa)L^2\pen_1(\tau)\right\}\enspace,
\]
where $\pen_1$ is the multiscale penalty proposed in \eqref{eq:multiscale_penalty}. Let $\widehat{\tau}$ still denote a minimizer among all $\tau$ in $\{2,\ldots,n\}$ of the criterion given by \eqref{eq:definition_criterion_penalized}. This estimator $\widehat{\tau}$ is then plugged into the basic empirical means to estimate $\Delta$ by 
\beq \label{eq:definition_widehat_Delta}
\widehat{\Delta}=  \frac{ \sum_{i=\widehat{\tau}}^{n}Y_i}{n+1-\widehat{\tau}}- \frac{\sum_{i=1}^{\widehat{\tau}-1}Y_i}{\widehat{\tau}}\enspace .
\eeq
For some numerical constants $\underline{c}$, $\underline{c}_{L,\kappa}$ that are specified in the proof of Proposition~\ref{prp:IC_one_change-point} in Section~\ref{Sec:ProofICSingleJump}, let 
\beq\label{eq:definition_IC}
 I_{\widehat{\tau}}:= 
\begin{cases}
  \left[\widehat{\tau}-\frac{ \underline{c} \log(e/\alpha)}{|\widehat{\Delta}|^2}; \widehat{\tau}+ \frac{ \underline{c}\log({e}/{\alpha})}{|\widehat{\Delta}|^2}\right] &\quad \text{if}\quad T_{IC}(\bY)< -\underline{c}_{L,\kappa}\log(e/\alpha)\enspace,\\
  [2,n]&\quad \text{otherwise}\enspace.
\end{cases}
\eeq

\begin{prp}\label{prp:IC_one_change-point}
Let $I_{\widehat{\tau}}$ denote the confidence interval introduced in~\eqref{eq:definition_IC}. 
There exist constants $\underline{c}$, $\underline{c}_{L,\kappa}$, $c_{L,\kappa}$, $c'$, and $c'_{L}$ such that the following holds for all $n\geq c'_L$. 
For any $\btheta$ in $\Theta_1$ with change-point position $\tau^*$ and change-point height $\Delta$, $$\P_\btheta \left(\tau^*\in I_{\widehat{\tau}}\right)\geqslant 1-\alpha\enspace.$$ 
Besides, if 
\beq\label{eq:condition_large_energy_IC_2}
\bE^{2}_1(\btheta)\geqslant (1+\kappa)L^2 \pen_1(\tau^*)+ c_{L,\kappa}\log({e}/{\alpha})\enspace , 
\eeq
 then, with probability larger than $1-\alpha$, one has
$$|I_{\widehat{\tau}}|\leq c'\log({e}/{\alpha})\Delta^{-2}\enspace.$$
\end{prp}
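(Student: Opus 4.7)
The plan is to split the analysis on which branch of~\eqref{eq:definition_IC} defines $I_{\widehat\tau}$. On the event $\{T_{IC}(\bY) \geq -\underline{c}_{L,\kappa}\log(e/\alpha)\}$, the interval equals $\{2,\ldots,n\}$ and contains $\tau^*$ trivially, so coverage is automatic there. Only the rejection branch $\{T_{IC}(\bY) < -\underline{c}_{L,\kappa}\log(e/\alpha)\}$ requires work for coverage, and for the width I must additionally show that, under~\eqref{eq:condition_large_energy_IC_2}, this branch is reached with probability at least $1-\alpha$.

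First I would construct a good event $\Omega_\alpha$ of probability at least $1-\alpha$ (after union bounds and constant rescaling) on which three things hold simultaneously: (a) the simultaneous CUSUM concentration of Lemma~\ref{lem:concentration:N_t} bounds $\bigl|\|(\bPi_\tau-\bPi_0)\bY\|^2 - \|(\bPi_\tau-\bPi_0)\btheta\|^2\bigr|$ by a constant multiple of $\pen_1(\tau)+\log(e/\alpha)$ uniformly in $\tau\in\{2,\ldots,n\}$; (b) the sub-Gaussian noise entering the two empirical averages inside $\widehat\Delta$ is controlled at scale $\sqrt{\log(e/\alpha)}\bigl[1/(\widehat\tau-1)+1/(n+1-\widehat\tau)\bigr]^{1/2}$; and (c) the conclusion of Proposition~\ref{prp:tau_hat} holds with $x\asymp\log(e/\alpha)$ whenever its hypothesis does. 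On $\Omega_\alpha$ intersected with the rejection event, applying (a) at a minimizer $\tilde\tau$ of $T_{IC}$ and using $\|(\bPi_{\tilde\tau}-\bPi_0)\btheta\|^2\leq \bE_1^2(\btheta)$ forces $\bE_1^2(\btheta)$ into the range where Proposition~\ref{prp:tau_hat} applies; the delicate calibration here exploits both the penalty slack between $\Cr_1$ (scale $L\pen_1$) and $T_{IC}$ (scale $(1+\kappa)L^2\pen_1$) and a large enough $\underline{c}_{L,\kappa}$ to absorb any mismatch between $\pen_1(\tilde\tau)$ and $\pen_1(\tau^*)$. This yields $|\widehat\tau-\tau^*|\leq c\log(e/\alpha)/\Delta^2$ together with the scale matching $(\widehat\tau-1)(n+1-\tau^*)/[(\tau^*-1)(n+1-\widehat\tau)]\in(1/2,2)$.

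The step I expect to be the main obstacle is turning the scale matching into two-sided control $|\widehat\Delta|\asymp|\Delta|$. Decomposing $\widehat\Delta = \E[\widehat\Delta\mid\widehat\tau] + Z$ and assuming without loss $\widehat\tau\geq\tau^*$ (the other case is symmetric), an elementary computation gives $\E[\widehat\Delta\mid\widehat\tau] = \Delta(\tau^*-1)/(\widehat\tau-1)$, which the scale matching places in $[\Delta/2, \Delta]$ (in absolute value). By (b), the noise $|Z|$ is at most $c\sqrt{\log(e/\alpha)}\cdot\sqrt{n/[(\tau^*-1)(n+1-\tau^*)]} = c\sqrt{\log(e/\alpha)}\,|\Delta|/\bE_1(\btheta)$, which stays below $|\Delta|/4$ precisely because the preceding paragraph already gave $\bE_1^2(\btheta)\gtrsim\log(e/\alpha)$. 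Hence $|\Delta|/4 \leq |\widehat\Delta|\leq 2|\Delta|$, and combining with $|\widehat\tau-\tau^*| \leq c\log(e/\alpha)/\Delta^2 \leq 4c\log(e/\alpha)/|\widehat\Delta|^2$ yields $\tau^*\in I_{\widehat\tau}$ once $\underline c$ is picked $\geq 4c$.

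For the width under~\eqref{eq:condition_large_energy_IC_2}, applying (a) at $\tau=\tau^*$ gives $\|(\bPi_{\tau^*}-\bPi_0)\bY\|^2 \geq \bE_1^2(\btheta) - c(\pen_1(\tau^*)+\log(e/\alpha))$, whence $T_{IC}(\bY) \leq -\bE_1^2(\btheta) + (1+\kappa)L^2\pen_1(\tau^*) + c\log(e/\alpha) < -\underline{c}_{L,\kappa}\log(e/\alpha)$ provided $c_{L,\kappa}$ is chosen large enough. One is therefore in the rejection branch, and combining with $|\widehat\Delta|\geq|\Delta|/4$ from the previous paragraph gives $|I_{\widehat\tau}| = 2\underline c\log(e/\alpha)/|\widehat\Delta|^2 \leq 32\underline c\log(e/\alpha)/\Delta^2$, which is the claimed $c'\log(e/\alpha)\Delta^{-2}$ bound.
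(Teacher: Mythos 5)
Your overall architecture---trivial coverage on the non-rejection branch, then (i) rejection implies large energy, (ii) apply Proposition~\ref{prp:tau_hat} to get $|\widehat\tau-\tau^*|\lesssim\log(e/\alpha)\Delta^{-2}$ and scale matching, (iii) two-sided control $|\widehat\Delta|\asymp|\Delta|$, (iv) width under~\eqref{eq:condition_large_energy_IC_2}---is essentially the contrapositive repackaging of the paper's Claims~1--3, and steps (ii)--(iv) are carried out correctly (your computation $\E[\widehat\Delta\mid\widehat\tau]=\Delta(\tau^*-1)/(\widehat\tau-1)$ and the use of the scale-matching inequality from Proposition~\ref{prp:tau_hat} to force $|\widehat\Delta|\geq|\Delta|/4$ mirrors the paper's Claim~2 argument).

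The genuine gap is in step (i), and it is not a technicality that ``a large enough $\underline c_{L,\kappa}$'' can absorb. Your bullet (a) controls the CUSUM noise uniformly via a bound of the form $\bN^2(t_\tau)\lesssim L^2[\pen_1(\tau)+\log(e/\alpha)]$ (this is what Lemma~\ref{lem:log_itere1} gives; note that Lemma~\ref{lem:concentration:N_t}, which you cite, gives the coarser $\log(n/\ell)$-style bound for general triads, and also beware that $\|(\bPi_\tau-\bPi_0)\bY\|^2-\|(\bPi_\tau-\bPi_0)\btheta\|^2$ contains the cross term $2\bB\bN$ which your stated bound ignores). Running the contrapositive with this noise control, you expand $-(\bB+\bN)^2\geq-(1+\eta)\bE^2(t_\tau)-(1+\eta^{-1})\bN^2(t_\tau)$ and, on rejection at $\tilde\tau$ with $\gamma_{\tilde\tau}\geq1/2$ (so $\pen_1(\tilde\tau)\geq\pen_1(\tau^*)-C$), the best you can extract after optimizing $\eta$ is $\bE_1^2\gtrsim(\sqrt{\kappa+1}-1)^2L^2\pen_1(\tau^*)+\cdots$. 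The coefficient $(\sqrt{\kappa+1}-1)^2$ is strictly below $1$ for every $\kappa\in(1,2)$, so this \emph{does not} verify the hypothesis $\bE_1^2>L^2\pen_1(\tau^*)+c_L\log(e/\alpha)$ of Proposition~\ref{prp:tau_hat} when $\pen_1(\tau^*)$ is large (e.g.\ $\tau^*\asymp\sqrt n$, $\pen_1(\tau^*)\asymp2\log\log n\to\infty$). The slack between the test penalty $(1+\kappa)L^2\pen_1$ and the estimation threshold $L^2\pen_1$ is eaten entirely by the $\pen_1(\tau)$-dependent part of the noise bound.

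What the paper does differently, and what is missing in your plan, is the $\gamma_\tau$-dependent noise control of Lemma~\ref{lem:controle_uniforme_N_ratio}: $\bN^2(t_\tau)\leq16[\log\log(e\gamma_\tau^{-1})+\log(8/\alpha)+1]$, where $\gamma_\tau=\bE^2(t_\tau)/\bE_1^2$. For $\gamma_\tau\geq1/2$ this makes the noise contribution a pure $O(\log(e/\alpha))$ constant, \emph{independent of $\pen_1(\tau)$ and of $n$}, which is exactly what allows the $(1+\kappa)$ coefficients on $\bE_1^2$ and on $L^2\pen_1(\tau^*)$ to cancel. The complementary regime $\gamma_\tau<1/2$ then has to be split further at the threshold $\gamma_0=(\pen_1(\tau^*))^{-1}\wedge1$ (the paper's Cases~1 and~2b), where the smallness of $\bE^2(t_\tau)\leq\gamma_0\bE_1^2$, respectively the $\log\log(e\gamma_0^{-1})$ penalty-difference bound of~\eqref{eq:UBDeltaPen}, does the work. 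You would need to import both ingredients---the $\gamma_\tau$-adapted concentration and the $\gamma_0$ split---for your step (i) to close; as written, it does not.
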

If one takes $\kappa$ and $L$ close to one in Condition \eqref{eq:condition_large_energy_IC_2},  then Proposition~\ref{prp:IC_one_change-point} shows that, if the squared energy of the change-point is larger than $\pen_1(\tau^*)$, the width of the  confidence interval is at most proportional to $\log(e/\alpha) \Delta^{-2}$ with probability larger than $(1-\alpha)$. Conversely, Lemma~\ref{lem:LBCPE1} implies that no estimator is able to localize $\tau^*$ at the precision  $c'' \log(e/\alpha) \Delta^{-2}$ with probability higher than $1-2\alpha$. As a consequence, no $(1-\alpha)$-level confidence interval $\tilde{I}$ of $\tau^*$ has a width smaller than $c'' \log(e/\alpha) \Delta^{-2}$ with probability higher than $1-\alpha$. This entails that the width of the confidence interval $I_{\widehat{\tau}}$ is  optimal both with respect to $\Delta$ and $\alpha$. 

Finally, since $\tau^*$ is an integer, note that $I_{\widehat{\tau}}$ can be reduced to the set of integers inside 
$I_{\widehat{\tau}}$. Thus, for a high-energy change-point and $|\Delta|\gtrsim 1$ we have $I_{\widehat{\tau}}=\{\tau^*\}$ with probability higher than $(1-2\alpha)$ as long as $\alpha \geq ce^{-c'\Delta^2}$.

\section{Multiple change-points ($K>1$): preliminaries and impossibility results}\label{sec:lower_multiple}

\subsection{Notation and preliminaries}

It has been underlined in the above section that the energy $\bE_1(\btheta)$ of a single change-point $\tau_1^*$ with height $\Delta_1=\mu_2-\mu_1$ in $\btheta$ defined by $\bE_1(\btheta)= |\Delta_1|\sqrt{(\tau_1^*-1)(n+1-\tau_1^*)/n}$ (see \eqref{def:energy1}) plays a crucial role in the analysis. 
Extending the single change-point analysis to the multiple change-points case necessarily poses the question of identifying the quantity which will allow to measure the difficulty for multiple change-points to be detected or localized. Most recent papers in the literature (see \cite{frick2014multiscale,Wang2018,baranowski2019narrowest,wang2020}) extend the notion of energy using $\bE_{\min}(\btheta)$ (as in \eqref{eq:definition_energie_min}). 
As our main purpose here is to give tighter guarantees of detection and localization, we use a more local notion of energy. Recall that $\cT_3$ refers to the collection of  triplets of integers $\bt=(t_1,t_2,t_3)$ such that $1\leq t_1< t_2< t_3\leq  n+1$. For $\btheta$ in $\bbR^n$ and $\bt=(t_1,t_2,t_3)$ in $\cT_3$, define the energy
\beq\label{eq:definition_energie}
\bE(\btheta,\bt)= \left| \frac{\sum_{i=t_2}^{t_3-1}\theta_{i}}{t_3-t_2} - \frac{\sum_{i=t_1}^{t_2-1}\theta_{i}}{t_2-t_1}\right|\sqrt{ \frac{(t_2-t_1)(t_3-t_2)}{t_3-t_1}}\enspace, 
\eeq
which is a weighted difference of means on $[t_1,t_2)$ and on $[t_2,t_3)$.
Given $\btheta$ in $\Theta_K$ with change-points vector  $\btau^*$ and an integer $1\leq k\leq K$, define the energy of $\btheta$ at the $k$-th change-point $\bE_{k}(\btheta)$ by 
$\bE_k(\btheta)=\bE[\btheta,(\tau^*_{k-1},\tau^*_k,\tau^*_{k+1})]$.  More simply, we refer to this quantity as the \emph{energy of the $k$-th change-point}. This extends the definition of $\bE_1(\btheta)$  when $K=1$. Note that 
$$\bE_k(\btheta)=|\Delta_k|\sqrt{ \frac{(\tau^*_{k}-\tau^*_{k-1})(\tau^*_{k+1}-\tau^*_k)}{\tau^*_{k+1}-\tau^*_{k-1}}}\enspace,$$ which is of order $\Delta_k\sqrt{\ell_k}$ (in fact $|\Delta_k|\sqrt{\ell_k/2} \leqslant \bE_k(\btheta)\leqslant |\Delta_k|\sqrt{\ell_k}$), where $\Delta_k$ denotes the height of the change-point and $\ell_k:=(\tau^*_{k}- \tau^*_{k-1})\wedge (\tau^*_{k+1}- \tau^*_{k})$ is the smallest length of the segments adjacent to the change-point, more simply named the \emph{$k$-th change-point length}.

\medskip 

Intuitively, one cannot expect to properly localize  a change-point if it cannot at least be reliably detected. 
From the previous section (see in particular Propositions \ref{prp:lower_test_one_change-point} and \ref{prp:test_one_change-point}), one knows that only  change-points whose energy is high enough are to be detected. This is why we introduce the following definition.
Given $\kappa>1$ and $q>0$, we say that $\tau^*_k$ is a \emph{$(\kappa,q)$-high-energy change-point} if 
\beq\label{eq:definition_high_energy}
\bE_k(\btheta) > \kappa \sqrt{2\log\left(\frac{n(\tau^*_{k+1}-\tau^*_{k-1})}{(\tau^*_{k+1}-\tau^*_{k})(\tau^*_{k}-\tau^*_{k-1})}\right)+ q}\enspace . 
\eeq
In other words, the energy $\bE_k(\btheta)$ is said to be high if  $\Delta^{2}_k\ell_k$ is large compared to $\log (n/\ell_k) +q$. 
Note that if the change-point length $\ell_k$ is smaller than $n^{\zeta}$ with $\zeta$ in $ [0,1)$, the logarithmic term is of the order of $\log (n)$, whereas if this length is proportional to $n$, the logarithmic term is of the order of a constant.

\subsection{Impossibility results}\label{sec:multiple:lower}

As in Section~\ref{sec:single:lower}, we assume throughout this section that the noise in the model \eqref{eq:univariate_model} is Gaussian, that is assumption ($\cA_G$) is fulfilled.  Besides, we sometimes use the $\btheta(\btau^*,\bmu)$ instead of $\btheta$ to stress the dependency of $\btheta$ on the parameters $\btau^*$ and $\bmu$.

\subsubsection{Low-energy change-points cannot be detected}\label{sec:multiple:lower:detection}

The targeted property for detecting a specific change-point among multiple change-points is formalized in Section~\ref{sec:specifications} below. In this subsection, we explain why this strong detection property is not achievable when the change-point has not a high-energy. More specifically, we consider the weaked possible notion of change-point detection, that is we test whether $(H_0)\ \{\btheta\in \Theta_0\}$ versus $(H_1)\ \{\btheta\notin \Theta_0\}$ and we establish that, for some alternatives, only vectors $\btheta$ with at least a high-energy change-point can be reliably detected.  In particular, we justify the two different quantities $q$ and $\log\left(n(\tau^*_{k+1}-\tau^*_{k-1})/((\tau^*_{k+1}-\tau^*_{k})(\tau^*_{k}-\tau^*_{k-1}))\right)$ in the definition~\eqref{eq:definition_high_energy} of high-energy change-points. Part of the results belong to the statistical folklore and asymptotic versions have already appeared for instance in~\cite{Chan2013}. Still, we state and prove here non-asymptotic counterparts to unify our presentation and for the sake of completeness.

\medskip 

We showed in Section~\ref{sec:lower:single:test} that, when testing $\{\btheta\in \Theta_0\}$ versus $\{\btheta\in \Theta_1\}$, even in the most simple situation where  the position of the tentative change-point is known, it  can only be reliably detected if its energy is large enough. More precisely, we established that any level-$\alpha$ test is only able to reject the null hypothesis with probability higher than $1-\beta$, when the energy of the change-point is at least of the order $\sqrt{\log(1/\min(\alpha,\beta))}$.  
Coming back to the definition~\eqref{eq:definition_high_energy} of high-energy change-points, we deduce that a single change-point $\tau^*_{1}$ whose length $\ell_1$ is proportional to $n$, can only be detected with probability higher than $1-ce^{-c'q}$ (for some $q>0$) if it has $(1,q)$-high energy.

\medskip 

Now, we turn to the logarithmic term in the definition~\eqref{eq:definition_high_energy} of high-energy change-points. This quantity 
only arises when there are at least two change-points. Following a long line of literature~\cite{MR2797847,arias2005near,Chan2013}, we consider the problem of segment detection. Given a positive integer $r$ (less than $\lfloor n/4\rfloor $) and $\delta>0$, we consider the collection $\Theta[r,\delta]\subset \Theta_2$ defined by 
\beq\label{eq:definition_r_delta}
\Theta[r,\delta]=\Big\{\btheta\in\bbR^n: \exists \tau\in \{\lfloor{n/4}\rfloor+1 ,\ldots,\lfloor{3n/4}\rfloor-r \}, \text{ such that }
\theta_i=\delta\1_{i\in \{\tau,\ldots,\tau+r-1\}}\Big\}\enspace. 
\eeq
A vector $\btheta$ in $\Theta[r,\delta]$ equals $\delta$ on  the segment $[\tau, \tau+r-1]$ and is null outside this segment. Note that this segment belongs to $[n/4; 3n/4]$ and  is therefore away from the endpoints. The problem of (single) segment detection is that of testing $\{\btheta=0\}$ versus $\{\btheta\in \Theta[r,\delta]\}$. Typical asymptotic results\footnote{In the literature, the segments are usually not restricted to be away from the endpoints, but one can readily adapt the minimax lower bounds to our setting.  } (see e.g. Proposition~1 in~\cite{MR2797847}) consider sequences $r_n$ and $\delta_n$ and state that, for $r_n/n\rightarrow 0$, then the sum of the type I and type II error probabilities of any test converges to one when $\delta_n\sqrt{r_n}
- \sqrt{2\log(n/r_n)}$ goes to $-\infty$. This entails that no test performs asymptotically better than pure random guess. 
For the sake of completeness, we provide a non-asymptotic counterpart.

\begin{prp}\label{prp:lower_detection_multiple}
Assume that $\bepsilon$ in \eqref{eq:univariate_model} satisfies ($\mathcal{A}_G$).
There exist positive numerical constants $c$, $c'$ and $n_0$ such that for all $n\geqslant n_0$, any $1\leq r\leq \lfloor n/4 \rfloor $, and any $\xi$ in $(0,1)$, the following holds. If $\delta\sqrt{r} \leq \sqrt{2(1-\xi)\log(n/(2r))}$, then any test $\mathscr{T}$ of $\{\btheta=0\}$ versus $\{\btheta\in \Theta[\delta,r]\}$ satisfies
\beq\label{eq:lower_multiple_change}
\P_{0}[\mathscr{T}=1] + \sup_{\btheta\in \Theta[\delta,r]}\P_{\btheta}[\mathscr{T}=0]\geqslant 1- c\left(\frac{r}{n}\right)^{c'\xi^2} \enspace . 
\eeq
\end{prp}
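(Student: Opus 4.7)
The plan is to apply the standard Bayesian reduction for minimax testing lower bounds, then upper-bound the resulting total-variation distance via a truncated second moment argument that preserves the sharp factor $\sqrt{2}$ in the threshold.

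First, I would restrict attention to a finite subfamily of $\Theta[r,\delta]$: within the index range $\{\lfloor n/4\rfloor +1,\ldots,\lfloor 3n/4\rfloor\}$, pick $N = \lfloor (n/2-r)/r\rfloor \gtrsim n/r$ pairwise disjoint segments $I_1, \ldots, I_N$ of length $r$, and let $\pi$ be the uniform prior on the corresponding mean vectors $\btheta^{(1)}, \ldots, \btheta^{(N)} \in \Theta[r,\delta]$. Setting $\bar \P_\pi = N^{-1}\sum_j \P_{\btheta^{(j)}}$, Le Cam's two-point lemma gives
\[
\P_0[\mathscr{T}=1] + \sup_{\btheta \in \Theta[r,\delta]} \P_\btheta[\mathscr{T}=0] \geq \P_0[\mathscr{T}=1] + \bar \P_\pi[\mathscr{T}=0] \geq 1 - d_{TV}(\P_0, \bar \P_\pi),
\]
and it remains to prove $d_{TV}(\P_0, \bar\P_\pi) \leq c(r/n)^{c'\xi^2}$. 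Since the segments are disjoint, under $\P_0$ the scores $G_j = r^{-1/2}\sum_{i \in I_j} Y_i$ are i.i.d.\ standard normal, and the likelihood ratio is $L(\bY) = N^{-1}\sum_j e^{MG_j - M^2/2}$ with $M = \delta\sqrt{r}$.

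A direct $\chi^2$ computation would yield $\chi^2(\bar\P_\pi,\P_0) = (e^{M^2}-1)/N$, which only controls $d_{TV}$ under the suboptimal condition $M^2 < \log N$, missing the factor $2$ appearing in the statement. I would instead truncate at the Gaussian-maximum scale $T = \sqrt{2 \log N}$, setting $\tilde L = N^{-1}\sum_j e^{MG_j - M^2/2} \1_{G_j \leq T}$ and using the split
\[
\mathbb{E}_0[|L-1|] \leq \mathbb{E}_0[L - \tilde L] + |\mathbb{E}_0[\tilde L] - 1| + \sqrt{\Var_0(\tilde L)}.
\]
Under the hypothesis $\delta\sqrt r \leq \sqrt{2(1-\xi)\log(n/(2r))}$, together with the elementary inequality $1-\sqrt{1-\xi} \geq \xi/2$, one gets $T - M \geq (\xi/2)\sqrt{2\log N}$, so the first two terms both reduce to the Gaussian tail $\P[Z > T - M] \leq N^{-\xi^2/4}$. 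By independence of the $G_j$, $\Var_0(\tilde L) \leq N^{-1}\mathbb{E}_0[\tilde L_1^2]$ with $\mathbb{E}_0[\tilde L_1^2] = e^{M^2}\Phi(T-2M)$; using $\Phi(T-2M) \leq \min(1, e^{-(2M-T)^2/2})$ one checks in either case that $\mathbb{E}_0[\tilde L_1^2] \leq N^{1-\xi^2/2}$, giving $\Var_0(\tilde L) \leq N^{-\xi^2/2}$. Assembling the pieces yields $d_{TV}(\P_0, \bar\P_\pi) \lesssim N^{-\xi^2/4} \lesssim (r/n)^{\xi^2/4}$.

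The main obstacle is recovering the sharp constant $\sqrt{2}$ in the threshold: the naive second moment falls short by a factor of $2$, and it is the truncation at the critical scale $T = \sqrt{2\log N}$ that captures the higher-criticism-type behavior of the Bayes mixture and produces the correct polynomial error exponent in $\xi^2$. The two-regime analysis of $\mathbb{E}_0[\tilde L_1^2]$ must be handled uniformly in $\xi \in (0,1)$, reflecting the transition between a small-signal regime (where truncation is essential to avoid a constant-factor loss) and a large-signal regime (where the naive bound already suffices), but in both regimes the same bound $N^{1-\xi^2/2}$ prevails, yielding the announced $(r/n)^{c'\xi^2}$ rate.
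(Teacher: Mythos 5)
Your proof is correct and takes essentially the same approach as the paper: a Le~Cam mixture over disjoint segments combined with a truncated likelihood ratio to sharpen the standard $\chi^2$ bound by the missing factor of $2$. The only cosmetic difference is that you truncate directly at the extreme-value scale $T = \sqrt{2\log N}$, whereas the paper truncates at $M + \sqrt{2a}$ with a free parameter $a \propto \log(n/r)$ optimized at the end; both lead to the same $(r/n)^{c'\xi^2}$ rate.
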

We recover that $\delta\sqrt{r}$ needs to be at least of the order of $\sqrt{2\log(n/r)}$ so that the segment is reliably detected. Let us now interpret this impossibility result in terms of energy and high-energy change-points. Any $\btheta$ in $\Theta[r,\delta]$ contains two change-points 
such that, for $k=1,2$, $(\tau^*_{k+1}-\tau_k^*)(\tau^*_{k}-\tau_{k-1}^*)/(\tau^*_{k+1}-\tau_{k-1}^*)$ belongs to $[r(1-r/(\lfloor n/4\rfloor +r)),r]$. As a consequence, their respective energies $\bE_1(\btheta)$ and $\bE_2(\theta)$ belong to $[\delta\sqrt{r(1-r/(\lfloor n/4\rfloor +r)} , \delta \sqrt{r}]$. Rephrasing Proposition~\eqref{eq:lower_multiple_change} we deduce that when, for some $\xi'$ in $(0,1)$ and some $r < c_{\xi} n$, the energy of both change-points is not $(\xi',1)$-high then it is impossible to reliably test whether $\{\btheta\in \Theta_0\}$ versus $\{\btheta\in \Theta_2\}$.

\medskip

Consider some $q>1$. In summary, even in very simple settings such as $\btheta$ in $\Theta_1$ or $\btheta$ in $\Theta_2$, the energy  a change-point has to be at least $(1,q)$-high, so that a test of $\{\btheta\in \Theta_0\}$ versus $\{\btheta\in \Theta_1\cup\Theta_2\}$ of level $ce^{-c'q}$ detects a change-point with probability at least $1-ce^{-c'q}$.

\subsubsection{Localization error}

We now provide lower bounds for the problem of localizing a high-energy change-point $\tau^*_k$.   
To derive the optimal rate, we consider, as in Section \ref{sec:lower:loc:one},  a favorable situation where almost all parameters are known. 
Fix $\bmu=(\mu_1,\ldots, \mu_{K+1})$ with $\Delta_k=\mu_{k+1}-\mu_k\neq 0$ for all $k$ in $\{1,\ldots,K\}$.  
Given  a vector $\bt=(t_1,\ldots,t_{K+1})$ of integers such that $1=t_1< t_2 < \ldots< t_{k+1}=n$, consider the partition $\cI_\bt=(I_k)_{k\in\{1,\ldots,K\}}$ of $\{2,\ldots, n\}$, where, for all $k$, $I_k=\{t_k+1,\ldots, t_{k+1}\}$. 
Denote by 
\[
\Theta_K[\cI_\bt,\bmu]=\{\btheta\in \Theta_K,\  \btheta_i=\mu_k\ \forall i\in\{\tau^*_{k-1},\ldots,\tau^*_k-1\},\text{ with } \tau^*_k\in I_k\ \forall k\in \{1,\ldots,K\}\}\enspace . 
\]
Suppose that the statistician knows that $\btheta$ is in $\Theta_K[\cI_\bt,\mu]$, that is, she  knows the vector $\bmu$ and she knows moreover that, for each $k$ in $\{1,\ldots, K\}$, $\tau^*_k$ belongs to $I_k$. 
Then, $\bY_k= (Y_i)_{i\in I_k}$ is a sufficient statistic for estimating $\tau^*_k$. 
Since the $\bY_k$'s are independent, all $K$ estimation problems of $\tau^*_k$ can be considered independently. 
Arguing as in Section \ref{sec:lower:loc:one}, we derive  the following result.

\begin{prp}\label{prp:lower_loc_multiple}
Assume that $n\geq 3$ and $\bepsilon$ in \eqref{eq:univariate_model} satisfies ($\mathcal{A}_G$).
There exist universal constants $c_1$-- $c_5$ such that the following holds. 
Fix $K$ in $\{2,\ldots, n-1\}$, a vector $\bmu=(\mu_1,\ldots, \mu_{K+1})$ in $\mathbb{R}^{K+1}$ with $\Delta_k=\mu_{k+1}-\mu_k\neq 0$ for all $k$ in $\{1,\ldots,K\}$, and a partition $\cI_\bt=(I_k)_{k\in\{1,\ldots,K\}}$ of $\{2,\ldots, n\}$ as described above and such that $|I_k|> 4\Delta^{-2}_k+2$ for any $k=1,\ldots, K$.\\
For any $k$ in $\{1,\ldots,K\}$, for any estimator $\widehat{\tau}_k$ of $\tau^*_k$ and any $x$ in $[1/2,|I_k|/2-1-2\Delta_k^{-2})$, one has 
\beq\label{eq:lower_risk_ke}
\sup_{\btheta\in \Theta_K[\cI,\mu]}\bbP_{\btheta}\left(|\widehat{\tau}_k-\tau^*_k|>2\Delta_k^{-2}+x\right)\geqslant c_1e^{-c_2x\Delta_k^2}\enspace.
\eeq
The $\bbL_1$-Wasserstein minimax risk over $ \Theta_K[\cI,\mu]$ is bounded from below as follows 
\beq\label{eq:lower_risk_l2}
\inf_{\widehat{\btau}\in \mathbb{N}^K}\sup_{\btheta\in \Theta_K[\cI,\mu]}\mathbb{E}_{\btheta}\left[d_W(\widehat{\btau},\btau^*)\right]\geq c_3 \sum_{k=1}^K \left[\frac{e^{-\Delta^{2}_k/8}}{|\Delta_k|}\1_{|\Delta_k|\geq 2}+  \Delta_k^{-2}\1_{|\Delta_k|\leq  2}\right]\enspace . 
\eeq
Assume that all $\Delta_k$ are equal to some common value $\Delta>0$. If the partition $\cI_\bt$ further satisfies $\min_{k\in\{1,\ldots,K\}}|I_k|\geq c_4 \Delta^{-2} \log K $, then the Hausdorff minimax risk over $ \Theta_K[\cI,\mu]$ satisfies 
\beq\label{eq:lower_risk_haussdorf}
\inf_{\widehat{\btau}\in \mathbb{N}^K}\sup_{\btheta\in \Theta_K[\cI,\mu]}\mathbb{E}_{\btheta}\left[d_H(\widehat{\btau},\btau^*)\right]\geq c_5 \left[\frac{Ke^{-\Delta^{2}/8}}{|\Delta|}\1_{|\Delta|\geq 2\sqrt{2\log(K)}}  + \frac{\log(K)}{\Delta^{2}}\1_{|\Delta|<  2\sqrt{2\log(K)}}\right]\enspace . 
\eeq
\end{prp}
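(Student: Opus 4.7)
The proof hinges on the observation that, under the parameterization $\btheta\in\Theta_K[\cI_\bt,\bmu]$ with $\bmu$ known and each $\tau_k^*$ confined to $I_k$, the mean $\theta_i$ for $i\notin I_k$ is determined by $\bmu$ alone. Consequently $\bY_k:=(Y_i)_{i\in I_k}$ is a sufficient statistic for $\tau_k^*$, and the $\bY_k$'s are mutually independent: the $K$ localization problems decouple into $K$ independent single change-point problems of length $|I_k|$ and jump $\Delta_k$. Lemma~\ref{lem:LBCPE1} then applies verbatim to the local problem on $I_k$ (the hypothesis $|I_k|>4\Delta_k^{-2}+2$ matches its condition), directly yielding the pointwise bound~\eqref{eq:lower_risk_ke}.

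For~\eqref{eq:lower_risk_l2}, note that $d_W(\widehat{\btau},\btau^*)=\sum_k|\widehat\tau_k-\tau_k^*|$, and by the sufficiency/independence decoupling the supremum over $\btheta$ can be handled coordinatewise. When $|\Delta_k|\leq 2$, integrating the tail~\eqref{eq:lower_risk_ke} over $x\in[1/2,|I_k|/4)$ gives $\E[|\widehat\tau_k-\tau_k^*|]\gtrsim\Delta_k^{-2}$ (the range is non-empty because $|I_k|>4\Delta_k^{-2}$, and $\Delta_k^2\leq 4$ keeps the exponent bounded). When $|\Delta_k|>2$, I instead run a two-point Le Cam argument between $\tau_k^*=a_k$ and $\tau_k^*=a_k+1$ for an integer $a_k$ deep in $I_k$: the two induced distributions of $\bY_k$ differ only at coordinate $a_k+1$ by a mean shift $\Delta_k$, so $\mathrm{TV}=1-2\overline{\Phi}(|\Delta_k|/2)$ and $\max_{\tau_k^*\in\{a_k,a_k+1\}}\P(\widehat\tau_k\neq\tau_k^*)\geq\overline{\Phi}(|\Delta_k|/2)$; the integer-valuedness of $\widehat\tau_k$ together with a standard Mills-ratio bound yields $\E[|\widehat\tau_k-\tau_k^*|]\gtrsim e^{-\Delta_k^2/8}/|\Delta_k|$. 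Summing over $k$ delivers~\eqref{eq:lower_risk_l2}.

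The Hausdorff bound~\eqref{eq:lower_risk_haussdorf} with $\Delta_k\equiv\Delta$ splits into two regimes, both handled via a product prior on $\btau^*$. In the large-jump regime $|\Delta|\geq 2\sqrt{2\log K}$, take $\tau_k^*\sim\Unif\{a_k,a_k+1\}$ independently with $a_k$'s placed deep in $I_k$ and pairwise separated by much more than $1$. By sufficiency, the Bayes-optimal estimator is coordinatewise and equal to the MLE on $\bY_k$, with individual error $\overline{\Phi}(|\Delta|/2)$ independent across $k$; hence $\P_\pi(\exists k:\tau_k^*\notin\widehat{\btau})\geq 1-(1-\overline{\Phi}(|\Delta|/2))^K\gtrsim K\overline{\Phi}(|\Delta|/2)\asymp Ke^{-\Delta^2/8}/|\Delta|$ in the regime $K\overline{\Phi}\leq 1$, and since $\btau^*$ lives on a well-separated integer grid, $d_H\geq 1$ on this event. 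In the small-jump regime $|\Delta|<2\sqrt{2\log K}$, set $R$ of order $\log(K)/\Delta^2$ so that Lemma~\ref{lem:LBCPE1} gives a tail probability of order $1/K$ for each local problem. Equip each $\tau_k^*\in I_k$ with the least-favorable prior granted by the minimax--Bayes identity for Lemma~\ref{lem:LBCPE1}, and define the local representative $\widetilde\tau_k(\bY):=\pi_{I_k}(\widehat{\btau})$ (an arbitrary element of $\widehat{\btau}\cap I_k$, with a default in $I_k$ if none). By sufficiency, the Bayes-optimal coordinatewise estimator of $\tau_k^*$ uses only $\bY_k$, so under the product prior the errors $\{|\widetilde\tau_k-\tau_k^*|\geq R\}$ are independent with individual probability $\gtrsim 1/K$; hence $\P(\max_k|\widetilde\tau_k-\tau_k^*|\geq R)\geq 1-(1-c/K)^K\geq 1-e^{-c}$ for some constant $c>0$. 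The assumption $|I_k|\gg R$ (ensured by taking $c_4$ large in the hypothesis) forces any $\widehat\tau_j$ within distance $R$ of $\tau_k^*$ to lie in $I_k$; a short pigeonhole (one $I_k$ receiving two elements forces another to receive none, in which case $\min_j|\widehat\tau_j-\tau_{k'}^*|\geq R$ as well) then yields $d_H\gtrsim R$ on this event.

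The delicate point is this last step, which combines Lemma~\ref{lem:LBCPE1}, a bound for a \emph{single} estimator, with a \emph{maximum} over $K$ coordinates of the non-separable Hausdorff loss. Sufficiency plus the minimax--Bayes identity decouple the problem into $K$ independent sub-problems, and the well-separation $|I_k|\gtrsim\log(K)/\Delta^2$ is what prevents a far-away $\widehat\tau_{j'}$ from accidentally covering $\tau_k^*$ and collapsing the nearest-neighbor distance below $R$.
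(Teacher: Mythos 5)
Your overall approach coincides with the paper's: reduce to $K$ independent single change-point problems via sufficiency of $\bY_k=(Y_i)_{i\in I_k}$, use a product prior to decouple the loss, and then compute Gaussian two-point tails with separation tuned to $\Delta_k$. The pointwise bound~\eqref{eq:lower_risk_ke} is handled exactly as in the paper. However, your treatment of~\eqref{eq:lower_risk_l2} for $|\Delta_k|\leq 2$ contains a genuine gap: you propose ``integrating the tail~\eqref{eq:lower_risk_ke} over $x$,'' but in~\eqref{eq:lower_risk_ke} the supremum over $\btheta$ is \emph{inside}, and the worst-case $\btheta$ achieving that tail depends on the level $x$ (in the proof of Lemma~\ref{lem:LBCPE1} the two-point adversary sits at separation $r\asymp \Delta_k^{-2}+x$). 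Since $\sup_{\btheta}\int \P_\btheta\,dx \le \int \sup_\btheta \P_\btheta\,dx$ and not the reverse, integrating the minimax tail does not lower-bound the minimax expectation. The paper avoids this by fixing a single product two-point prior with per-coordinate separation $r_k=\lfloor 4\Delta_k^{-2}\rfloor$, so that $\sup_{\btheta}\E_\btheta[d_W]\ge\int\E_\btheta[d_W]\,d\pi(\btheta)\geq \sum_k r_k\,\overline\Phi\bigl(r_k^{1/2}|\Delta_k|/2\bigr)\gtrsim\sum_k\Delta_k^{-2}$ at once; this also supplies, in one stroke, the rigorous justification for your phrase ``the supremum can be handled coordinatewise'' (the posterior factors, so the Bayes-optimal rule is a coordinatewise LRT).

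A second, smaller point: for the Hausdorff bound you appeal to ``the least-favorable prior granted by the minimax--Bayes identity for Lemma~\ref{lem:LBCPE1},'' but Lemma~\ref{lem:LBCPE1} is proved by an explicit two-point Le Cam argument and does not package a minimax theorem. The paper simply reuses the same product two-point prior with common separation $r$ and reads off $\inf\sup\E[d_H]\gtrsim r\,\bigl(1-(1-\overline\Phi(r^{1/2}|\Delta|/2))^K\bigr)$, choosing $r=1$ or $r\asymp\log K/\Delta^2$ in the two regimes. On the other hand, your pigeonhole argument showing that one miscovered $\tau_k^*$ forces $d_H\gtrsim R$ when the $I_k$'s are $\gg R$-separated is a worthwhile clarification that the paper leaves implicit; it is exactly the role played by the hypothesis $\min_k|I_k|\geq c_4\Delta^{-2}\log K$. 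In short: right framework, essentially the paper's route, but replace the tail-integration step by a fixed two-point product prior (or equivalently, prove a Bayes tail bound under a prior that does not move with $x$) to make the $|\Delta_k|\leq 2$ case correct.
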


The bottom line of this proposition is that at best, for any estimator $\widehat{\btau}\in \mathbb{N}^K$, the errors $|\widehat{\tau}_k-\tau^*_k|$ behave independently and have an exponential tail of the form~\eqref{eq:lower_risk_ke}. The first lower bound \eqref{eq:lower_risk_ke} for estimating a single change-point $\tau^*_k$ is a straightforward consequence of Lemma~\ref{lem:LBCPE1} and analogous results may be found e.g. in~\cite{Wang2018}. However, the lower bounds~\eqref{eq:lower_risk_l2} and~\eqref{eq:lower_risk_haussdorf} for the Wasserstein and Hausdorff risk are, up to our knowledge, new.

\subsection{Which requirements for a good change-point detection and localization procedure?}\label{sec:specifications}

The obtained lower bounds lead us to reflect upon the properties a good change-point detection and localization  estimator $\widetilde{\btau}$ should satisfy, and to specify them.
First, it should not overestimate the number of true change-points. Besides, one cannot hope that $\widetilde{\btau}$ detects all true change-points $\tau^*_k$. In view of Proposition \ref{prp:lower_detection_multiple}, it is more reasonable to require that $\widetilde{\btau}$ detects all high-energy change-points. Finally, those high-energy change-points should be localized at the parametric rate $\Delta_k$ as explained above.
More precisely, our target estimator $\widetilde{\btau}$ will satisfy, over a large probability event $\cA$ the three following properties.
In the sequel, the notation $\{\tilde{\btau}\}$ is short for the set $\{\tilde{\tau}_l,\ l\in\{1,\ldots, |\tilde{\btau}|\}\}$. 

\medskip

\noindent 
{\bf (NoSp). No  spurious change-point is detected:}

\begin{equation}\label{eq:def:nosp}
\left\{\begin{array}{c}
\Big|\big\{\widetilde{\btau}\big\} \cap \left(\frac{\tau^*_{k-1}+\tau^*_k}{2},\frac{\tau^*_{k}+\tau^*_{k+1}}{2}\right]  \Big|\leq 1\enspace,\ \text{for all}\ k \text{ in }\{2,\ldots, K-1\}\enspace;\\
\Big|\big\{\widetilde{\btau}\big\} \cap \left[2,\frac{\tau^*_{1}+\tau^*_{2}}{2}\right]  \Big|\leq 1 \enspace; \enspace \Big|\big\{\widetilde{\btau}\big\} \cap \left(\frac{\tau^*_{K-1}+\tau^*_K}{2},n\right]  \Big|\leq 1 \enspace .
\end{array}
\right.
\end{equation}

\medskip

\noindent 
{\bf (Detec[$\kappa,q,c$]).  High-energy change-points are detected}: for all $k$ in $\{1,\ldots, K\}$, if $\tau^*_k$ is a $(\kappa,q)$-high-energy change-point then 
\beq\label{eq:def:detec}
d_{H,1}(\widetilde{\btau}, \tau^*_k)\leq\min\left\{ \frac{\tau^*_{k+1}-\tau^*_k}{2},\frac{\tau^*_{k}-\tau^*_{k-1}}{2}, c \frac{\log\left(1\vee n\Delta_k^2\right)+ q}{\Delta_k^{2}} \ \right\}\enspace .
\eeq

\medskip 

\noindent 
{\bf (Loc[$\kappa,q,c,c'$]). High-energy change-points are localized at the optimal rate}: any $(\kappa,q)$-high-energy change-point $\tau^*_k$ satisfies 
\beq\label{eq:def:loc}
 \P\left(d_{H,1}(\widetilde{\btau}, \tau^*_k)\1_{\cA}\geq c {x }{\Delta_k^{-2}}\right)\leq c'e^{-x},\, \quad \forall x\geq 1 \enspace .
\eeq

Property  {\bf (NoSp)} is slightly stronger than requiring that $|\widetilde{\btau}|\leq K$, since it requires that, in each segment $\big((\tau^*_{k-1}+\tau^*_k)/2,(\tau^*_{k}+\tau^*_{k+1})/2]$, the number of change-points is not overestimated.\\
Property {\bf (Detec[$\kappa,q,c$])} specifies that, for any high-energy change-point $\tau^*_k$, there exists at least one predicted change-point $\tilde{\tau}_l$ which is closer to $\tau^*_k$ than any other true change-point. The last term $d_k= c\Delta_k^{-2}[\log\left(1\vee n\Delta_k^2\right)+ q]$   in the upper bound \eqref{eq:def:detec} provides a quantitative bound for $d_{H,1}(\widetilde{\btau}, \tau^*_k)$. Hence, {\bf (Detec[$\kappa,q,c$])}  provides an uniform control on the localization error of each high-energy change-point. In fact, it suffices to prove  {\bf (Detec[$\kappa,q,+\infty$])} to ensure  ({\bf Detec}[$\kappa,q,c$]) for some $c$ large enough. Indeed, inserting in $\btau^*$ two change-points $\tau_-= \tau^*_k- \lceil 2 d_k\rceil $ and $\tau_+= \tau^*_k+ \lceil 2 d_k\rceil$ with a null change-point height does not change the distribution. If $c$ is chosen large enough in~\eqref{eq:def:detec}, then the energy of $\tau^*_k$ relative to  the new change-point vector remains  $(\kappa,q)$-high. As a consequence, {\bf (Detec[$\kappa,q,+\infty$])} entails that, under the event 
$\mathcal{A}$, $d_{H,1}(\widetilde{\btau}, \tau^*_k)\leq d_k$.

As for  Property {\bf (Loc[$\kappa,q,c,c'$])}, it enforces that any ($\kappa,q$)-high-energy jump $\tau^*_k$ is estimated at the optimal rate (as given by Proposition~\ref{prp:lower_loc_multiple}) in the event $\mathcal{A}$. Since $d_{H,1}(\widetilde{\btau},\tau^*_k)$ is a non-negative integer, this property implies that, for $\Delta_k$ large enough, one has $d_{H,1}(\widetilde{\btau},\tau^*_k)=0$ with probability higher than $1-c' e^{-\Delta_k^2/c}- \P(\cA^c)$. Let us emphasize that, in~\eqref{eq:def:loc}, the subscript $k$ is fixed inside the probability, whereas ({\bf Detec}[$\kappa,q,c$]) provides an uniform bound with respect to all high-energy jumps.

In the next two sections, we introduce and analyze change-point detection procedures bearing in mind these three specifications.

\section{Penalized least-squares}\label{sec:least_squares}

\subsection{Definition}

Given a change-point  vector $\btau$, let $\bPi_{\btau}$ denote the projector onto the space of piece-wise constant vectors on $\{\tau_i,\ldots,\tau_{i+1}-1\}$ for $i=0,\ldots, |\btau|$. Then, we consider the penalized least-squares criterion defined by 
\beq\label{eq:definition_criterion}
\Cr_{0}(\bY,\btau):= \|\bY-\bPi_{\btau}\bY \|^2 + L\pen_0(\btau,q)\enspace , 
\eeq
where
\beq\label{eq:definition_penalite}
\pen_0(\btau,q)= q|\btau|+ 2 \sum_{i=1}^{|\btau|+1}\log\left(\frac{n}{\tau_{i}-\tau_{i-1}}\right) \enspace ,
\eeq
 and $L>1$, $q>0$  are tuning parameters. In the above definition, recall that we take the convention $\tau_0=1$ and $\tau_{|\btau|+1}=n+1$. 
Then, the \emph{penalized least-squares estimator} $\widehat{\btau}$ is defined as  any  minimizer of $\Cr_{0}(\bY,\btau)$. The dependency of $\widehat{\btau}$ on the tuning parameters $L$ and $q$ is left implicit. If the noise $\bepsilon$ in the model \eqref{eq:univariate_model} satisfies the Gaussian assumption $(\cA_G)$, $\widehat{\btau}$ is simply a penalized maximum log-likelihood estimator. In the sequel, $\pen_0(\btau;q)$ is referred to as a multiscale penalty because its value depends on the scales of $\tau_i-\tau_{i-1}$. 

\medskip 

\noindent 
{\bf Computational complexity}. The multiscale penalty~\eqref{eq:definition_penalite} is additive. The cost  of a segmentation in~\eqref{eq:definition_criterion} is the sum of the cost of its segments and one can apply Bellman's dynamic programming algorithm~\cite{bellman1961approximation} to compute $\widehat{\btau}$ with at most $O(n^2)$ operations. Furthermore, one may adapt pruning methods such as PELT~\cite{killick2012optimal} or~\cite{rigaill2010pruned} to reduce the complexity to linear in best-case scenarios.

\medskip 

\noindent 
{\bf Penalty choice}. The multiscale penalty  \eqref{eq:definition_penalite} may be contrasted with complexity based penalties in the literature. Among these complexity based penalties, one can essentially distinguish the $\ell_0-$penalties which are linear in the $\ell_0-$norm of $D\btheta=(\theta_{i+1}-\theta_i)_{i\in\{1,\ldots,n-1\}}$ simply equal to $|\btau|$, leading to estimators that are sometimes also referred to as Ising or Potts estimators, and which of course include the BIC penalty $\pen_{BIC}(\btau)= 2|\btau|\log(n)$ used in \cite{yao1988estimating,YaoAu1989,Boysen2009,Garreau2018,lavielle2000least,wang2020}, and the penalties $\pen_{MS}(\btau)=  |\btau|(1+ c\log(n/|\btau|))$ of~\cite{birge2001gaussian,Lebarbier2005}  designed for model selection and signal estimation.
The first term in our penalty \eqref{eq:definition_penalite} exactly corresponds to a $\ell_0-$penalty. The second term gives most weight to change-points $\tau_l$ that are the closest to one of their neighbours $\tau_{l-1}$ or $\tau_{l+1}$ at least, with a largest possible contribution of order $\log(n)$, and less weight to change-points $\tau_l$ such that $(\tau_l-\tau_{l-1})\wedge (\tau_{l+1}-\tau_l)$ is proportional to $n$, which occurs for instance if there is a fixed number of equi-spaced change-points.  Consequently, our penalty $\pen_{0}(\btau,q)$ is, up to multiplicative constants, always smaller than the BIC penalty, and favours equi-spaced change-point vectors.
The general form of $\pen_{0}(\btau,q)$ is related to the definition \eqref{eq:definition_high_energy} of high-energy change-points and to the target property {\bf (Detec[$\kappa,q,c$])}. It is justified below, where we also explain why the previous BIC and model selection penalties lead to suboptimal performances (see Section~\ref{sec:large_penalty}).

\medskip



\noindent
{\bf Connection with CUSUM statistics}. There are deep connections between penalized least-squares criteria such as $\Cr_{0}(\bY,\btau)$ and CUSUM statistics as pointed out for instance in Wang et al.~\cite{wang2020}.  Recall that, given $\bt=(t_1,t_2,t_3)$ in $\cT_3$, the CUSUM statistic at $\bt$ is defined in~\eqref{eq:definition_cusum} as the empirical counterpart of the weighted difference in means on $[t_1,t_2)$ and on $[t_2,t_3)$.
First recall that maximizing the CUSUM statistic $\bC(\bY,\bt)$ over all $\bt$ of the form $(1,\tau,n+1)$, with $\tau$ in $\{1,\ldots,n\}$ is equivalent to computing the least-squares estimator  $\arg\min_{\tau\in\{1,\ldots,n\}}\|\bY-\bPi_{\tau}\bY \|^2$. More generally, local differences of the criterion $\Cr_0(\bY,\btau)$ have a natural expression in terms of the CUSUM statistic.  For any integer $1\leq l\leq |\btau|$, let $\btau^{(-l)}$ denote  the vector $(\tau_1,\ldots, \tau_{l-1},\tau_{l+1}, \ldots, \tau_{|\btau|})$ where $\tau_l$ has been removed. 
From Pythagorean equality, writing $\delta_{l-1}= \tau_{l}-\tau_{l-1}$, $\delta_{l}=\tau_{l+1}-\tau_{l}$, $S_{l-1}= \sum_{i=\tau_{l-1}}^{\tau_{l}-1} Y_i$, and $S_{l}= \sum_{i=\tau_{l}}^{\tau_{l+1}-1} Y_i$, we obtain
 \beqn 
 \|\bY-\bPi_{\btau}\bY \|^2- \|\bY-\bPi_{\btau^{(-l)}}\bY \|^2&=& \|\bPi_{\btau^{(-l)}}\bY \|^2 - \|\bPi_{\btau}\bY \|^2\\
 &= &(\delta_{l-1}+\delta_l)\left(\frac{S_{l-1}+S_{l}}{\delta_{l-1}+\delta_l}\right)^2- \delta_{l-1}\left(\frac{S_{l-1}}{\delta_{l-1}}\right)^2-\delta_l\left(\frac{S_l}{\delta_l}\right)^2\\
 &=& -\frac{\delta_{l-1}\delta_l}{\delta_{l-1} +\delta_l}\left(\frac{S_{l-1}}{\delta_{l-1}}-\frac{S_l}{\delta_l}\right)^2\\
 &=& -\bC^2(\bY,(\tau_{l-1},\tau_{l},\tau_{l+1}))\enspace .
\eeqn 
From  the definition~\eqref{eq:definition_criterion}, we therefore deduce the following lemma.
\begin{lem}\label{lem:decomposition_critere_tau_tau_}
 For any $\btau$ and any $l=1,\ldots, |\btau|$, we have 
\beq\label{eq:decomposition_critere_tau_tau_-l}
\Cr_0(\bY,\btau) - \Cr_0(\bY,\btau^{(-l)})= -\bC^2\big(\bY,(\tau_{l-1},\tau_{l},\tau_{l+1})\big) +  L \left[2\log\left(\frac{n(\tau_{l+1}-\tau_{l-1})}{(\tau_{l+1}-\tau_{l})(\tau_{l}-\tau_{l-1})}\right)+ q\right]\enspace . 
\eeq 
\end{lem}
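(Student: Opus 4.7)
The proof is essentially a direct calculation, most of which is already displayed in the paragraph immediately preceding the lemma statement. My plan is simply to separate the two contributions (least-squares term and penalty term) and handle each.

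\textbf{Step 1: Decompose via the definition of $\Cr_0$.} Starting from
\[
\Cr_0(\bY,\btau) - \Cr_0(\bY,\btau^{(-l)}) = \bigl[\|\bY-\bPi_{\btau}\bY\|^2 - \|\bY-\bPi_{\btau^{(-l)}}\bY\|^2\bigr] + L\bigl[\pen_0(\btau,q)-\pen_0(\btau^{(-l)},q)\bigr],
\]
I will treat the two bracketed differences separately.

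\textbf{Step 2: The least-squares difference.} Since $\btau^{(-l)}$ is coarser than $\btau$, the image of $\bPi_{\btau^{(-l)}}$ is a subspace of the image of $\bPi_{\btau}$. By Pythagoras, $\|\bY-\bPi_{\btau^{(-l)}}\bY\|^2-\|\bY-\bPi_{\btau}\bY\|^2 = \|\bPi_{\btau}\bY\|^2-\|\bPi_{\btau^{(-l)}}\bY\|^2$. Only two adjacent segments change: the two segments $[\tau_{l-1},\tau_l)$ and $[\tau_l,\tau_{l+1})$ under $\btau$ are merged into $[\tau_{l-1},\tau_{l+1})$ under $\btau^{(-l)}$. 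All other segments contribute identically to both norms. With the notation $\delta_{l-1},\delta_l,S_{l-1},S_l$ introduced right before the lemma, the remaining contribution is exactly the three-term expression already computed there, which algebraically collapses (this is the standard identity behind the CUSUM construction) to $-\dfrac{\delta_{l-1}\delta_l}{\delta_{l-1}+\delta_l}\left(\dfrac{S_{l-1}}{\delta_{l-1}}-\dfrac{S_l}{\delta_l}\right)^2$, which is precisely $-\bC^2(\bY,(\tau_{l-1},\tau_l,\tau_{l+1}))$ by the definition~\eqref{eq:definition_cusum}.

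\textbf{Step 3: The penalty difference.} The vector $\btau^{(-l)}$ has one change-point fewer than $\btau$, so the $q|\btau|$ term contributes $+q$ to $\pen_0(\btau,q)-\pen_0(\btau^{(-l)},q)$. For the logarithmic sum in~\eqref{eq:definition_penalite}, all segments other than the two merged ones cancel. The surviving terms are
\[
2\log\!\left(\frac{n}{\tau_l-\tau_{l-1}}\right) + 2\log\!\left(\frac{n}{\tau_{l+1}-\tau_l}\right) - 2\log\!\left(\frac{n}{\tau_{l+1}-\tau_{l-1}}\right) = 2\log\!\left(\frac{n(\tau_{l+1}-\tau_{l-1})}{(\tau_{l+1}-\tau_l)(\tau_l-\tau_{l-1})}\right).
\]
Adding $q$ and multiplying by $L$ yields the second term of~\eqref{eq:decomposition_critere_tau_tau_-l}.

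\textbf{Step 4: Conclude.} Combining Steps 2 and 3 gives the announced identity. There is no real obstacle here; the only point that deserves verification is the telescoping of the two squared-projection norms, which is a straightforward expansion using the explicit form of an orthogonal projection onto piecewise-constant vectors (sample mean on each segment).
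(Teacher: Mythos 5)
Your proof is correct and follows essentially the same route as the paper: the least-squares difference is exactly the Pythagoras-plus-algebra computation displayed immediately before the lemma, and the penalty difference is obtained by the same telescoping of the two merged segments in $\pen_0$.
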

Heuristically, the above decomposition justifies the choice~\eqref{eq:definition_penalite} for the penalty $\pen_0(\btau,q)$. Indeed, if we assume that the noise vector $\bepsilon$ in $\eqref{eq:univariate_model}$ is equal to zero, then
$\bC^2\big(\bY,(\tau_{l-1},\tau_{l},\tau_{l+1})\big)$ simplifies as $\bE^2\big(\btheta,(\tau_{l-1},\tau_{l},\tau_{l+1})\big))$ and \eqref{eq:decomposition_critere_tau_tau_-l} suggests that, if $\tau_l$ is a $(\sqrt{L},q)$-high-energy change-point, it has to be included in $\widehat{\btau}$. 

\medskip

When $\btheta$ is constant on $[t_1,t_3)$, then the CUSUM statistic $\bC(\bY,\bt)$ does not depend on $\btheta$ and simplifies as the pure noise statistic:
\beq\label{eq:definition_N}
  \quad \quad \bN(\bt):=  \left[\frac{ \sum_{i=t_2}^{t_3-1}\epsilon_i}{t_3-t_2}- \frac{ \sum_{i=t_1}^{t_2-1}\epsilon_i}{t_2-t_1}\right]\sqrt{ \frac{(t_2-t_1)(t_3-t_2)}{t_3-t_1}}\enspace .
\eeq
An uniform control control of $\bN(\bt)$ over $\bt$ in $\cT_3$ is at the heart of the analysis of $\widehat{\btau}$, as explained in the next subsection.

\subsection{Large penalty parameter $L$}\label{sec:large_penalty}

In this subsection, we investigate the properties of the penalized least-squares estimator $\widehat{\btau}$ when the tuning parameter $L$ is chosen large enough.
Define the event $\cA_q$ by
\beq\label{eq:definition_A_q_large}
\cA_q= \left\{ |\bN(\bt)|\leq 2\sqrt{2\log\left(\frac{n(t_3-t_1)}{(t_3-t_2)(t_2-t_1)}\right)+ q}, \quad \forall \bt=(t_1,t_2,t_3)\in \cT_3\right\}\enspace . 
\eeq

Lemma~\ref{lem:concentration:N_t} below states that $\cA_q$ occurs with high probability. In fact, Lemma~\ref{lem:concentration:N_t} holds even if the multiplicative factor 2 in \eqref{eq:definition_A_q_large} is replaced by any constant larger than one. Let us discuss the order of magnitude of this bound. For a fixed $\bt$ in $\cT_3$, $\bN(\bt)$ has a sub-Gaussian distribution with constant $1$. Since there are at most $n^3/6$ elements in $\cT_3$, an union bound easily leads to $\mathbb{P}\left(\sup_{\bt\in \cT_3}|\bN(\bt)|\geq \sqrt{6\log(n)+2x}\right)\leq 2e^{-x}$ for any $x>0$. Such control are for instance used in~\cite{fryzlewicz2014wild} or~\cite{wang2020}. In the event $\cA_q$, we rely on a control of $\bN(\bt)$ that depends on the specific scale of $\bt$. For a small $(t_3-t_2)\wedge (t_2-t_1)$, the upper bound is still of the order of $\sqrt{\log(n)}$, whereas for $(t_3-t_2)\wedge (t_2-t_1)$ proportional to $n$, the upper bound in $\cA_q$ behaves like a constant. Intuitively, this improved bound is related to the fact that the random variables $\bN(\bt)$ at a large scale are highly correlated so that a peeling argument reduces from $\sqrt{\log(n)}$ to a constant. Such behaviour is reminiscent of other multiscale statistics considered in~\cite{frick2014multiscale}.

If the noise vector $\bepsilon$ has a standard Gaussian distribution then, for some $\alpha$ in $(0,1)$, one can compute by Monte-Carlo the minimal
$q_{1-\alpha}$ such that $\mathbb{P}(\cA_{q_{1-\alpha}})\geq  1-\alpha$.

\subsubsection{First step analysis: global optimality}

One may easily deduce from the event $\cA_q$ that  $\widehat{\btau}$ (with $L>4$)  does not overestimate too much the true number of change-points: in each segment $[\tau^*_k, \tau^*_{k+1}]$, the penalized least-squares estimator $\widehat{\btau}$ contains at most two change-points. Indeed, consider any change-point vector $\btau$ that contains three such change-points $\tau_{l-1},\tau_l, \tau_{l+1}$ in $[\tau^*_k, \tau^*_{k+1}]$ then it follows from \eqref{eq:decomposition_critere_tau_tau_-l} that on the event $\cA_q$, 
\[
\Cr_0(\bY,\btau) - \Cr_0(\bY,\btau^{(-l)})= -\bC^2\big(\bY,(\tau_{l-1},\tau_{l},\tau_{l+1})\big) +  L\left[2\log\left(\frac{n(\tau_{l+1}-\tau_{l-1})}{(\tau_{l+1}-\tau_{l})(\tau_{l}-\tau_{l-1})}\right)+ q\right]> 0 
\]
which implies that $\btau \neq \widehat{\btau}$. This simple observation implies that, on $\cA_q$, $|\widehat{\btau}|\leq 2K$. Refining these arguments, we are able to establish in the proof of the next proposition that any $\btau$ with $|\btau|>K$ can be locally modified, by adding and/or removing some change-points, so that the criterion strictly decreases under $\mathcal{A}_q$. Such arguments based on local modifications were previously used in~\cite{wang2020}, but we need to sharpen it to recover the tight logarithms.

Given any change-point vector $\btau$ and some integer $k$ such that $1\leq k\leq |\btau^*|=K$, we write $\btau^{(k)}$ for the (reordered) concatenation of $\btau$ and $\tau^*_k$. For $1\leq l \leq |\btau|$, we say that $\tau_l$ is a $(\kappa,q,\btau)$-high-energy change-point if 
$$\bE(\btheta,(\tau_{l-1},\tau_l,\tau_{l+1}))> \kappa \sqrt{2\log\left(\frac{n(\tau_{l+1}-\tau_{l-1})}{(\tau_{l+1}-\tau_{l})(\tau_{l}-\tau_{l-1})}\right)+ q}\enspace.$$

\begin{prp} \label{prp:rough_analysis}
There exist universal constants $c$, $c'$, $L_0$, $q_0$ and $n_0$ such that the following holds for any  $L\geq L_0$, any $q> q_0$ and any $n>n_0$. For any integer $K>0$ and $\btheta$ in $\Theta_K$, there exists $\kappa_L>0$ (only depending on $L$) and  an event $\cA_q$ (defined in \eqref{eq:definition_A_q_large}) of  probability higher than $1- ce^{-c'q}$  on which the  penalized least-squares estimator $\widehat{\btau}$ satisfies:
\begin{enumerate}
 \item [(a)] {\bf (NoSp)} No  spurious change-point is detected: for all $k=1,\ldots, K$,  
\[\left\{\begin{array}{c}
\Big|\big\{\widehat{\btau}\big\} \cap \left(\frac{\tau^*_{k-1}+\tau^*_k}{2},\frac{\tau^*_{k}+\tau^*_{k+1}}{2}\right]  \Big|\leq 1\enspace , \text{ for all }k \text{ in }\{2,\ldots, K-1\}\enspace;\\
\Big|\big\{\widehat{\btau}\big\} \cap \left[2,\frac{\tau^*_{1}+\tau^*_{2}}{2}\right]  \Big|\leq 1 \enspace; \enspace \Big|\big\{\widetilde{\btau}\big\} \cap \left(\frac{\tau^*_{K-1}+\tau^*_K}{2},n\right]  \Big|\leq 1 \enspace ;
\end{array}
\right.\]

\item [(b)] {\bf (Detec[$\kappa_L,q,\kappa_L$])}  High-energy change-points are detected: for all $k $ in $\{1,\ldots,K\}$ such that $\tau^*_k$ is a $(\kappa_{L},q)$-high-energy change-point, we have
\beq\label{eq:condition_high_energy_discovery2}
d_{H,1}(\widehat{\btau},\tau^*_k)\leq\min\left\{ \frac{\tau^*_{k+1}-\tau^*_k}{2},\frac{\tau^*_{k}-\tau^*_{k-1}}{2}, \kappa_L \frac{\log\left(n\Delta_k^2\right)+ q}{\Delta_k^{2}} \ \right\}\enspace ; 
\eeq
 \item [(c)] For all $k$ in $\{1,\ldots, K\}$, either $\tau^*_k$ belongs to $\widehat{\btau}$ or it is not a $((\sqrt{L}+2),q,\widehat{\btau}^{(k)})$-high-energy change-point. 
\end{enumerate}
\end{prp}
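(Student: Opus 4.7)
The plan is to work throughout on the event $\cA_q$ from~\eqref{eq:definition_A_q_large}, which by the forthcoming Lemma~\ref{lem:concentration:N_t} has probability at least $1 - ce^{-c'q}$. The key fact I'll use repeatedly is that, since $\bY=\btheta+\bepsilon$ and the CUSUM is linear in $\bY$, one has $|\bC(\bY,\bt)|=\bE(\btheta,\bt)\pm\bN(\bt)$, and on $\cA_q$ the noise $|\bN(\bt)|$ is bounded by $2\sqrt{2\log(n(t_3-t_1)/[(t_3-t_2)(t_2-t_1)])+q}$ uniformly in $\bt\in\cT_3$. I would establish (c) first, deduce (a) by a local-modification/contradiction argument, and finally combine the two to obtain (b).

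Property (c) is the most direct. If $\tau^*_k\notin\widehat{\btau}$, let $l$ be the position $\tau^*_k$ would occupy in $\widehat{\btau}^{(k)}$. Optimality of $\widehat{\btau}$ and Lemma~\ref{lem:decomposition_critere_tau_tau_} applied to $\widehat{\btau}^{(k)}$ give $\bC^2(\bY,(\widehat{\tau}_{l-1},\tau^*_k,\widehat{\tau}_l))\leq L[2\log(\cdot)+q]$. Taking square roots and invoking the noise bound on $\cA_q$ yields $\bE(\btheta,(\widehat{\tau}_{l-1},\tau^*_k,\widehat{\tau}_l))\leq(\sqrt{L}+2)\sqrt{2\log(\cdot)+q}$, which is precisely the negation of the $(\sqrt{L}+2,q,\widehat{\btau}^{(k)})$-high-energy condition.

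For (a) I would argue by contradiction: if some $I_k$ contains two distinct elements of $\widehat{\btau}$, I would produce a competitor $\btau'$ with $\Cr_0(\bY,\btau')<\Cr_0(\bY,\widehat{\btau})$ by a local surgery. The basic mechanism is to remove the ``extra'' element using Lemma~\ref{lem:decomposition_critere_tau_tau_} in the reverse direction: when the triad $(\widehat{\tau}_{l-1},\widehat{\tau}_l,\widehat{\tau}_{l+1})$ sits in a region on which $\btheta$ is constant (no true change-point inside), $\bE(\btheta,\cdot)=0$ and $|\bC(\bY,\cdot)|$ is pure noise, which on $\cA_q$ is strictly dominated by $L[2\log(\cdot)+q]$ once $L\geq L_0$, forcing a strict decrease of the criterion. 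When instead the triad straddles $\tau^*_k$, I would first insert $\tau^*_k$ (using the decomposition from the proof of (c) to control the change) and then remove the farther of the two spurious points, reducing to the previous situation. Boundary intervals $[2,(\tau^*_1+\tau^*_2)/2]$ and $((\tau^*_{K-1}+\tau^*_K)/2,n]$ are handled identically using the conventions $\widehat{\tau}_0:=1$, $\widehat{\tau}_{|\widehat{\btau}|+1}:=n+1$. This is the delicate step, and the main obstacle of the whole proof: because $\pen_0(\btau,q)$ is a sum of logarithms of segment lengths, any local surgery perturbs \emph{several} log-terms simultaneously, and the bookkeeping must be uniform over all scales — which is precisely why $L$ must be chosen at least some universal $L_0$, and where the multiscale form of $\pen_0$ earns its keep.

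Finally, (b) follows by combining (a) and (c). Assume $\tau^*_k$ is $(\kappa_L,q)$-high-energy with $\kappa_L>\sqrt{L}+2$. The case $\tau^*_k\in\widehat{\btau}$ is trivial, and by (a) the interval $I_k$ contains at most one $\widehat{\tau}_j$. If it contains none, then the enclosing pair of $\tau^*_k$ in $\widehat{\btau}$ lies outside $I_k$, so a direct computation using that both enclosing lengths exceed half of $\tau^*_{k+1}-\tau^*_{k-1}$ gives $\bE(\btheta,(\widehat{\tau}_{l-1},\tau^*_k,\widehat{\tau}_l))\geq c\,\bE_k(\btheta)$, contradicting (c) once $\kappa_L$ is chosen large enough. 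Hence $I_k$ contains a unique $\widehat{\tau}_j$, and writing (c) at $l=j$ together with the elementary lower bound $\bE(\btheta,(\widehat{\tau}_{j-1},\tau^*_k,\widehat{\tau}_{j+1}))\geq c|\Delta_k|\sqrt{|\widehat{\tau}_j-\tau^*_k|}$ (valid because $\widehat{\tau}_{j\pm 1}$ lie in $I_{k\pm 1}$ and are thus at distance comparable to $\tau^*_{k+1}-\tau^*_{k-1}$ from $\tau^*_k$) and solving for $|\widehat{\tau}_j-\tau^*_k|$ yields the bound~\eqref{eq:condition_high_energy_discovery2} with $\kappa_L$ proportional to $L$.
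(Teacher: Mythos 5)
Your sketches of (a) and (c) track the paper's Lemma~\ref{lem:step1} and Lemma~\ref{lem:step4}: (c) is a direct application of Lemma~\ref{lem:decomposition_critere_tau_tau_} plus the event $\cA_q$, and (a) is indeed proved by the local insertion/deletion surgery you describe. But the reduction of (b) to (a)~+~(c) contains a genuine gap, and it is exactly the point the paper singles out as hard.

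The trouble is the claimed energy lower bounds. You write that if $I_k$ contains no $\widehat\tau_j$ then ``both enclosing lengths exceed half of $\tau^*_{k+1}-\tau^*_{k-1}$ gives $\bE(\btheta,(\widehat\tau_{l-1},\tau^*_k,\widehat\tau_l))\geq c\,\bE_k(\btheta)$,'' and in the other case that $\bE(\btheta,(\widehat\tau_{j-1},\tau^*_k,\widehat\tau_{j+1}))\geq c|\Delta_k|\sqrt{|\widehat\tau_j-\tau^*_k|}$ because ``$\widehat\tau_{j\pm1}$ lie in $I_{k\pm1}$.'' Neither claim holds in general. Property (a) gives at most one estimated change-point per $I_m$, not at least one: intervals $I_{k-1}$, $I_{k-2}$, \ldots\ around undetectable low-energy true change-points can be empty of $\widehat\tau$'s, so $\widehat\tau_{l-1}$ (or $\widehat\tau_{j-1}$) may lie many true segments to the left of $\tau^*_{k-1}$. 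When that happens, $\bar\theta_{\widehat\tau_{l-1}:\tau^*_k}$ is a weighted average over several true plateaus and can be essentially anything; in particular $|\bar\theta_{\tau^*_k:\widehat\tau_l}-\bar\theta_{\widehat\tau_{l-1}:\tau^*_k}|$ need not be $\gtrsim|\Delta_k|$ even though $\tau^*_k$ itself is high-energy. So the triad energy that (c) bounds from above can be small for two unrelated reasons---because $\tau^*_k$ has been well localized, or because the flanking averages have been diluted by undetected neighbours---and your argument cannot tell these apart. This is precisely the feature (many possibly-small jumps acting as nuisance parameters) the paper flags as one of its main contributions to handle.

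The paper's Lemma~\ref{lem:step2} addresses this directly, without going through (c) at all. Writing $A_{\btau}=\1_{\tau_l<\tau^*_{k-1}}|\mu_{k-1}-\bar\theta_{\tau_l:\tau^*_k}|$ and $B_{\btau}=\1_{\tau_{l+1}>\tau^*_{k+1}}|\mu_k-\bar\theta_{\tau^*_{k+1}:\tau_{l+1}}|$, it splits into four cases according to whether $A_{\btau}$, $B_{\btau}$ exceed $|\Delta_k|/3$. In the benign case both are small and one gets the clean lower bound you wanted, so inserting $\tau^*_k$ alone into $\btau$ strictly decreases the criterion. When $A_{\btau}$ (resp.\ $B_{\btau}$, resp.\ both) is large, one must insert $\tau^*_{k-1}$ (resp.\ $\tau^*_{k+1}$, resp.\ both), and a further split on which inserted segment is shortest produces competitor triads whose energy is provably $\gtrsim\bE_k$. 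This multi-insertion mechanism is the missing idea: it converts ``$\tau^*_k$ high-energy'' into a criterion decrease even when the flanking $\widehat\tau$'s are far away. The quantitative bound $\kappa_L(\log(n\Delta_k^2)+q)/\Delta_k^2$ is then extracted by the phantom-change-point trick explained below~\eqref{eq:def:detec}, yielding Proposition~\ref{lem:step2-2}; your ``solve for $|\widehat\tau_j-\tau^*_k|$'' manipulation is morally the same step and would work once the preceding case analysis is supplied.
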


Increasing the value of $q$ in the penalty function $\pen_0(\btau, q)$ leads to a more conservative procedure which achieves  ({\bf NoSp}) with a higher probability guarantee, but $\tilde{\tau}$ provably detects jumps with a higher energy requirement. For a fixed but small probability $\alpha$ in $(0,1)$, one can take $q$ of order $\log(1/\alpha)$ so that ({\bf NoSp}) and ({\bf Detec[$\kappa_L,q,\kappa_L$]}) hold with probability higher than $1-\alpha$. In view of the impossibility results in Section~\ref{sec:multiple:lower:detection}, the properties ({\bf NoSp}) and ({\bf Detec[$\kappa_L,q,\kappa_L$]}) cannot be fulfilled with probability higher than $1-ce^{-c'q}$. The penalized least-squares estimator $\widehat{\btau}$ simultaneously achieves ({\bf NoSp}) and ({\bf Detec[$\kappa_L,q,\kappa_L$]}) with probability at least $1-ce^{-c'q}$.  Up to the constants $c$ and $c'$, we have established in Section~\ref{sec:multiple:lower:detection} that no change-point detection procedure is able to simultaneously achieve ({\bf NoSp}) and ({\bf Detec[$\kappa_L,q,\kappa_L$]}) with a probability higher than that.

\medskip 

This result contrasts with the previous work of \cite{frick2014multiscale}. In \cite{frick2014multiscale}, the SMUCE estimator is proved to achieve an uniform bound slightly different from \eqref{eq:condition_high_energy_discovery2}. Assuming that $
\bE_{min}^2(\btheta)=\min_{k\in \{1,\ldots,K\}}(\tau^*_{k+1}-\tau^*_k)\min_{k}\Delta^2_k$ (c.f. \eqref{eq:definition_energie_min}) is large compared to $\log (n/\min_k(\tau^*_{k}-\tau^*_{k-1}))$, the authors establish  (see their Theorem 2.8), that SMUCE detects all change-points with high probability. If all change-points have similar heights (that is all $|\Delta_k|$ are of the same order) and if all segments are of similar lengths (that is all ($\tau^*_{k+1}-\tau^*_k$) are of the same order), then their hypothesis boils down to assuming that all change-points have a high-energy. Hence, the main novelties of our result in Proposition~\ref{prp:rough_analysis} in comparison with~\cite{frick2014multiscale} are that: (i) it better handles non evenly spaced jumps and (ii) more importantly, it does not require that all jumps have a high energy.

 Other results for Wild Binary Segmentation estimators in \cite{fryzlewicz2014wild,wang2020,baranowski2019narrowest}) or $\ell_0$-penalized least-squares estimators in \cite{wang2020} assume that $\bE_{min}^2(\btheta)$ is higher than $\log(n)$ which, for well-spaced change-points (and a constant $K$), is suboptimal by a $\log(n)$ factor. Besides, all those works also require that all jumps have a high-energy.

\subsubsection{Second step analysis: local optimality}

\begin{prp}\label{prp:local_analysis}
Let $L_0$, $q_0$ and $\cA_q$ be defined as in Proposition \ref{prp:rough_analysis}.  If we fix $L\geq L_0$ and $q>q_0$, there exist $\kappa_L>0$ and $c$, $c'>0$ such that the following holds.  For any  $\btheta$ in $\mathbb{R}^n$ and any $(\kappa_L,q)$-high-energy change-point $\tau^*_k$, the penalized least-squares estimator $\widehat{\btau}$ satisfies
\[
 \P\left(d_{H,1}(\widehat{\btau}, \tau^*_k)\1_{\cA_q}\geq c x\Delta_k^{-2}\right)\leq c'e^{-x}\quad \forall x\geq 1\enspace.
\]
\end{prp}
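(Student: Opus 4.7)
The plan is to reduce the localization at $\tau^*_k$ to a local single-change-point problem in the random window $[\widehat\tau_{l-1},\widehat\tau_{l+1})$ and then argue as in the proof of Proposition~\ref{prp:tau_hat}. Throughout we work on the event $\cA_q$ from Proposition~\ref{prp:rough_analysis}, whose probability is at least $1-ce^{-c'q}$. Since $\tau^*_k$ is a $(\kappa_L,q)$-high-energy change-point, property \textbf{(Detec)} supplies an estimated change-point $\widehat\tau_l\in\{\widehat\btau\}$ with $d:=|\widehat\tau_l-\tau^*_k|\leq \kappa_L(\log(n\Delta_k^2)+q)\Delta_k^{-2}$, and property \textbf{(NoSp)} forces its neighbors $\widehat\tau_{l-1},\widehat\tau_{l+1}$ to lie outside $((\tau^*_{k-1}+\tau^*_k)/2,(\tau^*_k+\tau^*_{k+1})/2]$. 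In particular, $\tau^*_k-\widehat\tau_{l-1}\geq (\tau^*_k-\tau^*_{k-1})/2$ and $\widehat\tau_{l+1}-\tau^*_k\geq (\tau^*_{k+1}-\tau^*_k)/2$, both much larger than $d$.

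The key deterministic ingredient is the optimality inequality obtained by comparing $\widehat\btau$ with the competitor $\widehat\btau^{mod}$ in which $\widehat\tau_l$ has been replaced by $\tau^*_k$. Two applications of Lemma~\ref{lem:decomposition_critere_tau_tau_} combined with $\Cr_0(\bY,\widehat\btau)\leq \Cr_0(\bY,\widehat\btau^{mod})$ yield
\[
\bC^2\bigl(\bY,(\widehat\tau_{l-1},\widehat\tau_l,\widehat\tau_{l+1})\bigr)-\bC^2\bigl(\bY,(\widehat\tau_{l-1},\tau^*_k,\widehat\tau_{l+1})\bigr)\geq 2L\log\frac{(\widehat\tau_{l+1}-\tau^*_k)(\tau^*_k-\widehat\tau_{l-1})}{(\widehat\tau_{l+1}-\widehat\tau_l)(\widehat\tau_l-\widehat\tau_{l-1})}.
\]
The right-hand side is bounded in absolute value by a numerical constant, since $d$ is dominated by each of $\tau^*_k-\widehat\tau_{l-1}$ and $\widehat\tau_{l+1}-\tau^*_k$. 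Writing $\bC(\bY,\bt)=\bC(\btheta,\bt)+\bN(\bt)$ and expanding the squares, a direct computation gives
\[
\bC^2\bigl(\btheta,(\widehat\tau_{l-1},\tau^*_k,\widehat\tau_{l+1})\bigr)-\bC^2\bigl(\btheta,(\widehat\tau_{l-1},\widehat\tau_l,\widehat\tau_{l+1})\bigr)=\Delta_k^2\,\frac{(\tau^*_k-\widehat\tau_{l-1})(\widehat\tau_{l+1}-\tau^*_k)\,d}{(\widehat\tau_l-\widehat\tau_{l-1})(\widehat\tau_{l+1}-\widehat\tau_l)}\gtrsim \Delta_k^2\,d,
\]
at least when $\btheta$ agrees on $[\widehat\tau_{l-1},\widehat\tau_{l+1})$ with the two-step function jumping at $\tau^*_k$; the correction coming from change-points $\tau^*_j$, $j\neq k$, falling inside that window is controlled via property (c) of Proposition~\ref{prp:rough_analysis}, which caps the signed energy of any such $\tau^*_j$ relative to the $\widehat\btau^{(j)}$-triplet.

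The main obstacle is a uniform control, over the random triplets $(\widehat\tau_{l-1},\widehat\tau_l,\widehat\tau_{l+1})$, of the noise cross-terms $\bC(\btheta,\bt)\bN(\bt)$ and of the noise square differences $\bN^2(\bt)-\bN^2(\bt')$ appearing after expansion. For this I would fix $x\geq 1$ and perform a peeling argument: cover the admissible values of $d$ and of the distances to $\widehat\tau_{l-1},\widehat\tau_{l+1}$ by dyadic cells, and on each cell apply Lemma~\ref{lem:concentration:N_t}. On the relevant dyadic cell the cross-term behaves as a sub-Gaussian variable of scale $|\Delta_k|\sqrt{d}$ while the $\bN^2$ difference behaves like a centered chi-square-type increment of variance of order $d$; both are controlled by the scale-adaptive tails of $\bN(\bt)$ proved in Lemma~\ref{lem:concentration:N_t}, exactly as in the proof of Proposition~\ref{prp:tau_hat} but with the extra layer of random endpoints.

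Requiring that these noise contributions beat the signal gain $\gtrsim \Delta_k^2 d$ forces, on $\{d\geq cx/\Delta_k^2\}\cap\cA_q$, an event of probability $\exp(-c'\Delta_k^2 d)\leq \exp(-c''x)$ on each dyadic cell. The rough bound $d\leq \kappa_L(\log(n\Delta_k^2)+q)\Delta_k^{-2}$ from \textbf{(Detec)} caps the peeling at $O(\log(n\Delta_k^2)+q)$ cells, which Lemma~\ref{lem:concentration:N_t}'s scale-dependent logarithmic correction absorbs without inflating the tail. Summing over the cells then yields $\P(d\mathbf{1}_{\cA_q}\geq cx\Delta_k^{-2})\leq c'e^{-x}$ for every $x\geq 1$, as claimed. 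The subtle point is precisely that the peeling cost must be compensated by the scale-adaptive control of $\bN(\bt)$, so that the noise never dominates $\Delta_k^2 d$ in the regime prescribed by \textbf{(Detec)}.
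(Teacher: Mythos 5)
Your overall plan — compare $\widehat\btau$ with the competitor obtained by moving $\widehat\tau_l$ to $\tau^*_k$, isolate a signal gain $\gtrsim\Delta_k^2 d$, and show it cannot be beaten by the noise except on an event of probability $e^{-c\Delta_k^2 d}$ — is exactly the paper's strategy. However, your local decomposition and the tool you invoke for the noise control differ from the paper's, and the latter is where there is a genuine gap.

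You route the comparison through $\widehat\btau^{(-l)}$ (remove $\widehat\tau_l$, then insert $\tau^*_k$), which produces the two triads $\bt=(\widehat\tau_{l-1},\widehat\tau_l,\widehat\tau_{l+1})$ and $\bt'=(\widehat\tau_{l-1},\tau^*_k,\widehat\tau_{l+1})$, both spanning the entire (random) window $[\widehat\tau_{l-1},\widehat\tau_{l+1})$. The paper instead routes through $\widehat\btau^{(k)}$ (insert $\tau^*_k$, then remove $\widehat\tau_l$), which yields the short triads $t_1=(\widehat\tau_{l-1},\tau^*_k,\widehat\tau_l)$ and $t_2=(\tau^*_k,\widehat\tau_l,\widehat\tau_{l+1})$, each having the \emph{fixed} change-point $\tau^*_k$ as one of its three vertices. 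That choice is what makes the noise bound go through: $\bN(t_1)$ and $\bN(t_2)$ can be written in terms of partial sums $Z_{\tau^*_k:\tau}$ anchored at the deterministic $\tau^*_k$, and Lemma~\ref{lem_lil_localiser} then provides a uniform (over the random $\widehat\tau_{l-1},\widehat\tau_l,\widehat\tau_{l+1}$) law-of-iterated-logarithm control with the sub-exponential tail required.

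With your decomposition, the terms that must be controlled are the increments $\bN(\bt)-\bN(\bt')$ (times $\bE_k$) and $\bN^2(\bt)-\bN^2(\bt')$, uniformly over the random triads. Your proposal is to ``apply Lemma~\ref{lem:concentration:N_t}'' on each peeling cell, but that lemma only bounds each $|\bN(\bt)|$ individually, at a scale $\sqrt{2\log(n/(\delta_1\wedge\delta_2))+\cdots}$; it does \emph{not} bound the increment $\bN(\bt)-\bN(\bt')$ at the much smaller scale $\sqrt{d/\ell_k}$ that your argument needs (so that the cross term has sub-Gaussian scale $|\Delta_k|\sqrt{d}$). To make your route rigorous you would have to appeal to the $\psi_2$-increment bound $\|X_t-X_{t'}\|_{\psi_2}^2\lesssim\rho^2(t,t')$ of Lemma~\ref{lem:rho_var} (which lives inside the proof of Lemma~\ref{lem:concentration:N_t} but is not its conclusion), or equivalently re-express $\bN(\bt)-\bN(\bt')$ via partial sums anchored at $\tau^*_k$ and invoke Lemma~\ref{lem_lil_localiser} — which essentially brings you back to the paper's decomposition. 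So the gap is concrete: Lemma~\ref{lem:concentration:N_t}, as stated, cannot deliver the $|\Delta_k|\sqrt{d}$-scale fluctuation bound your cross-term requires, and the argument as written does not show that the noise is beaten by $\Delta_k^2 d$ with the advertised $e^{-c''x}$ probability.

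Two smaller points. Your closed-form for $\bC^2(\btheta,\bt')-\bC^2(\btheta,\bt)$ carries a spurious factor $(\widehat\tau_{l+1}-\tau^*_k)/(\widehat\tau_{l+1}-\widehat\tau_l)$; the exact identity is $\Delta_k^2\,d\,(\tau^*_k-\widehat\tau_{l-1})/(\widehat\tau_l-\widehat\tau_{l-1})$, but the lower bound $\gtrsim\Delta_k^2 d$ you need from it is unaffected. And the number of peeling cells is $O(\log\log(n\Delta_k^2)+\log q)$ rather than $O(\log(n\Delta_k^2)+q)$; again this does not break the union bound, which converges in any case.
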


As explained above, Proposition \ref{prp:local_analysis} improves the results of \cite{frick2014multiscale} and \cite{lavielle2000least} in the sense that it designs nonasymptotic guarantees for a specific change-point to be localized in an optimal way. More precisely, it shows the considered penalized least-squares estimator $\widehat{\btau}$ allows each change-point to be estimated at the optimal parametric rate $\Delta_k^{-2}$, as soon as it is has high-energy. Moreover, it implies in particular that, when $|\Delta_k|$ is larger than one, we have    $\widehat{\tau}_l=\tau^*_k$, for some $l$, on the intersection of $\cA_q$ and an event of probability higher than $1-c'e^{-\Delta_k^{2}/c}$. Up to our knowledge, this result, combined with Proposition \ref{prp:lower_loc_multiple}, provides the first matching upper and lower bounds for high-energy change-points localization. 

\medskip

\noindent 
{\bf From a global to a local problem}. In view of Properties {\bf (NoSp)} and {\bf (Detec[$\kappa_L,q,\kappa_L$])} of $\widehat{\btau}$ and the corresponding minimax lower bounds, it is worth emphasizing  how and whether the detection and localization  of a change-point  $\tau^*_k$ depend  on its height $\Delta_k$ and the segment lengths $(\tau^*_{k}- \tau^*_{k-1})$ and $(\tau^*_{k+1}- \tau^*_{k})$). First, its energy $\bE^2_{k}(\btheta)$, which depends on both its height $\Delta_k$ and the segment lengths $(\tau^*_{k}- \tau^*_{k-1})$ and $(\tau^*_{k+1}- \tau^*_{k})$, must be high enough so that the change-point is detected. 
Then, when the change-point energy is high enough, that is once it can be detected, estimating $\tau^*_k$ becomes a local problem and   the localization error $d_{H,1}(\widehat{\btau},\tau^*_k)$ only depends on the change-point height, and not on the segment lengths. 

\medskip

\noindent 
 {\bf Hausdorff and Wasserstein bounds}.  Taking an union bound over all high-energy change-points yields 
\[
 \max_{k,\ \tau_k^*\text{ has high energy}}  d_{H,1}(\wh{\btau},\tau^*_k) \1_{\cA_q} =O_{P}(1) \max_{k,\ \tau_k^* \text{ has high energy}}  \left(Ke^{-c''\Delta_k^{2}} \wedge   \frac{\log K}{ \Delta_k^{2}}\right)\enspace.
\]
Now, assuming that all change-points have high-energy, we deduce that, on the event $\cA_q$ of Proposition \ref{prp:rough_analysis} and Proposition \ref{prp:local_analysis}, $\widehat{K}=K$ and 
\begin{eqnarray}
 \E\left[d_W\left(\widehat{\btau},\btau^*\right)\1_{\cA_q}\right]& \lesssim& \sum_{k=1}^{K} \left(e^{-c''\Delta_k^{2}}\wedge \frac{1}{\Delta_k^{2}}   \right)\enspace , \label{eq:Wasserstein}\\
 \E\left[d_H\left(\widehat{\btau},\btau^*\right)\1_{\cA_q}\right]& \lesssim& \max_{k\in\{1,\ldots,K\}}  \left(Ke^{-c''\Delta_k^{2}} \wedge   \frac{\log K}{ \Delta_k^{2}}\right)\enspace . \label{eq:Haussdorf}
  \end{eqnarray}
  When  all change-points $\btau_k^*$ have a common height value $\Delta_k=\Delta$, these two bounds turn out to be in view of the impossibility result \eqref{eq:lower_risk_l2} and \eqref{eq:lower_risk_haussdorf} in  Section~\ref{sec:lower_multiple}. Note that the Hausdorff bound \eqref{eq:Haussdorf} can be slightly improved when the change-points heights $\Delta_k$ are heterogeneous, by using an union bound that puts more weights to small $\Delta_k$'s.

\medskip

Up to our knowledge, the Wasserstein risk has only been investigated for a Wild Binary Segmentation estimator in \cite{Wang2018} in a high dimensional framework, but the derived upper bound (see their Corollary 6) is of the  order
of $K n^4/  [(\min_{k\in\{1,\ldots,K\}} \Delta_k)^2 (\min_{k\in\{0,\ldots,K\}} (\tau^*_{k+1}-\tau^*_{k})^4]$, which is is larger than $K^5 /  [(\min_{k\in\{1,\ldots,K\}} \Delta_k)^2]$ and is therefore suboptimal.

The Hausdorff loss of the WBS estimator~\cite{fryzlewicz2014wild, wang2020}, SMUCE~\cite{frick2014multiscale}, and the BIC-penalized least-square estimators~\cite{wang2020}, or other state-of-the art procedures~\cite{baranowski2019narrowest,kovacs2020seeded} are provably upper bounded by $\log (n) /  (\min_{k\in\{1,\ldots,K\}} \Delta_k)^2$ under a more restrictive assumption than the present high-energy requirement. 
Up to our knowledge, the only comparable Hausdorff bound has been recently and independently established by Cho and Kirch~\cite{cho2019localised} under the stronger assumptions that $\min_k\bE_k(\btheta)\gtrsim \sqrt{\log(n)}$.

\subsubsection{Comparison with complexity-based penalized estimators}

Let us investigate the behaviour of the penalized least-square estimator if one had chosen a $\ell_0$-type penalty of the form $L |\btau|\log (n)$, equal to the BIC penalty $\pen_{BIC}$ up to the tuning parameter $L$,  as studied in~\cite{wang2020}, instead of our multiscale penalty $\pen_0(\btau,q)$). It is first worth emphasizing that the corresponding estimator $\widehat{\btau}_{BIC}$ does not satisfy the property ({\bf Detec}). Arguing as in \eqref{eq:decomposition_critere_tau_tau_-l}, we derive from the definition of $\widehat{\btau}_{BIC}$ that for any $l\leq |\widehat{\btau}_{BIC}|$, 
\[
\bC^2\big(\bY,((\widehat{\tau}_{BIC})_{l-1},(\widehat{\tau}_{BIC})_{l},(\widehat{\tau}_{BIC})_{l+1})\big) >  L\log (n)  \enspace .
\]
Unless $L$ is too small, this implies that $\widehat{\btau}_{BIC}$ is only able to detect change-points whose energy is higher than $\sqrt{\log(n)}$. By constrast, when all the segment sizes $\tau^*_{k+1}-\tau^*_{k}$ for $k$ in $\{0,\ldots,K\}$ are proportional to $n$, our penalized least-squares estimator $\widehat{\btau}$ is able to detect change-points with constant energy (see Proposition~\ref{prp:rough_analysis}).
Then, inspecting the proof of Proposition~\ref{prp:local_analysis}, one can see that, when suitably tuned, the estimator $\widehat{\btau}_{BIC}$ is able to localize change-points whose energy is large compared to $\sqrt{\log(n)}$ (that is, change-points that are detected) at the parametric rate $\Delta_{k}^{-2}$.

Since it also satisfies  (${\bf NoSp}$), we can conclude that $\widehat{\btau}_{BIC}$: (i) satisfies $({\bf NoSp})$, (ii) does not satisfy  ({\bf Detec}) but is able to detect change-point whose energy is large compared to $\sqrt{\log(n)}$, (iii) is able to localize such (very) high-energy change-point at the parametric rate $\Delta_{k}^{-2}$ as in Proposition~\ref{prp:local_analysis}.

Turning now to the model selection penalty $\pen_{MS}(\btau)=  |\btau|(1+ c\log(n/|\btau|))$ of~\cite{birge2001gaussian,Lebarbier2005}, we observe that  the corresponding estimator $\widehat{\btau}_{MS}$ partly shares the same weakness as $\widehat{\btau}_{BIC}$: when $|\widehat{\btau}_{MS}|$ is small, the penalty is close to $L|\btau|\log (n)$ and the estimator is overconservative. This is for instance the case when there are $K$ equi-spaced change-points with a small $K$. In other settings, $\widehat{\btau}_{MS}$ detects many spurious change-points. As an example, consider the situation where, in the first half of the sample, there is a high change-point (with a height much larger than $\sqrt{\log (n)}$) every five points and the signal is constant in the second half of the sample. Then, $\widehat{\btau}_{MS}$ will detect all those change-points in the first half so that $|\widehat{\btau}_{MS}|\geq n/10$. As a consequence, $\widehat{\btau}_{MS}$ nearly behaves as the penalized least-squares estimator with a penalty of the order $c'|\btau|$ and one can then show that the number of spurious change-points in the second half is proportional to $n$.

\subsection{Near-Minimal Penalty}

In the previous subsection, we considered large values $L>L_0$ of the tuning parameter  in  \eqref{eq:definition_criterion}. In practice, such large choice of $L$ may lead to too conservative procedures. One may then wonder how small one can take $L$ while still ensuring that $\widehat{\btau}$ does not overestimate too much the number of true change-points. Following the same approach as in the above subsection, one observes that $\widehat{\btau}$ does not contain more than two change-points on the segment $[\tau^*_k, \tau^*_{k+1}]$ as long as  the following event holds: 
\[
 \left\{ |\bN(\bt)|< L^{1/2} \sqrt{2\log\left(\frac{n(t_3-t_1)}{(t_3-t_2)(t_2-t_1)}\right)+ q}, \quad \forall \bt\in \cT_3\right\}\enspace . 
\]
It turns out that such an event occurs with high probability (at least for $q$ not too small) for all $L>1$. Conversely, it occurs with negligible probability when $L<1$ (see Lemma \ref{lem:concentration:N_t}  in the proofs Section for more details).  As a consequence, for $L<1$, the penalized least-squares estimator selects spurious change-points as illustrated by the next proposition.

\begin{prp}\label{prp:negative_minimal_penalty}
Let $\widehat{\btau}$ denote the penalized least-squares estimator of $\btau^*$ defined above, and assume that $\bepsilon$ in \eqref{eq:univariate_model} satisfies ($\mathcal{A}_G$). We have, for any fixed $L<1$, any $q>0$, any $n$ large enough, and any $\btheta$ in $\Theta_0$, that
 $$\P_{\btheta}(|\widehat{\btau}|\geq 1)\geq 1-n^{-1}\enspace\ .$$
\end{prp}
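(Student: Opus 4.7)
The idea is to exhibit, with probability at least $1-n^{-1}$, a single-change-point candidate $\btau=(\tau)$ such that $\Cr_0(\bY,(\tau))<\Cr_0(\bY,\emptyset)$; since $\widehat\btau$ is a minimizer of $\Cr_0(\bY,\cdot)$, this immediately yields $|\widehat\btau|\ge 1$. By \lemref{decomposition_critere_tau_tau_} applied with $\btau=(\tau)$ and $l=1$ (so that $\btau^{(-1)}=\emptyset$), and using that $\bC(\bY,\cdot)=\bN(\cdot)$ under $(\mathcal A_G)$ and $\btheta\in\Theta_0$,
$$\Cr_0(\bY,(\tau))-\Cr_0(\bY,\emptyset)= -\bN((1,\tau,n+1))^2+L\Bigl[2\log\!\Bigl(\tfrac{n^2}{(\tau-1)(n+1-\tau)}\Bigr)+q\Bigr].$$
Hence it suffices to upper bound by $n^{-1}$ the probability of the multiscale event
$$\cE=\bigcap_{\tau=2}^{n}\Bigl\{|\bN((1,\tau,n+1))|\le\sqrt{L\bigl[2\log(n^2/((\tau-1)(n+1-\tau)))+q\bigr]}\,\Bigr\}.$$

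The event $\cE$ is the restriction of the event $\cA_q$ in~\eqref{eq:definition_A_q_large} to the edge-triads $\bt=(1,\tau,n+1)$, with the universal multiplicative constant $2$ replaced by $\sqrt L<1$. The converse direction of \lemref{concentration:N_t}, announced in the paragraph preceding the proposition, is precisely the statement that such a multiscale event has vanishing probability as soon as this constant is strictly below $1$. Applying it directly gives $\P_{\btheta}(\cE)\le n^{-1}$ for all $n$ large enough, and therefore $\P_{\btheta}(|\widehat\btau|\ge 1)\ge 1-n^{-1}$.

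The main obstacle is establishing this converse direction of \lemref{concentration:N_t}. Fix $L'\in(L,1)$. Restricting to $\tau\in\{2,\ldots,\lfloor n^{1-\eta}\rfloor\}$ for a small $\eta>0$, the additive $Lq$ is absorbed into the slack $2(L'-L)\log(n^2/((\tau-1)(n+1-\tau)))$ once $n$ is large, so it is enough to show that with probability at least $1-n^{-1}$ some such $\tau$ satisfies $\bN((1,\tau,n+1))^2>2L'\log(n^2/((\tau-1)(n+1-\tau)))$. Writing $\bN((1,\tau,n+1))=-(S_{\tau-1}-(\tau-1)S_n/n)/\sqrt{(\tau-1)(n+1-\tau)/n}$ in Brownian-bridge form (with $S_k=\epsilon_1+\cdots+\epsilon_k$), one extracts a geometric subfamily $\tau_j=1+2^j$ with $j$ running over an interval of length $\asymp a\log_2 n$ for some $a\in(L'/(1+L'),1)$. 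Along this subfamily the $\bN((1,\tau_j,n+1))$'s are sufficiently decorrelated (pairwise covariances $\lesssim 2^{-(j'-j)/2}$) and each single test has failure probability of order $(n/2^j)^{-L'}$. A peeling argument based on the independence of the increments $S_{2^{j+1}}-S_{2^j}$ then controls the joint failure probability by $\exp(-n^{a(1+L')-L'})\le n^{-1}$, since $a(1+L')-L'>0$ by the choice $a>L'/(1+L')$.
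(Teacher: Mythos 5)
Your plan cannot succeed with a single-change-point candidate, and the chaining estimate that is supposed to salvage it is incorrect; let me explain both points, then contrast with what the paper actually does.

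First, the structural obstacle. Take $\btau=(\tau)$; the fit gain is $\bN^2\bigl((1,\tau,n+1)\bigr)$, which is the square of a centred sub-Gaussian with unit variance parameter, and the penalty is $L\bigl[q+2\log\bigl(n^2/((\tau-1)(n+1-\tau))\bigr)\bigr]$. When $\tau\asymp n$ the penalty is $O(1)$, so the success probability of a single $\tau$ is a constant bounded away from $1$; when $\tau\ll n$ the penalty is $\asymp\log(n/\tau)$, so the success probability is polynomially small. In either regime you need the union over $\tau$ to boost the probability to $1-n^{-1}$, but the CUSUM statistics $\bN\bigl((1,\tau,n+1)\bigr)$ are built from the common partial sums $S_\tau$ and are strongly positively correlated: along a dyadic grid $\tau_j=2^j$ you have only $O(\log n)$ effectively independent copies, not $n^a$ of them. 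That is nowhere near enough to close the gap.

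Second, the chaining bookkeeping is reversed. You write that "each single test has failure probability of order $(n/2^j)^{-L'}$"; that quantity is the \emph{success} probability. Along $j\le a\log_2 n$ with increments of $S$ independent, the correct joint failure probability is $\prod_j\bigl(1-(2^j/n)^{L'}\bigr)\approx\exp\bigl(-\sum_j(2^j/n)^{L'}\bigr)\approx\exp\bigl(-n^{-(1-a)L'}\bigr)$, which tends to $1$, not to $0$. The exponent you wrote down, $a(1+L')-L'$, does not arise from this computation, and in particular the constraint $a>L'/(1+L')$ does not give a bound of order $n^{-1}$. Note that this is consistent with the content of Lemma~\ref{lem:concentration:N_t} itself: along edge triads $(1,\tau,n+1)$ the optimal uniform bound for $|\bN|$ is the iterated-logarithm rate $\sqrt{2\log\log(\cdot)}$ (Lemma~\ref{lem:log_itere1}), which is far below $\sqrt{2L\log(n/\tau)}$ for any $L>0$; so a lower bound for this restricted max cannot beat the penalty for $L$ bounded away from $0$.

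The paper's proof escapes both problems by using the two-change-point candidate $\btau=(\tau,\tau+1)$ with $\tau=\argmax_{i\in[n/4,3n/4]}\epsilon_i$. The CUSUM at the degenerate triad $(\tau,\tau+1,n+1)$ has a segment of length one, so it is essentially $-\epsilon_\tau$ up to a vanishing correction; one then compares $\max_i\epsilon_i^2$, which is $\ge x_L^2\cdot 2\log n$ with probability $\ge 1-e^{-\beta_L n}$ because there are $\Theta(n)$ genuinely independent noise terms, against the penalty cost of $(\tau,\tau+1)$, which is $\approx 2L\log n+2Lq+O(1)$. Choosing $x_L=(\sqrt L+1)/2\in(\sqrt L,1)$ gives $x_L^2>L$ and the strict inequality $\Cr_0(\bY,(\tau,\tau+1))<\Cr_0(\bY,\emptyset)$ for $n$ large, hence $|\widehat\btau|\ge 1$. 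The crucial point the single-change-point route misses is that the degenerate scale supplies $\Theta(n)$ independent trials, whereas the triads $(1,\tau,n+1)$ supply only $O(\log n)$.
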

This leads us to more carefully consider the penalized least-squares estimator~\eqref{eq:definition_criterion} when  $L>1$. 

\begin{prp}[First step analysis] \label{prp:minimal_penalty_analysis} For any $L>1$, there exist positive constants $q'_0$, $c_1$-- $c_3$, $\kappa_L$ and $\eta_L$ in $(0,1)$  such that the following holds.  Fix any $q> q'_0 + c_1\log[((L\wedge 2)-1)^{-1}]$. There exists an event $\cA_{L,q}$  (defined in the proof) occurring with probability higher than $1-c_1e^{-c_2q}$ on which the least-squares estimator $\widehat{\btau}$ satisfies:
\begin{enumerate}
 \item[(a)] No interval $[\tau_k^*,\tau_{k+1}^*)$ contains more than two change-points, that is
 \[
\big|\big\{\widehat{\btau}\big\} \cap \big[\tau^*_{k},\tau^*_{k+1}\big) \big|\leq 2\enspace ; 
 \]
Besides, the beginning and the end of the intervals do not contain more than one change-point:
 \[
 \big|\big\{\widehat{\btau}\big\} \cap \big[\tau^*_{k},\tau^*_k+ \eta_L(\tau^*_{k+1}-\tau^*_k)\big]\big|\leq 1,\quad  \big|\big\{\widehat{\btau}\big\} \cap \big[\tau^*_{k+1}- \eta_L(\tau^*_{k+1}-\tau^*_k),\tau^*_{k+1}\big]\big|\leq 1\enspace . 
 \]

\item[(b)] {\bf (Detec[$\kappa_K,q,\kappa_L$])}. High-energy change-points are detected. For all $k$ in $\{1,\ldots, K\}$ such that $\tau^*_k$ is a $(\kappa_{L},q)$-high-energy change-point, we have
\beq\label{eq:condition_high_energy_discovery}
d_{H,1}\left(\widehat{\btau},\tau^*_k\right)\leq\min\left\{ \frac{\tau^*_{k+1}-\tau^*_k}{2}, \frac{\tau^*_{k}-\tau^*_{k-1}}{2}, \kappa_L \frac{\log\left(n\Delta_k^2\right)+ q}{\Delta_k^{2}}\ \right\}\enspace ; 
\eeq
 \item[(c)] For all $k$ in $\{1,\ldots, K\}$, either $\tau^*_k$ belongs to $\widehat{\btau}$ or it is not a $((1.1\sqrt{L}+0.9),\widehat{\btau}^{(k)},q)$-high-energy change-point. 
\end{enumerate}
\end{prp}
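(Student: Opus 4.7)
The plan is to mirror the architecture of Proposition~\ref{prp:rough_analysis} while working under a tighter concentration event, now required because $\sqrt{L}$ is close to $1$. Pick a constant $c_L>1$ satisfying both $c_L^2<L$ and $c_L\le 0.1\sqrt{L}+0.9$; both inequalities admit a solution for every $L>1$. Define
\[
\cA_{L,q}=\left\{|\bN(\bt)|\le c_L\sqrt{2\log\big(\tfrac{n(t_3-t_1)}{(t_3-t_2)(t_2-t_1)}\big)+q},\ \forall\bt\in\cT_3\right\}.
\]
Since Lemma~\ref{lem:concentration:N_t} holds for any multiplicative constant strictly greater than $1$, $\cA_{L,q}$ has probability at least $1-c_1e^{-c_2 q}$ provided $q$ exceeds $q'_0+c_1\log\!\bigl[((L\wedge 2)-1)^{-1}\bigr]$; the logarithmic penalty on $q$ originates from the degradation of the concentration bound as $c_L\downarrow 1$. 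All subsequent claims are established on $\cA_{L,q}$.

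The engine is Lemma~\ref{lem:decomposition_critere_tau_tau_}, used as a deletion formula and, symmetrically, as an insertion formula for $\widehat{\btau}^{(k)}=\widehat{\btau}\cup\{\tau_k^*\}$. Combined with the decomposition $\bC(\bY,\bt)=\pm\bE(\btheta,\bt)+\bN(\bt)$ and the optimality of $\widehat{\btau}$, it yields upper bounds on local signal energies. For part (a), if three consecutive $\widehat{\tau}_{l-1},\widehat{\tau}_l,\widehat{\tau}_{l+1}$ lie in $[\tau_k^*,\tau_{k+1}^*)$, then $\bE(\btheta,(\widehat{\tau}_{l-1},\widehat{\tau}_l,\widehat{\tau}_{l+1}))=0$, so the deletion difference equals $-\bN^2+L[2\log(\cdot)+q]$, which is strictly positive on $\cA_{L,q}$ because $c_L^2<L$, contradicting optimality. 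For the endpoint refinement, suppose two estimated change-points $\widehat{\tau}_l<\widehat{\tau}_{l+1}$ both sit in $[\tau_k^*,\tau_k^*+\eta_L(\tau_{k+1}^*-\tau_k^*)]$. The triple $(\widehat{\tau}_{l-1},\widehat{\tau}_l,\widehat{\tau}_{l+1})$ may straddle $\tau_k^*$, but an explicit computation of the weighted mean difference shows that the signal energy of this triple is damped by a factor of order $\sqrt{\eta_L}$ relative to $\bE_k(\btheta)$. Choosing $\eta_L$ small enough as a function of $L-1$ ensures that deleting $\widehat{\tau}_l$ still strictly decreases the criterion under $\cA_{L,q}$, which gives the required contradiction.

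Part (b) rests on the insertion form. Fix a $(\kappa_L,q)$-high-energy change-point $\tau_k^*$ and assume that $d_{H,1}(\widehat{\btau},\tau_k^*)$ exceeds the right-hand side of~\eqref{eq:condition_high_energy_discovery}. Denote by $\widehat{\tau}_l,\widehat{\tau}_{l+1}$ the neighbours of $\tau_k^*$ in $\widehat{\btau}$; part (a) guarantees that no other true change-point lies in $(\widehat{\tau}_l,\widehat{\tau}_{l+1})$, and the assumed separation implies that the lengths $\tau_k^*-\widehat{\tau}_l$ and $\widehat{\tau}_{l+1}-\tau_k^*$ compare with $\tau_k^*-\tau_{k-1}^*$ and $\tau_{k+1}^*-\tau_k^*$ up to universal constants. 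A direct estimate then lower bounds $\bE(\btheta,(\widehat{\tau}_l,\tau_k^*,\widehat{\tau}_{l+1}))$ by a constant multiple of $\bE_k(\btheta)$, so the high-energy hypothesis, for $\kappa_L$ chosen large enough, forces $\bC^2(\bY,(\widehat{\tau}_l,\tau_k^*,\widehat{\tau}_{l+1}))>L[2\log(\cdot)+q]$ on $\cA_{L,q}$. Thus inserting $\tau_k^*$ would strictly improve the criterion, contradicting optimality. Part (c) is the contrapositive of the same insertion inequality: if $\tau_k^*\notin\widehat{\btau}$ then $|\bC(\bY,(\widehat{\tau}_l,\tau_k^*,\widehat{\tau}_{l+1}))|\le\sqrt{L}\sqrt{2\log(\cdot)+q}$, whence $\bE(\btheta,(\widehat{\tau}_l,\tau_k^*,\widehat{\tau}_{l+1}))\le(\sqrt{L}+c_L)\sqrt{2\log(\cdot)+q}\le(1.1\sqrt{L}+0.9)\sqrt{2\log(\cdot)+q}$ by choice of $c_L$.

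The main obstacle is the endpoint refinement in (a): tuning $\eta_L$ so that the damping of the cross-energy defeats the penalty gain for every $L>1$ requires a precise interaction between $\eta_L$ and $c_L$, and it is this calibration, together with the joint choice of $\kappa_L$ and the insertion-based detection bound, that makes the proof delicate as $L\downarrow 1$.
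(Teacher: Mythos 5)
Your overall architecture is the right one and matches the paper: define a tightened version of the concentration event $\cA_{L,q}$ (your $c_L$ is exactly the paper's $\ell=[(\sqrt{L}+9)/10]\wedge 2$), use Lemma~\ref{lem:decomposition_critere_tau_tau_} as a deletion/insertion formula, and derive contradictions with the optimality of $\widehat{\btau}$. The first statement of (a), part (c), and the probability bound on $\cA_{L,q}$ are handled correctly. However, there are two genuine gaps.

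\emph{Endpoint refinement of (a).} You argue that deleting one of the two offending change-points works because the signal energy of the triple $(\widehat{\tau}_{l-1},\widehat{\tau}_l,\widehat{\tau}_{l+1})$ is ``damped by a factor of order $\sqrt{\eta_L}$ relative to $\bE_k(\btheta)$''. Even taking this at face value, it is not enough: the deletion test requires $\bE(\btheta,t)\leq(\sqrt{L}-c_L)\psi_q[\delta(t)]$, and $\psi_q[\delta(t)]$ is of order $\sqrt{q+\log(n/\min(\delta_1,\delta_2))}$ while a damped version $\sqrt{\eta_L}\,\bE_k(\btheta)$ can be arbitrarily larger than that. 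Concretely, if $\widehat{\tau}_{l-1}<\tau^*_k\leq\widehat{\tau}_l<\widehat{\tau}_{l+1}$ the triple energy is of order $|\Delta_k|\sqrt{\widehat{\tau}_{l+1}-\widehat{\tau}_l}$, which grows without bound in $|\Delta_k|$ and in the interval length. In such cases a pure deletion \emph{increases} the criterion, so your contradiction fails. The paper's Lemma~\ref{lem:step3} handles exactly this case via a combined delete-and-insert move (replace $\tau_l$ by $\tau^*_{k+1}$, i.e., $\btau\mapsto\btau^{(-l,k+1)}$), together with a case split on whether the deletion energy is small or large; no purely local deletion argument suffices.

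\emph{Detection (b).} The sketch relies on two claims that are false as stated. First, ``part (a) guarantees that no other true change-point lies in $(\widehat{\tau}_l,\widehat{\tau}_{l+1})$'' reverses the roles of $\btau^*$ and $\widehat{\btau}$: part (a) bounds the number of \emph{estimated} change-points in a \emph{true} segment; it says nothing about how many true change-points lie between two estimated ones, and indeed the central point of the paper is to tolerate arbitrarily many low-energy true change-points there. Second, the lengths $\tau^*_k-\widehat{\tau}_l$ and $\widehat{\tau}_{l+1}-\tau^*_k$ are only lower-bounded by the assumed separation and have no upper bound. Consequently, the averages $\overline{\theta}$ on $[\widehat{\tau}_l,\tau^*_k)$ and $[\tau^*_k,\widehat{\tau}_{l+1})$ need not be close to $\mu_k$ and $\mu_{k+1}$, and the claimed lower bound $\bE(\btheta,(\widehat{\tau}_l,\tau^*_k,\widehat{\tau}_{l+1}))\gtrsim\bE_k(\btheta)$ fails. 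This is precisely why Lemma~\ref{lem:step2} in the paper splits into four cases according to the deviations $A_{\btau}$ and $B_{\btau}$ of those averages from $\mu_k$ and $\mu_{k+1}$, and inserts up to three true change-points $(\tau^*_{k-1},\tau^*_k,\tau^*_{k+1})$, together with nontrivial energy comparison inequalities (Lemma~\ref{lem:energie}), rather than inserting $\tau^*_k$ alone.
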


Here, we may have $\widehat{K}>K$ and some post-processing procedure is needed to clean the estimator so that it may satisfy {\bf (NoSp)}.

Similarly to the analysis for large tuning parameter $L$, the localization rate of a specific high-energy change-point $\tau^*_k$ is of the order of $\Delta_k^{-2}$ as stated in the next proposition.

\begin{prp}[Second step analysis]\label{prp:local_analysis_2}
 Consider any $L>1$ and fix any $q> q'_0 + c_1\log[((L\wedge 2)-1)^{-1}]$ and $\cA_{L,q}$ as in Proposition \ref{prp:minimal_penalty_analysis}. Then, there exists $\kappa_L>0$ such that for any $(\kappa_L,q)$-high-energy change-point $\tau^*_k$, the least-squares estimator $\widehat{\btau}$ satisfies 
\[
 \P\left(d_{H,1}\left(\widehat{\btau},\tau^*_k\right)\1_{\cA_{L,q}}\geq c {x }{\Delta_k^{-2}}\right)\leq c'e^{-x} \quad \forall x\geq 1+ \log(L-1)_+\enspace.\] 
\end{prp}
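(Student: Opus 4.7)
The plan is to repeat the local optimality analysis behind Proposition~\ref{prp:local_analysis}, but with extra care because the multiplicative constants in the penalty are now only marginally larger than the noise fluctuations. Throughout, we work on the event $\cA_{L,q}$ supplied by Proposition~\ref{prp:minimal_penalty_analysis}. Fix a $(\kappa_L, q)$-high-energy change-point $\tau^*_k$. If $\tau^*_k \in \widehat{\btau}$, then $d_{H,1}(\widehat{\btau}, \tau^*_k) = 0$ and there is nothing to prove. Otherwise, property~(c) of Proposition~\ref{prp:minimal_penalty_analysis} says that $\tau^*_k$ is not a $((1.1\sqrt{L}+0.9), \widehat{\btau}^{(k)}, q)$-high-energy change-point, and combining this with (a) and (b) we extract a unique $\widehat{\tau}_l \in \widehat{\btau}$ such that $d_{H,1}(\widehat{\btau}, \tau^*_k) = |\widehat{\tau}_l - \tau^*_k|$, bounded a priori by $\kappa_L(\log(n\Delta_k^2)+q)/\Delta_k^{2}$; in particular $|\widehat{\tau}_l - \tau^*_k|$ is much smaller than $\min\{\widehat{\tau}_{l+1}-\widehat{\tau}_l,\, \widehat{\tau}_l - \widehat{\tau}_{l-1}\}$.

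Next I compare $\widehat{\btau}$ with the competitor $\btau'$ obtained by replacing $\widehat{\tau}_l$ by $\tau^*_k$. Optimality gives $\Cr_0(\bY,\widehat{\btau}) \le \Cr_0(\bY,\btau')$, and by the same Pythagorean identity that led to Lemma~\ref{lem:decomposition_critere_tau_tau_} this rearranges as
\[
\bC^2\bigl(\bY,(\widehat{\tau}_{l-1}, \tau^*_k, \widehat{\tau}_{l+1})\bigr) - \bC^2\bigl(\bY,(\widehat{\tau}_{l-1}, \widehat{\tau}_l, \widehat{\tau}_{l+1})\bigr) \le L\bigl[\pen_0(\btau',q)-\pen_0(\widehat{\btau},q)\bigr].
\]
The penalty gap on the right is at most $2L\log(((\widehat{\tau}_l-\widehat{\tau}_{l-1})(\widehat{\tau}_{l+1}-\widehat{\tau}_l))/((\tau^*_k-\widehat{\tau}_{l-1})(\widehat{\tau}_{l+1}-\tau^*_k)))$, a mild logarithmic quantity since $|\widehat{\tau}_l - \tau^*_k|$ is much smaller than the adjacent segment lengths. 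For the left-hand side I plug in $\bC(\bY,\bt)=\bar{\bC}(\btheta,\bt)+\bN(\bt)$. A direct calculation (the same as in the analysis for large $L$) produces a deterministic quadratic lower bound
\[
\bar{\bC}^2(\btheta,(\widehat{\tau}_{l-1}, \tau^*_k, \widehat{\tau}_{l+1})) - \bar{\bC}^2(\btheta,(\widehat{\tau}_{l-1}, \widehat{\tau}_l, \widehat{\tau}_{l+1})) \ge c\,\Delta_k^{2}|\widehat{\tau}_l - \tau^*_k|,
\]
holding because $\tau^*_k$ is the unique change-point of $\btheta$ inside the local window (by property~(a)) and $|\widehat{\tau}_l-\tau^*_k|$ is dominated by the segment lengths. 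Writing $m := |\widehat{\tau}_l - \tau^*_k|$, the cross and pure-noise terms combine into a centered sub-Gaussian variable whose variance proxy is of order $m$.

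The remaining step is a peeling argument over dyadic ranges $m \in [2^{j-1}, 2^j)$, for $j \le \log_2(\kappa_L(\log(n\Delta_k^2)+q)/\Delta_k^{2})$. Using the maximal inequality for sub-Gaussian CUSUM fluctuations uniformly over $\bt \in \cT_3$ (that is, Lemma~\ref{lem:concentration:N_t} applied at scale $m$), the noise contribution is at most $c\sqrt{m}(\sqrt{x}+\sqrt{\log(n/m)})$ on an event of probability at least $1-c'e^{-x}$, up to an additional summation over $j$ that costs only a $\log(1+\log(n\Delta_k^2))$ factor absorbed into constants. Combining the signal lower bound, the penalty upper bound and the noise deviation, then applying Young's inequality $2ab \le \eta a^2 + \eta^{-1} b^2$ with $\eta$ chosen so that the signal term dominates, yields $m \lesssim (x + \log(L-1)_+)/\Delta_k^{2}$. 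The condition $x \ge 1+\log(L-1)_+$ in the statement is exactly what guarantees that the peeling cost and the gap between the effective penalty constant $L$ and the noise constant $1$ are absorbed; this tightness near $L=1$ is the main obstacle relative to the easier Proposition~\ref{prp:local_analysis}, where $L \ge L_0$ provided a comfortable slack.
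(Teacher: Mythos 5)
Your high-level strategy is the right one — compare $\widehat{\btau}$ with the competitor $\btau^{(-l,k)}$ that replaces $\widehat{\tau}_l$ by $\tau^*_k$, bound the penalty gap, lower-bound the signal term by roughly $\Delta_k^2\,|\widehat{\tau}_l-\tau^*_k|$, and close with a noise deviation bound — and this is indeed what the paper does. But two of the steps you assert do not hold as stated, and at least one of them is fatal.

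First, the signal lower bound. You claim that $\bar{\bC}^2\bigl(\btheta,(\widehat{\tau}_{l-1},\tau^*_k,\widehat{\tau}_{l+1})\bigr)-\bar{\bC}^2\bigl(\btheta,(\widehat{\tau}_{l-1},\widehat{\tau}_l,\widehat{\tau}_{l+1})\bigr)\geq c\,\Delta_k^2\,|\widehat{\tau}_l-\tau^*_k|$ follows ``because $\tau^*_k$ is the unique change-point of $\btheta$ inside the local window (by property (a))''. This is false: property (a) of Proposition~\ref{prp:minimal_penalty_analysis} controls how many \emph{estimated} change-points lie in each true segment, not how many \emph{true} change-points lie between two consecutive estimated ones. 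The window $(\widehat{\tau}_{l-1},\widehat{\tau}_{l+1})$ can perfectly well contain $\tau^*_{k-1}$ or $\tau^*_{k+1}$ (as low-energy nuisance jumps), and it is precisely because the framework allows arbitrarily many undetectable jumps that this case must be handled. The paper's Lemma~\ref{lem:e1} does exactly this: it exploits property (c) (that any $\tau^*_{k\pm 1}\notin\widehat{\btau}$ must not be $((1.1\sqrt{L}+0.9),\widehat{\btau}^{(k\pm 1)},q)$-high-energy) to show the contamination coming from an extra low-energy jump is a fraction of $|\Delta_k|$. Without this, the claimed signal lower bound is simply unproved.

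Second, and more seriously, your noise bound is too crude to yield the parametric rate. You propose to control the noise contribution by $c\sqrt{m}\bigl(\sqrt{x}+\sqrt{\log(n/m)}\bigr)$ uniformly over the peeling slices. Feeding this into the signal inequality gives, after Young's inequality, only $\Delta_k^2\,m\lesssim x+\log(n/m)$. For, say, $\Delta_k=O(1)$ and $x=O(1)$, this forces $m\lesssim\log n$, which is exactly the uniform detection bound (Property {\bf Detec}) and \emph{not} the parametric tail $m\lesssim x/\Delta_k^2$ that the statement asserts. The difference is the entire content of the proposition. The paper avoids the $\log(n/m)$ loss by not applying the uniform-in-$\cT_3$ control at the relevant place. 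Instead, in Case~1 ($\tau_{l+1}-\tau_l\geq\tau_l-\tau^*_k$) it applies the non-asymptotic law of the iterated logarithm (Lemma~\ref{lem_lil_localiser}) to the partial sums $Z_{\tau^*_k:\cdot}$ and $Z_{\cdot:\tau^*_k}$ at scale $\nu=\Delta_k^2$, which gives $\bN^2(t_1),\bN^2(t_2)\lesssim x+\log\log(\Delta_k^2 m)$; the $\log\log$ is then absorbed into the signal $m\Delta_k^2$. In Case~2 it applies Lemma~\ref{lem:concentration:N_t} \emph{localized} (with $n$ replaced by $\tau'_q-\tau^*_k$) together with a $-2(L-1)\log\!\big(\tfrac{\tau_{l+1}-\tau^*_k}{\tau_{l+1}-\tau_l}\big)$ term coming from the penalty difference, and the optimization $a\log z - bz\leq a\log(a/(eb))$ is precisely what produces the $\log\big((L-1)^{-1}\big)_+$ appearing in the statement. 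Your remark that ``$x\geq 1+\log(L-1)_+$ is exactly what guarantees...'' gestures at the right phenomenon but does not match the actual mechanism, which is a signed cancellation between the noise tail and the penalty gap, available only because both carry a $\log\big(\tfrac{\tau_{l+1}-\tau^*_k}{\tau_{l+1}-\tau_l}\big)$ factor. In short, the proof needs the double-logarithm refinement; the single-$\log$ uniform bound cannot close the argument.
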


In summary, it turns out that properties ({\bf Detec}) and ({\bf Loc}) are valid all the way down to the minimal penalty tuning parameter $L>1$.

\section{Post-processing procedure}\label{sec:post_proc}

Given any vector $\btau$ of estimated change-points, we describe here a post-processing procedure for local improvements of $\btau$.  As e.g. in~\cite{fryzlewicz2014wild}, this post-processing procedure is a two steps procedure: the first step consists in cleaning spurious change-points of $\btau$; the second one ensures improvement of the localization of well-separated change-points. 
We underline that, in this section, the preliminary estimator  $\btau$ possibly depends on the data ${\bf Y}$.

\subsection{Pruning step}

In the pruning step, we rely on the CUSUM statistic~\eqref{eq:definition_cusum} to build confidence intervals around each $\tau_l$ for $l=1,\ldots ,|\btau|$. 
Given $\alpha$ in $ (0,1)$, let $\zeta_{1-\alpha}$ be the smallest number satisfying 
\beq\label{eq:definition_psi_alpha}
 \P\left[\sup_{\bt\in \cT_3}\left(|\bN(\bt)|-\sqrt{2\log\left(\frac{n(t_3-t_1)}{(t_3-t_2)(t_2-t_1)}\right)}\right)> \zeta_{1-\alpha}\right]\leq \alpha\enspace.
\eeq	
 From Lemma~\ref{lem:concentration:N_t}, we know that $\zeta_{1-\alpha}\leq c_1+c_2\sqrt{\log(1/\alpha)}$ for two numerical constants $c_1$ and $c_2$. If the distribution of the noise vector $\bepsilon$ in \eqref{eq:univariate_model} is a Gaussian distribution or another known distribution that can be simulated, then $\zeta_{1-\alpha}$ can be approximated by a Monte-Carlo method. In the sequel, we denote by $\cB_{1-\alpha}$ the event such that $|\bN(\bt)|\leq \sqrt{2\log(\frac{n(t_3-t_1)}{(t_3-t_2)(t_2-t_1)})})+ \zeta_{1-\alpha}$ for all $\bt$ in $\cT_3$.

\medskip

Given $\tau$ in $\{2,\ldots, n\}$ and a positive integer $r$, define $\bt^{(\tau,r)}=((\tau- r)\vee 1, \tau, (\tau+r) \wedge (n+1))$. In the simple case where   $r < \tau\wedge (n-\tau)$, then $\bt^{(\tau,r)}$ is simply the triplet  $(\tau-r,\tau,\tau+r)$ centered at $\tau$ with radius $r$. Then, we define $\widehat{r}_{\tau}$, the smallest radius $r$ such that the CUSUM statistic $\bC(\bY,\bt^{(\tau,r)})$ centered at $\tau$ is significantly large: 
 \beq\label{eq:definition_r_l}
  \widehat{r}_{\tau}= \min\left\{r,\  |\bC(\bY,\bt^{(\tau,r)})|> \sqrt{2\log\left(\frac{n(t^{(\tau,r)}_{3}-t^{(\tau,r)}_{1})}{(t^{(\tau,r)}_{3}-t^{(\tau,r)}_{2})(t^{(\tau,r)}_{2}-t^{(\tau,r)}_{1})}\right)}+ \zeta_{1-\alpha}   \right\}\enspace , 
 \eeq
with the convention $\min\{\emptyset\}=+\infty$. 
This leads us to the following confidence interval $\underline{I}_\tau$ around $\tau$. 
\beq\label{eq:definition_r_l'}
  \underline{I}_\tau= [t^{(\tau,\widehat{r}_{\tau})}_1+1,t^{(\tau,\widehat{r}_{\tau})}_3-1]\enspace ,
 \eeq
Recall from the definitions~\eqref{eq:definition_cusum} and \eqref{eq:definition_N} of $\bC(\bY,\bt)$ and $\bN(\bt)$ that, if no change-point occurs in $[t^{(\tau,r)}_1+1, t^{(\tau,r)}_3-1]$, then $\bC(\bY,\bt^{(\tau,r)})= \bN(\bt^{(\tau,r)})$. Hence, if no change-point occurs in $[t^{(\tau,\widehat{r}_{\tau})}_1+1, t^{(\tau,\widehat{r}_{\tau})}_3-1]$, then $\widehat{r}_{\tau}$ is the smallest radius such that $|\bN(\bt^{(\tau,r)})|$ is larger than  
 $$\sqrt{2\log\left(\frac{n(t^{(\tau,r)}_{3}-t^{(\tau,r)}_{1})}{(t^{(\tau,r)}_{3}-t^{(\tau,r)}_{2})(t^{(\tau,r)}_{2}-t^{(\tau,r)}_{1})}\right)}+ \zeta_{1-\alpha} \enspace,$$
 which contradicts the event $\cB_{1-\alpha}$ defined in \eqref{eq:definition_psi_alpha}. We conclude that, under $\cB_{1-\alpha}$,  all intervals $\underline{I}_{\tau}$ contain
at least one true change-point.

As a consequence, if two intervals $\underline{I}_{\tau}$ and $\underline{I}_{\tau'}$ are disjoint, then the closest true change-point from   $\tau$ differs from the closest true change-point from  $\tau'$. In a nutshell, the pruning step amounts to removing change-points in $\btau$ in such a way that the confidence intervals around the coordinates of the pruned change-point vector  do not intersect. The procedure (described in Algorithm~\ref{EWA} below) first and foremost prunes wide confidence intervals, since the corresponding estimated change-points are more prone to lie farther from a true change-point than narrow confidence intervals.

Given a vector $\btau$ and the corresponding confidence intervals $\underline{I}_{\tau_{1}}, \ldots, \underline{I}_{\tau_{|\btau|}}$, we reorder the change-points  by decreasing sizes of the corresponding confidence intervals, that is $\tau_{(1)},\tau_{(2)}, \ldots, \tau_{(|\btau|)}$ are such that $\widehat{r}_{\tau_{(1)}}\geq  \widehat{r}_{\tau_{(2)}}\geq \ldots \geq \widehat{r}_{\tau_{(|\btau|)}}$.)

\begin{algorithm}[H]
\caption{Pruning Step}
\label{EWA}
\begin{algorithmic}[1]
\STATE $\cP(\btau)=\btau$ \COMMENT{Initialization with all change-points}
	\FOR{$l = 1, \ldots, |\btau|-1, \do $}
		\IF{($\widehat{r}_{\tau_{(l)}}=\infty$) or ($\underline{I}_{\tau_{(l)}}$ intersects $\bigcup_{j>l }\underline{I}_{\tau_{(j)}}$)}
		\STATE{ Remove $\tau_{(l)}$ from  $\cP(\btau)$.}
		\ENDIF		 
	\ENDFOR
	\ENSURE $\cP(\btau)$
\end{algorithmic}

\end{algorithm}
In the next proposition, we say that a true change-point $\tau^*_k$ is reasonably well localized by $\btau$ if 
 \beq\label{eq:definition_well_localization_tau}
 d_{H,1}(\btau, \tau^*_k) < \frac{(\tau_{k}^* - \tau_{k-1}^*)\wedge (\tau_{k+1}^* - \tau_k^*)}{8}\enspace . 
 \eeq
 
 \begin{prp}\label{prp:pruned}
 There exist universal constants $\kappa$ and $c$ such that the following holds for any $\alpha$ in $(0,1)$. On the  event $\cB_{1-\alpha}$ of  probability higher than $1-\alpha$ and for any sequence $\btau$, we have
\begin{itemize}
 \item [(a)] {\bf (NoSp)} $\cP(\btau)$ does not detect any spurious change-point:
 \[\left\{\begin{array}{c}
\Big|\big\{\cP(\btau)\big\} \cap \left(\frac{\tau^*_{k-1}+\tau^*_k}{2},\frac{\tau^*_{k}+\tau^*_{k+1}}{2}\right]  \Big|\leq 1\enspace , \ \text{for all}\ k \text{ in }\{2,\ldots, K-1\}\enspace;\\
\Big|\big\{\cP(\btau)\big\} \cap \left[2,\frac{\tau^*_{1}+\tau^*_{2}}{2}\right]  \Big|\leq 1 \enspace ; \enspace \Big|\big\{\cP(\btau)\big\} \cap \left(\frac{\tau^*_{K-1}+\tau^*_K}{2},n\right]  \Big|\leq 1 \enspace .
\end{array}
 \right.\]
\item [(b)] {\bf (Detec[$\kappa,\zeta^2_{1-\alpha},c$])}  Any $(\kappa,\zeta^2_{1-\alpha})$-high-energy  change-point $\tau^*_k$ 
 that is reasonably well localized by $\btau$ (in the sense of \eqref{eq:definition_well_localization_tau}) also satisfies
\beq\label{eq:upper_distance_dP_tau}
 d_{H,1}\left(\cP(\btau),\tau^*_k\right) \leq \Big(2 d_{H,1}(\btau, \tau^*_k)\Big)\vee \left(c\frac{\log\left(n\Delta_k^{2}\right)+ \zeta^2_{1-\alpha}}{\Delta_k^{2}}\right)\enspace .	
\eeq 
\end{itemize}

 \end{prp}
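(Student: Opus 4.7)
Two observations under $\cB_{1-\alpha}$ drive everything. (i)~If $\widehat r_\tau<\infty$, setting $\bt=\bt^{(\tau,\widehat r_\tau)}$ and decomposing $\bC(\bY,\bt)=\bC(\btheta,\bt)+\bN(\bt)$, the definition of $\widehat r_\tau$ together with the bound $|\bN(\bt)|\leq\sqrt{2\log(n(t_3-t_1)/((t_3-t_2)(t_2-t_1)))}+\zeta_{1-\alpha}$ valid on $\cB_{1-\alpha}$ forces $\bC(\btheta,\bt)\neq 0$; so $\btheta$ is non-constant on $[t_1,t_3)$ and $\underline I_\tau$ contains at least one true change-point. (ii)~If $\underline I_\tau$ contains some true change-point $\tau^*_j$, then $|\tau-\tau^*_j|\leq\widehat r_\tau-1$, and by the triangle inequality $\underline I_\tau$ also contains every true change-point at least as close to $\tau$ as $\tau^*_j$ is---in particular, the closest one to $\tau$. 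Finally, Algorithm~\ref{EWA} outputs by construction a set $\cP(\btau)$ whose intervals $\{\underline I_\tau\}_{\tau\in\cP(\btau)}$ are pairwise disjoint.

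\emph{Part (a).} Suppose, toward contradiction, that $\tau_1<\tau_2$ both lie in $\big((\tau^*_{k-1}+\tau^*_k)/2,(\tau^*_k+\tau^*_{k+1})/2\big]$ and belong to $\cP(\btau)$. Since $\tau^*_k$ is (weakly) the closest true change-point to each $\tau_i$, observation (ii) yields $\tau^*_k\in\underline I_{\tau_1}\cap\underline I_{\tau_2}$, contradicting disjointness. The two boundary intervals $[2,(\tau^*_1+\tau^*_2)/2]$ and $((\tau^*_{K-1}+\tau^*_K)/2,n]$ are handled identically, with $\tau^*_1$ and $\tau^*_K$ replacing $\tau^*_k$.

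\emph{Part (b).} Pick $\tau_0\in\btau$ realizing $d_0:=d_{H,1}(\btau,\tau^*_k)<\ell_k/8$ and set $R_k:=\Delta_k^{-2}[\log(n\Delta_k^2)+\zeta^2_{1-\alpha}]$, $r^*:=\max(2d_0,c_1R_k)$. The first step is $\widehat r_{\tau_0}\leq r^*$. Taking $\kappa$ large enough in the $(\kappa,\zeta^2_{1-\alpha})$-high-energy hypothesis gives $r^*\ll\ell_k$, so the window of $\bt^{(\tau_0,r^*)}$ contains $\tau^*_k$ but no other true change-point, and a direct computation yields $|\bC(\btheta,\bt^{(\tau_0,r^*)})|\geq (r^*-d_0)|\Delta_k|/\sqrt{2r^*}\geq|\Delta_k|\sqrt{r^*}/(2\sqrt 2)$; for suitable $c_1$ this exceeds twice the threshold in~\eqref{eq:definition_r_l}, so on $\cB_{1-\alpha}$ we get $\widehat r_{\tau_0}\leq r^*$ and in particular $\tau^*_k\in\underline I_{\tau_0}$. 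Let now $\tau^\star$ minimize $\widehat r_\tau$ over $\{\tau\in\btau:\tau^*_k\in\underline I_\tau\}$, so that $\widehat r_{\tau^\star}\leq r^*$. I claim $\tau^\star\in\cP(\btau)$: otherwise some $\tau''$ with $\widehat r_{\tau''}<\widehat r_{\tau^\star}$ satisfies $\underline I_{\tau''}\cap\underline I_{\tau^\star}\neq\emptyset$, and minimality of $\tau^\star$ forces $\tau^*_k\notin\underline I_{\tau''}$; but $|\tau''-\tau^*_k|\leq|\tau''-\tau^\star|+|\tau^\star-\tau^*_k|<3\widehat r_{\tau^\star}\leq 3r^*\ll\ell_k$ makes $\tau^*_k$ the closest true change-point to $\tau''$, so observation (ii) applied to $\tau''$ forces $\tau^*_k\in\underline I_{\tau''}$, a contradiction. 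Hence $d_{H,1}(\cP(\btau),\tau^*_k)\leq|\tau^\star-\tau^*_k|\leq\widehat r_{\tau^\star}\leq\max(2d_0,c_1R_k)$, yielding~\eqref{eq:upper_distance_dP_tau} after absorbing constants.

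The main obstacle is the quantitative interplay in part (b): the universal constant $\kappa$ in the high-energy hypothesis must be chosen so that $c_1R_k\ll\ell_k$, as this single inequality powers both the CUSUM lower bound at $\tau_0$ and the identification of $\tau^*_k$ as the closest true change-point to every putative competitor $\tau''$. Extracting this inequality from $\bE_k(\btheta)>\kappa\sqrt{2\log(n(\tau^*_{k+1}-\tau^*_{k-1})/((\tau^*_{k+1}-\tau^*_k)(\tau^*_k-\tau^*_{k-1})))+\zeta^2_{1-\alpha}}$ is a careful but routine manipulation of the logarithmic terms, and also requires treating the boundary cases $t_1=1$ or $t_3=n+1$ in the CUSUM computation separately.
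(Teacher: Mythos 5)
Your approach mirrors the paper's in all essentials: under $\cB_{1-\alpha}$ every interval $\underline I_\tau$ with finite radius contains a true change-point and, because $\underline I_\tau$ is (nearly) symmetric around $\tau$, it in particular contains the one closest to $\tau$; disjointness of the intervals of $\cP(\btau)$ then yields (NoSp), and for (Detec) you first bound $\widehat r_{\tau_0}$ at the nearest $\tau_0\in\btau$ via a CUSUM-energy lower bound and then argue that some change-point close to $\tau^*_k$ survives pruning. The one structural difference is the survival argument: the paper follows a recursive chain of pruning witnesses, whereas you take $\tau^\star$ to be a $\widehat r$-minimizer over $\{\tau\in\btau:\tau^*_k\in\underline I_\tau\}$ and argue directly that it cannot be pruned. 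That reformulation is attractive, but as written it contains a bug.

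When $\tau^\star$ is pruned, Algorithm~\ref{EWA} only supplies a witness $\tau''$ occurring later in the sorted order, hence with $\widehat r_{\tau''}\leq\widehat r_{\tau^\star}$; you assert the strict inequality $\widehat r_{\tau''}<\widehat r_{\tau^\star}$, and in the equality case minimality of $\widehat r_{\tau^\star}$ does not preclude $\tau^*_k\in\underline I_{\tau''}$, so your contradiction does not close. The fix is to take $\tau^\star$ to be the minimizer of $\widehat r$ with the \emph{largest sorting index}: the witness $\tau''$ then has a strictly larger index, so if $\tau^*_k\in\underline I_{\tau''}$ then either $\widehat r_{\tau''}<\widehat r_{\tau^\star}$ (contradicting minimality) or $\widehat r_{\tau''}=\widehat r_{\tau^\star}$ with a larger index (contradicting extremality of $\tau^\star$). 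With this modification your single-step argument is equivalent to the paper's chain. Finally, the constant chase you defer as routine deserves a caveat: for $\tau^*_k$ to be the nearest true change-point to $\tau''$ one needs $3\widehat r_{\tau^\star}<\ell_k/2$, which is tight given only $d_{H,1}(\btau,\tau^*_k)<\ell_k/8$ and $\widehat r_{\tau^\star}\leq 2d_0\vee c_1 R_k$ (the term $2d_0$ can exceed $\ell_k/6$); one must sharpen the factor in front of $d_0$ below $4/3$ at the price of a larger $c_1$, and the published proof glosses over exactly the same point.
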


Note that any high-energy change-point $\tau^*_k$ detected by $\btau$ is also detected by $\cP(\btau)$. Unfortunately, Proposition \ref{prp:pruned} does not ensure that $\cP(\btau)$ has kept the closest change-point to $\tau^*_k$ in  $\cP(\btau)$ (it is only known to satisfy \eqref{eq:upper_distance_dP_tau}).  For this reason, we introduce a second post-processing step that aim at improving the localization rate for $\tau^*_k$.

\subsection{Local Improvements}

Consider any $\tau$ in $\{2,\ldots,n\}$ and the associated confidence interval $\underline{I}_{\tau}$ defined in~\eqref{eq:definition_r_l'}. We suggest to  re-estimate $\tau$ by minimizing  a restricted least-squares type criterion over $\underline{I}_{\tau}$.
Consider the restricted vector $\bY^{(\tau,2\widehat{r}_\tau-1)}= (Y_{t^{\tau,2\widehat{r}_\tau-1}_1},\ldots, Y_{t^{\tau,2\widehat{r}_\tau-1}_3-1})$.  For a small $\widehat{r}_{\tau}$, this means that we only keep the observations  in $[\tau-2\widehat{r}_{\tau}+1, \tau+2\widehat{r}_{\tau}-2]$. 
Let $\cL(\tau)$ be the change-point estimator minimizing over $\underline{I}_{\tau}$ the restricted least-squares criterion:
\[
 \cL(\tau) \in \argmin_{\tau'\in \underline{I}_{\tau}} \|\bPi_{\tau'}\bY^{(\tau,2\widehat{r}_\tau-1)} \|^2 \enspace .
\]
Equivalently, $\cL(\tau)$ is any  maximizer of the CUSUM statistic $\bC[ \bY, (t^{\tau,2\widehat{r}_\tau-1}_1,\tau', t^{\tau,2\widehat{r}_\tau-1}_3)]$ over $\tau'\in \underline{I}_{\tau}$. 
For short, we write $\cL(\btau)$ for the vector $(\cL(\tau_l))_{l=1,\ldots, |\btau|}$ and we write $\cL \cP(\btau) = \cL(\cP(\btau))$ for the change-point vector obtained after Pruning and Local improvement of $\btau$. 
\medskip 

Re-estimating the change-point positions by a restricted least-squares criterion minimization was already proposed in the literature (see e.g;~\cite[Section 3.2]{fryzlewicz2014wild}). Nevertheless, our fitting method differs in two ways: first, we restrict the new position to belong to $\underline{I}_{\tau}$ and second, we only consider data at distance less than $2\widehat{r}_\tau$ from $\tau$ whereas~\cite{fryzlewicz2014wild} considers data that are closer to $\tau$ than any of the other estimated change-points. These two differences allow us to better handle cases where some of the true change-points have a small energy.

\begin{prp}[{\bf Loc[$\kappa,\zeta^2_{1-\alpha},c,c'$]}]\label{prp:post:localization}
 There exist $\kappa$ and $c$ such that, on the event $\cB_{1-\alpha}$ defined below~\eqref{eq:definition_psi_alpha}, the following holds for any $\btheta$ in $\mathbb{R}^n$, any $(\kappa,\zeta^2_{1-\alpha})$-high energy change-point $\tau^*_k$ and any $\tau$ in $\{2,\ldots,n\}$. If 
\[
|\tau- \tau^*_k| < \frac{(\tau_{k}^* - \tau_{k-1}^*)\wedge (\tau_{k+1}^* - \tau_k^*)}{4}\  , 
\]
Then, for any $x>0$, we have 
\[
\P_{\btheta}\left( |\cL(\tau)-\tau^*_k|\1_{\cB_{1-\alpha}}\geq c\frac{x\vee 1}{\Delta_k^{2}}\right)\leq e^{-x}\enspace .
\]
\end{prp}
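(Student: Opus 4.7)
The overall plan is to reduce $\cL(\tau)$ to a single change-point least-squares estimator on the restricted data $\bY^{(\tau, 2\widehat{r}_\tau - 1)}$, after controlling the confidence radius $\widehat{r}_\tau$ from both sides, and then to invoke the localization argument already developed for Proposition~\ref{prp:tau_hat} and Lemma~\ref{lem:LBCPE1}.

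The key technical step is a two-sided control of $\widehat{r}_\tau$ under $\cB_{1-\alpha}$. Writing $\ell_k := (\tau^*_{k+1} - \tau^*_k) \wedge (\tau^*_k - \tau^*_{k-1})$, the assumption $|\tau - \tau^*_k| < \ell_k/4$ ensures that, for any $r \leq 3\ell_k/4$, the triplet $\bt^{(\tau,r)}$ straddles only the true change-point $\tau^*_k$, so
\[
\bC(\bY, \bt^{(\tau, r)}) = \bN(\bt^{(\tau, r)}) + \bE(\btheta, \bt^{(\tau, r)}),
\]
with $\bE(\btheta, \bt^{(\tau, r)}) = 0$ for $r \leq |\tau - \tau^*_k|$ and, for $r \geq 2|\tau - \tau^*_k|$, $\bE(\btheta, \bt^{(\tau, r)}) \asymp |\Delta_k|\sqrt{r}$. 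Under $\cB_{1-\alpha}$, $|\bN(\bt^{(\tau,r)})|$ is bounded by the threshold appearing in the definition~\eqref{eq:definition_r_l} of $\widehat{r}_\tau$, so the CUSUM cannot cross this threshold as long as $r \leq |\tau - \tau^*_k|$; hence $\widehat{r}_\tau > |\tau - \tau^*_k|$ and thus $\tau^*_k \in \underline{I}_\tau$. Conversely, a direct comparison of the signal $\bE$ with twice the threshold yields
\[
\widehat{r}_\tau \;\lesssim\; |\tau - \tau^*_k| + \Delta_k^{-2}\bigl[\log(1 \vee n\Delta_k^{2}) + \zeta^2_{1-\alpha}\bigr],
\]
provided $\kappa$ is chosen large enough. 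Together with $|\tau - \tau^*_k| < \ell_k/4$ and the consequence $\Delta_k^{-2}[\log(n/\ell_k) + \zeta^2_{1-\alpha}] \ll \ell_k$ of the $(\kappa, \zeta^2_{1-\alpha})$-high-energy hypothesis, this implies $4\widehat{r}_\tau \leq \ell_k - |\tau - \tau^*_k|$. In particular, the enlarged window $W := [t^{(\tau, 2\widehat{r}_\tau - 1)}_1, t^{(\tau, 2\widehat{r}_\tau - 1)}_3 - 1]$ on which $\cL(\tau)$ is computed contains $\tau^*_k$ and no other true change-point.

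Restricted to $W$, the mean vector is a two-step function with unique change-point at $\tau^*_k$ and height $\Delta_k$, and by construction $\cL(\tau)$ is the argmax over $\tau' \in \underline{I}_\tau$ of $|\bC[\bY, (t^{(\tau, 2\widehat{r}_\tau - 1)}_1, \tau', t^{(\tau, 2\widehat{r}_\tau - 1)}_3)]|$, that is, the single change-point least-squares estimator on $W$ constrained to lie in $\underline{I}_\tau$. Since $\tau^*_k \in \underline{I}_\tau$, the problem is genuinely the single change-point localization problem treated in the proof of Proposition~\ref{prp:tau_hat}. The energy of this local problem is of order $|\Delta_k|\sqrt{\widehat{r}_\tau}$, which, thanks to the lower bound $\widehat{r}_\tau > |\tau - \tau^*_k|$ combined with the high-energy hypothesis, exceeds the detection threshold required by that argument. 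The peeling bound in the proof of Proposition~\ref{prp:tau_hat} then delivers $|\cL(\tau) - \tau^*_k|\,\1_{\cB_{1-\alpha}} \leq c(x \vee 1)/\Delta_k^2$ with probability at least $1 - e^{-x}$ for every $x > 0$, which is exactly the claim.

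The main obstacle is the upper bound on $\widehat{r}_\tau$ in the first step: it is what prevents the enlarged window $W$ from absorbing a neighbour change-point $\tau^*_{k \pm 1}$, and the interplay between the constant $\kappa$ in the high-energy condition and the constants inside the thresholds of~\eqref{eq:definition_r_l} and~\eqref{eq:definition_psi_alpha} must be calibrated carefully. Boundary cases, where $\tau$ is close to $1$ or $n$ and the triplets get truncated at the endpoints, introduce asymmetric terms in $\bE(\btheta, \bt^{(\tau,r)})$ that must be handled separately but lead to the same conclusion.
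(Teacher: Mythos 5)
Your overall architecture is the same as the paper's: (i) under $\cB_{1-\alpha}$ control $\widehat{r}_\tau$ from both sides so that $\tau^*_k \in \underline{I}_\tau$ and the doubled window $[\tau - 2\widehat{r}_\tau+1,\tau+2\widehat{r}_\tau - 1)$ falls inside $[\tau^*_{k-1},\tau^*_{k+1})$, then (ii) run a law-of-iterated-logarithm peeling to localize within $\underline{I}_\tau$. Two points deserve attention.

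First, the intermediate claim that $4\widehat{r}_\tau \leq \ell_k - |\tau - \tau^*_k|$ does not follow from the bounds you cite. You only know $\widehat{r}_\tau \lesssim |\tau-\tau^*_k| \vee \Delta_k^{-2}\bigl[\log(1\vee n\Delta_k^2)+\zeta^2_{1-\alpha}\bigr]$ and $|\tau-\tau^*_k| < \ell_k/4$; when $|\tau - \tau^*_k|$ is close to $\ell_k/4$, multiplying by $4$ already exceeds $\ell_k - |\tau-\tau^*_k| \leq \ell_k$. The paper instead derives the weaker-looking inequality $|\tau - \tau^*_k| + 2\widehat{r}_\tau - 1 < 1 + \tfrac34 \ell_k$ directly from $|\tau-\tau^*_k| \leq \widehat{r}_\tau - 1 < \ell_k/4$ and then exploits the integrality of the left-hand side to conclude $|\tau - \tau^*_k| + 2\widehat{r}_\tau - 1 \leq \ell_k$, which is what puts the doubled window inside $[\tau^*_{k-1},\tau^*_{k+1})$. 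Your route either needs the integrality trick or tighter constants in the $\widehat{r}_\tau$ bound; as stated the implication is not established.

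Second, the reduction to Proposition~\ref{prp:tau_hat} is not literal. That proposition analyses the \emph{penalized} least-squares estimator over the whole sample, whereas $\cL(\tau)$ is the \emph{unpenalized} CUSUM maximizer over the constrained set $\underline{I}_\tau$, on a data window whose endpoints are themselves random. One cannot simply invoke the proposition; one has to redo the comparison $\|\bPi_{\tau^*_k}\bY^{(\tau,2\widehat{r}_\tau-1)}\|^2 - \|\bPi_{\tau'}\bY^{(\tau,2\widehat{r}_\tau-1)}\|^2$ for $\tau'\in\underline{I}_\tau$ using the projector identities of \eqref{eq:Pitau-Pitautau_1}--\eqref{eq:Pitau-Pitautau_1bis} adapted to the window, lower-bound the signal term via $\bE^2(t_1) \geq |\tau^*_k - \tau'|\,\Delta_k^2/3$ (which requires $\tau^*_k$ not too close to the window's endpoints, guaranteed by step (i)), and bound the two noise terms with Lemma~\ref{lem_lil_localiser}. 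What you may cite is the \emph{peeling mechanism} underlying Case~3 of Proposition~\ref{prp:tau_hat}'s proof (i.e.\ Lemma~\ref{lem_lil_localiser}), and here the restriction to $\underline{I}_\tau$ replaces the role the multiscale penalty played there: there is no ``far'' case to handle, because $\underline{I}_\tau$ has already confined $\tau'$. Finally, Lemma~\ref{lem:LBCPE1} is an impossibility result and plays no part in the upper bound; its mention here is spurious.
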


\medskip

Fix any $L>1$ and consider the post-processed penalized least-squares estimator $\cL\cP(\widehat{\btau})$ with $L>1$ and some $q  \geq q'_0 + \log[(L\wedge 2-1)^{-1}]$. Then,  it follows from the above propositions that, on the event $\cA_q\cap \cB_{1-\alpha}$ of probability higher than $1-ce ^{-c'q}-\alpha$, one has
\begin{itemize}
 \item[(a)] {\bf (NoSp)}. $\cL\cP(\widehat{\btau})$ does not detect any spurious change-point.
 
 \item[(b)] {\bf (Detec[$\kappa_L,q\vee \zeta^2_{1-\alpha},c$])} Any $(\kappa_L,q\vee \zeta^2_{1-\alpha})$-high-energy  change-point $\tau^*_k$ is reasonably well localized by $\cL\cP(\widehat{\btau})$
\[
 d_{H,1}(\cL\cP(\widehat{\btau}),\tau^*_k) \leq \frac{(\tau_{k}^* - \tau_{k-1}^*)\wedge (\tau_{k+1}^* - \tau_k^*)}{2}\wedge  \left(c\frac{\log\left(n\Delta_k^{2}\right)+ q}{\Delta_k^{2}}\right)\enspace .	
\]
\item [(c)] {\bf (Loc[$\kappa_L,q\vee \zeta^2_{1-\alpha},c,c'$])} For all such $(\kappa_L,q\vee \zeta^2_{1-\alpha})$-high-energy change-points $\tau^*_k$ we have 
\[
 \P_{\btheta}\left(d_{H,1}(\cL\cP(\widehat{\btau}), \tau^*_k)\1_{\cA_q\cap \cB_{1-\alpha}}\geq c \frac{x\vee 1 }{\Delta_k^{2}}\right)\leq c'e^{-x}\enspace , \quad \quad \forall x> 1 \enspace . 
\]
\end{itemize}

The post-processed least-squares estimator with $L>1$ therefore achieves the three aforementioned properties ({\bf NoSp}), ({\bf Detec}), and ({\bf Loc}).

\subsection{Post-processing the complete change-point vector as self-standing procedure}

The post-processing method can also be used as a self-standing change-point detection procedure by simply applying it to the full vector $\btau_f=\{2,3,\ldots, n\}$ of the $n-1$  possible change-points.  From Propositions \ref{prp:pruned} and \ref{prp:post:localization}, we deduce the following.

\begin{cor}[Analysis of $\cL\cP(\btau_f)$] \label{cor:complete_change-point_vector}
Consider any $\alpha$ in $(0,1)$. There exist numerical constants $\kappa$, $c$, $c'$  and  an event $\cB_{1-\alpha}$  (defined below \eqref{eq:definition_psi_alpha}) of probability higher than $1-\alpha$,  such that on $\cB_{1-\alpha}$,  $\cL\cP(\btau_f)$ satisfies  {\bf (NoSp)}, {\bf (Detec[$\kappa,\zeta_{1-\alpha}^{2},c$])}, and {\bf (Loc[$\kappa,\zeta_{1-\alpha}^{2},c,c'$])}.
\end{cor}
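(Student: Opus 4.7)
My plan is to combine Propositions~\ref{prp:pruned} and~\ref{prp:post:localization} applied with preliminary estimator $\btau=\btau_f=\{2,\ldots,n\}$. The key elementary observation is that the full candidate vector satisfies $d_{H,1}(\btau_f,\tau^*_k)=0$ for every $k$; in particular condition~\eqref{eq:definition_well_localization_tau} holds automatically for every true change-point, removing the main hypothesis needed to apply the two propositions. All conclusions are stated on the event $\cB_{1-\alpha}$ defined just below~\eqref{eq:definition_psi_alpha}, whose probability is at least $1-\alpha$.

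Substituting $d_{H,1}(\btau_f,\tau^*_k)=0$ into~\eqref{eq:upper_distance_dP_tau} immediately gives, for every $(\kappa,\zeta_{1-\alpha}^2)$-high-energy change-point,
\[
d_{H,1}\bigl(\cP(\btau_f),\tau^*_k\bigr)\ \leq\ c\,\frac{\log(n\Delta_k^{2})+\zeta_{1-\alpha}^{2}}{\Delta_k^{2}}.
\]
By the high-energy definition~\eqref{eq:definition_high_energy} and a sufficiently large choice of $\kappa$, the right-hand side is strictly smaller than $[(\tau^*_k-\tau^*_{k-1})\wedge(\tau^*_{k+1}-\tau^*_k)]/4$. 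Hence the element $\tau_{(j)}\in\cP(\btau_f)$ closest to $\tau^*_k$ satisfies the hypothesis of Proposition~\ref{prp:post:localization}, whose conclusion applied at $\tau=\tau_{(j)}$ yields exactly (Loc[$\kappa,\zeta_{1-\alpha}^2,c,1$]) for $\cL\cP(\btau_f)$, since $d_{H,1}(\cL\cP(\btau_f),\tau^*_k)\leq |\cL(\tau_{(j)})-\tau^*_k|$. The deterministic bound (Detec[$\kappa,\zeta_{1-\alpha}^2,c$]) on $\cB_{1-\alpha}$ then follows by observing that $\cL(\tau_{(j)})\in\underline{I}_{\tau_{(j)}}$, whose width $2\widehat{r}_{\tau_{(j)}}$ admits a deterministic $O((\log(n\Delta_k^{2})+\zeta_{1-\alpha}^{2})/\Delta_k^{2})$ upper bound on $\cB_{1-\alpha}$ by the same CUSUM-threshold comparison that underlies the proof of Proposition~\ref{prp:pruned}.

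For (NoSp), Proposition~\ref{prp:pruned}(a) gives (NoSp) for $\cP(\btau_f)$ directly. To transfer this property to $\cL\cP(\btau_f)$, I would use three ingredients: the pruned intervals $\{\underline{I}_\tau\}_{\tau\in\cP(\btau_f)}$ are pairwise disjoint (by construction of $\cP$), each contains at least one true change-point on $\cB_{1-\alpha}$ (the remark just after~\eqref{eq:definition_r_l'}), and $\cL$ maps each $\tau\in\cP(\btau_f)$ into its own interval $\underline{I}_\tau$. An ordering argument along the real line, combined with the detection bound established above, then precludes two elements of $\cL\cP(\btau_f)$ from lying in the same midpoint segment $((\tau^*_{k-1}+\tau^*_k)/2,(\tau^*_k+\tau^*_{k+1})/2]$, and the boundary cases $k=1,K$ are handled symmetrically.

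The main obstacle is this last ordering step: a priori, a single pruned confidence interval may straddle several true change-points, so one must check carefully that the local-improvement step cannot send the refined estimate across a midpoint into a neighbouring segment. This is handled by invoking the sharp deterministic localization bound for $\cL$ established for (Detec), which pins each $\cL(\tau_{(j)})$ to a $c/\Delta_k^{2}$-neighbourhood of its target true change-point and thus respects the midpoint partition of $\{2,\ldots,n\}$. With this in hand, the three properties (NoSp), (Detec), (Loc) follow.
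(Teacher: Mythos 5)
Your overall structure is exactly the one the paper intends: observe that $d_{H,1}(\btau_f,\tau^*_k)=0$ for every $k$, so the well-localization condition~\eqref{eq:definition_well_localization_tau} is free, and then import Propositions~\ref{prp:pruned} and~\ref{prp:post:localization}. Your derivation of \textbf{(Detec)} via the deterministic width bound on $\underline{I}_{\tau_{(j)}}$ and your derivation of \textbf{(Loc)} via Proposition~\ref{prp:post:localization} applied at the nearest pruned point are both correct and are what the paper does implicitly (the paper states the corollary and says it ``follows from'' the two propositions without writing the argument out).

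The concern is the \textbf{(NoSp)} transfer from $\cP(\btau_f)$ to $\cL\cP(\btau_f)$, which you rightly identify as the delicate step. Two remarks. First, a minor inaccuracy: a pruned interval $\underline{I}_\tau$ cannot ``straddle several true change-points'' -- since $\underline{I}_\tau$ is symmetric about $\tau\in S_m$ and contains at least one true change-point, the symmetry forces it to contain $\tau^*_m$ and (by the same argument) rules out $\tau^*_{m\pm1}$; so each pruned interval contains exactly one true change-point. It can, however, overlap adjacent midpoint segments. Second, and more substantively, your proposed patch -- invoking the $c/\Delta_k^2$-neighbourhood bound from \textbf{(Detec)} to pin $\cL(\tau)$ inside the correct midpoint segment -- only applies when $\tau^*_k$ is a $(\kappa,\zeta_{1-\alpha}^2)$-high-energy change-point. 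The \textbf{(NoSp)} property must hold for \emph{every} $k$, including those whose change-points have low energy and for which no localization guarantee exists. For such $k$, a pruned interval centred in $S_m$ but extending past a midpoint, combined with an $\cL$-refinement that drifts into the neighbouring segment, is not excluded by your argument; and since the disjointness invariant of $\cP$ concerns the intervals $\underline{I}_{\tau}$ for $\tau\in\cP(\btau_f)$, not the intervals $\underline{I}_{\cL(\tau)}$, the symmetry trick used in the proof of Proposition~\ref{prp:pruned}(a) does not carry over verbatim to $\cL\cP$. Closing this requires an additional observation about the relation between $\tau$, $\cL(\tau)$, and the unique $\tau^*_{m}$ in $\underline{I}_{\tau}$ that is uniform over all energy levels; as stated, your proof has a gap there.
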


In summary, $\cL\cP(\btau_f)$ achieves all the optimality performances specified in Section~\ref{sec:specifications}. In particular, its performances with respect to Hausdorff and Wasserstein risks are similar to that of the penalized least-squares estimators (see \eqref{eq:Wasserstein} and \eqref{eq:Haussdorf}).

\medskip 

\noindent 
{\bf Computational complexity.} In worst case, computing $\cL\cP(\btau_f)$ requires $O(n^2)$ operations. If we slightly modify the definition of $\widehat{r}_{\tau}$ by considering $\widehat{r}^{(d)}_{\tau}$ taking values in the dyadic set $\mathbb{D}:= \{1,2,4,\ldots, 2^{\lfloor\log_2 n\rfloor}\}$ and if we take a confidence interval $\underline{I}^{(d)}_\tau$ based on $\widehat{r}^{(d)}_{\tau}$,  Proposition~\ref{prp:pruned} and Corollary~\ref{cor:complete_change-point_vector} still remain valid for the corresponding Pruned and Locally improved $\btau_f$ denoted by $\cP^{(d)}(\btau_f)$ and $\cL\cP^{(d)}(\btau_f)$  respectively, to the price of slightly worse numerical constants $c$ and $\kappa$. Besides, $\cL\cP^{(d)}(\btau_f)$ can be computed in $O(n\log n)$ operations with a $O(n)$ space complexity. Indeed, for $\tau$ in  $(r,n-r)$,  $\bC(\bY,\bt^{(\tau+1),r})- \bC(\bY,\bt^{\tau,r})$ only depends on $Y_{\tau}$, $Y_{\tau+r}$ and $Y_{\tau-r}$. As a consequence, if all the $\bC(\bY,\bt^{\tau,r})$ are stored, then $\widehat{r}_{\tau+1}^{(d)}$ can be computed in $O(\log n)$ operations which results in $O(n\log n)$ operations to compute all the confidence intervals $\underline{I}^{(d)}_\tau$. Then, checking whether each $\underline{I}^{(d)}_{\tau_{(l)}}$ intersects $\bigcup_{j>l }\underline{I}^{(d)}_{\tau_{(j)}}$ can be done in $O(\log n)$ operations using a binary search algorithm. Finally, the local improvement step computational complexity is at worst linear.
In summary, the total  complexity is  $O(n\log(n))$ and as low as the one of some Binary Segmentation algorithms (see ~\cite{fryzlewicz2018tail,kovacs2020seeded}) or multiscale MOSUM~\cite{cho2019localised}, while it enjoys the targeted optimality properties {\bf (NoSp)}, {\bf (Detec)}, and {\bf (Loc)}.

\subsection{A simple global confidence region}

Given a change-point vector $\btau$, we define the global confidence interval $\tilde{I}^{(g)}_{\btau}$ as the union of the confidence intervals $\underline{I}_{\tau_l}$ introduced in  \eqref{eq:definition_r_l'}.
\beq\label{general_confidence_interval}
\tilde{I}^{(g)}_{\btau}= \cup_{l\in \{1,\ldots, |\btau|\}}\underline{I}_{\tau_l}\enspace . 
\eeq
It follows from the definition \eqref{eq:definition_r_l'} of $\underline{I}_{\tau}$ that, with probability higher than $1-\alpha$,  each segment  of $\tilde{I}^{(g)}_{\btau}$ contains at least one true change-point. For suitable estimators $\btau$ we have the following.

\begin{cor} \label{cor:general_confidence_interval}
Let $\btau$ be one of the three following estimators:
\begin{itemize}
\item[(i)] the penalized  least-squares estimator 
$\widehat{\btau}$ with large tuning parameter $L$ (as in Proposition \ref{prp:rough_analysis}),
\item[(ii)] the post-processed least-squares estimator $\cL\cP(\widehat{\btau})$ with $L\geq 1$,
\item[(iii)] the post-processed complete vector $\cL\cP(\btau_f)$.
\end{itemize}
Then with probability higher than $1-\alpha$, $\tilde{I}^{(g)}_{\btau}$ satisfies:
\begin{enumerate}
 \item[(a)] Each segment  of $\tilde{I}^{(g)}_{\btau}$ contains at least one true change-point ;
 \item[(b)] Each $(\kappa,\zeta^2_{1-\alpha})$-high energy change-point $\tau^*_k$ belongs to $\tilde{I}^{(g)}_{\btau}$. Besides, the width of the  corresponding segment in $\tilde{I}^{(g)}_{\btau}$ is less or equal to 
 \[
 c \frac{\log\left(n\Delta_k^2\right)+ \zeta^2_{1-\alpha}}{\Delta_k^{2}}\enspace .   
 \]
\end{enumerate}
\end{cor}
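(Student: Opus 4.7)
My plan is to identify, for each of the three choices of $\btau$, a good event $\cE \subseteq \cB_{1-\alpha}$ of probability at least $1-\alpha$ on which $\btau$ simultaneously satisfies (NoSp) and (Detec$[\kappa, \zeta^2_{1-\alpha}, c]$) for the claimed $\kappa$ and some absolute constant $c$. For case~(iii) this is precisely Corollary~\ref{cor:complete_change-point_vector}. For case~(i) I would intersect $\cB_{1-\alpha}$ with the event $\cA_q$ of Proposition~\ref{prp:rough_analysis}, with $q$ chosen proportional to $\zeta^2_{1-\alpha}$ so that $\P(\cA_q)\geq 1-c''\alpha$ (numerical constants being absorbed by enlarging $c$). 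For case~(ii) the analogous event results from combining Propositions~\ref{prp:minimal_penalty_analysis}, \ref{prp:pruned}, and~\ref{prp:post:localization}.

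The key geometric fact underlying both (a) and (b) is that on $\cB_{1-\alpha}$, each $\underline{I}_\tau$ contains the true change-point closest to $\tau$. Indeed, writing $d_\tau := d_{H,1}(\btau^*, \tau)$, for every $r \leq d_\tau$ the window $[\tau-r+1, \tau+r-1]$ contains no $\tau^*_j$, so $\bC(\bY, \bt^{(\tau,r)}) = \bN(\bt^{(\tau,r)})$ and this stays below the CUSUM threshold on $\cB_{1-\alpha}$; consequently $\widehat{r}_\tau \geq d_\tau + 1$, and the closest $\tau^*_j$ to $\tau$ is included in $\underline{I}_\tau$. Part (a) follows immediately: each maximal segment of $\tilde{I}^{(g)}_{\btau}$ contains some $\underline{I}_{\tau_l}$ and hence at least one true change-point.

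For part (b), fix a $(\kappa, \zeta^2_{1-\alpha})$-high-energy change-point $\tau^*_k$ and set $\ell_k := (\tau^*_{k+1} - \tau^*_k) \wedge (\tau^*_k - \tau^*_{k-1})$. By (Detec) there is $\tau_l \in \btau$ with $|\tau_l - \tau^*_k| \leq d^*_k := c_1 (\log(n\Delta_k^2) + \zeta^2_{1-\alpha})/\Delta_k^2 \leq \ell_k/2$, so $\tau^*_k$ is the closest true change-point to $\tau_l$ and therefore belongs to $\underline{I}_{\tau_l} \subseteq \tilde{I}^{(g)}_{\btau}$ by the previous paragraph. To bound $\widehat{r}_{\tau_l}$ from above, set $r^{**} := C_0 d^*_k$ for a sufficiently large absolute constant $C_0$. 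Using $r^{**} \geq 2 |\tau_l - \tau^*_k|$ and $r^{**} + |\tau_l - \tau^*_k| < \ell_k$ (both obtained once $C_0$ is chosen in accordance with the constant $\kappa$ in the high-energy threshold, as discussed below), the window $[\tau_l - r^{**}, \tau_l + r^{**}]$ contains $\tau^*_k$ and no other true change-point, and a direct computation yields
\[
\bE(\btheta, \bt^{(\tau_l, r^{**})}) = \frac{|\Delta_k|\,(r^{**} - |\tau^*_k - \tau_l|)}{\sqrt{2 r^{**}}} \;\geq\; |\Delta_k|\sqrt{r^{**}/8}.
\]
For $C_0$ large enough this exceeds $2\sqrt{2\log(2n/r^{**})} + 2\zeta_{1-\alpha}$, so on $\cB_{1-\alpha}$ we have $|\bC(\bY, \bt^{(\tau_l, r^{**})})| \geq \bE - |\bN| \geq \sqrt{2\log(2n/r^{**})} + \zeta_{1-\alpha}$, forcing $\widehat{r}_{\tau_l} \leq r^{**}$ and hence $|\underline{I}_{\tau_l}| \leq 2 C_0 d^*_k$.

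It remains to upgrade the bound on $|\underline{I}_{\tau_l}|$ to a bound on the width of the whole segment of $\tilde{I}^{(g)}_{\btau}$ containing $\tau^*_k$. Property (NoSp) ensures that at most one element of $\btau$ lies in the territory $J_k := ((\tau^*_{k-1} + \tau^*_k)/2, (\tau^*_k + \tau^*_{k+1})/2]$, namely $\tau_l$. Hence any other $\tau_{l'}$ whose $\underline{I}_{\tau_{l'}}$ could merge with $\underline{I}_{\tau_l}$ must lie outside $J_k$ at distance at least $\ell_k/2 - d^*_k$ from $\tau_l$, and when such merging does occur, applying the previous signal-to-noise computation to $\tau^*_{k\pm 1}$ forces $|\tau^*_{k\pm 1} - \tau^*_k|$ itself to be of order $d^*_k$; the merged segment therefore remains of width $O(d^*_k)$. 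The main technical obstacle lies in the signal-to-noise calculation in paragraph three: the constant $C_0$ in $r^{**}$ (required large to beat twice the noise threshold) and the constant $c_1$ in $d^*_k$ (coming from (Detec)) must be compatible with the high-energy constant $\kappa$ so that $(C_0+1)\,d^*_k \leq \ell_k$. The definition~\eqref{eq:definition_high_energy} of high-energy change-points was tailored precisely so that, for $\kappa$ chosen large enough as an absolute constant, these competing constraints can be simultaneously satisfied.
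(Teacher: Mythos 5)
The paper omits this proof entirely, so you cannot be judged against it directly; what you write is mostly on the right track, but the final paragraph has a genuine gap. Your key geometric observation (under $\cB_{1-\alpha}$ each $\underline{I}_\tau$ contains the true change-point nearest to $\tau$, because at radii $r\le d_{H,1}(\btau^*,\tau)$ the window is signal-free and the CUSUM is pure noise) is correct, as is the signal-to-noise computation bounding $\widehat{r}_{\tau_l}\le C_0 d^*_k$ when $\tau_l$ detects a high-energy $\tau^*_k$: the energy formula $\bE(\btheta,\bt^{(\tau_l,r^{**})})=|\Delta_k|(r^{**}-|\tau^*_k-\tau_l|)/\sqrt{2r^{**}}$ is right, and the constraint that $(C_0+1)d^*_k\le\ell_k$ can indeed be arranged by taking $\kappa$ large enough in~\eqref{eq:definition_high_energy}, as you note. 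This settles part~(a) and the bound on $|\underline{I}_{\tau_l}|$.

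The problem is the merging argument. You claim that if some $\underline{I}_{\tau_{l'}}$ with $\tau_{l'}$ outside $J_k$ merges with $\underline{I}_{\tau_l}$, then ``applying the previous signal-to-noise computation to $\tau^*_{k\pm 1}$ forces $|\tau^*_{k\pm 1}-\tau^*_k|$ to be of order $d^*_k$.'' But that computation requires $\tau^*_{k\pm 1}$ to be $(\kappa,\zeta^2_{1-\alpha})$-high-energy, which is nowhere in the hypothesis of the corollary. The paper's whole point is to allow low-energy nuisance change-points. If $\tau^*_{k+1}$ is low-energy, $\tau_{l'}\in J_{k+1}$ can sit far from $\tau^*_{k+1}$, $\widehat{r}_{\tau_{l'}}$ can be large, and $\underline{I}_{\tau_{l'}}$ can reach back across $\tau^*_k$ — you need a separate argument to rule this out, not an appeal to (Detec) applied to the neighbor. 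A related omission: for cases~(ii)--(iii) the pruning step only guarantees that the intervals $\underline{I}_{\cP(\btau)_l}$ are pairwise disjoint; the corollary concerns $\btau=\cL\cP(\widehat{\btau})$ or $\cL\cP(\btau_f)$, and after the local improvement step the new intervals $\underline{I}_{\cL(\cP(\btau)_l)}$ need not remain disjoint. You cite the disjointness-from-pruning implicitly without addressing this shift. In short: the proof of (b) for the stated union-of-intervals object needs more care in handling adjacency to low-energy change-points, or else one should make explicit that ``the corresponding segment'' is the single $\underline{I}_{\tau_l}$ detecting $\tau^*_k$ (under which reading your first three paragraphs suffice and the last one is unnecessary).
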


The proof is straightforward and is therefore omitted. 
In the above corollary, all the change-points do not necessarily have a high energy but we allow such change-points not to belong to $\tilde{I}^{(g)}_{\btau}$. This contrasts with the results of Frick et al.~\cite{frick2014multiscale} which require that all the change-points have a high-energy: $\bE_{min}^2(\btheta)\geq \log[n/(\min_{k\in\{0,\ldots,K\}} (\tau^*_{k+1}-\tau_{k}^*))]$.

If we had assumed that all the energies of the change-points were high, we could improve the tuning confidence region $\tilde{I}^{(g)}_{\btau}$ to segments of size $\log K/\Delta_k^{2}$ by plugging an estimator of $\Delta_k$ in Hausdorff bounds such as ~\eqref{eq:Haussdorf}. Similarly, one could also obtain a confidence interval of width $c_{\alpha}/\Delta_k^{2}$ for a specific change-point $\tau^*_k$ by plugging an estimator of $\Delta_k$ in  Propositions~\ref{prp:local_analysis} or \ref{prp:post:localization}.

\section{Discussion}\label{sec:discussions}

\subsection{Adaptation to $K\leq 1$ and $K>1$}

In the later parts of the manuscript, we allowed the number $K$ to lie between $0$ and $n-1$. Still, when $K=1$, our multiple change-point procedures only detect $\tau_1^*$ when its energy $\bE^2_1(\btheta)$ is large compared to $\log({n}/{((\tau_1^*-1)\wedge (n+1-\tau_1^*))})$. In Section~\ref{sec:one_change_point}, we established that it is possible to detect and localize change-points $\tau_1^*$ for energies that are logarithmically smaller. It is possible to combine the results 
to achieve simultaneous optimality for both $K\leq 1$ and $K>1$ with the following scheme. First, one computes an estimator $\widehat{\btau}$ using either the penalized criterion~\eqref{eq:definition_criterion} or the two-step methods $\cL\cP(\btau_f)$. If $|\widehat{\btau}|>0$, then $\widehat{\btau}$ is left unchanged. If $|\widehat{\btau}|=0$ (no change-point detected), the null hypothesis of non-existence of a change-point is tested (Section~\ref{sec:CPDTest}). If the test accepts the null,  $\widehat{\btau}$ is left unchanged again (no-change point detected). If the test rejects the null, one uses for $\widehat{\btau}$ the one-change point least-squares  estimator of Section~\ref{sec:least_one}. One can then easily prove that the resulting $\widehat{\btau}$ is simultaneously optimal for both settings. 

\subsection{Extensions: unknown variance,  heavy tail distributions and dependences}

Considering the model \eqref{eq:univariate_model}, where the noise $\bepsilon$ has a Gaussian distribution, but with an unknown variance, the unknown variance has to be estimated: the resulting estimator can then be plugged-in to rescale the data $\bY$. Suppose that we are given an estimator $\widehat{\sigma}$ (possibly depending on $\bY$) such that the ratio $\widehat{\sigma}/\sigma$ belongs to $[1,2]$ on an event $\mathcal{E}$ of high probability. Then, all the properties of our multiscale least-squares and post-processing procedures remain valid  on the event $\cE$. Some practical recommendations for estimating $\sigma$ are given in~\cite{fryzlewicz2020detecting}. As an alternative to estimating $\sigma$, one could calibrate the multiplicative constant $L$ of the penalized least-squares estimator $\widehat{\btau}$ using the slope heuristic~\cite{arlot2019minimal}. Indeed, the slope heuristic is partly validated by Propositions~\ref{prp:negative_minimal_penalty} and \ref{prp:minimal_penalty_analysis}.

\medskip

The only stochastic ingredient in the first step analysis of the multiscale penalized estimator (Propositions~\ref{prp:rough_analysis} and \ref{prp:minimal_penalty_analysis}) and in the analysis of the pruning step of the post-processing procedure (Proposition~\ref{prp:pruned}) is an uniform control~\eqref{eq:definition_N} of the noise variables $\bN(\bt)$. In the presence of dependences between the noise variables (as e.g. in~\cite{cho2019localised}), these stochastic controls become more challenging and the constants need to be adapted. Upon establishing such controls, the remainder of the proofs and the conclusion of the propositions remain unchanged (up to other constants). If the noise tails are heavier than sub-Gaussians, then uniform controls of the form~\eqref{eq:definition_psi_alpha} do not hold anymore. Relying on chaining arguments as in the proof of Lemma~\ref{lem:concentration:N_t}, we would obtain larger uniform bounds for $|\bN(t)|$ (in the spirit of \cite{cho2019localised}). Hence, the form~\eqref{eq:definition_penalite} of the penalty $\pen_0(\btau,q)$ and thresholds for confidence intervals~\eqref{eq:definition_r_l} have to be accommodated in a similar fashion to~\cite{cho2019localised}. As for the local analyses (Propositions~\ref{prp:local_analysis}, \ref{prp:local_analysis_2}, and \ref{prp:post:localization}), it seems of reach to accommodate the presence of mild dependences or heavier tails and still derive a local error of order $\Delta_k^{-2}$, but the tail distribution of the error will not be exponential anymore. We leave this as an open problem.

\subsection{Exact constant for multiple change-point detection}

In the single change-point problem, we establish that the exact leading constant for detection equals $\sqrt{2}$, whereas all of our detection results in the multiple change-point setting are stated up to numerical constants. 

Let us shortly discuss the setting of segment detection. Segment detection is a specific instance of the change-point problem where $K$ is even, $\mu_1=\mu_3=\ldots =\mu_{K+1}=0$ and, for $\ell=1,\ldots, K/2$,  $\tau^*_{2\ell}-\tau^*_{2\ell-1}$ is much smaller than $\tau^*_{2\ell-1}-\tau^*_{2\ell-2}$. In other words,  the signal $\btheta$ is null  except at $K/2$ segments that are well-spaced. It has been established in \cite{Kou2017,Chan2013} that a segment (and the two corresponding change-points $\tau^*_{2\ell-1}$ and $\tau^*_{2\ell}$) can be confidently detected as long as $\tau^*_{2\ell-1}$ is a $(\kappa,q)$-high energy change point with $\kappa=1$, as defined in~\eqref{eq:definition_high_energy}. We conjecture that the constant $\kappa=1$ is not sufficient in the more general change-points detection problem so that the segment detection problem is intrinsically easier than change-point detection. Still, we leave this as an open question.

\subsection{Do other classical procedures satisfy the optimality requirements~({\bf NoSp}), ({\bf Detec}) and ({\bf Loc})?}

While we have introduced two change-point procedures meeting the specific requirements, this does not imply that they outperform other procedures. As discussed in Section~\ref{sec:least_squares}, the BIC-penalized, when tuned suitably, achieves ({\bf NoSp}) and variants ({\bf Detec}$_{\log(n)}$) and ({\bf Loc}$_{\log(n)}$) of ({\bf Detec}) and ({\bf Loc}) where only change-points whose energy is high compared to $\sqrt{\log(n)}$ are detected and localized at the parametric rate. 

At the very least, achieving ({\bf Detec}) (instead of ({\bf Detec}$_{\log(n)}$)) is possible only if the procedure uses scale-adaptive threshold as in Frick et al.~\cite{frick2014multiscale} or Cho and Kirch~\cite{cho2019localised}. Still, some work is required to check whether their methods accommodate well low-energy change-points. Investigating the proofs of Baranowski et al.~\cite{baranowski2019narrowest} and Kov{\'a}cs  et al.~\cite{kovacs2020seeded}, we believe that both NOT and Seeded Binary Segmentation achieve ({\bf NoSp}) and ({\bf Detec}$_{\log(n)}$), but it is not straightforward to check whether ({\bf Loc}) holds or not. 
\medskip

Our desired properties of the form ({\bf NoSp}), ({\bf Detec}) and ({\bf Loc})  are more restrictive than typical recent results in the literature by (i) tightening the logarithms (ii) requiring a parametric localization rate of the detection threshold and (iii) allowing for possibly many small jumps that are undetectable and possibly act as nuisance parameters. This  last requirement is more in line with the literature on related statistical fields such as multiple testing or high-dimensional variable selection. Such desiderata can be extended to many other change-points settings beside the univariate case including change-points analysis  in the multivariate~\cite{Wang2018} or kernel~\cite{Arlot2019} frameworks. We hope that it will stimulate a research agenda towards optimal procedures in these settings.


\section{Proofs for a single change-point}

For any two integers $t_1,t_2$ such that $1\le t_1<t_2\leq n+1$, we denote
\beq\label{eq:definition_Z}
Z_{t_1:t_2} :=  \sum_{i=t_1}^{t_2-1}\epsilon_i\enspace ,  
\eeq
the partial sum. Besides, we shall often write $\bE_k$ for $\bE_k(\btheta)$ to alleviate the notation.

\subsection{Proof of Proposition~\ref{prp:lower_test_one_change-point}}

Consider the dyadic collection $\cT=\{ 2^l; l= 1, \ldots , \lfloor \log_2(n)/2\rfloor \}$ where $\log_2(n)=\log(n)/\log(2)$. In the sequel, we denote $a= \kappa^2/[4(1-\kappa)]$, and for $\tau$ in $\cT$, we define $\btheta(\tau)$ in $\Theta_1$ by $\theta^{(\tau)}_i= 0$ for $i\geq \tau$ and $\theta^{(\tau)}_i= \sqrt{2(1-\kappa)\log\log(n)/(\tau-1)}$ for any $i<\tau$. 
 As a consequence, $\|\btheta(\tau) \| = \sqrt{2(1-\kappa)\log\log(n)}$  and the energy $\bE_1(\btheta(\tau))$ satisfies $\bE_1^2(\btheta(\tau))/[2(1-\kappa)\log\log(n)]\in [1-n^{-1/2}; 1]$. To alleviate the notation, we write $\mathbb{P}_{\tau}$ for the distribution $\mathbb{P}_{\btheta(\tau)}$, while $\P_{0}$ still stands for the distribution of $\bY$ when $\btheta=0$. 
 
 We claim that  testing whether $\Theta=0$ versus $\Theta\in \{\Theta^{(\tau)},\, \tau\in \cT\}$ with small type I and Type II error probabilities is impossible. 
 More precisely we shall prove that there exists a constant $c>0$ such that, for any test $\mathscr{T}$,
 \beq\label{eq:objectif_1}
  \P_0[\mathscr{T}=1]+ \max_{\tau \in \cT}\P_{\tau}[\mathscr{T}= 0]\geq 1 - c\log^{-a/2}(n)\enspace . 
 \eeq
Note that~\eqref{eq:objectif_1} implies the result of the proposition. 
Following Le Cam's approach~\cite{baraud02}, we define the mixture probability $\mathbf{P}= |\cT|^{-1}\sum_{\tau \in \cT}\P_{\tau}$. 
Since 
\begin{eqnarray*}
 \P_0[\mathscr{T}=1]+ \max_{\tau \in \cT}\P_{\tau}[\mathscr{T}= 0]&\geq &  \P_0[\mathscr{T}=1]+\mathbf{P}[\mathscr{T}= 0]\\
 &\geq &  1-\abs{\P_0[\mathscr{T}=1]-\mathbf{P}[\mathscr{T}= 1]}\enspace,
 \end{eqnarray*}
inequality~\eqref{eq:objectif_1} is satisfied for all test $\mathscr{T}$ if the total variation distance between $\|\P_0-\mathbf{P}\|_{TV}$ is less or equal to $c\log^{-a/2}(n)$. Writing $L_{\tau}$ the likelihood ratio of $\P_{\tau}$ over $\P_0$ and $L= |\cT|^{-1}\sum_{\tau \in \cT}L_{\tau}$ the likelihood ratio of $\mathbf{P}$ over $\P_0$, this is in turn equivalent to 
\beq\label{eq:objectif_2}
\|\P_0-\mathbf{P}\|_{TV} = \frac{1}{2}\E_0[|L-1|]\leq c\log^{-a/2}(n)\enspace . 
\eeq 
To upper bound the first moment of $|L-1|$ it is usual to apply Cauchy-Schwarz inequality and to control the second moment of the likelihood ratio. Unfortunately, the corresponding second moment is small only for $\kappa >1/2$. Here, we rely on a slight variation of this approach originally introduced by Ingster~\cite{ingster_suslina} and strengthened in~\cite{arias2014community}. We introduce a thresholded likelihood $\tilde{L}\leq L$ such that $\E_0[|L -\tilde{L}|]$ is small enough and $\E_0[\tilde{L}^2]$ is close to one.

 For any $\tau\in \cT$, define the event
 $\Gamma_{\tau}=\big\{\langle \bY , \btheta(\tau)\rangle \leq \|\btheta(\tau) \|^2 + \|\btheta(\tau) \|\sqrt{2a\log\log(n)}\big\}$. Under $\P_{\tau}$, $\langle \bY , \btheta(\tau)\rangle$ is distributed as a normal random variable with mean $\|\btheta(\tau)\|^2$ and variance $\|\btheta(\tau)\|^2$. As a consequence, we have $\P_{\tau}[\Gamma_{\tau}^c]= \overline{\Phi}(\sqrt{2a\log\log(n)})\leq \log^{-a}(n)$.
 
 Define the thresholded likelihood $\tilde{L}_{\tau}= L_{\tau}\1_{\Gamma_{\tau}}$ and $\tilde{L}=|\cT|^{-1}\sum_{\tau\in \cT} \tilde{L}_{\tau}$. We have $ \tilde{L}\leq L$ and
 \[
  \E_0[L-\tilde{L}]= \frac{1}{|\cT|}\sum_{\tau\in \cT} \E_0[L_{\tau}\1_{\Gamma_{\tau}^c}]= \frac{1}{|\cT|}\sum_{\tau\in \cT}\P_{\tau}[\Gamma_{\tau}^c]\leq \log^{-a}(n)\enspace . 
 \]

 Let us now upper bound the total variation distance between $\mathbb{P}_0$ and $\mathbf{P}$ by introducing the thresholded likelihood. 
 \begin{eqnarray}
  \E_0[|L-1|]&\leq &\E_{0}[|L-\tilde{L}|]+ \E_{0}[|\tilde{L}-1|]\leq  \E_0[L-\tilde{L}]+ \E^{1/2}_{0}[|\tilde{L}-1|^2] \nonumber \\
  &\leq & \E_{0}[L-\tilde{L}]+ \left[\E_0[\tilde{L}^2] -1 + 2 \E_{0}[L-\tilde{L}]\right]^{1/2} \nonumber \\
  &\leq & \frac{3}{\log^{a/2}(n)}+ \left[\E_0[\tilde{L}^2] -1 \right]^{1/2}\enspace ,  \label{eq:upper_second_threshold1}
 \end{eqnarray}
where we used that $L\geq \tilde{L}$  and Cauchy-Schwarz inequality. Let us work out the second thresholded moment. 
 \begin{eqnarray}\nonumber
 \E_0\left[\tilde{L}^2 \right]  &=&  \frac{1}{|\cT|^2}\sum_{\tau,\ \tau'\in \cT}\E_0\left[\tilde{L}_\tau \tilde{L}_{\tau'}\right]\\ \nonumber
 &\leq & \frac{1}{|\cT|^2}\sum_{\tau,\ \tau'\in \cT}\left[\1_{|(\tau-1)/(\tau'-1)|\in [\tfrac{1}{16}; 16]}\E_0\left[\frac{\tilde{L}^2_\tau +  \tilde{L}^2_{\tau'}}{2}\right]+ \1_{|(\tau-1)/(\tau'-1)|\notin [\tfrac{1}{16};16]}\E_0\left[L_\tau L_{\tau'}\right]\right] \nonumber\\
 &\leq & \frac{1}{|\cT|}\sum_{\tau\in \cT}\left[9\E\left[\tilde{L}_{\tau}^2\right] + \frac{1}{|\cT|}\sum_{|(\tau-1)/(\tau'-1)|\notin [\tfrac{1}{16};16]}\E_0\left[L_\tau L_{\tau'}\right]\right] \label{eq:upper_second_threshold2}\enspace , 
 \end{eqnarray}
 where we used Young's inequality and $L_{\tau} \leq L_{\tau'}$ in the second inequality. Let us first work out the right-hand side term in~\eqref{eq:upper_second_threshold2}. For $\tau'>\tau$, standard computations lead to 
 \[
  \E_0\left[L_\tau L_{\tau'}\right]= \exp\left[\langle \btheta(\tau),\btheta(\tau') \rangle\right]= e^{2(1-\kappa)\log\log(n)\sqrt{\frac{\tau-1}{\tau'-1}}} \leq e^{2\log\log(n)\sqrt{\frac{\tau-1}{\tau'-1}}}\ . 
\]
If $(\tau'-1)/(\tau-1) \geq \log^2(n)$, then we get
\[
\E_0\left[L_\tau L_{\tau'}\right]\leq e^{2\log\log(n)/\log(n)}\leq 1+ \frac{4\log\log(n)}{\log(n)}\enspace , 
\]
for $n$ large enough. 
When $(\tau'-1)/(\tau-1) >16$, then $\E_0\left[L_\tau L_{\tau'}\right]\leq \sqrt{\log(n)}$. As a consequence, 
\begin{eqnarray}\nonumber 
 \frac{1}{|\cT|}\sum_{|\tau/\tau'|\notin [\tfrac{1}{16};16]}\E_0\left[L_\tau L_{\tau'}\right]&\leq& 1+ \frac{4}{\log\log(n)}+ \frac{\sqrt{\log(n)}}{|\cT|} \big|\big\{\tau'\ , \,  \max(\frac{\tau'-1}{\tau-1}; \frac{\tau-1}{\tau'-1})\in [16;\log^2(n)]\big\}\big|\\
 &\leq& 1 + \frac{4\log\log(n)}{\log(n)}+ c \frac{\log\log(n)}{\sqrt{\log(n)}} \leq1 + c' \frac{\log\log(n)}{\sqrt{\log(n)}}\enspace . \label{eq:upper_1_second}
\end{eqnarray}
The second thresholded moment is upper bounded in the following lemma
\begin{lem}\label{lem:second_threshold}
For any $\tau\in \cT$, we have 
\[
 \E_0\left[L_\tau^2\1_{\Gamma_{\tau}}\right]\leq \log(n)^{1-a}\enspace ,
\]
\end{lem}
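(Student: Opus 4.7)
The plan is to reduce the whole computation to a one-dimensional Gaussian integral along the direction $\btheta(\tau)$, and then apply the standard Mills' tail bound.

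First I would write the likelihood ratio explicitly. Under $\P_0$, $\bY\sim\mathsf{N}(0,\bI_n)$, so
\[
L_\tau(\bY)=\exp\!\left(\langle \bY,\btheta(\tau)\rangle-\tfrac12\|\btheta(\tau)\|^2\right),\qquad L_\tau^2(\bY)=\exp\!\left(2\langle \bY,\btheta(\tau)\rangle-\|\btheta(\tau)\|^2\right).
\]
Introducing the one-dimensional standardized variable $W=\langle \bY,\btheta(\tau)\rangle/\|\btheta(\tau)\|$, which is $\mathsf{N}(0,1)$ under $\P_0$, and writing $V=\|\btheta(\tau)\|=\sqrt{2(1-\kappa)\log\log(n)}$ and $b=\sqrt{2a\log\log(n)}$, the event $\Gamma_\tau$ rewrites as $\{W\leq V+b\}$.

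Second, the expectation is a pure one-dimensional Gaussian integral that one evaluates by completing the square:
\[
\E_0\!\left[L_\tau^2\1_{\Gamma_\tau}\right]=e^{-V^{2}}\int_{-\infty}^{V+b}\frac{e^{2Vw-w^{2}/2}}{\sqrt{2\pi}}\,dw=e^{V^{2}}\int_{-\infty}^{V+b}\frac{e^{-(w-2V)^{2}/2}}{\sqrt{2\pi}}\,dw=e^{V^{2}}\,\overline{\Phi}(V-b).
\]

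Third, I would apply the elementary Gaussian tail bound $\overline{\Phi}(x)\leq e^{-x^{2}/2}$ valid for $x\geq 0$. The required positivity $V-b\geq 0$ is where the hypothesis $\kappa<2/3$ enters: indeed
\[
V^{2}-b^{2}=\log\log(n)\Bigl[2(1-\kappa)-\tfrac{\kappa^{2}}{2(1-\kappa)}\Bigr]=\frac{\log\log(n)}{2(1-\kappa)}\bigl(2(1-\kappa)-\kappa\bigr)\bigl(2(1-\kappa)+\kappa\bigr)\geq 0
\]
exactly when $\kappa\leq 2/3$. Finally a direct plug-in gives
\[
V^{2}-\tfrac12(V-b)^{2}=\tfrac{V^{2}}{2}+Vb-\tfrac{b^{2}}{2}=(1-\kappa)\log\log(n)+\kappa\log\log(n)-a\log\log(n)=(1-a)\log\log(n),
\]
using $V^{2}/2=(1-\kappa)\log\log(n)$, $Vb=\kappa\log\log(n)$ and $b^{2}/2=a\log\log(n)$ with $a=\kappa^{2}/[4(1-\kappa)]$. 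Combining with the previous display yields $\E_0[L_\tau^{2}\1_{\Gamma_\tau}]\leq e^{(1-a)\log\log(n)}=\log(n)^{1-a}$, which is the claimed bound.

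The proof is essentially a routine Gaussian calculation; the only non-cosmetic point is the careful choice of the threshold $b=\sqrt{2a\log\log(n)}$ with $a=\kappa^{2}/[4(1-\kappa)]$, which has been made earlier in the proof precisely so that the three exponents $V^{2}/2$, $Vb$, $-b^{2}/2$ combine to $(1-a)\log\log(n)$. There is no real obstacle beyond keeping track of these constants and verifying that $V-b\geq 0$, which is what forces the restriction $\kappa\in(0,2/3)$ in the statement of Proposition~\ref{prp:lower_test_one_change-point}.
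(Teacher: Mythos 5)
Your proof is correct and follows essentially the same route as the paper's: reduce to the one-dimensional Gaussian along $\btheta(\tau)$, evaluate the thresholded second moment exactly as $e^{V^2}\overline{\Phi}(V-b)$, and apply $\overline{\Phi}(x)\leq e^{-x^2/2}$ for $x\geq 0$, with the restriction $\kappa<2/3$ guaranteeing $V-b\geq 0$. The constant bookkeeping ($V^2/2=(1-\kappa)\log\log n$, $Vb=\kappa\log\log n$, $b^2/2=a\log\log n$, hence exponent $1-a$) matches the paper exactly.
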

Since $|\cT|=\lfloor \log_2(n)\rfloor/2 $ and $a\leq 1/3$,  we conclude from~\eqref{eq:upper_second_threshold2}, \eqref{eq:upper_1_second}, and the last inequality that
$ \E_0[\tilde{L}^2]\leq 1+ c\log^{-a}(n)$ which, together with \eqref{eq:upper_second_threshold1} leads us to 
$ \E_0[|L-1|]\leq c\log^{-a/2}(n)$.
The result follows.

\begin{proof}[Proof of Lemma~\ref{lem:second_threshold}]
Let $Z$ denote  a standard random variable. Since under the null, $\langle \bY, \btheta(\tau)\rangle$ is distributed as $Z\|\btheta(\tau) \|$, we derive that
\beqn 
  \E_0\left[L_\tau^2\1_{\Gamma_{\tau}}\right]&=& \E\left[e^{2Z\|\btheta(\tau) \|^2 - \|\btheta(\tau) \|^2 }\1_{Z \leq \|\btheta(\tau) \|+ \sqrt{2a\log\log(n)}}\right]\\
  &= &\int_{-\infty}^{\sqrt{2\log\log(n)}[\sqrt{1-\kappa}+\sqrt{a}]}\phi\left(z- 2\sqrt{2(1-\kappa)\log\log(n)}\right)e^{2(1-\kappa)\log\log(n)}dz\\
  & =& \log^{2(1-\kappa)}(n)\overline{\Phi}\left[\sqrt{2\log\log(n)}[\sqrt{1-\kappa}- \sqrt{a}]\right]\\
  &\leq & \log(n)^{1- \kappa -a + 2\sqrt{(1-\kappa)a}}= \log(n)^{1-a}\enspace , 
\eeqn 
where we used in the last line that $a \leq (1-\kappa)$ for $\kappa < 2/3$, that $\overline{\Phi}(x)\leq e^{-x^2/2}$ for any $x\geq 0$ and the definition of $a$
\end{proof}

\subsection{Proof of Lemma~\ref{lem:LBCPE1}}
 For short, we write in this proof $\bP_{\tau}$ for $P_{\theta(\tau,\bmu)}$.
Fix $\tau=2$. Consider any $1/2\leq x\leq n/2-1 -4\Delta^{-2}$. 
Let $r$ denote the smallest integer larger than $4\Delta^{-2}+2x$.
Consider the test of assumptions 
\[
{\bf H0:}\ \tau^*=\tau\qquad \text{versus}\qquad {\bf H1:}\ \tau^*= \tau+r\enspace.
\] 
The Likelihood-ratio test rejects {\bf H0} when $T=r^{-1}\sum_{i=\tau}^{\tau+r-1}Y_i$ is larger than a threshold $t$. 
For symmetry reasons, the risk (sum of type I and type II error probabilities) of this test is minimal for  $t=(\mu_2+\mu_1)/2$.
If $T>t$, define $\widehat{\tau}=\tau+r$ and $\widehat{\tau}=\tau$ otherwise.
It holds that
\[
\mathbb{P}_{\tau}[\widehat{\tau}\neq \tau]+  \mathbb{P}_{\tau+r}[\widehat{\tau}\neq \tau+r]= 2\overline{\Phi}\left[\sqrt{r}\Delta/2\right]\enspace. 
\]
Let $\widehat{\tau}'$ denote any estimator of $\tau$.
Let $T'$ denote the test such that 
$T'=1$ iff $|\widehat{\tau}'-\tau|\geq r/2$.
Then
\begin{align*}
 \mathbb{P}_{\tau}[|\widehat{\tau}'- \tau|\geq r/2]+\mathbb{P}_{\tau+r}[|\widehat{\tau}'- \tau-r|\geq r/2]&\geq\mathbb{P}_{\tau}[T'=1]+\mathbb{P}_{\tau+r}[T'=0]\enspace.
\end{align*}
Let $t'$ denote the threshold such that $\mathbb{P}_{\tau}[T>t']=\mathbb{P}_{\tau}[T'=1]$.
By Neyman-Pearson's theorem, $\mathbb{P}_{\tau+r}[T'=0]\geq \mathbb{P}_{\tau+r}[T\leq t']$, hence,
\begin{align*}
 \mathbb{P}_{\tau}[|\widehat{\tau}'- \tau|\geq r/2]+\mathbb{P}_{\tau+r}[|\widehat{\tau}'- \tau-r|\geq r/2]&\geq \mathbb{P}_{\tau}[T>t']+\mathbb{P}_{\tau+r}[T\leq t']\\
&\geq \mathbb{P}_{\tau}[\widehat{\tau}\neq \tau]+  \mathbb{P}_{\tau+r}[\widehat{\tau}\neq \tau+r]\\
&= 2\overline{\Phi}\left[\sqrt{r}\Delta/2\right]\enspace.
\end{align*}
In particular, this leads us to
\[
 \inf_{\widehat{\tau}}\sup_{\tau'\in \{\tau, \tau+r\}}\mathbb{P}_{\tau'}[|\widehat{\tau}- \tau^*|\geq r/2]\geq  \overline{\Phi}\left[\sqrt{r}\Delta/2\right]\enspace. 
\]
Recall the following lower bound $\overline{\Phi}(x)\geq \tfrac{\phi(x)}{2x}$ if $x\geq 4$. Since $\sqrt{r} \Delta/2>2$ 
There exist absolute constants $c,c'>0$ such that
\beq\label{eq:localization}
\inf_{\widehat{\tau}}\sup_{\tau'\in \{\tau, \tau+r\}}\mathbb{P}_{\tau'}[|\widehat{\tau}- \tau^*|\geq r/2]\geq  c \frac{e^{-r\Delta^{2}/8}}{r^{1/2}\Delta}\geqslant c'e^{-c'r \Delta^{2}}\enspace .
\eeq
Combing back to the definition of $r$, we conclude the proof of  Lemma~\ref{lem:LBCPE1}.

\subsection{Deviations inequality and Laws of iterated logarithms}

The following results are non-asymptotic versions of the law of the iterated logarithm for sub-Gaussian random variables. 
We refer to~\cite{howard2018uniform} for proofs of (sharper versions) of Lemma~\ref{lem_lil_localiser}. 
For the sake of completeness, a proof of these lemmas is also provided in Section~\ref{sec:proof_lil}.

\begin{lem}
\label{lem:supsumGauss_log_iterated_0}
Let $\epsilon_1,\ldots,\epsilon_n$ be independent centered sub-Gaussian random variables such that $\E[e^{s\epsilon_i}]\leq e^{s^2/2}$, for any $i\geq 1$ and any $s>0$. 
Then, for any integer $d>0$, any $\alpha>0$ and any $x>0$, 
$$\P\left[\max_{k\in [d,(1+\alpha)d]} \frac{\sum_{i=1}^k \epsilon_i}{\sqrt{k}} \geq x\right]\leq \exp\left(- \frac{x^2}{2(1+\alpha)}\right).$$
\end{lem}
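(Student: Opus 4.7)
The plan is to reduce the supremum of the self-normalized partial sums to a plain supremum of partial sums, and then to control the latter by Doob's maximal inequality applied to the standard exponential supermartingale associated with $S_k=\sum_{i=1}^k\epsilon_i$.

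First I would exploit the one-sided nature of the statement. For any $k\geq d$, the inequality $S_k/\sqrt{k}\geq x$ forces $S_k\geq x\sqrt{k}\geq x\sqrt{d}$, so that
\[
\left\{\max_{k\in[d,(1+\alpha)d]}\frac{S_k}{\sqrt{k}}\geq x\right\}\subseteq\left\{\max_{k\leq N}S_k\geq x\sqrt{d}\right\},\quad N:=\lfloor(1+\alpha)d\rfloor\leq (1+\alpha)d.
\]
This removes the $\sqrt{k}$ normalization and reduces the problem to a uniform tail bound for the partial sums up to time $N$.

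Next I would introduce, for $s>0$ to be optimized, the process $M_k=\exp(sS_k-ks^2/2)$. The sub-Gaussian assumption gives $\E[e^{s\epsilon_{k+1}}\mid\mathcal{F}_k]\leq e^{s^2/2}$, so $(M_k)$ is a nonnegative supermartingale with $\E[M_0]=1$. For each $k\leq N$, the elementary identity $\{S_k\geq x\sqrt d\}=\{M_k\geq \exp(sx\sqrt d-ks^2/2)\}$ together with $ks^2/2\leq Ns^2/2$ gives the inclusion
\[
\{S_k\geq x\sqrt d\}\subseteq\{M_k\geq \exp(sx\sqrt d-Ns^2/2)\}.
\]
Taking a union over $k\leq N$ and applying Doob's maximal inequality to the nonnegative supermartingale $(M_k)$ then yields
\[
\P\!\left[\max_{k\leq N}S_k\geq x\sqrt d\right]\leq \exp\!\left(-sx\sqrt d+Ns^2/2\right).
\]

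Finally I would choose $s$ to minimize the right-hand side. Setting $s=x\sqrt d/N$ gives an exponent of $-x^2 d/(2N)$, and using $N\leq(1+\alpha)d$ the bound becomes $\exp(-x^2/(2(1+\alpha)))$. Combining this with the reduction from the first step delivers the claimed inequality. There is no real obstacle here: the only things to check carefully are that the sub-Gaussian assumption gives the supermartingale property of $(M_k)$ (so Doob applies with $\E[M_0]=1$) and that the optimization in $s$ yields exactly the announced constant $2(1+\alpha)$ without any leftover $\log$ or factor coming from the integrality of $N$.
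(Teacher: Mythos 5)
Your proof is correct and rests on the same idea as the paper's: the exponential supermartingale $e^{sS_k-ks^2/2}$ together with the observation that $S_k\geq x\sqrt k$ implies $S_k\geq x\sqrt d$ for $k\geq d$, followed by optimization over $s$. The only difference is presentational: you reduce to a plain maximum and invoke Doob's maximal inequality, while the paper introduces the stopping time $A=\inf\{n\geq d:\, S_n\geq\sqrt n\,x\}$ and applies optional stopping plus Markov's inequality --- two standard and essentially interchangeable phrasings of the same supermartingale argument.
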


\begin{lem}\label{lem_lil_localiser}
Let $t_1\in\{1,\ldots,n\}$ and $ \nu >0$.
For any $t>0$, with probability larger than $1- e^{-x}$, for all $t_2\geq t_1+1/\nu $,
 \beq\label{eq:upper_Z_tau}
Z_{t_1:t_2}\leq 2\sqrt{(t_2-t_1)[\log\log(3\nu(t_2-t_1))+ x+1]}\enspace .
 \eeq
\end{lem}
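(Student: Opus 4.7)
The plan is to establish the uniform bound via a dyadic peeling argument on the scale $s := t_2 - t_1$, combined with the maximal inequality of Lemma~\ref{lem:supsumGauss_log_iterated_0}. First I would introduce a geometric sequence of scales $D_k := \lceil 1/\nu\rceil (1+\alpha)^k$ for $k \geq 0$, for some $\alpha > 0$ to be tuned, so that every integer $s \geq 1/\nu$ lies in at least one interval $[D_k, D_{k+1}]$. Shifting indices, the sequence $(\epsilon_{t_1+j})_{j \geq 0}$ satisfies the assumption of Lemma~\ref{lem:supsumGauss_log_iterated_0}, so that for each $k$ and any threshold $y_k > 0$,
\[
\P\Big[\max_{s \in [D_k, D_{k+1}]} \frac{Z_{t_1:t_1+s}}{\sqrt{s}} \geq y_k\Big] \leq \exp\Big(-\frac{y_k^2}{2(1+\alpha)}\Big).
\]

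With the choice $y_k := 2\sqrt{\log\log(3\nu D_k) + x + 1}$, monotonicity of $s \mapsto \log\log(3\nu s)$ gives $y_k \sqrt{s} \leq 2\sqrt{s[\log\log(3\nu s) + x + 1]}$ uniformly for $s \in [D_k, D_{k+1}]$, matching the target bound on that scale. Taking a union bound over $k \geq 0$ then yields
\[
\P\Big[\exists\, t_2 \geq t_1 + 1/\nu:\ Z_{t_1:t_2} > 2\sqrt{(t_2-t_1)[\log\log(3\nu(t_2-t_1)) + x + 1]}\Big] \leq e^{-\frac{2(x+1)}{1+\alpha}} \sum_{k \geq 0} \big(\log(3\nu D_k)\big)^{-2/(1+\alpha)}.
\]
The complement of this event gives the conclusion of the lemma.

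The main obstacle is the delicate balancing of constants needed to show that the right-hand side is at most $e^{-x}$ for every $x > 0$. Taking $\alpha < 1$ pushes the exponent $2/(1+\alpha)$ strictly above $1$, so that using $\log(3\nu D_k) \geq \log 3 + k \log(1+\alpha)$ the series becomes a shifted $p$-series with finite sum $C_\alpha$. The shape of the target bound essentially pins $y_k^2$ down to $4[\log\log(3\nu D_k) + x + 1]$, which with naive dyadic peeling ($\alpha = 1$) is exactly borderline and produces a divergent harmonic series; one must therefore use $\alpha < 1$ (equivalently, denser scales). The additive $+1$ inside the square root provides the slack to absorb the residual multiplicative constant $C_\alpha\,e^{-2/(1+\alpha)}$, yielding the claim for every $x > 0$.
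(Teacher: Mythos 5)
Your strategy — geometric peeling on $t_2 - t_1$, the maximal inequality of Lemma~\ref{lem:supsumGauss_log_iterated_0} on each annulus, then a union bound — matches the paper's structure. However, the final constant check, which you flag as "the main obstacle," does \emph{not} close as claimed. Writing $p := 2/(1+\alpha) > 1$, your union bound totals $e^{-p(x+1)}\,C_\alpha$ with $C_\alpha = \sum_{k\geq 0}\bigl(\log(3\nu D_k)\bigr)^{-p}$, and the worst case $x\to 0^+$ requires $C_\alpha \leq e^{p}$. Using $\log(3\nu D_k) \geq \log 3 + k\log(1+\alpha)$, an integral comparison gives $C_\alpha \geq \frac{(\log 3 + \log(1+\alpha))^{1-p}}{(p-1)\log(1+\alpha)}$, which is roughly $7$–$10$ across the whole range $\alpha\in(0,1)$ (diverging at both endpoints, with minimum $\approx 7$ near $\alpha\approx 0.4$), whereas $e^{p} = e^{2/(1+\alpha)}$ lies only between $e\approx 2.7$ and $e^2\approx 7.4$; at the $\alpha$ minimizing $C_\alpha$, $e^p\approx 4.2 < C_\alpha$. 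The "$+1$" inside the square root is not enough slack by a factor of roughly two; the problem is not merely borderline.

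The paper sidesteps the series-tail analysis entirely. It keeps dyadic peeling ($\alpha=1$) but chooses the annulus-$s$ threshold as $y_s = 2\sqrt{\log[(s+1)(s+2)] + x}$, so the annulus-$s$ tail is exactly $e^{-x}/[(s+1)(s+2)]$; since $\sum_{s\geq 0}\frac{1}{(s+1)(s+2)} = 1$ telescopes, the union bound equals $e^{-x}$ with no residual constant at all. The $\log\log$ only appears at the very end, via $s \leq \log_2\bigl(\nu(t_2-t_1)\bigr)$ and $\log[(s+1)(s+2)]\leq 2\log\log[3\nu(t_2-t_1)]+1$. To repair your version you would either need to adopt these $s$-dependent weights (tying the threshold to the union-bound weight rather than directly to the $\log\log$ form), or enlarge the additive constant from $+1$ to something $\gtrsim 2$–$3$ — which gives a weaker conclusion than stated. (As a side note, the paper's own conversion introduces a factor $2$ in front of $\log\log$ in the final display, so the lemma's stated bound without that factor is itself a little tighter than what the proof literally delivers.)
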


\begin{lem}\label{lem:log_itere1}
For any $\alpha\in (0,1)$ and any $x>0$, with probability larger than $1-6e^{-x}$, for any $\tau \in\{2,\ldots, n\}$
\[
\bN(t_{\tau})\leq (1+\alpha)\sqrt{2\bigg(\log\log\bigg [(1+\alpha)\max\bigg\{\bigg(\tau\wedge\frac{n}\tau\bigg), \bigg(n+1-\tau\wedge\frac{n}{n+1-\tau}\bigg)\bigg\}\bigg]+3x+C_\alpha\bigg)} \enspace,
\]
where $t_\tau=(1,\tau,n+1)$ and $\bN$ is defined in \eqref{eq:definition_N}.
Here, the constant $C_\alpha$ can be chosen as follows
\[
C_\alpha=\frac{\log(1+\alpha^{-1})}{1+\alpha}-\log\log[1+\alpha]\enspace.
\]
\end{lem}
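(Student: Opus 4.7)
To prove this uniform LIL-type bound on the CUSUM statistic, the plan is to decompose $\bN(t_\tau)$ into two standardised sub-Gaussian partial sums and apply a dyadic-peeling argument to each.

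The first step is the decomposition
\[
\bN(t_\tau) \;=\; -w_1(\tau)\, A_\tau \;+\; w_2(\tau)\, B_\tau, \qquad w_1(\tau)=\sqrt{\tfrac{n+1-\tau}{n}},\ \ w_2(\tau)=\sqrt{\tfrac{\tau-1}{n}},
\]
with $A_\tau = Z_{1:\tau}/\sqrt{\tau-1}$ and $B_\tau = Z_{\tau:n+1}/\sqrt{n+1-\tau}$. Each of $A_\tau$, $B_\tau$ is a standardised sub-Gaussian (with proxy $1$), they are independent at fixed $\tau$, the weights satisfy $w_1^2+w_2^2=1$, and Cauchy--Schwarz provides the convenient envelope $|\bN(t_\tau)|\leq \sqrt{A_\tau^2+B_\tau^2}$.

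Next, I will control $A_\tau$ uniformly in $\tau$ via a LIL-type peeling at ratio $1+\alpha$. I partition $\{2,\ldots,n\}$ into dyadic blocks $J_j = \{\tau : \tau-1\in[(1+\alpha)^j,(1+\alpha)^{j+1})\}$ and apply Lemma~\ref{lem:supsumGauss_log_iterated_0} (with $d=(1+\alpha)^j$) to each of $(\epsilon_i)$ and $(-\epsilon_i)$ within each block. Choosing the threshold $u_j^2 = 2(1+\alpha)\bigl(x+2\log(j+1)-\log\log(1+\alpha)\bigr)$ makes the per-block failure probability of order $e^{-x}/[(j+1)^2\log(1+\alpha)]$, which sums over $j$ to $\lesssim e^{-x}$. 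This yields, with probability at least $1-3e^{-x}$, a uniform bound $|A_\tau|\leq (1+\alpha)\sqrt{2\bigl(\log\log[(1+\alpha)(\tau-1)]+x+C'_\alpha\bigr)}$ for all $\tau\in\{2,\ldots,n\}$; running the same argument on the time-reversed sequence produces the analogous bound for $|B_\tau|$ with $\log\log[(1+\alpha)(n+1-\tau)]$, on another event of probability $\geq 1-3e^{-x}$.

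The main obstacle will be sharpening the log-log argument from $\max\{\log\log\tau,\log\log(n+1-\tau)\}$ to the target $\log\log\bigl[\max\{\tau\wedge n/\tau,\,(n+1-\tau)\wedge n/(n+1-\tau)\}\bigr]$. Indeed, simply plugging the forward and backward LIL bounds into $|\bN(t_\tau)|\leq \sqrt{A_\tau^2+B_\tau^2}$ is loose in the middle regime: it yields $\log\log n$ at $\tau\asymp n/2$, whereas the target is $O(1)$ there. To close this gap, I would exploit that for $\tau>\sqrt n$ the weight $w_1(\tau)$ is bounded strictly below $1$, so the weighted quantity $w_1(\tau)A_\tau$ has variance proxy $w_1(\tau)^2$ and admits a second, sharper dyadic peeling in blocks indexed by $n/\tau$ rather than by $\tau$, obtained by re-expressing $A_\tau$ via the identity $Z_{1:\tau} = Z_{1:n+1}-Z_{\tau:n+1}$ and applying Lemma~\ref{lem:supsumGauss_log_iterated_0} to the tail sums. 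A symmetric re-peeling of $w_2(\tau)B_\tau$ for $\tau<n-\sqrt n$ completes the control. Collecting the four peelings (forward and backward, primary and middle-regime) explains the prefactor $6$ in the failure probability, while the additive $3x$ inside the square root and the constant $C_\alpha = \log(1+\alpha^{-1})/(1+\alpha)-\log\log(1+\alpha)$ emerge from carefully tracking the $(1+\alpha)$-factors accumulated during the peelings and summing the geometric series $\sum_{j\geq 0}(1+\alpha)^{-j}=(1+\alpha)/\alpha$.
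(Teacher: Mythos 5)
Your decomposition is the same one the paper uses, and you correctly diagnose the loose spot: the naive $\sqrt{A_\tau^2+B_\tau^2}$ bound from two forward peelings yields $\sqrt{\log\log n}$ at $\tau\asymp n/2$ whereas the target is $O(1)$. But your proposed fix for the middle regime does not work. Two problems:

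First, the claim that ``for $\tau>\sqrt n$ the weight $w_1(\tau)=\sqrt{(n+1-\tau)/n}$ is bounded strictly below $1$'' is false in the range that matters. For $\sqrt n<\tau\leq n/2+1$, $w_1(\tau)\in[1/\sqrt2,\,1-O(1/\sqrt n)]$, which gives no constant attenuation; the improvement in this range cannot come from a variance reduction in $w_1(\tau)A_\tau$.

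Second, the re-expression $Z_{1:\tau}=Z_{1:n+1}-Z_{\tau:n+1}$ followed by peeling of the tail sums breaks down precisely where it is needed. The term
\[
w_1(\tau)\,\frac{Z_{1:n+1}}{\sqrt{\tau-1}}\;=\;\sqrt{\frac{n+1-\tau}{n(\tau-1)}}\,Z_{1:n+1}
\]
is, on the event $|Z_{1:n+1}|\lesssim\sqrt{nx}$, of size $\sqrt{x(n+1-\tau)/(\tau-1)}$, which is of order $n^{1/4}\sqrt{x}$ near $\tau\asymp\sqrt n$ --- far larger than the target $\sqrt{2\log\log(n/\tau)+x}$. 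The companion term $\sqrt{(n+1-\tau)/(n(\tau-1))}\,Z_{\tau:n+1}$ is equally large there. The identity introduces two large cancelling contributions; peeling the tail sums cannot recover $\log\log(n/\tau)$ because $n+1-\tau\geq n/2$ throughout the middle regime, so the tail peeling indexed by $n+1-\tau$ sits in a single block.

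The paper's approach is both simpler and correct, and differs from yours in two ways. (i) The $B_\tau$ contribution needs no peeling at all when $\tau\leq n/2+1$: since $n+1-\tau\in[n/2,n]$, a single application of Lemma~\ref{lem:supsumGauss_log_iterated_0} with $d=n/2$, $\alpha=1$ gives $-Z_{\tau:n+1}\sqrt{(\tau-1)/((n+1-\tau)n)}\leq 2\sqrt{x(\tau-1)/n}$ uniformly, at cost $e^{-x}$. (ii) The $\log\log(n/\tau)$ term is produced by a \emph{second} peeling of the \emph{same} quantity $Z_{1:\tau}/\sqrt{\tau-1}$: the per-block deviation bound from Lemma~\ref{lem:supsumGauss_log_iterated_0} is identical regardless of block index, so one is free to index the blocks backwards as $\tau\in[n/(1+\alpha)^{s+1},\,n/(1+\alpha)^s]$ and assign union-bound weights decaying in $s\asymp\log(n/\tau)/\log(1+\alpha)$, which directly yields the $\log\log(n/\tau)$ factor with no algebraic identity involved. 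Combining the forward and backward peelings of $Z_{1:\tau}$ with the single-block control of $Z_{\tau:n+1}$ gives the result for $\tau\leq n/2+1$ at cost $3e^{-x}$, and symmetry in $\tau\leftrightarrow n+1-\tau$ supplies the remaining $3e^{-x}$. The $3x$ inside the square root then comes from the inequality $(\sqrt{1-u}\,a+\sqrt{u}\,b)^2\leq a^2+b^2$ with $u=(\tau-1)/n$ applied to merge the two pieces.
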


 \begin{lem}\label{lem:controle_uniforme_N_ratio}
 Fix $\tau,\tau^*\in \{2,\ldots,n\}$ and let 
 \[
 \gamma_\tau=\frac{n+1-\tau^*}{n+1-\tau}\frac{\tau-1}{\tau^*-1}\enspace.
 \]
 For any $x>0$, with probability higher than $1-2e^{-x}$, we have, uniformly over all $\tau \leq \tau^*$,
 \beq\label{eq:controle_uniforme_N_ratio}
 \bN(t_{\tau})\leq 4\sqrt{\log\log\left(e\gamma_\tau^{-1}\right)+x+1}\enspace.
 \eeq
 \end{lem}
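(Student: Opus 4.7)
The plan is to exploit that $\gamma_\tau$ equals the squared $L^2$-correlation between $\bN(t_\tau)$ and $\bN(t_{\tau^*})$ for $\tau\leq\tau^*$. Indeed, a direct calculation from~\eqref{eq:definition_N} gives
\[
\Cov\bigl(\bN(t_\tau),\bN(t_{\tau^*})\bigr)=\sqrt{\frac{(\tau-1)(n+1-\tau^*)}{(\tau^*-1)(n+1-\tau)}}=\sqrt{\gamma_\tau},
\]
which suggests decomposing
\[
\bN(t_\tau) \;=\; \sqrt{\gamma_\tau}\,\bN(t_{\tau^*}) \;+\; R_\tau.
\]
A few lines of algebra then identify the residual as $R_\tau=-\sqrt{n/[(\tau-1)(n+1-\tau)]}\,U_\tau$ with the discrete Brownian bridge $U_\tau:=Z_{1:\tau}-\tfrac{\tau-1}{\tau^*-1}Z_{1:\tau^*}$. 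Crucially, $U_\tau$ depends only on $\epsilon_1,\ldots,\epsilon_{\tau^*-1}$, vanishes at $\tau=\tau^*$, and has variance $(\tau-1)(\tau^*-\tau)/(\tau^*-1)$, so $\Var(R_\tau)=1-\gamma_\tau$ shrinks to $0$ as $\tau\to\tau^*$.

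From there, the factor $4$ in the bound is split as $2+2$. For the projection piece, since $\bN(t_{\tau^*})$ is sub-Gaussian with variance one, $\P(\bN(t_{\tau^*})>2\sqrt{x+1})\le e^{-2(x+1)}\le e^{-x}$, so on the complementary event $\sqrt{\gamma_\tau}\,\bN(t_{\tau^*})\le 2\sqrt{x+1}\le 2\sqrt{\log\log(e\gamma_\tau^{-1})+x+1}$ holds uniformly in $\tau\le\tau^*$. For the residual piece, the aim is
\[
\P\Bigl(\sup_{2\le\tau\le\tau^*}\frac{R_\tau}{\sqrt{\log\log(e\gamma_\tau^{-1})+x+1}}>2\Bigr)\le e^{-x}.
\]
I would prove this by a peeling argument on the dyadic level sets $\mathcal{J}_k:=\{\tau:\gamma_\tau^{-1}\in[2^k,2^{k+1})\}$, $k=0,1,\ldots,K$ with $K\leq \lceil\log_2\gamma_2^{-1}\rceil$. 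On $\mathcal{J}_k$ the target right-hand side is at least $2\sqrt{\log(1+k\log 2)+x+1}$, and the correlations between residuals there are at least $1/\sqrt{2}$, so the process is essentially frozen; bounding $\sup_{\tau\in\mathcal{J}_k}|R_\tau|$ can then be achieved by applying Lemma~\ref{lem_lil_localiser} to the forward walk $(Z_{1:\tau})$ and to its time-reversed counterpart $(Z_{\tau:\tau^*})$ (via the bridge identity $U_\tau=\tfrac{\tau^*-\tau}{\tau^*-1}Z_{1:\tau}-\tfrac{\tau-1}{\tau^*-1}Z_{\tau:\tau^*}$) with a level-dependent confidence parameter $x+k$; the geometric summation $\sum_{k\ge 0}e^{-x-k}\lesssim e^{-x}$ then closes the argument.

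The main obstacle is making this last peeling sharp. A straight triangle-inequality bound on $U_\tau$ combined with the LIL yields $|R_\tau|\lesssim\sqrt{(1-\gamma_\tau)(\log\log(\tau^*-1)+x)}$; this is correct in its $\sqrt{1-\gamma_\tau}$ prefactor (and hence vanishes as $\tau\to\tau^*$) but contains a uniform $\sqrt{\log\log(\tau^*-1)}$ factor that is too coarse by a large multiplicative factor when $\tau$ is close to $\tau^*$, where the target bound is essentially $\sqrt{x+1}$. The role of the peeling is precisely to trade this global $\log\log(\tau^*-1)$ for the local scale $\log\log(e\gamma_\tau^{-1})\asymp\log k$ on $\mathcal{J}_k$, and calibrating the level-dependent confidence $x+k$ is what makes the resulting union bound summable to $e^{-x}$. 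Tracking the universal constants through the LIL, the bridge identity, and the union bound so as to finish with the constant $4$ in the statement (rather than a worse one) will be the remaining piece of nontrivial bookkeeping.
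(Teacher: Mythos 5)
Your decomposition $\bN(t_\tau)=\sqrt{\gamma_\tau}\,\bN(t_{\tau^*})+R_\tau$ with $\|R_\tau\|_{\psi_2}^2\leq 1-\gamma_\tau$ is algebraically correct, and the clean bound on the projection piece is fine; this is genuinely different from the paper, which instead bounds the two partial-sum contributions $Z_{1:\tau}/\sqrt{\tau-1}$ and $Z_{\tau:n+1}/\sqrt{n+1-\tau}$ directly by applying Lemma~\ref{lem:supsumGauss_log_iterated_0} dyadically in $\tau$ over $[\tau^*/2^{s+1},\tau^*/2^s]$ (respectively $[2^s(n+1-\tau^*),2^{s+1}(n+1-\tau^*)]$), and then converts $\log\log(e\,\tau^*/\tau)$-type quantities into $\log\log(e\gamma_\tau^{-1})$ via $\frac{\tau^*-1}{\tau-1}\leq\gamma_\tau^{-1}$ for $\tau\leq\tau^*$. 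However, the residual step of your proposal has a genuine gap. First, the claim that on $\mathcal{J}_k$ the correlations between residuals are at least $1/\sqrt{2}$ fails at $k=0$: under Gaussian noise one has, for $\tau<\tau'\leq\tau^*$,
\[
\Cor(R_\tau,R_{\tau'})=\sqrt{\frac{\gamma_\tau(1-\gamma_{\tau'})}{\gamma_{\tau'}(1-\gamma_\tau)}}\enspace,
\]
which tends to $0$ as $\gamma_{\tau'}\uparrow 1$ with $\gamma_\tau$ held near $1/2$, so the process is not frozen on $\mathcal{J}_0$.

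The more structural problem is that the bridge piece $Z_{\tau:\tau^*}$ has anchor length $\tau^*-\tau$, which vanishes at $\tau=\tau^*$. Within the single level set $\mathcal{J}_0=\{\tau:\gamma_\tau\in(1/2,1]\}$ this length sweeps from $1$ up to order $n$ (for $\tau^*\asymp n/2$, $\mathcal{J}_0$ already contains all $\tau$ in roughly $(\tau^*/2,\tau^*]$), so any LIL-type dyadic peeling over $\tau^*-\tau$ inside $\mathcal{J}_0$ requires $\Theta(\log n)$ sub-levels, and the union-bound penalty contributes a $\sqrt{\log\log n}$ factor. The coefficient multiplying $Z_{\tau:\tau^*}/\sqrt{\tau^*-\tau}$ in $R_\tau$ equals $\sqrt{(1-\gamma_\tau)(\tau-1)/(\tau^*-1)}$, which is bounded away from $0$ at the low-$\gamma$ end of $\mathcal{J}_0$, so this $\sqrt{\log\log n}$ is not absorbed, while your target $2\sqrt{\log\log(e\gamma_\tau^{-1})+x+1}\approx 2\sqrt{x+1}$ there is $O(1)$ independently of $n$. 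The paper's decomposition sidesteps this entirely because both of its pieces, $Z_{1:\tau}$ and $Z_{\tau:n+1}$, have \emph{nonzero} reference lengths at $\tau=\tau^*$ (namely $\tau^*-1$ and $n+1-\tau^*$), so the dyadic index $s$ stays $O(1)$ for $\tau$ near $\tau^*$; this is exactly what makes $\log[(s+1)(s+2)]$ comparable to $\log\log(e\gamma_\tau^{-1})$ rather than to $\log\log n$. Pivoting around $\bN(t_{\tau^*})$ is elegant, but the vanishing anchor of the resulting bridge is what breaks the bookkeeping.
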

\subsection{Proof of Proposition~\ref{prp:test_one_change-point}}\label{proof:PropTest0}

For any $t\in\mathcal{T}_3$, let 
\[
\bB(t) := \left(\frac{\sum_{i=t_2}^{t_3-1}\theta_{i}}{t_3-t_2} - \frac{\sum_{i=t_1}^{t_2-1}\theta_{i}}{t_2-t_1}\right)\sqrt{ \frac{(t_2-t_1)(t_3-t_2)}{t_3-t_1}}\enspace.
\]
Hence, $\bE(t)=|\bB(t)|$ (see the definition of $\bE(t)$ in~\eqref{eq:definition_energie}).
For any $\tau$, define the triad $t_{\tau}=(1,\tau,n+1)$. 
Basic algebra shows that (see~\eqref{eq:definition_N} for a definition of $\bN(t)$)
\begin{equation}\label{eq:ElemIneq}
 \|(\Pi_\tau-\Pi_0)\bY\|^2=\frac{(\tau-1)(n+1-\tau)}{n}\left(\frac1{\tau-1}\sum_{i=1}^{\tau-1}Y_i-\frac1{n+1-\tau}\sum_{i=\tau}^{n}Y_i\right)^2=\big(\bB(t_\tau)+\bN(t_\tau)\big)^2\enspace.
\end{equation}
A consequence of this elementary remark is that
\beq \label{eq:definition_criterion_1}
\varphi_1(\tau)= \Cr_1(\tau,\bY)- \|\bY\|^2 + \|\bPi_{0} \bY\|^2 = - \big(\bB(t_\tau)+\bN(t_\tau)\big)^2+ L^2\pen_1(\tau) \enspace.
\eeq

When $\btheta\in\Theta_0$, for any $\tau$, $\bB(t_{\tau})=0$, so $\varphi_1(\tau)= -\bN^2(t_{\tau})+ L^2\pen_1(\tau)$. 
It follows from Lemma~\ref{lem:log_itere1} that, for any $\alpha\in(0,1]$ and any $x>0$, with probability $1-12e^{-x}$, for any $\tau\in\{2,\ldots,n\}$,
\begin{multline*}
|\bN(t_{\tau})|\leq \\
(1+\alpha)\sqrt{2\bigg(\log\log\bigg [(1+\alpha)\max\bigg\{\bigg(\tau\wedge\frac{n}\tau\bigg), \bigg(n+1-\tau\wedge\frac{n}{n+1-\tau}\bigg)\bigg\}\bigg]+3x+C_\alpha\bigg)}\enspace. 
\end{multline*}
Choosing $\alpha=L-1$, this yields, for any $x>0$, with probability $1-12e^{-x}$, for any $\tau\in\{2,\ldots,n\}$,
\[
|\bN(t_{\tau})|\leq L\sqrt{\pen_1(\tau)+6x+C^*_L}\ , \quad\text{where}\quad C_L^*=\frac2{L}\log\bigg(\frac{L}{L-1}\bigg)-2\log\log(L)\enspace.
\]
In particular, for all $t>0$, with probability $1-12e^{-x}$,
\[
\boldsymbol{\varphi}_1 \geq  -L^2\big(6x+ C_L^*\big)\enspace .
\]
Choosing $x=\log(12/\alpha)$ proves the first statement of Proposition~\ref{prp:test_one_change-point}.

\bigskip

Assume now that $\btheta\in\Theta_1$. From \eqref{eq:definition_criterion_1} and the inequality $(a+b)^2\geq b^2/(L+1)-a^2/L$, valid for any $a,b\in \bbR$ and $L>0$, we derive that 
\begin{equation}\label{eq:LBTest0}
 \boldsymbol{\varphi}_1\leq \varphi_1(\tau^*)\leq - \frac{\bE^{2}_1}L+ \frac{\bN^2(t_{\tau^*})}{L+1} + L^2\pen_1(\tau^*)\enspace.
\end{equation}

Now for any $t=(t_1,t_2,t_3)\in \mathcal{T}_3$, there exist $\alpha_{t_1},\ldots,\alpha_{t_3}$ such that $\bN(t)=\sum_{i=t_1}^{t_3}\alpha_i\epsilon_i$ and
\[
\sum_{i=t_1}^{t_3-1}\alpha_i^2=\frac{(t_3-t_2)(t_2-t_1)}{t_3-t_1}\bigg(\frac1{t_3-t_2}+\frac1{t_2-t_1}\bigg)=1\enspace.
\]
Therefore, for any $s>0$,
\begin{align*}
 \E[e^{s\bN(t)}]&=\E[\prod_{i=t_1}^{t_3-1}e^{s\alpha_i\epsilon_i}]=\prod_{i=t_1}^{t_3-1}\E[e^{s\alpha_i\epsilon_i}]\leq e^{s^2\sum_{i=t_1}^{t_3-1}\alpha_i^2/2}= e^{s^2/2}\enspace.
\end{align*}
It follows that, for any $u>0$, taking $s=u/2$, $\P(\bN(t)>u)\leqslant e^{-su+s^2/2}=e^{-u^2/2}$, so 
\begin{equation}\label{eq:bN2tau_1}
\bbP\big(\bN(t_{\tau^*})\leq \sqrt{2x}\big)\geqslant 1-e^{-x},\qquad \bbP\big(\bN^2(t_{\tau^*})\leq 2x\big)\geqslant 1-2e^{-x}\enspace.
\end{equation}
Plugging this inequality in \eqref{eq:LBTest0} with $x=\log(2/\beta)$ concludes the proof of  Proposition~\ref{prp:test_one_change-point}.

\subsection{Proof of Proposition \ref{prp:tau_hat}}\label{Sec:Proof:PropPenLS1Jump}
The idea of the proof is that, with high probability, for all $\tau\in\{2,\ldots,n\}$ such that $\Delta^2|\tau-\tau^*|$ is large,
\[
\Cr_1(\tau,\bY)< \Cr_1(\tau^*,\bY)\enspace.
\]
On the same event, this implies that $\Delta^{2}|\widehat{\tau}-\tau^*|$ is small. 
Let
\[
\gamma=\frac{(\sqrt{L}-1)(L-1)}{2L^{3/2}}\in (0,1/2)\enspace.
\]
Assume that $n$ is sufficiently large to ensure that $2\gamma\sqrt{n}\geq e$. 

\bigskip 

\noindent 
{\bf Case 1: $\tau$ is far from $\tau^*$.} 
First, consider times $\tau$ satisfying
\begin{equation}\label{Ass:Case1}
\bE^2(t_{\tau})\leq \gamma \bE^{2}_1\enspace. 
\end{equation}
Define $\kappa_1= \sqrt{L}-1$. $\kappa_2= (1-L^{-1})/2$, so
\[
\gamma=\frac{\kappa_1\kappa_2}{1+\kappa_1}=\frac{\kappa_2}{1+\kappa_1^{-1}}.
\] 
From the inequality $2|xz|\leq \kappa x^2 + \kappa^{-1}z^2$ valid for any $x,z\in\bbR$ and $\kappa>0$, we derive 
\[
(x+z)^2\leq (1+\kappa)x^2+(1+\kappa^{-1}z^2),\qquad (x+z)^2\geq (1-\kappa)x^2+(1-\kappa^{-1}z^2)\enspace.
\]
Hence, from \eqref{eq:ElemIneq},
\begin{align} 
\notag\Cr_1(\tau,\bY)- \Cr_1(\tau^*,\bY)&=-\|(\Pi_\tau-\Pi_0)\bY\|^2+L\pen_1(\tau)+\|(\Pi_{\tau^*}-\Pi_0)\bY\|^2-L\pen_1(\tau^*)\\
\label{eq:Int0}&= -\big(\bB(t_\tau)+\bN(t_\tau)\big)^2+L\pen_1(\tau)+\big(\bB(t_{\tau^*})+\bN(t_{\tau^*})\big)^2-L\pen_1(\tau^*)\\
\notag&\geq- \bN^2(t_{\tau})(1+\kappa_1)+ L\pen_1(\tau) -\bE^2(t_{\tau})(1+\kappa_1^{-1})\\
\notag&\quad+ \bE^{2}(t_{\tau^*})(1-\kappa_2) + \bN^{2}(t_{\tau^*})\left(1-\kappa_2^{-1}\right)-L\pen_1(\tau^*)\enspace.
\end{align}
From Assumption \eqref{Ass:Case1}, it follows that 
\begin{align}
\label{eq:LBDeltaCr1} \Cr_1(\tau,\bY)- \Cr_1(\tau^*,\bY)\geq & - \bN^2(t_{\tau})(1+\kappa_1)+ L\pen_1(\tau) +  \bE^{2}(t_{\tau^*})\big[1-\kappa_2-\gamma(1+\kappa_1^{-1})\big] \\
\notag&+ \bN^{2}(t_{\tau^*})\left(1-\kappa_2^{-1}\right)-L\pen_1(\tau^*)\\
\notag= & - \sqrt{L}\bN^2(t_{\tau})+ L\pen_1(\tau)+ \frac1L\bE^{2}(t_{\tau^*}) -L\pen_1(\tau^*) - \bN^{2}(t_{\tau^*})\frac{L+1}{L-1}\enspace . 
\end{align}
Let us apply Lemma~\ref{lem:log_itere1} with $\alpha= L^{1/4}-1$. 
With probability larger than $1-12e^{-x}$, simultaneously for all $\tau\in\{2,\ldots,n\}$,
\beq\label{eq:upper_N2}
\bN^2(t_{\tau})L^{1/2}\leq L\pen_1(\tau)+ 6L x+C_L^*\enspace,
\eeq
where 
\[
C_L^*=L^{3/4}\log\bigg(\frac{L^{1/4}}{L^{1/4}-1}\bigg)-L\log\log(L^{1/4})\enspace.
\]
Moreover, by \eqref{eq:bN2tau_1}, $\bN^{2}(\tau^*)\leq 2x$ with probability larger than $1-2e^{-x}$. 
Plugging this bound and \eqref{eq:upper_N2} into \eqref{eq:LBDeltaCr1} shows that, with probability at least $1-14e^{-x}$, simultaneously for all $\tau\in\{2,\ldots,n\}$, such that \eqref{Ass:Case1} holds
\[
 \Cr_1(\tau,\bY)- \Cr_1(\tau^*, \bY)\geq  \frac1L\bE^{2}(t_{\tau^*}) -L\pen_1(\tau^*)  - 2x\left[3L+\frac{L+1}{L-1}\right]-C_L^*\enspace .
\]
This last expression is positive as long as 
\beq\label{condition_1_tau_far}
 \bE^{2}(t_{\tau^*}) > L^2\pen_1(\tau^*)+2Lx\left[3L+\frac{L+1}{L-1}\right]+C_L^*\enspace.
\eeq
We have proved, that with probability larger than $1-14e^{-x}$, we have $\bE^{2}(t_{\widehat{\tau}})>\gamma\bE^{2}_1$, as long as Condition~\eqref{condition_1_tau_far} is satisfied.

\bigskip

\noindent 
{\bf Case 2. $\tau$ is neither far nor close to $\tau^*$.} 
Now consider times $\tau$ such that 
\begin{equation}\label{Ass:NCloseNorFar}
\gamma<\frac{\bE^2(t_{\tau})}{\bE^{2}(t_{\tau^*})}\leq\frac12\enspace. 
\end{equation}
Starting from~\eqref{eq:Int0}, we derive that
$\Cr_1(\tau,\bY)> \Cr_1(\tau^*,\bY)$ if
\[
 \left[\bB(t_{\tau^*})+ \bN(t_{\tau^*})\right]^2 - \left[\bB(t_{\tau})+ \bN(t_{\tau})\right]^2 > L\pen_1(\tau^*)-L\pen_1(\tau)\enspace . 
\]
In particular, the above inequality is met if 
\[
 |\bB(t_{\tau^*})+ \bN(t_{\tau^*})| - |\bB(t_{\tau})+ \bN(t_{\tau})| > \sqrt{L(\pen_1(\tau^*)-\pen_1(\tau))_+}\enspace . 
\]
By the triangular inequality, it follows that $\Cr_1(\tau,\bY)> \Cr_1(\tau^*,\bY)$ if 
\beq\label{eq:t_tau_0}
 \bE(t_{\tau^*})- \bE(t_{\tau}) - |\bN(t_{\tau^*})|- |\bN(t_{\tau})|- \sqrt{L(\pen_1(\tau^*)-\pen_1(\tau))_+}>0\enspace . 
\eeq
Assume, without loss of generality, that $\tau<\tau^*$.
In this case, an important fact is that 
\[
\bE(t_{\tau})=\bE(t_{\tau^*})\sqrt{\frac{n+1-\tau^*}{n+1-\tau}\frac{\tau-1}{\tau^*-1}}\enspace.
\]
Thus, Condition \eqref{Ass:NCloseNorFar} is equivalent to 
\[
\gamma<\frac{n+1-\tau^*}{n+1-\tau}\frac{\tau-1}{\tau^*-1}\leq \frac12\enspace.
\]
In particular, as $\tau<\tau^*$, it implies that 
\beq\label{eq:CdtTau}
n+1-\tau^*>\gamma(n+1-\tau),\qquad \tau-1>\gamma(\tau^*-1)\enspace.
\eeq
We claim, that as long as $\tau$ satisfies~\eqref{eq:CdtTau} for some $\gamma\in (0,1/2)$ such that $\gamma\sqrt{n}\geq \sqrt{e}$, then 
\begin{equation}\label{eq:UBDeltaPen}
 \pen_1(\tau^*)-\pen_1(\tau)\leq c\log\log(e\gamma^{-1})\enspace\ ,
\end{equation}
for some constant $c$.

\begin{proof}[Proof of \eqref{eq:UBDeltaPen}]
Suppose that $\tau^*<n/2+1$ (the case $\tau^*\geq n/2+1$ follows by similar arguments).
In this case, as $\tau<\tau^*$, $\pen_1(\tau^*)=2\log\log(2(\tau^*\wedge n/\tau^*))$, $\pen_1(\tau)=2\log\log(2(\tau\wedge n/\tau))$. 
If $\tau^*>\sqrt{n}$, then $\pen_1(\tau^*)=2\log\log(2n/\tau^*)$.
For $\tau>\sqrt{n}$, we have  $\pen_1(\tau)=2\log\log(2n/\tau)$ and 
\[
\pen_1(\tau^*)-\pen_1(\tau)=2\log\log(2n/\tau^*)-2\log\log(2n/\tau)\leq 0\enspace.
\]
If $\tau<\sqrt{n}$, then $\tau>\gamma\sqrt{n}$, so
\[
\pen_1(\tau^*)-\pen_1(\tau)=2\log\log(2n/\tau^*)-2\log\log(2\tau)\leq 2\log\log(n) - 2\log\log(\gamma \sqrt{n})\leq c\log\log(e\gamma^{-1})\ .
\]
If $\tau^*\leq \sqrt{n}$, then $\tau\leq \sqrt{n}$ and 
\[
\pen_1(\tau^*)-\pen_1(\tau)=2\log\log(\tau^*)-2\log\log(\tau)\leq 2\log\log(1+\tau/\gamma)-2\log\log(\tau)\leq  c\log\log(e\gamma^{-1}) \enspace.
\]
This proves \eqref{eq:UBDeltaPen}. 
\end{proof}
Eq.~\eqref{eq:UBDeltaPen} implies that Condition~\eqref{eq:t_tau_0} is fulfilled if 
\beq\label{eq:t_tau_0bis}
 \bE(t_{\tau^*})- \bE(t_{\tau}) - |\bN(t_{\tau^*})|- |\bN(t_{\tau})|- \sqrt{LC_\gamma}>0\enspace . 
\eeq
Condition~\eqref{Ass:NCloseNorFar} implies that 
\[
\bE(t_{\tau^*})- \bE(t_{\tau})\geq (\sqrt{2}-1)\bE(t_{\tau^*})\enspace.
\]
By \eqref{eq:bN2tau_1}, $\bN^{2}(\tau^*)\leq 2x$ with probability higher than $1-2e^{-x}$.
Moreover, Lemma~\ref{lem:supsumGauss_log_iterated_0} applied with $d=\gamma(\tau^*-1)$ and $\alpha=1/\gamma-1$ shows that, with probability at least $1-2e^{-x}$,
\[
\forall \tau\in [\gamma\tau^*,\tau^*],\qquad Z_{1:\tau}\leq \sqrt{\frac{2(\tau-1)x}{\gamma}}\enspace.
\]
Likewise, Lemma~\ref{lem:supsumGauss_log_iterated_0} applied with $d=n+1-\tau^*$ and $\alpha=1/\gamma$ shows that, with probability at least $1-2e^{-x}$,
\[
\forall \tau\in [n+1-\tau^*,(n+1-\tau^*)/\gamma],\qquad Z_{\tau:(n+1)}\leq \sqrt{2(1+\gamma^{-1})(n+1-\tau)x}\enspace.
\]
Hence, with probability larger than $1-4e^{-x}$,
\[
|\bN(t_{\tau})|\leq  \bigg|\sqrt{\frac{n+1-\tau}{n(\tau-1)}}Z_{1:\tau}\bigg| + \bigg|Z_{\tau:n+1}\sqrt{\frac{\tau-1}{(n+1-\tau)n}}\bigg|\leq 2\sqrt{2(1+\gamma^{-1})x}\enspace.
\]

It follows that Condition~\eqref{eq:t_tau_0bis} holds with probability at least $1-6e^{-x}$ if 
\begin{align*}
 (\sqrt{2}-1)\bE(t_{\tau^*})-\sqrt{2x}(1+2\sqrt{1+\gamma^{-1}})+\sqrt{LC_\gamma}>0\enspace,
\end{align*}
that is if 
\beq\label{eq:CdtCase2}
\bE(t_{\tau^*})\geq \frac{\sqrt{2x}(1+2\sqrt{1+\gamma^{-1}})-\sqrt{LC_\gamma}}{\sqrt{2}-1}\enspace.
\eeq
The conclusion of Case 2 is that, if Condition \eqref{eq:CdtCase2} is fulfilled, with probability at least $1-6e^{-x}$, $\bE^2(t_{\widehat{\tau}})\geq\bE(t_{\tau^*})^2/2$ or $\bE^2(t_{\widehat{\tau}})< \gamma \bE(t_{\tau^*})^2$.

\bigskip 

\noindent 
{\bf Case 3}:  Let $\tau\in\{2,\ldots,n\}$ be such that   $\bE^2(t_{\tau})>  \bE^{2}(t_{\tau^*})/2$.  	
Assume moreover, without loss of generality, that $\tau < \tau^*$. 
Since 
$\bE(t_{\tau})=\bE(t_{\tau^*})\sqrt{\frac{\tau-1}{\tau^*-1}\cdot \frac{n+1-\tau^*}{n+1-\tau}}$, this implies that $\frac{\tau-1}{\tau^*-1}\cdot \frac{n+1-\tau^*}{n+1-\tau}\geq 1/2$.

We now use a slightly different approach than in the previous cases. 
Define the orthogonal projector $\bPi_{\tau,\tau^*}$ onto the space of vectors that are constant on $[1, \tau-1]$, $[\tau,\tau^*-1]$ and $[\tau^*,n]$. 
Note that, if ${\bf 1}_{a:b}$ denote the vector with coordinates $1$ for all $i\in\{a,\ldots,b-1\}$ and null otherwise, then, for any $\tau<\tau^*$ and any vector $\ba\in\bbR^n$, we have
\begin{align*}
 \bPi_{\tau} \ba&=\bigg(\frac1{\tau-1}\sum_{i=1}^{\tau-1} a_i\bigg){\bf 1}_{1:\tau}+\bigg(\frac1{n+1-\tau}\sum_{i=\tau}^n a_i\bigg){\bf 1}_{\tau:n+1}\enspace,\\
 \bPi_{\tau,\tau^*} \ba&=\bigg(\frac1{\tau-1}\sum_{i=1}^{\tau-1} a_i\bigg){\bf 1}_{1:\tau}+\bigg(\frac1{\tau^*-\tau}\sum_{i=\tau}^{\tau^*-1} a_i\bigg){\bf 1}_{\tau:\tau^*}+\bigg(\frac1{n+1-\tau^*}\sum_{i=\tau^*}^n a_i\bigg){\bf 1}_{\tau^*:n+1}\enspace.
\end{align*}
Hence,
\begin{align*}
(  \bPi_{\tau}-\bPi_{\tau,\tau^*} )\ba&=\bigg(\frac1{n+1-\tau}\sum_{i=\tau}^n a_i-\frac1{\tau^*-\tau}\sum_{i=\tau}^{\tau^*-1} a_i\bigg){\bf 1}_{\tau:\tau^*}\\
&\qquad +\bigg(\frac1{n+1-\tau}\sum_{i=\tau}^n a_i-\frac1{n+1-\tau^*}\sum_{i=\tau^*}^n a_i\bigg){\bf 1}_{\tau^*:n+1}\\
&=\bigg(\frac1{n+1-\tau^*}\sum_{i=\tau^*}^n a_i-\frac{1}{\tau^*-\tau}\sum_{i=\tau}^{\tau^*-1} a_i\bigg)\frac{n+1-\tau^*}{n+1-\tau}{\bf 1}_{\tau:\tau^*}\\
&\qquad+\bigg(\frac1{\tau^*-\tau}\sum_{i=\tau}^{\tau^*-1} a_i-\frac1{n+1-\tau^*}\sum_{i=\tau^*}^n a_i\bigg)\frac{\tau^*-\tau}{n+1-\tau}{\bf 1}_{\tau^*:n+1}\enspace.
\end{align*}
Therefore, by orthogonality of the vectors ${\bf 1}_{\tau^*:n+1}$ and ${\bf 1}_{\tau:\tau^*}$,
\begin{align}
\notag \|(  \bPi_{\tau}-\bPi_{\tau,\tau^*} )\ba\|^2&=\bigg(\frac1{\tau^*-\tau}\sum_{i=\tau}^{\tau^*-1} a_i-\frac1{n+1-\tau^*}\sum_{i=\tau^*}^n a_i\bigg)^2\bigg(\frac{(n+1-\tau^*)^2(\tau^*-\tau)}{(n+1-\tau)^2}+\frac{(\tau^*-\tau)^2(n+1-\tau^*)}{(n+1-\tau)^2}\bigg)\\
\label{eq:Pitau-Pitautau_1} &=\bigg(\frac1{\tau^*-\tau}\sum_{i=\tau}^{\tau^*-1} a_i-\frac1{n+1-\tau^*}\sum_{i=\tau^*}^n a_i\bigg)^2\frac{(n+1-\tau^*)(\tau^*-\tau)}{n+1-\tau}\enspace.
 \end{align}
 Likewise, one can show that
\begin{equation}\label{eq:Pitau-Pitautau_1bis}
  \|(  \bPi_{\tau^*}-\bPi_{\tau,\tau^*} )\ba\|^2=\bigg(\frac1{\tau^*-\tau}\sum_{i=\tau}^{\tau^*-1} a_i-\frac1{\tau-1}\sum_{i=1}^{\tau} a_i\bigg)^2\frac{(\tau-1)(\tau^*-\tau)}{\tau^*-1}\enspace.
\end{equation}
 
By Pythagoras relationship, we have  
\[ 
\Cr_1(\tau,\bY) -  \Cr_1(\tau^*,\bY)= \|( \bPi_{\tau,\tau^*}- \bPi_{\tau} ) \bY \|^2 - \|( \bPi_{\tau,\tau^*}-  \bPi_{\tau^*} )\bY \|^2 + L(\pen_1(\tau)-\pen_1(\tau^*))\enspace.
\]
From \eqref{eq:Pitau-Pitautau_1} and \eqref{eq:Pitau-Pitautau_1bis}, it follows that
\beqn 
\lefteqn{\Cr_1(\tau,\bY) -  \Cr_1(\tau^*,\bY)}&&\\
& =&  \frac{(\tau^*-\tau)(n+1-\tau^*)}{n+1-\tau}\left[\frac{\sum_{i=\tau^*}^{n} Y_i}{n+1-\tau^*} - \frac{\sum_{i=\tau}^{\tau^*- 1} Y_i}{\tau^*-\tau}\right]^2  +  \frac{(\tau^*-\tau)(\tau-1)}{\tau^*-1}\left[\frac{\sum_{i=\tau}^{\tau^*} Y_i}{\tau^*-\tau} - \frac{\sum_{i=1}^{\tau- 1} Y_i}{\tau-1}\right]^2
\\&&+ L(\pen_1(\tau)-\pen_1(\tau^*))\\
& = &  \frac{(\tau^*-\tau)(n+1-\tau^*)}{n+1-\tau}\left[\frac{Z_{\tau^*:n+1}}{n+1-\tau^*} - \frac{Z_{\tau:\tau^*}}{\tau^*-\tau} +  \Delta\right]^2-  \frac{(\tau^*-\tau)(\tau-1)}{\tau^*-1}\left[\frac{Z_{\tau:\tau^*}}{\tau^*-\tau} - \frac{Z_{1:\tau^*}-Z_{\tau:\tau^*}}{\tau-1}\right]^2
\\&&+ L(\pen_1(\tau)-\pen_1(\tau^*))\enspace.
\eeqn
From the inequalities $a^2/2-b^2\leq (a+b)^2\leq 2(a^2+b^2)$, it follows
\beqn 
\Cr_1(\tau,\bY) -  \Cr_1(\tau^*,\bY)
&\geq & \frac{(\tau^*-\tau)(n+1-\tau^*)}{n+1-\tau}\bigg(\frac{\Delta^2}2 -  \frac{4Z^2_{\tau:\tau^*}}{(\tau^*-\tau)^2}-\frac{4Z_{\tau^*:n+1}^2}{(n+1-\tau^*)^2}\bigg)\\
&&-\frac{2(\tau^*-\tau)(\tau-1)}{\tau^*-1}\bigg(\frac{Z_{1:\tau^*}^2}{(\tau-1)^2}+\frac{Z_{\tau:\tau^*}^2(\tau^*-1)^2}{[(\tau-1)(\tau^*-\tau)]^2}\bigg)\enspace.
\eeqn
Using repeatedly $\frac{\tau-1}{\tau^*-1}\cdot \frac{n+1-\tau^*}{n+1-\tau}\geq 1/2$ and $\tau < \tau^*$, this yields
\beqn 
\Cr_1(\tau,\bY) -  \Cr_1(\tau^*,\bY)&\geq&\frac{\Delta^2(\tau^*-\tau)}4-8\frac{Z^2_{\tau:\tau^*}}{\tau^*-\tau}-4(\tau^*-\tau)\frac{Z_{\tau^*:n+1}^2}{(n+1-\tau^*)^2}-2(\tau^*-\tau)\frac{Z_{1:\tau^*}^2}{(\tau-1)^2}\\
&&+L(\pen_1(\tau)-\pen_1(\tau^*))\enspace.
\eeqn 
By \eqref{eq:UBDeltaPen}, there exists an absolute constant $C$ such that $\pen_1(\tau)-\pen_1(\tau^*)\leq C$. It follows that
\begin{align*}
 \Cr_1(\tau,\bY) -  \Cr_1(\tau^*,\bY)&\geq \frac{\Delta^2(\tau^*-\tau)}4-8\frac{Z^2_{\tau:\tau^*}}{\tau^*-\tau}-4(\tau^*-\tau)\bigg(\frac{Z_{\tau^*:n+1}^2}{(n+1-\tau^*)^2}+\frac{Z_{1:\tau^*}^2}{(\tau-1)^2}\bigg)-2C\enspace .
\end{align*}

The proof of \eqref{eq:bN2tau_1} shows that, on an event of probability at least $1-4e^{-x}$, both $|Z_{\tau^*:n+1}|\leq \sqrt{2(n+1-\tau^*)x}$ and $|Z_{1:\tau^*}|\leq \sqrt{2(\tau^*-1) x}$ simultaneously. 
By Lemma~\ref{lem_lil_localiser}, with probability larger than $1-2e^{-x}$,
\[
|Z_{\tau:\tau^*}|{\bf 1}_{\{(\tau^*-\tau)\Delta^{2}\geq 1\}}\leq 2\sqrt{2(\tau^*-\tau)(\log\log[3\Delta^{2}(\tau^*-\tau)]+x+1)}
\]
Plugging these deviations inequalities in the above bound, we conclude that for, $(\tau^*-\tau)\Delta^2\geq 1$,
\beqn 
\Cr_1(\tau,\bY) -  \Cr_1(\tau^*,\bY)&\geq & (\tau^*-\tau)\left[\frac{\Delta^2}{4} - 16x\frac{n}{(n+1-\tau^*)(\tau^*-1)}\right]\\ && - 32(\log\log[3\Delta^{2}(\tau^*-\tau)]+x+1)-2C
\eeqn 
Restricting our attention to  $x \leq \bE_1^2/128 $ (which is possible by taking $c_{L}$ large enough in the statement of the proposition), this simplifies in 
\[
 \Cr_1(\tau,\bY) -  \Cr_1(\tau^*,\bY)\geq \Delta^2 \frac{(\tau^*-\tau)}{8}- 32(\log\log[3\Delta^{2}(\tau^*-\tau)]+x+1)-2C\enspace , 
\]
which is positive provided that $(\tau^*-\tau)\geq C\frac{1\vee x}{\Delta^2}$. 
Hence, there exists an absolute constant $C>0$ such that, for any $x\leq \bE_1^2/32$, with probability $1-12e^{-x}$, any $\tau$ such that $\bE^2(t_{\tau})>  \bE^{2}(t_{\tau^*})/2$ and $\Cr_1(\tau,\bY) \leq  \Cr_1(\tau^*,\bY)$ satisfies $|\tau^*-\tau|\leq C\frac{1\vee x}{\Delta^2}$.

Gathering the conclusions of Cases 1, 2 and 3, we conclude that there exist constants $C>0$ (absolute) and $C_L$ such that, with probability larger than $1-32e^{-x}$, if $\bE_1^2\geq L^2\pen_1(\tau^*)+C_L(1+x)$, then,  
\[
|\widehat{\tau}-\tau^*|\leq C\frac{1\vee x}{ \Delta^{2}}\enspace. 
\]

\subsection{Proof of Proposition~\ref{prp:IC_one_change-point}} \label{Sec:ProofICSingleJump}

All along the proof, the change-point energy is called small if 
\beq\label{eq:condition_small_energy_IC}
 \bE^{2}_1\leq L^2\pen_1(\tau^*)+ \underline{c}''_L\log\left(\frac{e}{\alpha}\right) \enspace.
 \eeq
In this expression, the constant $\underline{c}''_L$ is chosen to ensure that, when \eqref{eq:condition_small_energy_IC} does not hold, Proposition~\ref{prp:tau_hat} applies and $|\widehat{\tau}-\tau^*|\leq C\log(1/\alpha)/\Delta^{2}$ with probability higher than $1-\alpha/2$.

Proposition \ref{prp:IC_one_change-point} follows from the  three following claims.

\medskip 

 \noindent 
{\bf Claim 1}: If the change-point energy is small, with probability higher than $1-\alpha$, the test $\boldsymbol{\varphi}_{IC}$ does not reject the null. 
When $\boldsymbol{\varphi}_{IC}$ does not reject the null $I_{\widehat{\tau}}=\{2,\ldots,n\}$, so $\tau^*\in I_{\widehat{\tau}}$. 
Hence, if the change-point energy is small, with probability higher than $1-\alpha$, $\tau^*\in I_{\widehat{\tau}}$.

\medskip 

\noindent 
{\bf Claim 2}: If the change-point energy is not small, the confidence interval defined by \eqref{eq:definition_IC} contains $\tau^*$ with probability larger than $1-\alpha$.

\medskip

\noindent 
{\bf Claim 3}: If \eqref{eq:condition_large_energy_IC_2} is satisfied, with probability larger than $1-\alpha$,  $\boldsymbol{\varphi}_{IC}$ rejects the null  and $|\widehat{\Delta}|\geq |\Delta|/4$.

\medskip 

\noindent 
To conclude the proof, it remains to prove each claim. 

\medskip 

\noindent 
{\bf Proof of Claim 1}. We prove that,  with probability higher than $1-\alpha$, for all $\tau$, $\varphi_{IC}(\tau)\geq  - \underline{c}_{L,\kappa}\log(\frac{e}{\alpha})$. 
As in the proof of Proposition~\ref{prp:tau_hat}, we distinguish the cases where $\tau$ is far and close from $\tau^*$.

\bigskip 

\noindent
{\bf Case 1}: Fix $\gamma_0= (\pen_1(\tau^*))^{-1}\wedge 1$ and consider $\tau$ such that $\bE^2(t_{\tau})\leq \gamma_0 \bE^{2}_1$. 
From \eqref{eq:ElemIneq} and Lemma~\ref{lem:log_itere1} applied with $\alpha= L-1$, with probability larger than $1-\alpha/2$,
\beqn 
 \varphi_{IC}(\tau)  &\geq & - (1+\kappa^{-1})\bE^{2}(t_{\tau})-  (1+\kappa)\bN^{2}(t_{\tau})+  L^2(1+\kappa)\pen_1(\tau)\\
 &\geq & - (1+\kappa^{-1})\gamma_0 \bE^{2}_1 - (1+\kappa)L^2\left[6\log(1/\alpha)+C_{L}\right]\\
 &\geq& -(1+\kappa^{-1})\left[L^2+ \underline{c}''_{L}\log(\frac{e}{\alpha})\right]- (1+\kappa)L^2\left[6\log(1/\alpha)+C_{L}\right]\enspace.
\eeqn 
We used \eqref{eq:condition_small_energy_IC} in the third line.
There exists a constant $\underline{c}_{L,\kappa}$ such that this last lower bound is smaller than $-\underline{c}_{L,\kappa}\log(\frac{e}{\alpha})$. In conclusion, we have proved that, with probability higher than $1-\alpha/2$, one has $ \varphi_{IC}(\tau) \geq -\underline{c}_{L,\kappa}\log(\frac{e}{\alpha})$ simultaneously for all $\tau$ satisfying $\bE^2(t_{\tau})\leq \gamma_0 \bE^{2}_1$.

\bigskip 

\noindent 
{\bf Case 2}: Now consider $\tau$ satisfying $\bE^2(t_{\tau})\geq \gamma_0 \bE^{2}_1$. 
We assume that $\tau\leq \tau^*$, the case $\tau>\tau^*$ is handled similarly. 
Recall the basic inequality 
 \[
 \gamma_{\tau}= \frac{\bE^{2}(t_{\tau})}{\bE^{2}_1}= \frac{(\tau-1)(n+1-\tau^*)}{(\tau^*-1)(n+1-\tau)} \enspace .
 \]
 With this notation,
 \beqn 
 \varphi_{IC}(\tau)&\geq& -(1+\kappa) \bE^{2}(t_{\tau}) - (1+\kappa^{-1}) \bN^2(t_{\tau})+ (1+\kappa)L^2\pen_1(\tau)\\
 & \geq & -(1+\kappa)\gamma_{\tau}\bE^{2}_1-(1+\kappa^{-1})\bN^2(t_{\tau})+ (1+\kappa)L^2\pen_1(\tau)\enspace .
 \eeqn 
 We need uniform controls of both $\bN^2(t_{\tau})$ and $\pen_1(\tau)$.
By Lemma \ref{lem:controle_uniforme_N_ratio}, with probability higher than $1-\alpha/4$, we have 
 \beq\label{eq:upper_varphi_ic}
 \varphi_{IC}(\tau)
  \geq  -(1+\kappa)\gamma_{\tau}\bE^{2}_1- (1+\kappa^{-1})16\left[\log\log\left(e \gamma_{\tau}^{-1}\right)+\log\left(\frac{8}{\alpha}\right)+1\right]+ (1+\kappa)L^2\pen_1(\tau)\enspace .
 \eeq
Assume first that $\gamma_{\tau}\geq 1/2$.
In this case, by \eqref{eq:UBDeltaPen}, there exists an absolute constant $C$ such that
\[
\pen_1(\tau)\geq \pen_1(\tau^*)-C\enspace.
\]
Therefore, \eqref{eq:upper_varphi_ic} implies that
\beqn 
 \varphi_{IC}(\tau)& \geq&  (1+\kappa)\left[- \bE^{2}_1+ L^2(\pen_1(\tau^*)-C)\right] - (1+\kappa^{-1})16\left[0.6+\log\left(\frac{8}{\alpha}\right)+1\right]\enspace.
 \eeqn 
 As the change-point energy is small, this implies that
\beqn 
 \varphi_{IC}(\tau) &\geq & -(1+\kappa)\left[\underline{c}''_{L}\log(\frac{e}{\alpha})+ CL^2\right]- (1+\kappa^{-1})16\left[1.6+\log\left(\frac{8}{\alpha}\right)\right]\enspace.
\eeqn 
This last lower bound is larger than $- \underline{c}_{L,\kappa}\log(\frac{e}{\alpha})$ if we choose $\underline{c}_{L,\kappa}$ large enough.

Assume now that $\gamma_{\tau}< 1/2$. 
By \eqref{eq:UBDeltaPen}, there exists an absolute constant $C$ such that
\[
\pen_1(\tau)-\pen_1(\tau^*)\geq -C\log\log(e\gamma_0^{-1})\enspace.
\]
Therefore, \eqref{eq:upper_varphi_ic} implies that, with probability higher than $1-\alpha/4$
 \beqn 
 \varphi_{IC}(\tau)& \geq&  (1+\kappa)\left[- \frac{1}{2}\bE^{2}_1+ L^2(\pen_1(\tau^*)-C\log\log(e\gamma_0^{-1}))\right] \\
 &&\qquad - 16(1+\kappa^{-1})\left[\log\log(2e \gamma_0^{-1}) + 1+\log\left(\frac{8}{\alpha}\right)\right]\\
 &\geq &   (1+\kappa)\left[-\underline{c}''_{L}\log(\frac{e}{\alpha})+\frac{L^2}{2} \pen_1(\tau^*)\right] \\
 &&\qquad - C_{L,\kappa}\left[ \log\log\left[2e (\pen_1(\tau^*)\vee 1)\right]+ 1+\log\left(\frac{e}{\alpha}\right)\right]\enspace.
\eeqn
It remains to say that, for any positive $a$ and $b$, there exists $C_{a,b}$ such that $ax-b\log\log(2ex)\geq C_{a,b}$.
This implies that $\varphi_{IC}(\tau)\geq - \underline{c}_{L,\kappa}\log(\frac{e}{\alpha})$ if we choose $\underline{c}_{L,\kappa}$ large enough.

 Arguing simularly for $\tau\geq \tau^*$ and combining this result with Case 1, we have proved that the test $\boldsymbol{\varphi}_{IC}$ does not reject the null with probability higher than $1-\alpha$, when $\bE_1$ is small~\eqref{eq:condition_small_energy_IC}.

 \bigskip

\noindent 
{\bf Proof of Claim 2}:
Assume that $\bE_1$ is large so \eqref{eq:condition_small_energy_IC} does not hold. 
We have to prove that there exists an absolute constant $C$ such that, with probability higher than $1-\alpha$, $\tau^*$ belongs to the interval 
\[
[\widehat{\tau}-C \log(\tfrac{e}{\alpha})|\widehat{\Delta}|^{-2}; \widehat{\tau}+  C\log(\tfrac{e}{\alpha})|\widehat{\Delta}|^{-2}]\enspace.
\]

Since \eqref{eq:condition_small_energy_IC} does not hold, Proposition~\ref{prp:tau_hat} implies that there exists an absolute constant $C$ such that, for $n$ large enough, with probability larger than $1-\alpha/2$, 
\beq\label{eq:error_tau_hat_claim2}
|\widehat{\tau}-\tau^*|\leq C\frac{\log(e/\alpha)}{\Delta^{2}}\ \text{ and }\gamma_{\widehat{\tau}}\geq 1/2\enspace . 
\eeq
From Lemma \ref{lem:controle_uniforme_N_ratio}, with probability larger than $1-\alpha/4$, we have for all $\tau<\tau^*$ such that $\gamma_{\tau}\geq 1/2$
\[
 |\bN(t_{\tau})|\leq 4\sqrt{\log(8/\alpha)+1.6}=:u_{\alpha}
\]
By symmetry, the same bound holds for all $\tau> \tau^*$ such that $\gamma_{\tau}\geq 1/2$. In view of \eqref{eq:error_tau_hat_claim2}, it therefore holds for $\bN(t_{\widehat{\tau}})$ with probability larger than $1-\alpha/2$. Since
\[
\widehat{\Delta}:=   \frac{\sum_{i=\widehat{\tau}}^{n+1-\widehat{\tau}}Y_i}{n+1-\widehat{\tau}}- \frac{\sum_{i=1}^{\widehat{\tau}-1}Y_i}{\widehat{\tau}-1}= 
\Delta\left[\frac{\tau^*-1}{\widehat{\tau}-1}\wedge \frac{n+1-\tau^*}{n+1-\widehat{\tau}}\right]+ \sqrt{\frac{n}{(\widehat{\tau}-1)(n+1-\widehat{\tau})}}\bN(t_{\widehat{\tau}})\enspace ,
\]
we deduce that, with probability larger than $1-\alpha/2$,
\[
 \frac{|\Delta|}{2}-  \sqrt{\frac{n}{(\widehat{\tau}-1)(n+1-\widehat{\tau})}}u_{\alpha} \leq  |\widehat{\Delta}|\leq |\Delta|+ \sqrt{\frac{n}{(\widehat{\tau}-1)(n+1-\widehat{\tau})}}u_{\alpha}\enspace .
\]
Since $\gamma_{\widehat{\tau}}\geq 1/2$, we have 
$\sqrt{\frac{n}{(\widehat{\tau}-1)(n+1-\widehat{\tau})}}\leq \sqrt{2\frac{n}{(\tau^*-1)(n+1-\tau^*)}}$ and the previous bounds imply that 
\[
\left[\frac{1}{2}-\frac{\sqrt{2u_{\alpha}}}{\bE_1}\right] \leq  \frac{|\widehat{\Delta}|}{\Delta}\leq \left[1+\frac{\sqrt{2u_{\alpha}}}{\bE_1}\right] 
\]
If $\underline{c}''_{L}$ in \eqref{eq:condition_small_energy_IC} is sufficiently large, this implies that 
\beq\label{eq:controle_Delta_hat}
 \frac{|\widehat{\Delta}|}{\Delta}\in [1/4, 5/4]\enspace.
\eeq
Together with \eqref{eq:error_tau_hat_claim2}, this shows that $\tau^*$ belongs to the interval~\eqref{eq:definition_IC} with probability higher than $1-\alpha$, provided that the constant $\underline{c}$ is large enough.

\bigskip

\noindent 
{\bf Proof of Claim 3}. This last claim is quite straightforward. If $\bE_1$ is large enough, then $\varphi_{IC}(\tau^*)$ is small enough and the test is rejected with probability higher than $1-\alpha/2$. 
On this event,  $IC_{\widehat{\tau}}$ is defined as in~\eqref{eq:definition_IC}. 
Arguing as in Claim 2, 
we derive as in \eqref{eq:controle_Delta_hat} that $|\widehat{\Delta}|\geq |\Delta|/4$. The result follows.

\section{Proofs for multiple change-points} \label{Proofs_multiple}
\subsection{Proofs of the lower bounds}

\begin{proof}[Proof of Proposition~\ref{prp:lower_detection_multiple}]

Let $r$ in $\{1,\ldots,\lfloor n/4 \rfloor\}$, $\delta \leq \sqrt{2(1-\xi)\log(n/r)/r}$, and 
$$\cT=\left\{\lfloor n/4 \rfloor+kr+1,\ k\in\left\{0,\ldots, \left\lfloor \frac{\lfloor 3n/4 \rfloor - \lfloor n/4 \rfloor-1}{r}\right\rfloor -1\right\}\right\}\enspace.$$ For 
$\tau$ in $ \cT$, define $\btheta(\tau)$ in $\Theta_2$ by $\theta_i{(\tau)}=\delta \1_{i\in[\tau, \tau+r-1]}$. We aim at proving that, for any test $\mathscr{T}$,
 \beq\label{eq:objectif_detection_multiple}
  \P_0[\mathscr{T}=1]+ \max_{\tau \in \cT}\P_{\btheta(\tau)}[\mathscr{T}= 0]\geq 1 - c_n\enspace,
 \eeq
with $c_n=c(r/n)^{c'\xi}$ for some positive numerical constants $c$ and $c'$.  Note that~\eqref{eq:objectif_detection_multiple} implies the result of the proposition. 
As in the proof of Proposition~\ref{prp:lower_test_one_change-point} and with the same notation, we use Le Cam's approach and therefore define the mixture probability $\mathbf{P}= |\cT|^{-1}\sum_{\tau \in \cT}\P_{\tau}$. Then, one knows that Inequality~\eqref{eq:objectif_detection_multiple} holds for all test $\mathscr{T}$ if $\|\P_0-\mathbf{P}\|_{TV}$ is less or equal to $c_n$. 

Let us introducing for any $\tau$ in $\cT$, the event
 $\Gamma_{\tau}=\big\{\langle \bY , \btheta(\tau)\rangle \leq \|\btheta(\tau) \|^2 + \|\btheta(\tau) \|\sqrt{2a}\big\},$ ($a>0$) such that
  $\P_{\tau}[\Gamma_{\tau}^c]= \overline{\Phi}(\sqrt{2a})\leq e^{-a}$.\\
  Now, define the thresholded likelihood $\tilde{L}_{\tau}= L_{\tau}\1_{\Gamma_{\tau}}$ and $\tilde{L}=|\cT|^{-1}\sum_{\tau\in \cT} \tilde{L}_{\tau}$. We have $ \tilde{L}\leq L$ and
 \[
  \E_0[L-\tilde{L}]= \frac{1}{|\cT|}\sum_{\tau\in \cT} \E_0[L_{\tau}\1_{\Gamma_{\tau}^c}]= \frac{1}{|\cT|}\sum_{\tau\in \cT}\P_{\tau}[\Gamma_{\tau}^c]\leq e^{-a}\enspace . 
 \]
We first upper bound $\|\P_0-\mathbf{P}\|_{TV}$ by
\[
\|\P_0-\mathbf{P}\|_{TV}= \frac{1}{2} \E_0[|L-1|]\leq \frac{3}{2} \left[\E_{0}[L-\tilde{L}]\right]^{1/2} + \frac{1}{2}\left[\E_0[\tilde{L}^2] -1 \right]^{1/2}\leq \frac{3}{2} e^{-a/2}+ \frac{1}{2}\left[\E_0[\tilde{L}^2] -1 \right]^{1/2}\enspace.\]
Now, remark that:
\[
 \E_{0}[\tilde{L}^2 -1]= \frac{1}{|\cT|^2}\sum_{\tau, \tau'\in \cT}\left[\E_0[L_{\tau}L_{\tau'}\1_{\Gamma_{\tau}}\1_{\Gamma_{\tau'}}] - 1\right] \enspace.
\]
Recall that $L_{\tau}= \exp\left[\sum_{i=\tau}^{\tau+r-1}Y_i\tau - \frac{r\delta^2}{2}\right]$. Hence, $L_{\tau}$ is independent of $L_{\tau'}$ for $\tau\neq \tau'$. This implies that for $\tau\neq \tau'$,
\[
 \E_0[L_{\tau}L_{\tau'}\1_{\Gamma_{\tau}}\1_{\Gamma_{\tau'}}]\leq \E_{0}[L_{\tau}]\E_{0}[L_{\tau'}]= 1\enspace . 
\]
As a consequence, 
\begin{eqnarray*}
 \E_{0}[\tilde{L}^2 -1]\leq \frac{1}{|\cT|^2}\sum_{\tau\in \cT}\left[\E_0[\tilde{L}_{\tau}^2]- 1\right]\leq  \frac{1}{|\cT|^2}\sum_{\tau\in \cT}\E_{0}\left[\exp\left[\sum_{i=\tau}^{\tau+r-1}2Y_i\delta - r\delta^2\right]\1_{\sum_{i=\tau}^{\tau+r-1}Y_i\leq \sqrt{2ra}+ r\delta}\right] \enspace.
\end{eqnarray*}
Introducing $Z=r^{-1/2}\sum_{i=\tau}^{\tau+r-1}Y_i$ which follows a standard Gaussian distribution under $\P_0$, 
\beqn 
\E_{0}[\tilde{L}^2 -1]&\leq &\frac{1}{|\cT|} \E_0\left[e^{2Zr^{1/2}\delta - r\delta^2}\1_{Z\leq \sqrt{2a}+ r^{1/2}\delta}\right]= \frac{1}{|\cT|} e^{r\delta^2}\int^{\sqrt{2a}+ r^{1/2}\delta}_{-\infty}\phi(z- 2r^{1/2}\delta)dz\\
&\leq & \frac{1}{|\cT|} e^{r\delta^2}\overline{\Phi}(r^{1/2}\delta -\sqrt{2a})\\
&\leq & \frac{1}{|\cT|}e^{r\delta^2/2 -a +  \delta \sqrt{2ar}}\enspace,
\eeqn 
provided that  $a\leq r \delta^2/2$. 
Gathering everything, we have proved that 
\[\|\P_0-\mathbf{P}\|_{TV}\leq \frac{3}{2}e^{-a/2}+ \frac{1}{2|\cT|^{1/2}} e^{r\delta^2/4 -a/2 +  \sqrt{ar/2}\delta}\enspace .                                                                  \]
With $\delta=\sqrt{2(1-\xi) \log (n/r)/r}$, $n\geq 8$ and $r\leq n/4$, then $|\cT|\geq n/(8r)$. Taking $a= b\log(n/r)$ with $b\leq (1-\xi)$, this leads us to 
\begin{eqnarray*}
\|\P_0-\mathbf{P}\|_{TV}&\leq& \frac{3}{2}\left(\frac{n}{r}\right)^{-b/2}+ \sqrt{2}\left(\frac{n}{r}\right)^{- \xi/2-b/2+\sqrt{b(1-\xi)}}
\enspace.
\end{eqnarray*}
Take $b= [\xi^2/[2(1-\xi)]]\wedge (1-\xi)$ so that $\sqrt{b(1-\xi)}\leq \xi/2$ for any $\xi\in (0,1)$, which leads us to $\|\P_0-\mathbf{P}\|_{TV}\leq 3 (\tfrac{n}{r})^{-b/2}$. Finally, it suffices to observe that there exists $c'>0$ such that  $b\geq c'\xi^2$ for any $\xi$ in $ (0,1)$. 
\end{proof}

\begin{proof}[Proof of Proposition~\ref{prp:lower_loc_multiple}]
Let us first prove the lower bound~\eqref{eq:lower_risk_ke}.
As $\tau^*_k\in I_k$, both $\mu_{k-1}$ and $\mu_k$ are known to the statistician and as all other change-points do not belong to $I_k$, the statistic $(Y_i)$, $i\in I_k$ is sufficient for estimating of $\tau^*_k$. 
As a consequence, estimating $\tau^*$ boils downs to 6 a one-change-point estimation problem on $(Y_i)_{i\in I_k}$ and the results follow from Lemma~\ref{lem:LBCPE1}.

For any probability distribution $\pi$ on $\Theta[\cI,\mu]$ and any estimator $\widehat{\btau}$ of $\btau^*$,
\[
\sup_{\btheta\in \Theta[\cI,\mu]}\E_{\btheta}\left[\sum_{k=1}^K|\widehat{\tau}_k -\tau^*_k|\right]\geqslant \int \E_{\btheta}\left[\sum_{k=1}^K|\widehat{\tau}_k -\tau^*_k|\right]\pi({\rm d}\btheta)\enspace.
\]
To define a probability distribution $\pi$, fix first a sequence $(r_k)_{k\in\{1,\ldots,K\}}$ such that all $|r_k|\leq |I_k|-1$ and, for all $k\in\{1,\ldots K\}$, fix
\[
z_{k,0}=x_k+1,\qquad z_{k,1}= x_k+1+r_k\enspace.
\]
Let $\cU$ denote the uniform distribution over $\{0,1\}^K$ and, for any $\bu\in\{0,1\}^K$, let $\btheta_{\bu}\in \Theta[\cI,\mu]$ be the vector such that, for all $k\in\{1,\ldots,K\}$, $\tau^*_k=z_{k,\bu_k}$. 
Let $\pi$ denote the distribution of $\btheta_{\bU}$, where $\bU\sim\cU$.
As all $\btheta_{\bu}\in \Theta[\cI,\mu]$, the coordinates $U_1,\ldots,U_K$ of $\bU$ are independent conditionally on $\bY$.

First, consider the Wasserstein loss. The Bayes risk is achieved by the MAP estimator $\widehat{u}$. As a consequence, 
\beqn 
 \inf_{\widehat{\btau}\in \mathbb{N}^K}\sup_{\btheta\in \Theta[(I_l),\mu]}\mathbb{E}_{\btheta}\left[\sum_{k=1}^K|\widehat{\tau}_k -\tau^*_k|\right]&\geq &\sum_{k=1}^K r_k \mathbb{P}[\widehat{u}_k\neq u_k]\\
 &\geq & \sum_{k=1}^K r_k \overline{\Phi}[r^{1/2}_k|\Delta_k|/2]\enspace ,
\eeqn 
where we argued as in~\eqref{eq:lower_risk_ke}. Taking $r_k=1$ if $|\Delta_k|\geq 2$ and $r_k= \lfloor \frac{4}{\Delta_k^{2}}\rfloor$ for $|\Delta_k|\leq 2$ leads to \eqref{eq:lower_risk_l2}.

Turning to \eqref{eq:lower_risk_haussdorf}, we restrict ourselves to the case where all $r_k$'s are equal to some $0<r< \min_k |I_k|/2$. Again, one easily checks that 
\beqn 
\inf_{\widehat{\btau}\in \mathbb{N}^K}\sup_{\btheta\in \Theta[(I_l),\mu]}\mathbb{E}_{\btheta}\left[d_H(\widehat{\btau};\btau*)\right]&\geq& \frac{r}{2}\mathbb{P}[\widehat{u}\neq u]= \frac{r}{2}\left[1-(1- \overline{\Phi}[r^{1/2}|\Delta|/2])^{K}\right]\ . 
\eeqn
If $\Delta^2\geq 4$, we simply take $r=1$, which, together with $(1-x)^{K}\geq 1- Kx$, leads us to
\[
 \inf_{\widehat{\btau}\in \mathbb{N}^K}\sup_{\btheta\in \Theta[(I_l),\mu]}\mathbb{E}_{\btheta}\left[d_H(\widehat{\btau};\btau^*)\right]\geq \frac{K}{2}\overline{\Phi}(|\Delta|/2)\geq c\frac{Ke^{-\Delta^2/8}}{|\Delta|}\ , 
\]
since $\overline{\Phi}(x)\geq c x^{-1}e^{-x^2/2}$ for $x\geq 1/2$ by integration by part. If $\Delta^2\leq 4$, we take $r= \lfloor \frac{4}{\Delta^{2}} (\overline{\Phi}^{-1}[1/(4K)])^{2}\rfloor\geq 1$. We have $r < \min_{k}|I_k|/2$ since $|I_k|\geq c\log(K)/\Delta^2$ for a suitable constant $c>0$.
Hence, 
\beqn 
\inf_{\widehat{\btau}\in \mathbb{N}^K}\sup_{\btheta\in \Theta[(I_l),\mu]}\mathbb{E}_{\btheta}\left[d_H(\widehat{\btau};\btau^*)\right]&\geq& r\left(1-( 1- \frac{1}{4K})^{K}\right)\geq c'\frac{\log(K)}{\Delta^2}\ ,
\eeqn
  and the result follows.

\end{proof}

\subsection{Further Notation and preliminary Lemmas for Multiple Jumps}\label{sec:further_notation}
The purpose of this subsection is to introduce relevant quantities for evaluating the criteria differences $\Cr_{0}(\btau,\bY)- \Cr_{0}(\btau',\bY)$ for change-point vectors $\btau$ and $\btau'$ that differ at exactly one change-point. 
 For any $k=1,\ldots, K+1$, we write for short $\delta^*_{k}= \tau^*_{k}- \tau^*_{k-1}$. 
For any $q>0$, we define the function $\psi_q$, for any $\delta=(\delta_1,\delta_2)\in \{1,\ldots, n\}^2$, 
\beq\label{eq:definition_psi_q}
 \psi_q[\delta]:= \sqrt{2\log\left(\frac{n(\delta_1+\delta_2)}{\delta_1\delta_2}\right)+ q}\enspace . 
\eeq
Given $t\in \cT_3$,  we write define $\underline{\Delta}_{t}$  as the difference of means
\beq\label{eq:definition_Delta_t}
\underline{\Delta}_{t}:=\tfrac{1}{\tau_2 - \tau_1}\sum_{i=\tau_1}^{\tau_2-1}\theta_i - \tfrac{1}{\tau_3 - \tau_2}\sum_{i=\tau_2}^{\tau_3-1}\theta_i\enspace ,
\eeq
so that the energy satistfies $\bE[t]:=\sqrt{\frac{(\tau_{3}-\tau_2)(\tau_{2}-\tau_{1})}{\tau_{3}-\tau_{1}}}  |\underline{\Delta}_{t}|$. 
For $t\in \cT_3$, let $\delta(t)= (t_2-t_1,t_3-t_2)$ denote the differences. In the following lemma, we compute the difference of penalized criteria. It is a slight variation of the decomposition given in Lemma \ref{lem:decomposition_critere_tau_tau_}.

\begin{lem}[Comparison of the criteria for one change-point difference]\label{lem:comparison_one_change-point}
 Consider any change-point vector $\btau= (\tau_1, \ldots, \tau_{m})$ (with the convention $\tau_0=1$ and $\tau_{m+1}={n+1}$). For any $l\in [m]$,let $t= (\tau_{l-1},\tau_l,\tau_{l+1})\in \cT_3$. We have the following decomposition. 
\beqn
\Cr_{0}(\btau^{(-l)}, \bY) - \Cr_{0}(\btau, \bY) = \left((-1)^{sign(\underline{\Delta}_t)}\bE(t)-\bN(t)\right)^2-   L\psi^2_q(\tau_{l}-\tau_{l-1},\tau_{l+1}-\tau_l)\enspace .
\eeqn  
\end{lem}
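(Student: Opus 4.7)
The statement is essentially a re-encoding of Lemma~\ref{lem:decomposition_critere_tau_tau_}, so my plan is to obtain it in two short steps: first re-use the decomposition already proved, then rewrite the resulting $\bC^2(\bY,t)$ by splitting signal and noise.

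First, I would apply Lemma~\ref{lem:decomposition_critere_tau_tau_} to the triad $t=(\tau_{l-1},\tau_l,\tau_{l+1})$. Multiplying that identity by $-1$ yields
\[
\Cr_0(\btau^{(-l)},\bY)-\Cr_0(\btau,\bY)=\bC^2(\bY,t)-L\Big[2\log\Big(\tfrac{n(\tau_{l+1}-\tau_{l-1})}{(\tau_{l+1}-\tau_l)(\tau_l-\tau_{l-1})}\Big)+q\Big],
\]
and the bracketed quantity is precisely $\psi_q^2(\tau_l-\tau_{l-1},\tau_{l+1}-\tau_l)$ by the definition~\eqref{eq:definition_psi_q}. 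It therefore only remains to identify $\bC^2(\bY,t)$ with $\bigl((-1)^{\mathrm{sign}(\underline{\Delta}_t)}\bE(t)-\bN(t)\bigr)^2$.

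For this second step, I would write $Y_i=\theta_i+\epsilon_i$ in the definition~\eqref{eq:definition_cusum} of the CUSUM statistic, so that $\bC(\bY,t)$ splits additively as the deterministic CUSUM evaluated at $\btheta$ plus the noise CUSUM $\bN(t)$ from~\eqref{eq:definition_N}. Using the definition~\eqref{eq:definition_Delta_t} of $\underline{\Delta}_t$, the deterministic part is
\[
\Bigl(\tfrac{1}{t_3-t_2}\sum_{i=t_2}^{t_3-1}\theta_i-\tfrac{1}{t_2-t_1}\sum_{i=t_1}^{t_2-1}\theta_i\Bigr)\sqrt{\tfrac{(t_2-t_1)(t_3-t_2)}{t_3-t_1}}=-\underline{\Delta}_t\sqrt{\tfrac{(t_2-t_1)(t_3-t_2)}{t_3-t_1}},
\]
and since $\bE(t)=|\underline{\Delta}_t|\sqrt{(t_2-t_1)(t_3-t_2)/(t_3-t_1)}$, the deterministic part is $-\mathrm{sign}(\underline{\Delta}_t)\,\bE(t)$. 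Hence $\bC(\bY,t)=-\mathrm{sign}(\underline{\Delta}_t)\,\bE(t)+\bN(t)$. Squaring is invariant under an overall sign flip, so
\[
\bC^2(\bY,t)=\bigl(\mathrm{sign}(\underline{\Delta}_t)\,\bE(t)-\bN(t)\bigr)^2=\bigl((-1)^{\mathrm{sign}(\underline{\Delta}_t)}\bE(t)-\bN(t)\bigr)^2,
\]
which combined with the first display yields the claimed formula.

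There is no real obstacle here: once Lemma~\ref{lem:decomposition_critere_tau_tau_} is available, the only work is a bookkeeping identification of signal and noise in the CUSUM, together with the trivial observation that the squared quantity is insensitive to the sign attached to $\bE(t)$.
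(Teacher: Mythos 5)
Your proof is correct and takes the same route the paper implicitly intends: the paper states Lemma~\ref{lem:comparison_one_change-point} as ``a slight variation'' of Lemma~\ref{lem:decomposition_critere_tau_tau_} and gives no separate argument, so the content is exactly the two bookkeeping steps you carried out (apply Lemma~\ref{lem:decomposition_critere_tau_tau_}, then split the CUSUM into signal and noise). Your derivation $\bC(\bY,t)=-\sign(\underline{\Delta}_t)\,\bE(t)+\bN(t)$, hence $\bC^2(\bY,t)=\bigl(\sign(\underline{\Delta}_t)\,\bE(t)-\bN(t)\bigr)^2$, is exactly what is needed. One small caveat on your last line: the identification of $\bigl(\sign(\underline{\Delta}_t)\bE(t)-\bN(t)\bigr)^2$ with $\bigl((-1)^{\sign(\underline{\Delta}_t)}\bE(t)-\bN(t)\bigr)^2$ is not literally a consequence of ``squaring kills an overall sign flip'' --- an overall sign flip changes the coefficient of both $\bE$ and $\bN$, not just $\bE$, and with the usual $\sign\in\{-1,1\}$ convention the symbol $(-1)^{\sign(\cdot)}$ is identically $-1$, which does not match $\sign(\cdot)$. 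The resolution is simply that the paper's notation $(-1)^{\sign(\cdot)}$ is nonstandard shorthand for ``a sign $\pm1$'', and every subsequent use squares the expression and then passes to the magnitude bound $\bigl(\pm\bE-\bN\bigr)^2\geq\bigl(|\bE|-|\bN|\bigr)_+^2$ or $\leq\bigl(|\bE|+|\bN|\bigr)^2$, for which the sign is immaterial; so the statement as used is correct and your proof captures the substance.
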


The next lemma provides a probability bound on the noise random variables $\bN(t)$. Its proof is postponed to Section~\ref{sec:proof:concentration:Nt}.

\begin{lem}[Concentration of $\bN(\bt)$]\label{lem:concentration:N_t}
Fix any $x\in (0,1)$. With probability higher than $1-x$, one has
 \beq\label{eq:upper_bound_N_t}
 \bN^2(\bt)\leq   2\log\left(\frac{n(\delta_1(\bt)+\delta_2(\bt)}{\delta_1(\bt)\delta_2(\bt) }\right)+   c_1 \log\log\left(\frac{n(\delta_1(\bt)+\delta_2(\bt)}{\delta_1(\bt)\delta_2(\bt) x}\right) + c_2\log\left(\frac{1}{x}\right) + c_3 .
 \eeq
 simultaneously over all triads $\bt=(t_1,t_2,t_3)$ in $ \cT_3$.
\end{lem}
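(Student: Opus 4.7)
The plan is to prove Lemma~\ref{lem:concentration:N_t} by combining a dyadic peeling in the triad scales $(\delta_1,\delta_2)$ with a sub-Gaussian chaining argument inside each dyadic class, closed by a weighted union bound over scales.

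First I would write $\bN(\bt) = \langle \ba_{\bt},\bepsilon\rangle$ with $\ba_{\bt}\in\bbR^n$ satisfying $\|\ba_{\bt}\|_2=1$, so that, under the sub-Gaussian assumption on $\bepsilon$, both $\bN(\bt)$ and the increment $\bN(\bt)-\bN(\bt')$ are sub-Gaussian with parameters $1$ and $\|\ba_{\bt}-\ba_{\bt'}\|_2$ respectively. I then partition $\cT_3$ into dyadic classes
\[
\cT(\ell_1,\ell_2)=\bigl\{\bt\in\cT_3:2^{\ell_1}\le\delta_1(\bt)<2^{\ell_1+1},\ 2^{\ell_2}\le\delta_2(\bt)<2^{\ell_2+1}\bigr\},
\]
for $0\le\ell_1,\ell_2\le\log_2(n)$, fix a summable weight $p_{\ell_1,\ell_2}\propto (\ell_1+1)^{-2}(\ell_2+1)^{-2}$ with $\sum_{\ell_1,\ell_2} p_{\ell_1,\ell_2}\le 1$, and target a failure budget $x\,p_{\ell_1,\ell_2}$ for each class.

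Within a fixed class $\cT(\ell_1,\ell_2)$, a direct computation shows that a small integer shift $\bt'-\bt=(s_1,s_2,s_3)$ keeping $\bt'$ in the class satisfies $\|\ba_{\bt}-\ba_{\bt'}\|_2^2\lesssim (|s_1|+|s_2|+|s_3|)/(2^{\ell_1}\wedge 2^{\ell_2})$. Although $|\cT(\ell_1,\ell_2)|$ can be as large as $n\cdot 2^{\ell_1+\ell_2}$, this Lipschitz estimate collapses the effective metric entropy to $\log N(\varepsilon,\cT(\ell_1,\ell_2),d)\lesssim \log\bigl(n/(2^{\ell_1}\wedge 2^{\ell_2})\bigr)+3\log(1/\varepsilon)$ when successive dyadic refinements of $(t_1,t_2,t_3)$ are used. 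A sub-Gaussian chaining argument then yields a deviation bound of the form
\[
\P\Bigl(\sup_{\bt\in\cT(\ell_1,\ell_2)}|\bN(\bt)|^2 > 2\log\bigl(n/(2^{\ell_1}\wedge 2^{\ell_2})\bigr)+r_{\ell_1,\ell_2}(x)\Bigr)\le x\,p_{\ell_1,\ell_2},
\]
with remainder $r_{\ell_1,\ell_2}(x)\lesssim \log\log\bigl(n/(2^{\ell_1}\wedge 2^{\ell_2})\bigr)+\log(1/(x p_{\ell_1,\ell_2}))$.

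A union bound over the $O(\log^2(n))$ scale pairs converts this into a uniform control with total failure probability at most $x$. Since for $\bt\in\cT(\ell_1,\ell_2)$ one has $n(\delta_1(\bt)+\delta_2(\bt))/(\delta_1(\bt)\delta_2(\bt))\asymp n/(2^{\ell_1}\wedge 2^{\ell_2})$, the scale-dependent main term translates directly into $2\log\bigl(n(\delta_1(\bt)+\delta_2(\bt))/(\delta_1(\bt)\delta_2(\bt))\bigr)$, while the chaining remainder and the $\log p_{\ell_1,\ell_2}^{-1}\lesssim\log\log(n)$ union cost are absorbed into the $c_1\log\log(\cdots)+c_2\log(1/x)+c_3$ correction. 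The main obstacle is securing the sharp leading constant $2$ uniformly over all dyadic scales: a generic chaining bound only delivers an unspecified multiplicative constant, so the delicate step is to carry out chaining down to a sufficiently fine discretization of $\cT(\ell_1,\ell_2)$ where a direct union bound with the sub-Gaussian tail $2e^{-u^2/2}$ supplies the constant $2$, and to check that the Lipschitz estimate on $\ba_{\bt}$ only pushes the remaining off-grid fluctuations into the $\log\log$ and $\log(1/x)$ correction.
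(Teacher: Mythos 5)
Your overall strategy --- dyadic peeling of $\cT_3$ by the segment scales, sub-Gaussian chaining inside each scale class with a weighted union bound across classes, and pushing the chaining down to a fine enough net that a direct sub-Gaussian union bound supplies the sharp leading constant $2$ --- is exactly the architecture of the paper's proof, so the plan is sound. The gap is in the Lipschitz estimate you use for the increments. You assert $\|\ba_{\bt}-\ba_{\bt'}\|_2^2\lesssim (|t_1-t_1'|+|t_2-t_2'|+|t_3-t_3'|)/(\delta_1\wedge\delta_2)$, treating the three endpoints symmetrically. This is a valid upper bound, but it is far too loose in the $t_3$ direction when the two segments are unbalanced: if $\delta_1\ll\delta_2$, shifting $t_3$ by $s\asymp\delta_2$ changes $\ba_{\bt}$ only by $\|\ba_{\bt}-\ba_{\bt'}\|_2^2\asymp \delta_1/\delta_2\ll 1$, whereas your bound gives $s/\delta_1\asymp\delta_2/\delta_1\gg 1$. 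Because the geometry is genuinely anisotropic, the covering number of a class $\cT(\ell_1,\ell_2)$ at radius $\varepsilon$ under your metric is $\asymp n\,2^{\ell_2-2\ell_1}/\varepsilon^6$ (taking $\ell_1\le\ell_2$), not $\asymp (n/2^{\ell_1})/\varepsilon^3$ as you assert; the entropy therefore picks up an extra additive term of order $\log(\delta_2/\delta_1)$, which can be as large as $\log n$ and doubles the leading constant in the worst case, breaking the argument precisely where the constant $2$ is needed.

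The fix is to use a semi-metric that downweights displacements of $t_3$ by the factor $(\delta_1/\delta_2+\delta_1'/\delta_2')^2$ after splitting $\cT_3$ into $\{\delta_1\le\delta_2\}$ and its mirror. This is what the paper's Lemma~\ref{lem:rho_var} establishes via $\rho^2(\bt,\bt')=|t_1-t'_1|+|t_2-t'_2|+|t_3-t'_3|(\delta_1/\delta_2+\delta'_1/\delta'_2)^2$, and it is essential: with this $\rho$ the covering number of each scale class drops to $\asymp (n/\delta_2)/\kappa^6$ (Lemma~\ref{lem:covering_number_gamma}), which is at most $n(\delta_1+\delta_2)/(\delta_1\delta_2)$ and lets the coarsest-net union bound deliver $\sqrt{2\log(\cdot)}$ exactly, with the finer chaining levels and the $O(\log^2 n)$ scale-union cost absorbed into the $\log\log$ remainder. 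There is also a secondary wrinkle you should make explicit: the paper chains on the unnormalized process $X_{\bt}=\sigma(\bt)\bN(\bt)$, for which $\rho$ is the natural increment metric, and divides by $\sigma(\bt)$ only at the end; this requires the extra control of $|\sigma(\bt)-\sigma(\ol{\bt})|/\sigma(\bt)$ along the chain (Lemma~\ref{lem:comparaison_variance}). If you chain directly on the normalized $\bN(\bt)$ as in your proposal, the ratio $\sigma(\bt')/\sigma(\bt)$ must be folded into the increment metric, which is doable but must be carried out rather than left implicit.
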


In the proof of Propositions \ref{prp:rough_analysis} and \ref{prp:minimal_penalty_analysis}, we fix $\ell= [(\sqrt{L}+ 9)/10]\wedge 2<\sqrt{L}$. When, $L>1$, we also have $\ell> 1$.  For any $L >1$ and $q>2$, define the event 
\beq\label{eq:definition_event_A}
\cA_{L ,q}:= \left\{|\bN(t)|\leq \ell \psi_q[\delta(t)]\enspace ,  \forall t\in \cT_3\right\}\enspace ,
\eeq
where $\psi_q$ is defined in \eqref{eq:definition_psi_q}.
Note that, for $L$ large enough, $\cA_{L,q}$ is simply the even $\cA_q$ defined in \eqref{eq:definition_A_q_large}. 
The next result is a consequence of the previous lemma.
\begin{lem}\label{lem:control_event_A}
There exist universal constants $q_0$, $c$ and $c'$ such that the following holds. 
For any $\ell>1$ and any  $q\geq q_0+ c \log[(\ell-1)^{-1}]_+$, we have 
\[
 \mathbb{P}\left[\cA_{L,q}\right]\geq 1 - e^{-c'q}\enspace .
\]

\end{lem}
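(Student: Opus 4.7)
The plan is to apply Lemma~\ref{lem:concentration:N_t} with $x = e^{-c'q}$ for a small universal constant $c'$ (to be chosen below), and to verify that the resulting uniform upper bound on $\bN^2(t)$ does not exceed $\ell^2 \psi_q^2[\delta(t)]$ whenever $q$ is large enough in terms of $\ell$; this will yield $\mathbb{P}[\cA_{L,q}] \geq 1 - e^{-c'q}$.

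Set $A(t) := \log\bigl(n(\delta_1(t)+\delta_2(t))/(\delta_1(t)\delta_2(t))\bigr) \geq 0$, so that $\psi_q^2[\delta(t)] = 2A(t) + q$. With $x = e^{-c'q}$, Lemma~\ref{lem:concentration:N_t} yields, with probability at least $1 - e^{-c'q}$ and simultaneously over all $t \in \cT_3$,
\[
\bN^2(t) \;\leq\; 2A(t) + c_1 \log(A(t) + c'q) + c_2 c'q + c_3.
\]
It therefore suffices to show that $c_1 \log(A + c'q) + c_2 c'q + c_3 \leq 2(\ell^2 - 1) A + \ell^2 q$ for every $A \geq 0$. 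I would decouple the $A$- and $q$-dependencies via the elementary inequality $\log(A + c'q) \leq \log(A+1) + \log(1+c'q)$ (which follows from $A + c'q \leq (A+1)(1+c'q)$), reducing the task to the two conditions
\begin{align*}
c_1 \log(A+1) &\leq 2(\ell^2-1) A + \alpha_\ell \quad \text{for all } A \geq 0, \\
c_1 \log(1 + c'q) + c_2 c' q + c_3 + \alpha_\ell &\leq \ell^2 q,
\end{align*}
for a suitable slack $\alpha_\ell \geq 0$.

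For the first line, maximizing $A \mapsto c_1 \log(A+1) - 2(\ell^2 - 1) A$ over $A \geq 0$ shows that one may take $\alpha_\ell$ of order $c_1 \log[c_1/(2(\ell^2-1))]_+ + O(1)$; since $\ell^2 - 1 \geq 2(\ell-1)$, this is bounded by a universal constant times $1 + \log[(\ell-1)^{-1}]_+$. For the second line, I would fix $c' := 1/(2c_2)$, so that $c_2 c'q = q/2$ and, using $\ell^2 > 1$, the inequality reduces to $q/2 \geq c_1 \log(1 + q/(2c_2)) + c_3 + \alpha_\ell$, which holds as soon as $q \geq q_0 + c \log[(\ell-1)^{-1}]_+$ for suitable universal constants $q_0$ and $c$. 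The main technical point is the decoupling step and the careful tracking of the $\ell$-dependence in the slack $\alpha_\ell$; the remainder is routine juggling of universal constants.
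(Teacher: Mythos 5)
Your proposal is correct and follows essentially the same route as the paper: apply Lemma~\ref{lem:concentration:N_t} with $x$ chosen exponentially small in $q$, and then verify that the resulting bound is dominated by $\ell^2\psi_q^2[\delta(t)]$ by absorbing the $\log\log$ term. The only cosmetic difference is that the paper collapses this absorption into the single inequality $b\log(z)\leq az + b\log[(b/(ae))\vee e]$ applied to $z=A(t)+\log(1/x)$, whereas you first decouple via $\log(A+c'q)\leq\log(A+1)+\log(1+c'q)$ and handle the two pieces separately; both yield the same $\ell$-dependence of order $\log[(\ell-1)^{-1}]_+$ in the threshold for $q$.
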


Finally, we shall often use the following identity for the $\psi_q$ function~\eqref{eq:definition_psi_q}. For $\delta\neq \delta'$, one has 
\beq
 \psi_q[\delta]- \psi_q[\delta'] = \frac{2}{\psi_q(\delta)+\psi_q(\delta') }\log\left( \frac{(\delta_1+\delta_2)\delta'_1\delta'_2}{\delta_1\delta_2(\delta'_1+\delta'_2)}\right)\enspace . \label{eq:difference_penalty}
\eeq
Its proof is straightforward.

\subsection{Proof of Proposition~\ref{prp:negative_minimal_penalty} }

Let $\tau=\argmax_{i\in [n/4;3n/4]} \epsilon_i$ and define the change-point vector $\btau=(\tau,\tau+1)$. We shall prove that, with overwhelming probability, $
\Cr_0(\emptyset) > \Cr_0(\btau)$. As a consequence, the empty vector will not be the global minimal minimum and the penalized least-squares estimator $\widehat{\btau}$ selects at least one change-point.

Define $x_L= (\sqrt{L}+1)/2 \in (1/2,1)$. Note that 
\[
\P[Y_{\tau}-\theta_{\tau}\leq x_L\sqrt{2\log(n)}]= [1-\overline{\Phi}(x_L\sqrt{2\log(n)})]^{\lfloor n/2\rfloor}\leq e^{- \lfloor n/2\rfloor \log(\overline{\Phi}(x_L\sqrt{2\log(n)})) }\ . 
\]
Since $\overline{\Phi}(x)\geq ce^{-x^2/2}/x$ for any $x\geq 1$, it follows that, for a constant $\beta_L\in (0,1)$ and $n$ large enough, 
\[
 \P[Y_{\tau}-\theta_{\tau}\leq x_L\sqrt{2\log(n)}]\leq e^{-\beta_L n}\ . 
\]
Applying an union bound to all $Z_{1:i}$ with $i=n/4,\ldots, 3n/4$, we obtain that, with probability higher than 
$1-1/(2n)$, $|Z_{1:\tau}|\leq c\sqrt{n\log(n)}$. From the definition~\eqref{eq:definition_N} of $\bN(t)$, we deduce that 
\[
|\bN(1,\tau,\tau+1)|\geq \left[\frac{\sqrt{L}+1}{2}\sqrt{2\log(n)}- c'\sqrt{\frac{\log(n)}{n}}\right]\left[1-\frac{1}{\tau+1}\right]^{1/2}\geq \frac{\sqrt{L}+2}{3}\sqrt{2\log(n)}\enspace , 
\]
for $n$ large enough.  Applying two times the decomposition~\eqref{eq:decomposition_critere_tau_tau_-l}, we deduce that 
\beqn 
 \Cr_0(\emptyset) - \Cr_0(\btau)&\geq& - \bN^2(\tau,\tau+1,n+1)  -\bN^2(0,\tau,n+1)+   2Lq+ 4L\log(4)+2L\log(n)\\
 &\geq & -2\log(n)\left[\left(\frac{\sqrt{L}+2}{3}\right)^2-L\right]+2q+8\log(2) \enspace ,
\eeqn 
which is negative for $n$ large enough. The result follows.

\subsection{Proof of Propositions \ref{prp:rough_analysis} and \ref{prp:minimal_penalty_analysis} }\label{sec:proof_rough_analysis}

The proofs of these two propositions is decomposed in a few lemmas. 

\begin{lem}\label{lem:step1}
Fix any $L>1$ and any $q>2$.  Under the event $\cA_{L,q}$, the following holds
\begin{itemize}
 \item For any $k\in \{0,\ldots, K\}$,  $\widehat{\btau}$ contains at most two change-points in $[\tau_k^*;\tau^*_{k+1}]$. 
 \item Either  $\tau^*_k$ does not belong to $\widehat{\btau}$ or it is a $(\ell+ \sqrt{L},\widehat{\btau}^{(k)},q)$-high energy change-point. 
\end{itemize}

\end{lem}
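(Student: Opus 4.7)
The plan is to combine the minimality of $\widehat\btau$ with the uniform noise bound carried by $\cA_{L,q}$, both exploited through the one-removal identity of Lemma~\ref{lem:decomposition_critere_tau_tau_}:
\[
\Cr_0(\btau,\bY)-\Cr_0(\btau^{(-l)},\bY)=-\bC^2\bigl(\bY,(\tau_{l-1},\tau_l,\tau_{l+1})\bigr)+L\,\psi_q^2\bigl[\delta(\tau_{l-1},\tau_l,\tau_{l+1})\bigr].
\]
The definition $\ell=[(\sqrt{L}+9)/10]\wedge 2$ guarantees $1<\ell<\sqrt{L}$ whenever $L>1$, and this strict gap is what drives both items.

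For the first bullet I would proceed by contradiction. Suppose $\widehat\btau$ contains three consecutive change-points $\widehat\tau_{l-1}<\widehat\tau_l<\widehat\tau_{l+1}$ inside some $[\tau^*_k,\tau^*_{k+1}]$, an interval on which $\btheta$ is constant. Then for $t=(\widehat\tau_{l-1},\widehat\tau_l,\widehat\tau_{l+1})$ the signal contribution vanishes ($\underline{\Delta}_t=0$, hence $\bE(\btheta,t)=0$), so $\bC(\bY,t)=\bN(t)$. On $\cA_{L,q}$ (see~\eqref{eq:definition_event_A}), $|\bN(t)|\leq \ell\,\psi_q[\delta(t)]$, which gives $\bC^2(\bY,t)\leq \ell^2\psi_q^2<L\,\psi_q^2$, and the identity above yields
\[
\Cr_0(\widehat\btau,\bY)-\Cr_0(\widehat\btau^{(-l)},\bY)\geq (L-\ell^2)\,\psi_q^2[\delta(t)]>0,
\]
contradicting the minimality of $\widehat\btau$.

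For the second bullet I would start from the assumption $\tau^*_k\in\widehat\btau$, writing $\tau^*_k=\widehat\tau_l$ with neighbours $\widehat\tau_{l-1},\widehat\tau_{l+1}$ in $\widehat\btau$. Since $\tau^*_k$ already belongs to $\widehat\btau$, the reordered concatenation $\widehat\btau^{(k)}$ coincides with $\widehat\btau$, and in particular the triad defining the energy of $\tau^*_k$ in $\widehat\btau^{(k)}$ is exactly $t=(\widehat\tau_{l-1},\tau^*_k,\widehat\tau_{l+1})$. Comparing $\widehat\btau$ with $\widehat\btau^{(-l)}$ through the one-removal identity, the minimality of $\widehat\btau$ forces $\bC^2(\bY,t)\geq L\,\psi_q^2[\delta(t)]$. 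Decomposing $\bC(\bY,t)=\mathrm{sign}(\underline{\Delta}_t)\,\bE(\btheta,t)+\bN(t)$ (cf.\ Lemma~\ref{lem:comparison_one_change-point}) and using $|\bN(t)|\leq \ell\,\psi_q[\delta(t)]$ on $\cA_{L,q}$, the triangle inequality then yields
\[
\bE(\btheta,t)\geq |\bC(\bY,t)|-|\bN(t)|\geq (\sqrt{L}-\ell)\,\psi_q[\delta(t)],
\]
which is exactly the high-energy property of $\tau^*_k$ with respect to its neighbours in $\widehat\btau^{(k)}$, the threshold constant being dictated by the slack between $\ell$ and $\sqrt{L}$ built into $\cA_{L,q}$.

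The hard part is conceptual rather than computational: the whole argument turns on recognizing that the strict inequality $\ell<\sqrt{L}$ puts the correct sign on the criterion gap in both directions—noise-only triads cannot be profitably split (bullet 1), while non-removal of an already selected $\tau^*_k$ forces the empirical CUSUM past $\sqrt{L}\,\psi_q[\delta(t)]$ and therefore the signal energy past $(\sqrt{L}-\ell)\,\psi_q[\delta(t)]$ (bullet 2). The remaining work is just bookkeeping with the decomposition of Lemma~\ref{lem:comparison_one_change-point} and with the deterministic bounds on $\cA_{L,q}$ coming from Lemma~\ref{lem:concentration:N_t}.
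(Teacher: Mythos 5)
Your argument for the first bullet is correct and is essentially the paper's: a triad of estimated change-points lying in a stretch where $\btheta$ is constant has zero energy, so on $\cA_{L,q}$ its squared CUSUM is at most $\ell^2\psi_q^2<L\,\psi_q^2$, and removing the middle point strictly decreases $\Cr_0$, contradicting the minimality of $\widehat\btau$.

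The second bullet is where the proposal has a genuine gap. The statement as printed has its negations flipped (compare items (c) of Propositions \ref{prp:rough_analysis} and \ref{prp:minimal_penalty_analysis}, which restate this lemma): the claim actually proved and used downstream is ``either $\tau^*_k$ belongs to $\widehat\btau$ or it is \emph{not} a $(\ell+\sqrt{L},\widehat\btau^{(k)},q)$-high-energy change-point'', i.e.\ any true change-point whose energy relative to the surrounding points of $\widehat\btau$ exceeds $(\ell+\sqrt{L})\psi_q$ must be selected. You proved the other implication: assuming $\tau^*_k\in\widehat\btau$, you compared $\widehat\btau$ with $\widehat\btau^{(-l)}$ and deduced $\bE(t)\geq(\sqrt{L}-\ell)\psi_q[\delta(t)]$. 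Besides going in the wrong direction, this does not even reach the stated constant: the removal-plus-optimality route can never give more than $\sqrt{L}-\ell$, since the noise may inflate the CUSUM by $\ell\psi_q$; the constant $\ell+\sqrt{L}$ is the signature of an \emph{insertion} argument. The paper's proof runs as follows: take any candidate $\btau$ with $\tau^*_k\notin\btau$, let $\tau_l<\tau^*_k<\tau_{l+1}$ be its neighbours and $t=(\tau_l,\tau^*_k,\tau_{l+1})$; applying Lemma \ref{lem:comparison_one_change-point} to $\btau^{(k)}$ and using $|\bN(t)|\leq\ell\psi_q[\delta(t)]$ on $\cA_{L,q}$,
\[
\Cr_0(\btau^{(k)},\bY)-\Cr_0(\btau,\bY)\leq -\bigl[\bE(t)-\ell\psi_q(\delta(t))\bigr]_+^2+L\psi_q^2(\delta(t))\enspace,
\]
which is strictly negative as soon as $\bE(t)>(\ell+\sqrt{L})\psi_q(\delta(t))$; hence such a $\btau$ cannot be the minimizer, and for $\widehat\btau$ either $\tau^*_k$ is included or its energy with respect to $\widehat\btau^{(k)}$ is at most $(\ell+\sqrt{L})\psi_q$. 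This detection-type direction is the one needed later (e.g.\ in Lemma \ref{lem:e1} and in the localized analysis), so the missing insertion comparison is a substantive omission, not a cosmetic one.
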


\begin{lem}\label{lem:step2}
For any $L>1$ and $q>2$, there exists a constant $\kappa_L>1$ such that the following holds under the event $\cA_{L,q}$. If $\tau^*_k$ is a $(\kappa_L,q)$-high energy change-point, then 
\[
d_{H,1}(\wh{\btau},\tau^*_k)\leq\min\left[ \frac{\tau^*_{k+1}-\tau^*_k}{2}, \frac{\tau^*_{k}-\tau^*_{k-1}}{2} \right]\enspace .
\]
\end{lem}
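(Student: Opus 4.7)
The plan is to argue by contradiction: assuming that $\tau^*_k$ is a $(\kappa_L,q)$-high energy change-point but $d_{H,1}(\widehat{\btau},\tau^*_k) > \min[(\tau^*_{k+1}-\tau^*_k)/2,(\tau^*_k-\tau^*_{k-1})/2]=:m$, I will exhibit a change-point vector whose penalized criterion is strictly smaller than that of $\widehat{\btau}$, contradicting its optimality. Without loss of generality take $m=(\tau^*_k-\tau^*_{k-1})/2$ (the other case is symmetric), so that no element of $\widehat{\btau}$ lies in $(\tau^*_k-m,\tau^*_k+m)$.

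The natural candidate is $\widetilde{\btau}:=\widehat{\btau}^{(k)}$, with $\tau^*_k$ at position $l$ and neighbors $\widetilde{\tau}_{l-1},\widetilde{\tau}_{l+1}$ satisfying $\alpha_\pm:=|\widetilde{\tau}_{l\pm 1}-\tau^*_k|\geq m$. Lemma~\ref{lem:comparison_one_change-point} yields
\[
\Cr_0(\widehat{\btau},\bY)-\Cr_0(\widetilde{\btau},\bY)=\bigl((-1)^{\sign(\underline{\Delta}_t)}\bE(t)-\bN(t)\bigr)^2-L\,\psi_q^2[\delta(t)],
\]
where $t=(\widetilde{\tau}_{l-1},\tau^*_k,\widetilde{\tau}_{l+1})$. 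On $\cA_{L,q}$, $|\bN(t)|\leq \ell\,\psi_q[\delta(t)]$, so by the triangle inequality the right-hand side is strictly positive as soon as $\bE(t)>(\sqrt{L}+\ell)\,\psi_q[\delta(t)]$. Hence the proof reduces to establishing this one inequality.

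Both sides will be compared to their true index-$k$ analogues. The upper bound on $\psi_q$ is elementary: $\alpha_\pm\geq m$ gives $1/\alpha_-+1/\alpha_+\leq 4/\min(\tau^*_k-\tau^*_{k-1},\tau^*_{k+1}-\tau^*_k)\leq 4\bigl(1/(\tau^*_k-\tau^*_{k-1})+1/(\tau^*_{k+1}-\tau^*_k)\bigr)$, whence $\psi_q^2[\delta(t)]\leq \psi_q^2[(\tau^*_k-\tau^*_{k-1},\tau^*_{k+1}-\tau^*_k)]+2\log 4$. The lower bound $\bE(t)\gtrsim \bE_k(\btheta)$ is transparent when both neighbors fall in the adjacent true segments, $\widetilde{\tau}_{l-1}\in[\tau^*_{k-1},\tau^*_k)$ and $\widetilde{\tau}_{l+1}\in[\tau^*_k,\tau^*_{k+1}]$: the two segment means equal exactly $\mu_k$ and $\mu_{k+1}$, so $\bE(t)=|\Delta_k|\sqrt{\alpha_-\alpha_+/(\alpha_-+\alpha_+)}\geq \bE_k(\btheta)/2$. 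Combined with the $\psi_q$ bound and $\bE_k(\btheta)>\kappa_L\,\psi_q[(\tau^*_k-\tau^*_{k-1},\tau^*_{k+1}-\tau^*_k)]$, taking $\kappa_L$ large enough (depending only on $L$) closes this configuration.

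The main obstacle is the residual configuration where, say, $\widetilde{\tau}_{l-1}<\tau^*_{k-1}$: the average on $[\widetilde{\tau}_{l-1},\tau^*_k)$ blends several $\mu_{k-j}$ and the direct lower bound on $\bE(t)$ can fail by cancellation. To treat it I would not compare $\widehat{\btau}$ with the single-point augmentation $\widetilde{\btau}$ but with the compound augmentation $\widehat{\btau}\cup\{\tau^*_{k-1},\tau^*_k,\tau^*_{k+1}\}$, expanded as a telescoping sum of one-point insertions, each controlled through Lemma~\ref{lem:comparison_one_change-point}. The key point is that the high-energy property of $\tau^*_k$, combined with the structural conclusions of Lemma~\ref{lem:step1}, forces $\widehat{\btau}$ either to already contain $\tau^*_{k-1},\tau^*_{k+1}$ or to produce enough local energy there so that the added penalty terms are absorbed and the overall increment in $\Cr_0$ stays negative. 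This calculation eventually yields an inequality of the form $\bE_k(\btheta)\leq C_L\,\psi_q[(\tau^*_k-\tau^*_{k-1},\tau^*_{k+1}-\tau^*_k)]$, so that any $\kappa_L>C_L$ contradicts the high-energy hypothesis on $\tau^*_k$ and concludes the proof.
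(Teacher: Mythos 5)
The overall plan — argue by contradiction, insert true change-points into $\widehat{\btau}$, and use \lemref{comparison_one_change-point} plus the event $\cA_{L,q}$ to show the criterion strictly decreases — is the paper's approach, and your ``transparent'' case is a correct special instance of the paper's Case 1 (both blended averages vanish). The $\psi_q$ bookkeeping via $\alpha_\pm\geq m$ is sound. But the residual configuration, which is where essentially all the work in this lemma lives, is left as a gesture: you say the high-energy property of $\tau^*_k$ and \lemref{step1} ``force'' the telescoped criterion increment to stay negative, but neither statement gives any handle on $\tau^*_{k\pm 1}$, and \lemref{step1} is not in fact used in the paper's proof of this lemma.

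The concrete issue is that when $\widetilde{\tau}_{l-1}<\tau^*_{k-1}$, inserting $\tau^*_{k-1}$ produces a triad $t_-=(\widetilde{\tau}_{l-1},\tau^*_{k-1},\tau^*_k)$ whose penalty $\psi_q^2[\delta(t_-)]$ can be \emph{much larger} than $\psi_q^2[\delta_k^*,\delta_{k+1}^*]$ precisely when $\tau^*_{k-1}-\widetilde{\tau}_{l-1}$ is small — so ``absorption'' of the added penalty is not automatic; one must show the local energy grows at least as fast. The paper resolves this with the inequality
\[
\bE[t_-]\geq\frac{\bE_k}{3}\sqrt{\frac{\tau_k^*-\widetilde{\tau}_{l-1}}{\tau_{k-1}^*-\widetilde{\tau}_{l-1}}}\enspace,\qquad
\psi_q(\delta(t_-))\leq\psi_q[\delta_k^*,\delta_{k+1}^*]\sqrt{\frac{\tau_k^*-\widetilde{\tau}_{l-1}}{\tau_{k-1}^*-\widetilde{\tau}_{l-1}}}\enspace,
\]
so the same amplification factor appears on both sides and cancels. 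The paper also splits differently: not by whether $\widetilde{\tau}_{l\pm1}$ cross segment boundaries, but by whether the blended-mean deviations $A_{\btau}=\1_{\widetilde{\tau}_{l-1}<\tau^*_{k-1}}|\mu_{k-1}-\overline{\theta}_{\widetilde{\tau}_{l-1}:\tau^*_k}|$ and $B_{\btau}$ exceed $|\Delta_k|/3$, and for each of the four resulting regimes it chooses a tailored compound insertion ($\btau^{(k)}$, $\btau^{(k-1,k)}$, $\btau^{(k,k+1)}$, or $\btau^{(k-1,k,k+1)}$) with a further subcase split by which of the three relevant segment lengths is smallest, so as to pick the triad whose penalty is dominant and whose energy is large. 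Your telescoping idea points in the right direction but, as written, you have not exhibited why any of the three one-point increments in the telescoped sum is individually or collectively negative — and that verification is the content of this lemma.
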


Define $\delta_0 =  \kappa_L \Delta_k^{-2}[\log\left(n\Delta_k^2\right)+ q] $. If $2\delta_0 \leq \min(\tau^*_{k+1}-\tau^*_k, \tau^*_{k}-\tau^*_{k-1})$, then we may build a new vector $\tilde{\btau}^*= (\tau^*_1,\ldots, \tau^*_{k-1},\tau^*_k-\lceil 2\delta_0\rceil ,\tau^*_k, \tau^*_k+\lceil 2\delta_0\rceil , \tau^*_{k+1},\ldots )$. Obviously, $\btheta$ is piece-wise constant with respect to $\btau^*$. The energy of $\tau^*_k$ for this new change-point vector is lower bounded as follows 
\[
 \frac{\lceil 2\delta_0\rceil }{2}\Delta_k^{2} \geq \kappa_L \left[\log\left(n\Delta_k^2\right)+ q\right]>\kappa_L \left[\log\left(\frac{n}{\delta_0} \right)+ q\right]\geq \kappa_L \psi^2_q\left[\delta(\tau^*_k-\lceil 2\delta_0\rceil ,\tau^*_k, \tau^*_k+\lceil 2\delta_0\rceil)\right]\enspace .
\]
As a consequence,  $\tau^*_k$ is a $(\kappa_L,q)$-high energy change-point for $\tilde{\btau}^*$. From Lemma \ref{lem:step2}, we deduce that 
\[
d_{H,1}(\wh{\btau},\tau^*_k)\leq \delta_0
\]
Hence, we arrive the following proposition. It ensures that, under $\cA_{L,q}$, $\wh{\btau}$ satisfies Property {\bf (Detec[$\kappa_L,q,\kappa_L$])}. 
\begin{prp}\label{lem:step2-2}
For any $L>1$ and $q>2$, there exists a constant $\kappa_L>1$ such that the following holds under the event $\cA_{L,q}$. If $\tau^*_k$ is a $(\kappa_L,q)$-high energy change-point, then 
\[
d_{H,1}(\wh{\btau},\tau^*_k)\leq\min\left[ \frac{\tau^*_{k+1}-\tau^*_k}{2}, \frac{\tau^*_{k}-\tau^*_{k-1}}{2},  \kappa_L \frac{\log\left(n\Delta_k^2\right)+ q}{\Delta_k^{2}} \right]
\]
\end{prp}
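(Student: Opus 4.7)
The key idea is a reduction trick: the parametric rate $\delta_0 := \kappa_L \Delta_k^{-2}[\log(n\Delta_k^2) + q]$ is obtained from the coarser bound of Lemma \ref{lem:step2} by artificially shrinking the lengths of the segments adjacent to $\tau_k^*$, exploiting that adding change-points of zero height does not alter the distribution of $\bY$ nor the fact that $\btheta$ is piece-wise constant on the resulting partition.

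First I would split into two cases. If $2\delta_0 > (\tau^*_{k+1}-\tau^*_k) \wedge (\tau^*_{k}-\tau^*_{k-1})$, then the minimum in the claimed bound is achieved by one of the two length terms, and a direct application of Lemma~\ref{lem:step2} to the original change-point vector $\btau^*$ suffices. The interesting case is when $2\delta_0 \leq (\tau^*_{k+1}-\tau^*_k) \wedge (\tau^*_{k}-\tau^*_{k-1})$, and the remainder of the plan is devoted to it.

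In that case, I would introduce the augmented change-point vector
\[
\tilde{\btau}^* = (\tau^*_1,\ldots, \tau^*_{k-1}, \tau^*_k - \lceil 2\delta_0\rceil, \tau^*_k, \tau^*_k + \lceil 2\delta_0\rceil, \tau^*_{k+1},\ldots, \tau^*_K),
\]
assigning the two new change-points a height of zero. Since these are dummy change-points, $\btheta$ remains piece-wise constant with respect to $\tilde{\btau}^*$ and the distribution $\P_\btheta$ is unchanged; in particular, the estimator $\widehat{\btau}$ and the event $\cA_{L,q}$ are unaffected. The localization error $d_{H,1}(\widehat{\btau}, \tau^*_k)$ is also the same quantity. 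The point of this reduction is that the segments adjacent to $\tau^*_k$ in $\tilde{\btau}^*$ now have common length $\lceil 2\delta_0\rceil$, much smaller than before.

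Next I would verify that $\tau^*_k$ is still a $(\kappa_L,q)$-high-energy change-point \emph{with respect to} $\tilde{\btau}^*$, so that Lemma~\ref{lem:step2} applies to the augmented configuration. The energy of $\tau^*_k$ relative to $\tilde{\btau}^*$ is bounded below by $|\Delta_k|\sqrt{\lceil 2\delta_0\rceil/2}$, whose square satisfies
\[
\tfrac{\lceil 2\delta_0\rceil}{2}\Delta_k^2 \;\geq\; \kappa_L\bigl[\log(n\Delta_k^2)+q\bigr] \;\geq\; \kappa_L\bigl[\log(n/\delta_0)+q\bigr] \;\geq\; \kappa_L\,\psi_q^2\bigl(\lceil 2\delta_0\rceil,\lceil 2\delta_0\rceil\bigr),
\]
where the second inequality uses $\log(n\Delta_k^2) \geq \log(n/\delta_0)$ (which follows from $\delta_0 \geq \Delta_k^{-2}$) and the last uses the definition~\eqref{eq:definition_psi_q} of $\psi_q$ applied to the adjacent lengths in $\tilde{\btau}^*$. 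Applying Lemma~\ref{lem:step2} to the augmented vector then yields
\[
d_{H,1}(\widehat{\btau},\tau^*_k) \leq \tfrac{\lceil 2\delta_0\rceil}{2} \leq \delta_0 + 1,
\]
and absorbing the $+1$ by slightly enlarging the constant $\kappa_L$ (using $q>2$ so that $\delta_0 \geq 1$) gives the announced bound. The main obstacle is purely bookkeeping: ensuring that the constant $\kappa_L$ delivered by Lemma~\ref{lem:step2} is large enough so that the chain of inequalities above holds with the same $\kappa_L$ on both sides; this is a routine choice of constants but must be carried out carefully since $\kappa_L$ appears both in the definition of $\delta_0$ and in the high-energy threshold applied on the augmented configuration.
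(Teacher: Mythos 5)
Your proposal is essentially identical to the paper's own proof: both split into the same two cases according to whether $2\delta_0$ exceeds $(\tau^*_{k+1}-\tau^*_k)\wedge(\tau^*_k-\tau^*_{k-1})$, and in the interesting case both insert the same two zero-height dummy change-points $\tau^*_k \pm \lceil 2\delta_0\rceil$, verify that $\tau^*_k$ remains high-energy with respect to the augmented vector via the same chain of inequalities, and then invoke Lemma~\ref{lem:step2} on the augmented configuration. You are also right that the remaining work is constant bookkeeping (the chain as written conflates $\kappa_L$ and $\kappa_L^2$, and a factor of~$2$ is lost going from $\log(n/\delta_0)+q$ to $\psi_q^2$); the paper has the same sloppiness, and it is harmless since $\kappa_L$ is only asserted to exist.
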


To ensure that no spurious change-points is estimated, we need to strengthen Lemma~\ref{lem:step1}. The next two lemmas ensure, that in the vicinity of a true change-point $\tau^*_k$, $\widehat{\tau}$ does not contain two many change-points.

\begin{lem}\label{lem:step3}
For any $L>1$ and $q\geq 2$, there exists $\eta_L\in (0,1/2]$ such that the following holds under $\cA_{L,q}$.
For any $k=1,\ldots, K$, $\widehat{\btau}$ contains at most one change-point in $ [\tau^*_k;\eta_L(\tau^*_{k+1}+\tau^*_k)]$ and in $[\eta_L(\tau^*_{k}+\tau^*_{k-1}); \tau^*_k]$. Besides for $L$ large enough, we have $\eta_L=1/2$. 
\end{lem}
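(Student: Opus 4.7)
The plan is to argue by contradiction: assume that two coordinates $\wh{\tau}_l<\wh{\tau}_{l+1}$ of $\wh{\btau}$ lie in $[\tau^*_k,\tau^*_k+\eta_L(\tau^*_{k+1}-\tau^*_k)]$ (the left-interval statement is symmetric) and exhibit a local modification of $\wh{\btau}$ that strictly decreases $\Cr_0$ on the event $\cA_{L,q}$, contradicting its minimality. The main ingredient is Lemma~\ref{lem:comparison_one_change-point}, which for any index $m$ gives
\[
\Cr_0(\wh{\btau}^{(-m)})-\Cr_0(\wh{\btau})=\bigl(\pm\bE(t_m)-\bN(t_m)\bigr)^2-L\psi_q^2(\delta(t_m)),
\]
with $t_m=(\wh{\tau}_{m-1},\wh{\tau}_m,\wh{\tau}_{m+1})$, combined with the uniform bound $|\bN(t)|\leq\ell\psi_q(\delta(t))$ provided by $\cA_{L,q}$ with $\ell<\sqrt L$.

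Easy sub-cases reduce to a vanishing signal energy. If $\wh{\tau}_{l-1}\geq\tau^*_k$, then $\btheta$ is constant on the triad $t_l$, so $\bE(t_l)=0$ and the display above is at most $(\ell^2-L)\psi_q^2(\delta(t_l))<0$, contradicting minimality. Symmetrically, if $\wh{\tau}_{l+2}\leq\tau^*_{k+1}$ (including the boundary case where $\wh{\tau}_{l+1}$ is the last coordinate of $\wh{\btau}$), removing $\wh{\tau}_{l+1}$ and applying the same argument to $t_{l+1}$ yields a contradiction.

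The genuinely hard configuration is $\wh{\tau}_{l-1}<\tau^*_k$ and $\wh{\tau}_{l+2}>\tau^*_{k+1}$. Iterating Lemma~\ref{lem:step1}(a) on the adjacent segments confines $\wh{\tau}_{l-1}\in[\tau^*_{k-1},\tau^*_k)$ and $\wh{\tau}_{l+2}\in(\tau^*_{k+1},\tau^*_{k+2}]$, and neither $\tau^*_k$ nor $\tau^*_{k+1}$ lies in $\wh{\btau}$. Lemma~\ref{lem:step1}(c) applied to $\tau^*_{k+1}$ then yields $\Delta_{k+1}^2\,b'c'/(b'+c')\leq(\ell+\sqrt L)^2\psi_q^2(b',c')$, with the notation $b=\wh{\tau}_l-\tau^*_k$, $c=\wh{\tau}_{l+1}-\wh{\tau}_l$, $b'=\tau^*_{k+1}-\wh{\tau}_{l+1}$, $c'=\wh{\tau}_{l+2}-\tau^*_{k+1}$, and an analogous bound on $\Delta_k^2\,ab/(a+b)$ with $a=\tau^*_k-\wh{\tau}_{l-1}$. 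Direct computation gives $\bE^2(t_{l+1})=\Delta_{k+1}^2\,cc'^2/[(b'+c')(c+b'+c')]$, which factors as $[\Delta_{k+1}^2\,b'c'/(b'+c')]\cdot cc'/[b'(c+b'+c')]$. The geometric constraint $b+c\leq\eta_L(\tau^*_{k+1}-\tau^*_k)$ enforces $c/b'\leq\eta_L/(1-\eta_L)$, hence $\bE^2(t_{l+1})\leq \tfrac{\eta_L}{1-\eta_L}(\ell+\sqrt L)^2\psi_q^2(b',c')$. Identity~\eqref{eq:difference_penalty} converts $\psi_q^2(b',c')$ into $\psi_q^2(\delta(t_{l+1}))$ up to an additive constant $2\log((1-\eta_L)^{-1})$ absorbed by enlarging $q\geq q_0$. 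Plugging back into Lemma~\ref{lem:comparison_one_change-point} and bounding $(\bE(t_{l+1})+\ell\psi_q(\delta(t_{l+1})))^2$ against $L\psi_q^2(\delta(t_{l+1}))$ produces the strict decrease as soon as $\sqrt{\eta_L/(1-\eta_L)}(\ell+\sqrt L)<\sqrt L-\ell$. If this first option fails because the bound on $\Delta_{k+1}$ is loose compared to $\Delta_k$, the mirror manoeuvre (remove $\wh{\tau}_l$, use the $\Delta_k$ bound together with $a\leq\tau^*_k-\tau^*_{k-1}$) succeeds, and at least one of the two always applies.

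The main obstacle is the hard-case bookkeeping: carefully tracking three different $\psi_q^2$-scales through identity~\eqref{eq:difference_penalty} while retaining an exploitable strict inequality and absorbing the residual additive constants into the slack of $\cA_{L,q}$. One sets $\eta_L$ to be the largest value in $(0,1/2]$ satisfying $\eta_L/(1-\eta_L)<(\sqrt L-\ell)^2/(\sqrt L+\ell)^2$ (up to the absorbed constant); for $L$ large enough that the truncation in $\ell=[(\sqrt L+9)/10]\wedge 2$ saturates at $\ell=2$, the right-hand side can be made arbitrarily close to $1$ and the choice $\eta_L=1/2$ becomes admissible, yielding the final sharpened statement.
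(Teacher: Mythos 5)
Your proposal diverges substantially from the paper's argument and contains genuine gaps. The paper's key move in the hard case ($\tau_{l+1}>\tau^*_{k+1}$ in their notation, so the energy $\bE(t)$ at the triad $t=(\tau_{l-1},\tau_l,\tau_{l+1})$ is large) is a \emph{replacement}: form $\btau^{(-l,k+1)}$ by deleting $\tau_l$ and inserting $\tau^*_{k+1}$, and exploit the algebraic identity $\bE^2(v)=z\,\bE^2(t)$ for $v=(\tau_l,\tau^*_{k+1},\tau_{l+1})$ with $z\geq\eta_L^{-1}-1$. This amplification factor, together with a controlled penalty shift $\psi_q^2(\delta(v))\leq\psi_q^2(\delta(t))+2\log(z)$, is what makes the criterion strictly decrease. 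You never perform this replacement; you only consider removals and try to close the argument by bounding $\bE^2(t_{l+1})$ directly, via Lemma~\ref{lem:step1}(c) applied to $\tau^*_{k+1}$. That route breaks in the following step.

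Your chain $\bE^2(t_{l+1})\leq\frac{\eta_L}{1-\eta_L}(\ell+\sqrt L)^2\psi_q^2(b',c')$ is loose because you have already dropped the factor $\frac{c'}{c+b'+c'}$, and the subsequent claim that ``identity~\eqref{eq:difference_penalty} converts $\psi_q^2(b',c')$ into $\psi_q^2(\delta(t_{l+1}))$ up to an additive constant $2\log((1-\eta_L)^{-1})$'' is false. By~\eqref{eq:difference_penalty},
\[
\psi_q^2(b',c')-\psi_q^2(c,\,b'+c')=2\log\!\left(\frac{c(b'+c')^2}{b'c'(c+b'+c')}\right)\enspace,
\]
which, for example with $c=b'\asymp n$ and $c'=1$, is of order $\log n$ and cannot be absorbed into a fixed $q_0$. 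So the bound you derive on $\bE^2(t_{l+1})$ does not control it against $(\sqrt L-\ell)^2\psi_q^2(\delta(t_{l+1}))$, which is what Lemma~\ref{lem:comparison_one_change-point} requires. (The cure would be to keep the full factor $\frac{cc'}{b'(c+b'+c')}$ before comparing $\psi_q^2$ values, but you don't do this.) Two further problems: your application of Lemma~\ref{lem:step1}(c) silently assumes $\wh{\tau}_{l+2}\leq\tau^*_{k+2}$ (otherwise the relevant difference of means is not $\Delta_{k+1}$), and the ``at least one of the two manoeuvres always applies'' claim is asserted without proof. Finally, even granting all the bookkeeping, your terminal condition $\eta_L/(1-\eta_L)<(\sqrt L-\ell)^2/(\sqrt L+\ell)^2$ \emph{never} admits $\eta_L=1/2$ (the right-hand side tends to $1$ from below as $L\to\infty$, whereas $\eta_L=1/2$ requires $\eta_L/(1-\eta_L)=1$), so the sharpened claim ``$\eta_L=1/2$ for $L$ large'' is not established. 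The paper reaches it via a different inequality, essentially $z[\sqrt L-2-2\sqrt 2]^2>4+L\log(2z)$, which does hold for all $z\geq 1$ once $L$ is large enough.
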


\begin{lem}\label{lem:step4}
There exists $L_0>1$ and $q_0$ such that, for all $L\geq L_0$ and $q\geq q_0$, $\widehat{\btau}$ satisfies {\bf (NoSp)} under the event $\cA_{L,q}$.
\end{lem}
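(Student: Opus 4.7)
The plan is to combine Lemma~\ref{lem:step3} (taken with $\eta_L=1/2$ for $L$ large enough) with a local contradiction argument. For $L\ge L_0$ and $q\ge q_0$, Lemma~\ref{lem:step3} already guarantees on $\cA_{L,q}$ that $\widehat{\btau}$ has at most one element in each of the closed intervals $[(\tau^*_{k-1}+\tau^*_k)/2,\tau^*_k]$ and $[\tau^*_k,(\tau^*_k+\tau^*_{k+1})/2]$ (with obvious adjustments at the endpoints $k=1$ and $k=K$). When $\tau^*_k\in\widehat{\btau}$, $\tau^*_k$ is itself this unique element in both intervals and ({\bf NoSp}) at $k$ is immediate. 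Hence, only the case $\tau^*_k\notin\widehat{\btau}$ remains, and ({\bf NoSp}) at $k$ can then fail only through the ``straddling'' configuration where $\widehat{\btau}$ contains one estimated change-point $\widehat{\tau}_{l-1}\in((\tau^*_{k-1}+\tau^*_k)/2,\tau^*_k)$ and another $\widehat{\tau}_l\in(\tau^*_k,(\tau^*_k+\tau^*_{k+1})/2]$.

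To rule this out, set $a=\widehat{\tau}_l-\tau^*_k$ and $b=\tau^*_k-\widehat{\tau}_{l-1}$. Since $(\widehat{\tau}_{l-1},\widehat{\tau}_l)\subset(\tau^*_{k-1},\tau^*_{k+1})$ contains $\tau^*_k$ as its only true change-point, property (c) of Lemma~\ref{lem:step1} applied to $\widehat{\btau}^{(k)}$ yields
\[
|\Delta_k|\sqrt{\tfrac{ab}{a+b}}\ =\ \bE(\btheta,(\widehat{\tau}_{l-1},\tau^*_k,\widehat{\tau}_l))\ \le\ (\ell+\sqrt{L})\,\psi_q[(b,a)].
\]
I would then consider the candidate vector $\btau'$ obtained from $\widehat{\btau}$ by removing both $\widehat{\tau}_{l-1},\widehat{\tau}_l$ and inserting $\tau^*_k$, so that $|\btau'|=|\widehat{\btau}|-1$. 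Applying Lemma~\ref{lem:decomposition_critere_tau_tau_} three times (one insertion and two removals), the criterion difference $\Cr_0(\widehat{\btau},\bY)-\Cr_0(\btau',\bY)$ can be written as a signed combination of three squared CUSUM statistics at well-chosen triads, plus a net penalty difference that contains the additive $+Lq$ coming from the removal of one change-point together with logarithmic contributions that can be rearranged via identity~\eqref{eq:difference_penalty}. On $\cA_{L,q}$, each CUSUM is bounded by $\bE(\btheta,\cdot)+\ell\,\psi_q[\delta(\cdot)]$; the energies $\bE(\btheta,\cdot)$ are in turn controlled through the bound above on $|\Delta_k|$, so that the RSS-side contribution is dominated by a fixed multiple of $\psi_q^2$ values.

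The contradiction then comes from the additional $+Lq$ term: for $L_0$ large enough (so that the factor $(\sqrt{L}-\ell)/(\sqrt{L}+\ell)$ is close enough to $1$) and $q_0$ large enough (so that $+Lq$ dominates the residual $\psi_q^2$ fluctuations uniformly in $a,b,c,d$, where $c=\widehat{\tau}_{l+1}-\widehat{\tau}_l$ and $d=\widehat{\tau}_{l-1}-\widehat{\tau}_{l-2}$), this forces $\Cr_0(\btau',\bY)<\Cr_0(\widehat{\btau},\bY)$, contradicting the optimality of $\widehat{\btau}$. Extra true change-points that may lie in $(\widehat{\tau}_{l-2},\widehat{\tau}_{l+1})$ are absorbed into the upper bound on $|\Delta_k|$, and the boundary cases $k\in\{1,K\}$ require only minor bookkeeping adjustments.

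The hard part will be carrying out this last quantitative step uniformly over all regimes of the four scales $a,b,c,d$, which can be arbitrarily imbalanced (e.g.\ $a,b$ of order $1$ while $c,d$ are of order $n$). The required inequality is essentially that the $\psi_q^2$ terms generated on the RSS side are never large enough to compensate the $+Lq$ slack on the penalty side, which, after algebraic simplifications using~\eqref{eq:difference_penalty} to convert every ratio of segment lengths into a logarithmic quantity of the same form as $\psi_q^2$, reduces to a purely scale-free inequality that holds once $q_0$ is taken large enough.
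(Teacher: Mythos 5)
Your high-level plan of reducing to the ``straddling'' configuration via Lemma~\ref{lem:step3} and then deriving a contradiction from the optimality of $\widehat{\btau}$ matches the paper's strategy, but the specific local modification you choose does not work, and this is not a detail that can be patched by tuning $L_0$ and $q_0$.

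The issue is your candidate vector $\btau'$, which removes both $\widehat{\tau}_{l-1}$ and $\widehat{\tau}_{l}$. When you decompose $\Cr_0(\widehat{\btau},\bY)-\Cr_0(\btau',\bY)$ into three elementary steps, the triads that appear are not confined to $[\tau^*_{k-1},\tau^*_{k+1})$. Concretely, removing $\widehat{\tau}_{l-1}$ produces a CUSUM at a triad whose left segment is $[\widehat{\tau}_{l-2},\widehat{\tau}_{l-1})$, and nothing in the reduction prevents $\widehat{\tau}_{l-2}$ from sitting far to the left of $\tau^*_{k-1}$. If $\tau^*_{k-1}$ lies in that left segment with a jump $|\Delta_{k-1}|$ unrelated to (and possibly vastly larger than) $|\Delta_k|$, the corresponding energy $\bE(\btheta,\cdot)$ is governed by $|\Delta_{k-1}|$ and is \emph{not} bounded by $(\ell+\sqrt{L})\psi_q$. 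The same problem occurs on the right when $\widehat{\tau}_{l+1}>\tau^*_{k+1}$: removing $\widehat{\tau}_{l}$ creates a triad whose right segment $[\widehat{\tau}_l,\widehat{\tau}_{l+1})$ straddles $\tau^*_{k+1}$. Your claim that ``the energies are in turn controlled through the bound above on $|\Delta_k|$'' is therefore false, and the $+Lq$ penalty gain is not enough to absorb a term that can be made arbitrarily large by choosing $|\Delta_{k\pm 1}|$ huge. Consequently, the ``purely scale-free inequality'' you invoke at the end does not exist.

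The paper avoids this trap by never removing more than one estimated change-point at a time: the candidates are $\widehat{\btau}^{(-l)}$ (one fewer point), $\widehat{\btau}^{(-l,k)}$ and $\widehat{\btau}^{(-l,k+1)}$ (same cardinality, one point swapped), so the merged segments never span more than one true change-point. The further key idea — which is absent from your argument and cannot be reproduced with a single candidate — is to use the optimality of $\widehat{\btau}$ against \emph{all} of these candidates simultaneously: the inequalities $\Cr_0(\widehat{\btau})\le\Cr_0(\widehat{\btau}^{(-l,k)})$ and $\Cr_0(\widehat{\btau})\le\Cr_0(\widehat{\btau}^{(-l,k+1)})$ yield the upper bounds \eqref{eq:cond2}--\eqref{eq:cond3} on $\bE(\tau_{l-1},\tau^*_k,\tau_l)$ and $\bE(\tau_l,\tau^*_{k+1},\tau_{l+1})$, which are then propagated through the energy identities of Lemma~\ref{lem:energie} to contradict $\Cr_0(\widehat{\btau})\le\Cr_0(\widehat{\btau}^{(-l)})$. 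To repair your proof, you would essentially need to rediscover that mechanism; the single-shot ``remove two, insert one, let $Lq$ dominate'' plan, as written, leaves the energies at the neighbouring true jumps uncontrolled.
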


Each of these four lemmas is proved using local improvements of $\widehat{\btau}$. More precisely, if $\widehat{\btau}$ does not satisfy any of the properties of Lemmas \ref{lem:step1}--\ref{lem:step4}, than a local modification of $\widehat{\btau}$ (insertion/deletion of a change-point) decreases the criterion $\Cr_{0}$ contradicting the optimality of $\widehat{\btau}$. This approach was already followed by Wang et al.~\cite{wang2020} but here the arguments are slightly more delicate.

\begin{proof}[Proof of Proposition \ref{prp:rough_analysis}]
 For $L$ large enough, we have $\ell=2$ and Lemma \ref{lem:control_event_A} ensures that $\mathbb{P}\left[\cA_{L,q}\right]\geq 1 - e^{-c'q}$. Since $\ell=2$, we simply have $\cA_{L,q}=\cA_q$ where 
 $\cA_q$ is defined in \eqref{eq:definition_A_q_large}. 
Since both $L$ and $q$ are large enough, we may apply Lemmas \ref{lem:step1} and \ref{lem:step4} and Proposition~\ref{lem:step2-2}. The result follows. 
\end{proof}

\begin{proof}[Proof of Proposition \ref{prp:minimal_penalty_analysis}]
It follows from Lemma \ref{lem:control_event_A} that  $\mathbb{P}\left[\cA_{L,q}\right]\geq 1 - e^{-c'q}$. We then apply  Lemmas \ref{lem:step1} and \ref{lem:step3} and Proposition~\ref{lem:step2-2} to conclude.  
\end{proof}

\subsubsection{Proof of Lemma \ref{lem:step1}} 

Consider a sequence $\btau$ such that, for some integers $l$ and $k$, we have $[\tau_{l-1},\tau_{l+1}]\subset[\tau_{k-1}^*,\tau_k^*]$. As the signal is constant over $[\tau_{l-1},\tau_{l+1})$, we have  $\underline{\Delta}_{t}=0$ (defined in \eqref{eq:definition_Delta_t})   for $t=(\tau_{l-1},\tau_{l},\tau_{l+1})$. We claim that, under $\cA_{L,q}$, $\Cr_{0}(\btau,\bY)>\Cr_{0}(\btau^{(-l)},\bY)$ which implies that $\widehat{\btau}\neq \btau$. Indeed,  Lemma \ref{lem:comparison_one_change-point} together with the definition of $\cA_{L,q}$ ensure 
\[
\Cr_{0}(\btau^{(-l)},\bY)- \Cr_{0}(\btau,\bY)=  \bN^2[t] - L \psi^2_q(\delta(t)) \leq \psi^2_q(\delta(t))[-L+ \ell]\enspace , 
\]
which is negative since $L>\ell$. 

\medskip

Turning to the second result, we consider a true change-point $\tau^*_k$ and a sequence $\btau$ such that $\tau^*_k$ is a $(\ell+ \sqrt{L},q,\btau)$-high energy change-point and $\tau^*_k$ does not belong to $\btau$. 
Let $l$ such that $\tau_l< \tau^*_k< \tau_{l+1}$. Write $t=(\tau_l,\tau^*_k,\tau_{l+1})$.  By Lemma \ref{lem:comparison_one_change-point} and the definition of $\cA_{L,q}$, 
 \beqn
 \Cr_{0}(\btau^{(k)}, \bY) - \Cr_{0}(\btau, \bY)&\leq&  - [\bE(t) - \ell\psi_q(\delta(t)) ]_+^2 + L\psi^2_q(\delta(t))\enspace , 
 \eeqn 
 which is negative as long as $\bE(t)> \psi_q(\delta(t))(\ell+\sqrt{L})$. The latter inequality holds since $\tau^*_k$ is a $(\ell+ \sqrt{L},q,\btau)$-high energy change-point. This implies that $\widehat{\btau}\neq \btau$.

 \subsubsection{Proof of Lemma \ref{lem:step2}} 

Consider any sequence $\btau$ that does not detect a high-energy change-point, say $\tau^*_k$. We shall prove that $\btau\neq \widehat{\btau}$. 
Write $r=[(\tau^*_{k+1}-\tau^*_k)\wedge (\tau^*_{k}-\tau^*_{k-1})]/2$. As a consequence, there exists an indice $l$ such that 
 \beq\label{eq:condition_lem_step2}
 \tau_l <  \tau^*_k -r <  \tau^*_k+r < \tau_{l+1}\ .
 \eeq
Define the vector $t=(\tau_{l},\tau^*_k,\tau_{l+1})$. 
\begin{eqnarray}
\bE(t)& =& \big|\overline{\theta}_{\tau^*_k:\tau_{l+1}}-\overline{\theta}_{\tau_{l}:\tau^*_k}\big|\sqrt{\frac{(\tau_k^*-\tau_l)(\tau_{l+1}- \tau_k^*)}{\tau_{l+1}-\tau_l}} \nonumber \\
& \geq &  \left[|\Delta_k| - \1_{\tau_l< \tau_{k-1}^*}|\mu_{k-1} -\overline{\theta}_{\tau_{l}:\tau^*_k}| - \1_{\tau_{l+1}> \tau_{k+1}^*}|\mu_{k} -\overline{\theta}_{\tau^*_{k}:\tau_{l+1}}| \right]\sqrt{\frac{(\tau_k^*-\tau_l)(\tau_{l+1}- \tau_k^*)}{\tau_{l+1}-\tau_l}} \label{eq:lower_ET}
\end{eqnarray}

Denote $A_{\btau}= \1_{\tau_l< \tau_{k-1}^*}|\mu_{k-1} -\overline{\theta}_{\tau_{l}:\tau^*_k}|$ and $B_{\btau}= \1_{\tau_{l+1}>\tau_{k+1}^*}|\mu_{k} -\overline{\theta}_{\tau_{k+1}^*:\tau_{l+1}}|$. We consider four subcases depending on the values of $A_{\btau}$ and $B_{\btau}$. Throughout this proof we shall often use that $\bE_k\leq |\Delta_k|\sqrt{2r}$, which holds by definition of $\bE_k$. 

\bigskip 

\noindent 
\underline{Case 1}: $A_{\btau}\vee B_{\btau}\leq |\Delta_k|/3$. It then follows from \eqref{eq:lower_ET} and that $(\tau_k^*- \tau_{l})\wedge (\tau_{l+1}-\tau_k^*)\geq r$ that 
\[
 \bE(t)\geq \frac{|\Delta_k|}{3}\sqrt{\frac{(\tau_k^*-\tau_l)(\tau_{l+1}- \tau_k^*)}{\tau_{l+1}-\tau_l}}\geq \frac{|\Delta_k|\sqrt{r}}{3\sqrt{2}}\geq \frac{\bE_k}{6}\enspace , 
\]
where we used~\eqref{eq:condition_lem_step2} and $\bE_k\leq |\Delta_k|\sqrt{2r}$. 
Then, we consider the sequence $\btau^{(k)}$ such that  $\tau^*_k$ has been inserted into $\btau$. We shall prove that $ \Cr_{0}(\btau^{(k)}, \bY) < \Cr_{0}(\btau, \bY)$. Indeed, we deduce from Lemma~\ref{lem:comparison_one_change-point} and the definition of $\cA_{L,q}$ that
\beqn 
 \Cr_{0}(\btau^{(k)}, \bY) - \Cr_{0}(\btau, \bY) & \leq   &  -\left[\bE(t)-\ell\psi_q(\delta(t))\right]_+^2+  L\left[\pen_0(\btau^{(k)}) -\pen_0(\btau)\right] \\
 &\leq & -\left[\bE_k/6 - \ell \psi_q(\delta(t))\right]^2+  L\psi^2_q(\delta(t))\enspace .
\eeqn
Since $(\tau_{l+1}-\tau^*_k)\wedge (\tau^*_{k}-\tau_l)\geq  [(\tau^*_{k+1}-\tau^*_k)\wedge (\tau^*_{k}-\tau^*_{k-1})]/2$, we derive from \eqref{eq:difference_penalty} that 
$ \psi_q(\delta(t))\leq \sqrt{\psi^2_q(\delta_k^*,\delta_{k+1}^*)+ \log(4)}< 1.5\psi_q(\delta_k^*,\delta_{k+1}^*)$ since $\psi_q(\delta_k^*,\delta_{k+1}^*)\geq q\geq 2$. 
As $\tau^*_k$ is a high-energy change-point, $\bE_k> \kappa_L \psi_q(\delta_k^*,\delta_{k+1}^*)$ is large compared to $L\psi_q(\delta_k^*,\delta_{k+1}^*)$. As a consequence, we have $\Cr_{0}(\btau^{(k)} \bY) < \Cr_{0}(\btau, \bY)$, providing that $\kappa_L \geq 9(\ell + \sqrt{L})$.

\bigskip 

\noindent 
\underline{Case 2}: $A_{\btau}\geq |\Delta_k|/3$ and $B_{\btau}\leq |\Delta_k|/3$. We then consider  the sequence $\btau^{(k-1,k)}$ such that both $\tau^*_{k-1}$ and  $\tau^*_k$ have been inserted into $\btau$. We consider two subcases
\begin{enumerate}
 \item $(\tau^*_{k-1}-\tau_{l})\geq (\tau^*_k -\tau^*_{k-1})\wedge (\tau_{l+1}-\tau^*_{k})$. Let  $t'= (\tau^*_{k-1},\tau^*_{k},\tau_{l})$. 
  Observing that we can move from $\btau$ to $\btau^{(k-1,k)}$ by first adding $\tau^*_{k-1}$ and then adding $\tau^*_k$, we get 
\begin{eqnarray}
 \nonumber
 \Cr_{0}(\btau^{(k-1,k)}, \bY) - \Cr_{0}(\btau, \bY) & \leq   &  -\left((-1)^{\sign(\underline{\Delta}_{t'})}\bE[t']+ \bN[t']\right)^2+  L\left[\pen_0(\btau^{(k-1,k)}) -\pen_0(\btau)\right] \\
 &\leq & -\left(\bE[t'] - \ell \psi_q(\delta(t'))\right)_+^2+  L\left[\pen_0(\btau^{(k-1,k)}) -\pen_0(\btau)\right]\label{eq:upper_t'}\enspace . 
\end{eqnarray}
As $\tau^*_{k-1}-\tau_{l}\geq (\tau^*_k -\tau^*_{k-1})\wedge (\tau_{l+1}-\tau^*_{k})$, the penalty difference is at most $2L\psi^2_q(\delta(t'))$. Arguing as in \eqref{eq:lower_ET}, we have 
\beqn 
 \bE[t']&\geq&  \left[|\Delta_k|  - \1_{\tau_{l+1}> \tau_{k+1}^*}|\mu_{k} -\overline{\theta}_{\tau^*_{k}:\tau_{l+1}}| \right]\sqrt{\frac{(\tau_k^*-\tau^*_{k-1})(\tau_{l+1}- \tau_k^*)}{\tau_{l+1}-\tau^*_{k-1}}} \\
 &\geq & \frac{2|\Delta_k|\sqrt{r}}{3\sqrt{2}}\geq  \frac{\bE_k}{3}\enspace .
\eeqn 
Finally, we observe  that, as for $t$ in Case 1,  $\psi_q(\delta(t'))\leq 1.5\psi_q(\delta_k^*,\delta_{k+1}^*)$. Since $\tau_k^*$ is a high-energy change-point, it then follows from \eqref{eq:upper_t'} that $\Cr_{0}(\btau^{(k-1,k)}, \bY) < \Cr_{0}(\btau, \bY)$, providing that $\kappa_L \geq 4.5(\ell + \sqrt{2L})$.
 \item $(\tau^*_{k-1}-\tau_{l})< (\tau^*_k -\tau^*_{k-1})\wedge (\tau_{l+1}-\tau^*_{k})$. Let $t_-= (\tau_{l},\tau^*_{k-1},\tau^*_{k})$. Observing that we may go from $\btau$ to $\btau^{(k-1,k)}$ by first adding $\tau^*_{k}$ and then adding $\tau^*_{k-1}$, we get 
\begin{eqnarray}
 \nonumber
 \Cr_{0}(\btau^{(k-1,k)}, \bY) - \Cr_{0}(\btau, \bY) & \leq   &  -\left((-1)^{\sign(\underline{\Delta}_{t_-})}\bE[t_-]+ \bN[t_-]\right)^2+  L\left[\pen_0(\btau^{(k-1,k)}) -\pen_0(\btau)\right] \\
 &\leq & -\left[\bE[t_-] - \ell \psi_q(\delta(t_-))\right]^2+  L\left[\pen_0(\btau^{(k-1,k)}) -\pen_0(\btau)\right]\enspace . \label{eq:upper_t'2}
\end{eqnarray}
Since $(\tau^*_{k-1}-\tau_{l})$ is small, the penalty difference is at most $2L\psi^2_q[\delta(t_-)]$. As a consequence, the expression \eqref{eq:upper_t'2} is negative as long as 
\beq  \label{eq:objective_lower_t-}
 \bE[t_-] > (\sqrt{2L}+\ell)\psi_q(\delta(t_-))\enspace .
\eeq
Working out $ \mu_{k-1}- \overline{\theta}_{\tau_{l}:\tau^*_{k}}$ and relying on the inequality $A_{\btau}\geq |\Delta_k|/3$, we get
\beqn 
\bE[t_-]&= & |\mu_{k-1}- \overline{\theta}_{\tau_{l}:\tau^*_{k-1}}|\sqrt{\frac{(\tau_{k-1}^*-\tau_{l})(\tau^*_{k}-\tau^{*}_{k-1})}{\tau^*_{k}-\tau_{l}}}=|\mu_{k-1}- \overline{\theta}_{\tau_{l}:\tau^*_{k}}|\sqrt{\frac{(\tau_{k}^*-\tau_{l})(\tau^*_{k}-\tau^{*}_{k-1})}{\tau^*_{k-1}-\tau_{l}}} 
\\
&\geq & \frac{\bE_k}{3}\sqrt{\frac{(\tau_{k}^*-\tau_{l})(\tau^*_{k+1}-\tau^{*}_{k-1})}{(\tau^*_{k-1}-\tau_{l})(\tau^*_{k+1} - \tau^*_{k})}}\\
&> & \frac{\bE_k}{3}\sqrt{\frac{\tau_k^*-\tau_{l}}{\tau_{k-1}^*-\tau_{l}}}\geq \frac{\kappa_L}{3}\psi_q[\delta_k^*,\delta_{k+1}^*] \sqrt{\frac{\tau_k^*-\tau_{l}}{\tau_{k-1}^*-\tau_{l}}}\  ,
\eeqn 
Since $\tau^*_k$ is a high-energy change-point.
By \eqref{eq:difference_penalty}, the penalty term in~\eqref{eq:objective_lower_t-} satisfies
\beqn 
\psi_q(\delta(t_-))&\leq& \sqrt{\psi^2_q[\delta_k^*,\delta_{k+1}^*]+ 2 \log\left(\frac{\tau_k^*-\tau_{l}}{\tau_{k-1}^*-\tau_{l}}\right)}\leq \psi_q(\delta_k^*,\delta_{k+1}^*)\left[1+  \frac{1}{q}\log\left(\frac{\tau_k^*-\tau_{l}}{\tau_{k-1}^*-\tau_{l}}\right)\right]\\ &\leq& \psi_q[\delta_k^*,\delta_{k+1}^*] \sqrt{\frac{\tau_k^*-\tau_{l}}{\tau_{k-1}^*-\tau_{l}}} \enspace ,
\eeqn 
where we used $q\geq 2$ and $x-1 - \log(x)>0$. Combining the two last bounds, we conclude that \eqref{eq:objective_lower_t-} holds providing that 
$\kappa_L\geq 3 (\sqrt{2L}+ \ell)$.

\end{enumerate}

\bigskip 

\noindent 
\underline{Case 3}: $A_{\btau}<|\Delta_k|/3$ and $B_{\btau}\geq |\Delta_k|/3$. We  consider  the sequence $\btau^{(k,k+1)}$ and argue as in Case 2.

\bigskip

\noindent 
\underline{Case 4}: $A_{\btau}\geq |\Delta_k|/3$ and $B_{\btau}\geq |\Delta_k|/3$. We consider the sequence $\btau^{(k-1,k,k+1)}$ by inserting all three true change-points into $\btau$. Define $t_+= (\tau_k^*,\tau_{k+1}^*,\tau_{l+1})$. Depending in which order we add the three change-points, we have the following bounds
\beq\label{eq:diff_crit_case4}
\Cr_{0}(\btau^{(k-1,k,k+1)}, \bY) - \Cr_{0}(\btau, \bY) \leq     -\max_{t=t_-,t^*_k,t_+}\left((-1)^{\sign(\underline{\Delta}_{t})}\bE(t)+ \bN(t)\right)^2+  L\left[\pen_0(\btau^{(k-1,k,k)}) -\pen_0(\btau)\right]
\eeq
Then, we focus on $t_-$, $t^*_k$, $t_+$, depending on the relative values of $(\tau^*_{k-1}-\tau_{l})$,  $(\tau^*_k -\tau^*_{k-1})\wedge (\tau_{k+1}^* -\tau^*_{k})$, and $(\tau_{l+1}-\tau_{k^*})$. If $(\tau^*_k -\tau^*_{k-1})\wedge (\tau_{k+1}^{*}-\tau^*_{k})$ is the smallest quantity of those three, it follows from \eqref{eq:diff_crit_case4} that 
\[
\Cr_{0}(\btau^{(k-1,k,k+1)}, \bY) - \Cr_{0}(\btau, \bY) \leq    -(\bE_{k}-\ell\psi_q(\delta_k^*,\delta_{k+1}^*))^2+ 3L\psi^2_q(\delta_k^*,\delta_{k+1}^*) \enspace , 
\]
which is negative since $\tau_k^*$ is a high-energy change-point. If $(\tau^*_{k-1}-\tau_{l})$ is smallest difference, then we focus on $t_{-}$. The corresponding penalty is the largest one and we have 
\[
 \Cr_{0}(\btau^{(k-1,k,k+1)}, \bY) - \Cr_{0}(\btau, \bY) \leq    (\bE[t_-]- \ell\psi_q(\delta(t_-)))^2+ 3L\psi^2_q(t_-) \enspace . 
\]
Arguing as in the second part of Case 2, we derive that $\bE[t_-]\geq \tfrac{1}{3}\bE_k\sqrt{\frac{\tau_k^*-\tau_{l}}{\tau_{k-1}^*-\tau_{l}}}$ which is large compared to $(\sqrt{3L}+\ell)\psi_q(\delta(t_-))$. The case where $(\tau_{l+1}-\tau_{k+1}^*)$ is the smallest difference is handled similarly by focusing on $t_+$.

 \subsubsection{Proof of Lemma \ref{lem:step3}}

 Consider some $\eta_L\in (0,1/2]$ whose value will be fixed later. 
Take  any $\btau$ such that, for some $k$ and some $l$, 
$ \tau^*_k\leq \tau_{l-1}< \tau_{l} \leq  \eta_L (\tau^*_{k+1}+\tau^*_k) < \tau^*_{k+1}$. We shall prove that $\btau\neq \widehat{\btau}$. If $\tau_{l+1}\leq \tau^*_{k+1}$, then Lemma \ref{lem:step1} already enforces that $\btau\neq \widehat{\btau}$. We assume henceforth that $\tau_{l+1}> \tau_k^*$. Consider the vector $\btau^{(-l)}$ and define $t= (\tau_{l-1},\tau_l,\tau_{l+1})$.
 By Lemma \ref{lem:comparison_one_change-point} and the definition of $\cA_{L,q}$, we have
\beqn
\Cr_{0}(\btau^{(-l)}, \bY) - \Cr_{0}(\btau, \bY)&\leq &  \left[(-1)^{\sign(\underline{\Delta}_{t})}\bE(t)+\bN(t) \right]^2- L\psi^2_q[\delta(t)] \\
&\leq &  \left[\bE(t)+  \ell \psi_q(\delta(t))   \right]^2- L\psi^2_q[\delta(t)]\enspace . 
\eeqn 
If $\bE(t)$ is small enough so that the right-hand side is negative, we have $\widehat{\btau}\neq \btau$. As a consequence, we only need to deal with the case where $\bE(t)$ is large. Henceforth, we assume that 
\[
 \bE(t)\geq  [\sqrt{L}-\ell]\psi_q[\delta(t)]\enspace .
\]
 In view of the definition of $\bE(t)$, this  is equivalent to
\[
(\overline{\theta}_{\tau_{k+1}^*:\tau_{l+1}}- \mu_{k+1})^2\geq  \frac{(\tau_{l+1}-\tau_l)(\tau_{l+1}-\tau_{l-1}) }{(\tau_{l+1}-\tau^*_{k+1})^2(\tau_{l}-\tau_{l-1})}[\sqrt{L}-\ell]^2\psi^2_q[\delta(t)]\enspace . 
\]
Introduce the vector $\btau^{(-l,k+1)}:= (\tau_1,\ldots, \tau_{l-1},\tau^*_{k+1},\tau_{l+1},\ldots)$. Define $u= (\tau_{l-1},\tau_l,\tau^{*}_{k+1})$,  $v= (\tau_{l},\tau^{*}_{k+1},\tau_{l+1})$, and
\[
z:= \frac{(\tau_{l+1}-\tau_{l-1})(\tau^*_{k+1}-\tau_{l}) }{(\tau_{l+1}-\tau_{k+1}^*)(\tau_{l}-\tau_{l-1})}\geq  \frac{(\tau^*_{k+1}-\tau_{l}) }{(\tau_{l}-\tau_{l-1})} \geq \frac{1}{\eta_L}-1 \enspace \ ,  
\]
since $\tau^*_k\leq \tau_{l-1}< \tau_{l} \leq  \eta_L (\tau^*_{k+1}+\tau^*_k)$. 
From the definition of $\bE(v)$ together with the previous bound, we derive that
\beq\label{eq:lower_Ev}
\bE^2(v) \geq z(\sqrt{L}-\ell)^2\psi^2_q[\delta(t)]\enspace .
\eeq
Since $\tau_{l+1}-\tau_l\geq \tau^*_{k+1}-\tau_l\geq \tau_l-\tau_{l-1}$, we deduce that $\psi^2_q(\delta(u))\leq \psi^2(\delta(t))+ \log(4)$.
 Applying Lemma \ref{lem:comparison_one_change-point} and relying on the event $\cA_{L,q}$, we obtain
\beqn
\lefteqn{\Cr_{0}(\btau^{(-l,k+1)}, \bY) - \Cr_{0}(\btau, \bY)}&&\\&=& \Cr_{0}(\btau^{(-l,k+1)}, \bY) - \Cr_{0}(\btau^{(k+1)}, \bY)+\Cr_{0}(\btau^{(k+1)}, \bY) - \Cr_{0}(\btau, \bY)\\
& \leq&   \bN^2(u) - \left( (-1)^{\sign(\underline{\Delta}_{v})}\bE(v)+ \bN(v)\right)^2+L\left[\pen_0(\btau^{(-l,k+1)}) - \pen_0(\btau)\right]\enspace , \\
& \leq&   \ell^2\psi^2_q[\delta(u)] - \left( \bE(v) -  \ell  \psi_q[\delta(v)]\right)_+^2+L\left[\pen_0(\btau^{(-l,k+1)}) - \pen_0(\btau)\right]\\
&< &   \ell^2 [\psi^2_q[\delta(t)]+\log(4)] - \left( \sqrt{z}(\sqrt{L}-\ell)\psi_q[\delta(t)] -  \ell  \psi_q[\delta(v)]\right)_+^2+L\left[\pen_0(\btau^{(-l,k+1)}) - \pen_0(\btau)\right]\enspace .
\eeqn 
 Let us compute the penalty difference.
\beqn 
\pen_0(\btau^{(-l,k+1)}) - \pen_0(\btau)&=&  \psi^2_q[\delta(v)]- \psi^2_q[\delta(u)] \\
&= & 2\log\left(\frac{(\tau_{l+1}-\tau_l)(\tau_l-\tau_{l-1})}{(\tau_{l+1}-\tau_{k+1}^*)(\tau_{k+1}^*- \tau_{l-1})}\right)\ .
\eeqn
Since $\tau^*_k\leq \tau_{l-1} <\tau_l\leq (\tau^*_k+\tau^*_{k+1})/2< \tau^*_{k+1} \leq \tau_{l+1}$, one easily checks that the above expression is less or equal to $2\log(z)$. Hence, we also have $\psi^2_q[\delta(v)]\leq \psi^2_q[\delta(t)]+ 2\log(z)$.
We derive from the previous inequalities that
\beqn 
\frac{ \Cr_{0}(\btau^{(-l,k+1)}, \bY) - \Cr_{0}(\btau, \bY)}{\psi^2_q[\delta(t)]}& \leq& - \left[z^{1/2}(\sqrt{L}-\ell)- \ell\sqrt{1+ \frac{2\log(2z)}{\psi^2_q(\delta(t))}}\right]_+^2 + \ell^2 +\frac{2L\log(z)+ 2\ell^2(\log(2))}{\psi^2_q[\delta(t)]}\\
&\leq & - \left[z^{1/2}(\sqrt{L}-\ell)- \ell\sqrt{1+ \log(2z)}\right]_+^2+  \ell^2+ L\log(2z)\enspace ,
\eeqn 
since $\psi^2_q[\delta(t)]\geq q\geq 2$ and $\ell^2\leq L$. 
Coming back to the definition of $\ell$ (Section \ref{sec:further_notation}), we observe that the last expression is  negative as soon as  
\begin{eqnarray}
\label{eq:objective_negative}
 0.9 \sqrt{z}(\sqrt{L}-1)&>&2\sqrt{L}\sqrt{1+\log(2z)}\enspace , \quad \text{ if }\sqrt{L}\leq 11\enspace ,\\
 \sqrt{z}(\sqrt{L}-2) &>&\sqrt{4+L\log(2z)}+ 2\sqrt{1+ \log(2z)}\enspace , \quad \text{ if }\sqrt{L}> 11 .\label{eq:objective_negative2}
\end{eqnarray}
Recall that $z\geq \eta_{L}^{-1}-1$. 
In summary we only have to prove, that for all $L>1$, there exists $\eta_L\in (0,1/2]$ such that \eqref{eq:objective_negative}  or \eqref{eq:objective_negative2} holds for all $z\geq \eta_L^{-1}-1$ and that, for $\sqrt{L}>11$ large enough, \eqref{eq:objective_negative2} holds for all $z\geq 1$ (which corresponds to taking $\eta_L=1/2$).

The function $h: z\mapsto \tfrac{z}{1+\log(2z)}$ is increasing for $z\geq 1$. If  $\eta_L\in (0,1/2)$ is chosen  in such a way $h(1/\eta_L-1)>4L[0.9(\sqrt{L}-1)]^{-2}$, then \eqref{eq:objective_negative} is satisfied.

\medskip 

Now consider the large $L$ regime $(\sqrt{L}>11)$. Since $1+\log(2z)\leq 2z$,  \eqref{eq:objective_negative2} is satisfied as soon as 
\beq \label{eq:objective_negative3} 
 z [\sqrt{L} -2-2\sqrt{2}]^2 - 4 -  L\log(2z) > 0\enspace 
\eeq
For a fixed $\sqrt{L}>11$, the above expression is positive for $z$ large enough. Hence, there exists $\eta_L\in (0,1/2)$ such that~\eqref{eq:objective_negative3} holds for all $z\geq \eta_L-1$. Beside, using a derivation argument, we derive that Inequality~\eqref{eq:objective_negative3} holds for all $z\geq 1$ as long as 
\[
L(1-\log(2))-4 + 2L\log\left(1- \frac{2+2\sqrt{2}}{\sqrt{L}}\right) >0\enspace ,
\]
which holds for $L$ large enough.  This concludes the proof.

 \subsubsection{Proof of Lemma \ref{lem:step4}}

 We consider the specific cases $k=1$ and $k=K$ at the end of this proof. 
 Consider any vector $\btau$ such that, for some $k$ and $l$, $\tau_{l-1}$ and $\tau_{l}$ both belong to $[(\tau^*_{k-1}+\tau^*_k)/2; (\tau^*_{k}+\tau^*_{k+1})/2]$. Since $L$ is chosen large enough, we may assume that $\eta_L$ in Lemma \ref{lem:step3} is equal to $1/2$. Applying this lemma, we have $\btau\neq \widehat{\btau}$ unless
 \[
  \frac{\tau^*_{k-1}+\tau^*_k}{2} \leq \tau_{l-1} < \tau^*_k < \tau_{l}\leq \frac{\tau^*_{k}+\tau^*_{k+1}}{2}
 \]
 By symmetry, we may assume that $\tau_{l}-\tau^*_k\geq \tau^*_{k}-\tau_{l-1}$. As a warm-up, we consider the simpler situation where $\tau_{l+1}\leq \tau^*_{k+1}$. We shall prove that either $\btau^{(-l)}$ or $\btau^{(-l,k)}$ achieves a smaller criterion value than $\btau$ which as usual implies that $\btau\neq \widehat{\btau}$. Write $t=(\tau_{l-1},\tau_l, \tau_{l+1})$, $u=(\tau^*_k,\tau_l,\tau_{l+1})$, and $v= (\tau_{l-1},\tau^*_k, \tau_{l})$. We have 
\[
\Cr_{0}[\btau^{(-l)}] - \Cr_{0}[\btau] \leq   (\bE(t) + \ell \psi_q[\delta(t)])^2   - L\psi^2_q[\delta(t)] \enspace . 
\]
If $\bE(t)< \psi_q[\delta(t)](\sqrt{L} - \ell)$, then $\Cr_{0}[\btau^{(-l)}]< \Cr_{0}[\btau]$ and $\btau\neq \widehat{\btau}$. Now assume that $\bE(t)$ is large. Considering the local modification $\btau^{(-l,k)}$, we obtain
\begin{equation}
\Cr_{0}[\btau^{(-l,k)}] - \Cr_{0}[\btau]\leq   - [\bE(v) -\ell  \psi_q[\delta(v)]]_+^2  +  \ell^2\psi^2_q[\delta(u)]  + L\left[\pen_0(\btau^{(-l,k) }) - \pen_0(\btau) \right] \ .  \label{eq:upper1}
\end{equation}
  We shall prove that the rhs in \eqref{eq:upper1} is negative. From the definition of energies, we derive 
\[ 
\bE^2(t) = \Delta_k^2 \frac{(\tau_{l+1}-\tau_l)(\tau_k^*-\tau_{l-1})^2 }{(\tau_{l+1} - \tau_{l-1})(\tau_l-\tau_{l-1})}=  \bE^2(v) \frac{(\tau_k^*-\tau_{l-1})(\tau_{l+1}-\tau_l)}{(\tau_l-\tau_k^*)(\tau_{l+1}-\tau_{l-1})}\enspace . 
\]
Define $z= \frac{(\tau_l-\tau_k^*)(\tau_{l+1}-\tau_{l-1})}{(\tau_k^*-\tau_{l-1})(\tau_{l+1}-\tau_l)}>1$. So that $\bE(v)= \sqrt{z}\bE(t)> \sqrt{z}\psi_q[\delta(t)](\sqrt{L} - \ell)$. Also 
\[
 \psi^2_q[\delta(u)] -\psi^2_q[\delta(t)]= 2 \log\left(\frac{(\tau_{l+1}-\tau^*_k)(\tau_{l}-\tau_{l-1})}{(\tau_{l+1}-\tau_{l-1})(\tau_{l}-\tau^*_k)}\right)\leq 2\log(2)\ ,
\]
since $\tau_{l}-\tau_{l-1}\leq 2(\tau_{l}-\tau^*_{k})$. 
By definition of the penalty function we have 
\beqn 
\pen_0(\btau^{(-l,k) }) - \pen_0(\btau) &= &  \psi^2_q[\delta(v)]- \psi^2_q[\delta(u)] =  2\log\left(\frac{(\tau_{l+1}-\tau_l)(\tau_l-\tau_{l-1})}{(\tau_{l+1}-\tau_{k}^*)(\tau_{k}^*- \tau_{l-1})}\right) \\
& =& 2 \log\left(\frac{(\tau_l-\tau_k^*)(\tau_{l+1}-\tau_{l-1})}{(\tau_{l+1}-\tau_l)(\tau_k^*-\tau_{l-1})}\right)+ 2 \log \left(\frac{(\tau_{l+1}-\tau_{l})^2(\tau_{l}-\tau_{l-1})}{(\tau_{l+1}-\tau_k^*)(\tau_{l+1}-\tau_{l-1})(\tau_l-\tau_k^*)}\right)\\
&\leq& 2 \log(z)+2\log(2)\enspace ,
\eeqn 
since $\tau_l-\tau_{l-1}\leq 2 (\tau_l-\tau_k)^*$. Hence, $\psi^2_q[\delta(v)]\leq \psi^2_q[\delta(t)]+ 2\log(4z)$.
Coming back to \eqref{eq:upper1}, this leads us to 
\begin{eqnarray}\nonumber
\frac{\Cr_{0}[\btau^{(-l,k)}] - \Cr_{0}[\btau]}{\psi^2_q[\delta(t)]}&\leq &  - \left[\sqrt{z}(\sqrt{L}-\ell)- \ell\left(1+ \frac{2\log(4z)}{\psi^2_q(\delta(t))}\right) ^{1/2}\right]_+^2  +    \ell^2 \left(1+ \frac{2\log(2)}{\psi^2_q[\delta(t)]}\right) + \frac{2L\log\left(2z\right)}{\psi^2_q[\delta(t)]}\\
&\leq & - \left[\sqrt{z}(\sqrt{L}-\ell)- \ell\left(1+ \frac{2\log(4z)}{q}\right) \right]_+^2  +   \ell^2(1+\frac{2}{q}\log(2))+  \frac{ 2L\log(2z)}{q} \enspace , \nonumber \\
&\leq & - \left[\sqrt{z}(\sqrt{L}-2)- 2\sqrt{1+ \log(4z)} \right]_+^2  +   4(1+\log(2))+  L\log(2z)\ ,
\end{eqnarray}
since $q\geq 2$ and $\ell= 2$ for $L\geq \sqrt{11}$. Hence, it only  remains to prove that, for all $L$ large enough, all $z\geq 1$,
\[
 \sqrt{z}(\sqrt{L}-2)> 2\sqrt{1+ \log(4z)} + 2\sqrt{1+\log(2)}+ \sqrt{L\log(2z)}\ . 
\]
Consider any $\zeta\in (0,1)$. There exists $L_{\zeta}$ such that,  for all $L>L_{\zeta}$ and all $z\geq 1$, 
\[
\zeta \sqrt{zL}>2\sqrt{z}+  2\sqrt{1+ \log(4z)} + 2\sqrt{1+\log(2)}\ . 
\]
Hence, it suffices to prove that there exists  $\zeta_0\in (0,1)$ such that for all $z\geq 1$, we have
$(1-\zeta_0)^2 z  \geq \log(2z)$. Deriving this expression with respect to $z$, we conclude that the latter inequality holds as long as $2\log(1-\zeta_0)^{-1})\leq \log(e/2)$. Taking $\zeta_0= 1-\sqrt{2/e}$ concludes this part.

\bigskip 

Now, we consider the more challenging (and painful) situation where $\tau_{l+1}> \tau_{k+1}^*$. We shall prove, by deleting $\tau_{l}$ and possibly inserting $\tau^*_{k}$ or $\tau^*_{k+1}$, the penalized criterion will decrease. In other words,  at least one the sequences  $\btau^{(-l)}$ $\btau^{(-l,k)}$ and $\btau^{(-l,k+1)}$ achieves a smaller criterion than $\btau$. It suffices to show that $\Cr_{0}[\btau] > \Cr_{0}[\btau^{(-l)}]$ as long as
$\Cr_{0}[\btau]\leq \Cr_{0}[\btau^{(-l,k)}]\wedge \Cr_{0}[\btau^{(-l,k+1)}]$. We assume in the remainder of this proof that the latter inequality holds. Under the event $\cA_{L,q}$,  Lemma \ref{lem:comparison_one_change-point} enforces that 
\beqn
\lefteqn{\Cr_{0}[\btau^{(-l,k)}] - \Cr_{0}[\btau]}&&\\ &\leq &  - \left(\bE[(\tau_{l-1},\tau_k^*,\tau_{l})] - \ell \psi_q[\delta(\tau_{l-1},\tau_k^*,\tau_l)]\right)^2_+ +    \left(\bE(\tau_{k}^*,\tau_{l},\tau_{l+1}) +  \ell\psi_q[\delta(\tau_{k}^*,\tau_{l},\tau_{l+1})]\right)^2 \\ & &  + L\left(\psi^2_q[\delta(\tau_{l-1},\tau_k^*,\tau_{l})] - \psi^2_q[\delta(\tau_{k}^*,\tau_{l},\tau_{l+1})]\right)\enspace ; \\
\lefteqn{\Cr_{0}[\btau^{(-l,k+1)}] - \Cr_{0}[\btau]}&&\\ &\leq &  - \left(\bE(\tau_{l},\tau_{k+1}^*,\tau_{l+1}) - \ell\psi_q[\delta(\tau_{l},\tau_{k+1}^*,\tau_{l+1})]\right)^2_+  +      \left(\bE(\tau_{l-1},\tau_l,\tau_{k+1}^*) +  \ell\psi_q[\delta(\tau_{l-1},\tau_{l},\tau^*_{k+1})]\right)^2 \\
&&  + L\left(\psi^2_q[\delta(\tau_{l},\tau_{k+1}^*,\tau_{l+1})] - \psi^2_q[\delta(\tau_{l-1},\tau_{l},\tau^*_{k+1})]\right) \ .
\eeqn 
From \eqref{eq:difference_penalty} and the respective positions of the change-points, we derive that 
\[
\psi^2_q[\delta(\tau_{l-1},\tau_k^*,\tau_{l})] - \psi^2_q[\delta(\tau_{k}^*,\tau_{l},\tau_{l+1})]= 2\log\left(\frac{(\tau_{l+1}-\tau_{l})(\tau_{l}-\tau_{l-1})}{(\tau_{l+1}-\tau_{k}^*) (\tau^*_{k}-\tau_{l-1})}\right) \leq 2\log\left(\frac{\tau_l-\tau_{l-1}}{\tau_k^*-\tau_{l-1}}\right)\enspace .
\]
Similarly, we get
\[
\psi^2_q[\delta(\tau_{l},\tau_{k+1}^*,\tau_{l+1})] - \psi^2_q[\delta(\tau_{l-1},\tau_{l},\tau^*_{k+1})]= 2\log\left(\frac{(\tau_{l}-\tau_{l-1})(\tau_{l+1}-\tau_{l})}{(\tau_{l+1}-\tau_{k+1}^*)(\tau^*_{k+1}-\tau_{l-1})}\right) \leq 2\log\left(\frac{\tau_{l+1}-\tau_{l}}{\tau_{l+1}-\tau_{k+1}^*}\right). 
\]
Since $\Cr_{0}[\btau]\leq \Cr_{0}[\btau^{(-l,k)}]\wedge \Cr_{0}[\btau^{(-l,k+1)}]$, since   $\ell\leq \sqrt{L}$, the above inequalities lead us to 
\begin{eqnarray} 
 \bE(\tau_{l-1},\tau_k^*,\tau_{l})&\leq& \bE(\tau_{k}^*,\tau_{l},\tau_{l+1})+ \omega_1\,  \label{eq:cond2} \enspace ;\\
 \bE(\tau_{l},\tau_{k+1}^*,\tau_{l+1})&\leq &\bE(\tau_{l-1},\tau_l,\tau_{k+1}^*)+\omega_2 \label{eq:cond3} \enspace ;\\ \nonumber
 \omega_1&:=& 2\ell \psi_q[\delta(\tau_{k}^*,\tau_{l},\tau_{l+1})]
  +2\sqrt{2L \log\left(\frac{\tau_l-\tau_{l-1}}{\tau_k^*-\tau_{l-1}}\right)}  \enspace ; \\
 \omega_2& :=&  2 \ell \psi_q[\delta(\tau_{l-1},\tau_l,\tau_{k+1}^*)]
  +2\sqrt{2L \log\left(\frac{\tau_{l+1}-\tau_{l}}{\tau_{l+1}-\tau_{k+1}^*}\right)} \enspace . \nonumber 
\end{eqnarray}
We aim at  proving that $\Cr_{0}(\btau^{(-l)}) < \Cr_{0}(\btau)$. Define $t= (\tau_{l-1},\tau_l,\tau_{l+1})$.  In view of Lemma \ref{lem:comparison_one_change-point}, it suffices to establish the following inequality
\beq
\label{eq:cond4}
 \bE(t) <  \psi_q[\delta(t)]\left(\sqrt{L} - \ell\right) \enspace , 
\eeq
 We start with the following lemma that relates the change-point energies. 
\begin{lem} \label{lem:energie}
It holds that 
\begin{eqnarray}\label{eq:upper_1_energie}
&&\frac{\bE(\tau^*_k,\tau_{l},\tau_{l+1})\bE(\tau_{l-1},\tau_{l},\tau^*_{k+1}) }{\bE(\tau_{l},\tau^*_{k+1},\tau_{l+1})\bE(\tau_{l-1},\tau^*_k,\tau_{l})}= \sqrt{\frac{(\tau_{l+1}-\tau_{k+1}^*)(\tau^*_k-\tau_{l-1})}{(\tau_{l+1}-\tau_k^*)(\tau^*_{k+1}-\tau_{l-1})}}\leq \sqrt{\frac{1}{3}}
\enspace ; \\
\label{eq:upper_2_energie}
&&\frac{\bE(\tau^*_k,\tau_{l},\tau_{l+1}) }{\bE(\tau_{l},\tau^*_{k+1},\tau_{l+1})}\leq  \sqrt{\frac{\tau_{l+1}-\tau^*_{k+1}}{\tau_{l+1}-\tau^*_k}} \enspace ; \quad \frac{\bE(\tau_{l-1},\tau_{l},\tau^*_{k+1}) }{\bE(\tau_{l-1},\tau^*_k,\tau_{l})} \leq  \sqrt{\frac{\tau^*_k-\tau_{l-1}}{\tau_{l}-\tau^*_k} }\enspace ; \\
\bE(t)& \leq&  \bE(\tau_{l-1},\tau_k^*,\tau_l)\sqrt{\frac{\tau^*_k-\tau_{l-1}}{\tau_l-\tau^*_k }}+ \bE(\tau_{l},\tau_{k+1}^*,\tau_{l+1})\sqrt{2\frac{\tau_{l+1}-\tau^*_{k+1}}{\tau_{l+1}-\tau_{l-1}}}   \enspace .  \label{eq:upper_3_energie}
\end{eqnarray}
\end{lem}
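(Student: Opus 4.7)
The plan is to reduce everything to explicit algebra after introducing the shorthand
$a=\tau^*_k-\tau_{l-1}$, $b=\tau_l-\tau^*_k$, $c=\tau^*_{k+1}-\tau_l$, $d=\tau_{l+1}-\tau^*_{k+1}$. The hypotheses entering Lemma~\ref{lem:step4} at this point give $a\leq b$ (from $\tau_l-\tau^*_k\geq \tau^*_k-\tau_{l-1}$) and $b\leq c$ (from $\tau_l\leq (\tau^*_k+\tau^*_{k+1})/2$, hence $\tau^*_{k+1}-\tau_l\geq(\tau^*_{k+1}-\tau^*_k)/2\geq b$). On the relevant intervals, $\btheta$ is piece-wise constant with values $\mu_k,\mu_{k+1},\mu_{k+2}$ on $[\tau_{l-1},\tau^*_k),[\tau^*_k,\tau^*_{k+1}),[\tau^*_{k+1},\tau_{l+1})$. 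This allows me to compute each of the four energies appearing in \eqref{eq:upper_1_energie}--\eqref{eq:upper_2_energie} as
\[
\bE(\tau_{l-1},\tau^*_k,\tau_l)=|\Delta_k|\sqrt{\tfrac{ab}{a+b}},\ \bE(\tau_l,\tau^*_{k+1},\tau_{l+1})=|\Delta_{k+1}|\sqrt{\tfrac{cd}{c+d}},
\]
\[
\bE(\tau_{l-1},\tau_l,\tau^*_{k+1})=\tfrac{a|\Delta_k|}{a+b}\sqrt{\tfrac{(a+b)c}{a+b+c}},\ \bE(\tau^*_k,\tau_l,\tau_{l+1})=\tfrac{d|\Delta_{k+1}|}{c+d}\sqrt{\tfrac{b(c+d)}{b+c+d}},
\]
by direct evaluation of the differences of means (the cross-terms $m_1,m_4$ picking up the $\Delta_k,\Delta_{k+1}$ factors; the terms in $\mu_{k+1}$ cancel).

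For \eqref{eq:upper_1_energie}, I would multiply these four expressions: both $|\Delta_k|$ and $|\Delta_{k+1}|$ cancel in the ratio, and a straightforward simplification yields
\[
\frac{\bE(\tau^*_k,\tau_l,\tau_{l+1})\,\bE(\tau_{l-1},\tau_l,\tau^*_{k+1})}{\bE(\tau_l,\tau^*_{k+1},\tau_{l+1})\,\bE(\tau_{l-1},\tau^*_k,\tau_l)}
=\sqrt{\frac{ad}{(a+b+c)(b+c+d)}},
\]
which is the claimed identity after rewriting $a+b+c=\tau^*_{k+1}-\tau_{l-1}$ and $b+c+d=\tau_{l+1}-\tau^*_k$. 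The bound $\leq 1/\sqrt{3}$ follows since $a\leq b$ and $a\leq c$ give $a+b+c\geq 3a$, hence $a/(a+b+c)\leq 1/3$, while $d/(b+c+d)\leq 1$. For \eqref{eq:upper_2_energie}, dividing the explicit expressions directly gives ratios equal to $\sqrt{bd/[c(b+c+d)]}$ and $\sqrt{ac/[b(a+b+c)]}$, which are bounded by $\sqrt{d/(b+c+d)}$ (using $b\leq c$) and by $\sqrt{a/b}$ (using $c\leq a+b+c$) respectively, matching the stated right-hand sides.

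For the subadditive bound \eqref{eq:upper_3_energie}, the difference of means over $[\tau_{l-1},\tau_l)$ and $[\tau_l,\tau_{l+1})$ simplifies to $a\Delta_k/(a+b)+d\Delta_{k+1}/(c+d)$ (the $\mu_{k+1}$ terms cancel identically). The triangle inequality then splits $\bE(t)$ into a $|\Delta_k|$-piece and a $|\Delta_{k+1}|$-piece. The first equals $|a\Delta_k|\sqrt{(c+d)/[(a+b)(a+b+c+d)]}$ and is bounded by $|\Delta_k|\sqrt{a^2/(a+b)}=\bE(\tau_{l-1},\tau^*_k,\tau_l)\sqrt{a/b}$, using $(c+d)/(a+b+c+d)\leq 1$. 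The second equals $|d\Delta_{k+1}|\sqrt{(a+b)/[(c+d)(a+b+c+d)]}$; here the only non-trivial step is $a+b\leq 2c$, which follows from $a\leq b\leq c$, and gives $\bE(\tau_l,\tau^*_{k+1},\tau_{l+1})\sqrt{2d/(a+b+c+d)}$. No step is delicate: the one place to be careful is the injection of the factor $2$ via $a+b\leq 2c$, which crucially uses the ordering from the surrounding proof of Lemma~\ref{lem:step4}; everything else is bookkeeping on the explicit formulas.
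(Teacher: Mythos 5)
Your proof is correct and follows essentially the same route as the paper: both compute the four energies explicitly as functions of $\Delta_k$, $\Delta_{k+1}$ and the segment lengths $a=\tau^*_k-\tau_{l-1}$, $b=\tau_l-\tau^*_k$, $c=\tau^*_{k+1}-\tau_l$, $d=\tau_{l+1}-\tau^*_{k+1}$ (the paper records these as ratio identities \eqref{eq:upper_1_ratio}--\eqref{eq:upper_2_ratio} rather than listing the closed forms, but this is the same algebra), and both inequalities rest on the same orderings $a\leq b\leq c$ inherited from the surrounding proof of Lemma~\ref{lem:step4}, used in the same places (e.g.\ $a+b\leq 2c$ for the factor $\sqrt{2}$ in \eqref{eq:upper_3_energie}).
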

It follows from (\ref{eq:upper_1_energie}--\ref{eq:upper_2_energie})  together with \eqref{eq:cond2} and \eqref{eq:cond3} that $\bE(\tau_{l-1},\tau^*_k,\tau_{l})$ is small. Indeed, we have
\beqn 
\bE(\tau_{l-1},\tau^*_k,\tau_{l})&\leq&\bE(\tau^*_k,\tau_{l},\tau_{l+1})+ \omega_1
 \\
&\leq & \bE(\tau_{l},\tau^*_{k+1}, \tau_{l+1}) \frac{ \bE(\tau^*_{k},\tau_l, \tau_{l+1})}{\bE(\tau_{l},\tau^*_{k+1}, \tau_{l+1})} + \omega_1 \\
&\leq & \left(\bE(\tau_{l-1},\tau_{l}, \tau^*_{k+1}) + \omega_2\right) \frac{ \bE(\tau^*_{k},\tau_l, \tau_{l+1})}{\bE(\tau_{l},\tau^*_{k+1}, \tau_{l+1})}  + \omega_1 \\
&\leq & \frac{\bE(\tau_{l-1},\tau^*_k,\tau_{l})}{\sqrt{3}}+  \omega_1+\sqrt{\frac{\tau_{l+1}-\tau^*_{k+1}}{\tau_{l+1}-\tau^*_k}} \omega_2\enspace ,
\eeqn 
which implies 
\[
 \bE(\tau_{l-1},\tau^*_k,\tau_{l})\leq \frac{3+ \sqrt{3}}{2}\left(\omega_1+ \sqrt{\frac{\tau_{l+1}-\tau^*_{k+1}}{\tau_{l+1}-\tau^*_k}} \omega_2 \right)\enspace .
\]
Similarly, we obtain 
\[
 \bE(\tau_{l},\tau^*_{k+1}, \tau_{l+1})\leq \frac{3+ \sqrt{3}}{2}\left(\omega_2+ \sqrt{\frac{\tau^*_k-\tau_{l-1}}{\tau_{l}-\tau^*_k} } \omega_1 \right)\enspace .
\]
Thanks to \eqref{eq:upper_3_energie}, and since $\tau_l- \tau_k^*\leq \tau_k^*-\tau_{l-1}$, we arrive at 
\beqn 
\bE(t)&\leq& 6 \left[\omega_1 \sqrt{\frac{\tau^*_k-\tau_{l-1}}{\tau_l-\tau^*_k }}+ \omega_2\sqrt{\frac{\tau_{l+1}-\tau^*_{k+1}}{\tau_{l+1}-\tau^*_k}} \right] \enspace .
\eeqn 
Write $z_1= \frac{\tau_l-\tau^*_k }{\tau^*_k-\tau_{l-1}}\geq 1$ and $z_2= \frac{\tau_{l+1}-\tau_k^*}{\tau_{l+1}-\tau^*_{k+1}}\geq 1$. 
We then come back to the definition of $\omega_1$ and $\omega_2$
 \beqn 
 \bE(t)&\leq& 12 \ell \left[\psi_q[\delta(\tau^*_k,\tau_l,\tau_{l+1})]+ \psi_q[\delta(\tau_{l-1},\tau_l,\tau^*_{k+1})]\right]+ 4\sqrt{2L}\left[\sqrt{\frac{\log(1+z_1)}{z_1}}+ \sqrt{\frac{\log(1+ z_2)}{z_2}}\right]\\
 &\leq & 12\ell \left[\psi_q[\delta(\tau^*_k,\tau_l,\tau_{l+1})]+ \psi_q[\delta(\tau_{l-1},\tau_l,\tau^*_{k+1})] \right]+ 12\sqrt{L}\enspace .
 \eeqn 
 Then, we use the definition~\eqref{eq:definition_psi_q} of $\psi_q$.
 \[
  \psi^2_q[\delta(\tau^*_k,\tau_l,\tau_{l+1})]\leq 2\log\left(\frac{n(\tau_{l+1}-\tau^*_k)}{(\tau_{l+1}-\tau_l)(\tau_l-\tau^*_k)}\right)+  q\leq 2\log(2)+ \psi^2[\delta(t)]\enspace ,
 \]
since $\tau_l-\tau_k^*\geq (\tau_l-\tau_{l-1})/2$. Besides, 
 \[
  \psi^2_q[\delta(\tau_{l-1},\tau_l,\tau^*_{k+1})]\leq 2\log\left(\frac{3n}{\tau_l-\tau_{l-1}}\right)+  q\leq 2\log(2)+ \psi^2[\delta(t)]\enspace ,
 \]
since $\tau^*_{k+1}-\tau_{l}\geq (\tau_l-\tau_{l-1})/2$. We conclude that 
\[
 \bE(t)\leq 24\ell \psi_q[\delta(t)]+ 24\ell \sqrt{2\log(3)}+ 12\sqrt{L}\enspace ,
\]
which is smaller than $\psi_q[\delta(t)][\sqrt{L} - \ell]$ provided that we have chosen $q$ large enough. This shows \eqref{eq:cond4} and concludes the main part of the proof for general $k$.

\bigskip 

It remains to consider the cases $k=1$ and $k=K$. By symmetry, we focus on the case $k=1$. Suppose that $[2; (\tau^*_1+\tau^*_2)/2]$ contains two change-points $\tau_1$ and $\tau_2$. The case $\tau_2\leq \tau^*_1$ has already been handled in Lemma~\ref{lem:step1}. The case $\tau_1\geq (1+\tau^*_1)/2$ has already been considered in the general case case. Finally, the case $\tau_1\leq (\tau^*_1+\tau^*_2)/2$ has already been handled in Lemma~\ref{lem:step3}.

\begin{proof}[Proof of Lemma \ref{lem:energie}] 
In order to prove the first bound, we simply come back to the definition of the energy. To alleviate notation, we write $\mu'_{l+1}= \overline{\theta}_{\tau_{k+1}^*:\tau_{l+1}}$ and $\mu'_{l}= \overline{\theta}_{\tau_{l-1}:\tau^*_{k}}$. 
\begin{eqnarray} \nonumber
\bE^2[\tau^*_k,\tau_{l},\tau_{l+1}] &= &\left(\mu'_{l+1}\frac{\tau_{l+1}-\tau^*_{k+1}}{\tau_{l+1}-\tau_l}+ \mu_{k+1}\frac{\tau^*_{k+1}-\tau_l}{\tau_{l+1}-\tau_l}  -\mu_{k+1}  \right)^2 \frac{(\tau_{l+1}-\tau_l)(\tau_l-\tau^*_k)}{(\tau_{l+1}-\tau^*_k)}\\\nonumber
& = &\left(\mu'_{l+1}- \mu_{k+1}\right)^2 \frac{(\tau_{l+1}-\tau^*_{k+1})^2 (\tau_l-\tau_{k}^*)}{(\tau_{l+1}-\tau^*_k)(\tau_{l+1}-\tau_l)}\\  \label{eq:upper_1_ratio}
&= & \bE^2[\tau_{l},\tau^*_{k+1},\tau_{l+1}] \frac{(\tau_{l+1}-\tau^*_{k+1})(\tau_{l}-\tau^*_k)}{(\tau_{l+1}-\tau^*_k)(\tau^*_{k+1}-\tau_{l})}\\
&\leq & \bE^2[\tau_{l},\tau^*_{k+1},\tau_{l+1}]  \frac{(\tau_{l+1}-\tau^*_{k+1})}{(\tau_{l+1}-\tau^*_k)} \enspace , \nonumber
\end{eqnarray}
where we used in the last line that  $\tau_l$ is closer to $\tau_k^*$ than to $\tau_{k+1}^*$. This proves the first part of \eqref{eq:upper_2_energie}. 
Analogously, we obtain
\begin{eqnarray}
 \bE^2[\tau_{l-1},\tau_{l},\tau^*_{k+1}]&  = & \bE^2[\tau_{l-1},\tau^*_k,\tau_{l}] \frac{(\tau^*_k-\tau_{l-1})(\tau^*_{k+1}-\tau_{l})}{(\tau_{l}-\tau^*_k)(\tau^*_{k+1}-\tau_{l-1})} \leq 
 \frac{(\tau^*_k-\tau_{l-1})}{(\tau_{l}-\tau^*_k)}\enspace .
 \label{eq:upper_2_ratio}  
\end{eqnarray}
This proves the second of \eqref{eq:upper_2_energie}. 
Multiplying \eqref{eq:upper_1_ratio} and \eqref{eq:upper_2_ratio}, we obtain the identity in \eqref{eq:upper_1_energie}. Besides, $\tau_k^*\leq \tau_{k+1}^* \leq \tau_{l+1}$ and $\tau^*_{k+1}-\tau_{l-1}\geq 3(\tau^*_{k}-\tau_{l-1})$ since $\tau^*_{k}-\tau_{l-1}\leq \tau_{l}-\tau^*_{k}\leq \tau^*_{k+1}-\tau_{l}$. This allows us to recover the upper bound in~\eqref{eq:upper_1_energie}.

Turning to the last bound \eqref{eq:upper_3_energie}, we also come back to the definition of the energy
\beqn 
\lefteqn{\bE^2(t)\frac{(\tau_{l+1}-\tau_{l-1})}{(\tau_{l+1}-\tau_{l})(\tau_{l}-\tau_{l-1})} } \\ &=   &  \left[\left(\mu'_{l+1}\frac{\tau_{l+1}-\tau^*_{k+1}}{\tau_{l+1}-\tau_{l}} + \mu_{k+1}\frac{\tau^*_{k+1}-\tau_l}{\tau_{l+1}-\tau_{l}} \right)-\left(\mu'_{l}\frac{\tau^*_k-\tau_{l-1}}{\tau_{l}-\tau_{l-1}}+ \mu_{k+1}\frac{\tau_{l}-\tau^*_k}{\tau_{l}-\tau_{l-1}} \right) \right]^2\\
&= & \left[(\mu'_{l+1}-\mu_{k+1})\frac{\tau_{l+1}-\tau^*_{k+1}}{\tau_{l+1}-\tau_{l}}  - (\mu'_{l} -\mu_{k+1})\frac{\tau^*_k-\tau_{l-1}}{\tau_{l}-\tau_{l-1}} \right]^2\\
& \leq & \left[\bE(\tau_{l},\tau_{k+1}^*,\tau_{l+1})\sqrt{\frac{\tau_{l+1}-\tau^*_{k+1}}{(\tau_{l+1}-\tau_{l})(\tau_{k+1}^*-\tau_l)}}  + \bE(\tau_{l-1},\tau_k^*,\tau_l)\sqrt{\frac{\tau^*_k-\tau_{l-1}}{(\tau_{l}-\tau_{l-1})(\tau_l-\tau^*_k) }} \right]^2\enspace . 
\eeqn
This leads us to
\beqn 
\bE(t)
& \leq & \bE(\tau_{l},\tau_{k+1}^*,\tau_{l+1})\sqrt{\frac{(\tau_{l+1}-\tau^*_{k+1})(\tau_{l}-\tau_{l-1})}{(\tau_{l+1}-\tau_{l-1})(\tau_{k+1}^*-\tau_l)}}  + \bE(\tau_{l-1},\tau_k^*,\tau_l)\sqrt{\frac{(\tau^*_k-\tau_{l-1})(\tau_{l+1}-\tau_{l})}{(\tau_l-\tau^*_k)(\tau_{l+1}-\tau_{l-1}) }} \\
&\leq & \bE(\tau_{l},\tau_{k+1}^*,\tau_{l+1})\sqrt{2\frac{\tau_{l+1}-\tau^*_{k+1}}{\tau_{l+1}-\tau_{l-1}}}  + \bE(\tau_{l-1},\tau_k^*,\tau_l)\sqrt{\frac{\tau^*_k-\tau_{l-1}}{\tau_l-\tau^*_k }} \enspace , 
\eeqn 
where we used again  $\tau_k^*-\tau_{l-1}\leq \tau_l- \tau_k^*\leq (\tau_{k+1}^*-\tau_k^*)/2$.

\end{proof}

\subsection{Localized analysis (Proof of Propositions \ref{prp:local_analysis} and \ref{prp:local_analysis_2})}

We still work conditionally to  the event $\cA_{L,q}$ so that the result of Proposition \ref{prp:rough_analysis} is valid for large $L$ and the result of Proposition \ref{prp:minimal_penalty_analysis} is true for $L>1$. Consider any high-energy change-point $\tau^*_k$. The closest estimated change-point $\widehat{\tau}_l$ to $\tau^*_k$ belongs to the interval $[\frac{\tau^*_{k-1}+\tau^*_k}{2},\frac{\tau^*_{k}+\tau^*_{k+1}}{2}]$.  By symmetry, we may assume that $\widehat{\tau}_l> \tau^*_k$, 
also if the energy  $\bE_k$  is high enough, Propositions \ref{prp:rough_analysis} and \ref{prp:minimal_penalty_analysis} enforce that $\widehat{\tau}_l-\tau_k^*\leq (\tau^*_{k+1}-\tau_k^*)/4$. 
To alleviate the notation, we write $\btau$ for $\widehat{\btau}$ in this subsection.  We deduce from Lemma \ref{lem:comparison_one_change-point} that, for $t_1= (\tau_{l-1},\tau^*_k, \tau_l)$ and $t_2=(\tau^*_k, \tau_{l},\tau_{l+1})$, 
\beqn 
 \Cr_{0}(\btau^{(-l,k)},\bY) - \Cr_{0}(\btau,\bY)&= &- ((-1)^{\sign(\underline{\Delta}_{t_1})}\bE(t_1) -  \bN(t_1) )^2 +  ((-1)^{\sign(\underline{\Delta}_{t_2})}\bE(t_2) -  \bN(t_2) )^2 \\ & &+ L\psi^2_q(\delta(t_1)) - L\psi^2_q(\delta(t_2))
\eeqn 
This difference is non-negative  since $\btau$ minimizes the criterion $\Cr_{0}$. This implies that 
\beq \label{eq:condition_energie_noise}
 [\bE(t_1)-|\bN(t_1)|]_+^2\leq  [\bE(t_2)+  |\bN(t_2)|]^2 + L(\psi^2_q(\delta(t_1)) - \psi^2_q(\delta(t_2)))\enspace . 
\eeq
First, we control the energies $\bE(t_1)$ and $\bE(t_2)$. 

\begin{lem}\label{lem:e1}
The energy $\bE(t_1)$ satisfies
\beqn
\bE^2(t_1)&\geq&  [\tau_l -\tau_{k}^*]\left[\frac{\bE_k^{2} }{16 (\tau^*_{k+1}-\tau_l)}\bigvee \frac{\Delta^{2}_k}{8}\right]\enspace . 
\eeqn
\end{lem}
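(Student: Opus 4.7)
The plan is to lower bound the weighted mean difference $\underline{\Delta}_{t_1}$ governing $\bE(t_1)$ through
\[
\bE^2(t_1) = |\underline{\Delta}_{t_1}|^2 \cdot \frac{(\tau_k^* - \tau_{l-1})(\tau_l - \tau_k^*)}{\tau_l - \tau_{l-1}}.
\]
Since $\tau_l - \tau_k^* \leq (\tau^*_{k+1}-\tau_k^*)/4$ implies $\tau_l < \tau^*_{k+1}$, the signal $\btheta$ is identically $\mu_{k+1}$ on $[\tau_k^*,\tau_l)$, so the mean over that interval equals $\mu_{k+1}$. Moreover, since $\tau_l$ is the closest element of $\widehat{\btau}$ to $\tau_k^*$, we have $\tau_k^* - \tau_{l-1} \geq \tau_l - \tau_k^*$, hence the weight ratio $(\tau_k^* - \tau_{l-1})/(\tau_l - \tau_{l-1}) \geq 1/2$ and
\[
\bE^2(t_1) \;\geq\; \tfrac{1}{2}(\tau_l - \tau_k^*)\,\bigl|\bar\theta_{\tau_{l-1}:\tau_k^*} - \mu_{k+1}\bigr|^2.
\]
The remaining work is to lower bound the factor $|\bar\theta_{\tau_{l-1}:\tau_k^*} - \mu_{k+1}|$.

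I would split according to the location of $\tau_{l-1}$. If $\tau_{l-1} \geq \tau^*_{k-1}$, the signal is constantly $\mu_k$ on $[\tau_{l-1},\tau_k^*)$, so $\bar\theta_{\tau_{l-1}:\tau_k^*} = \mu_k$ and the factor equals $|\Delta_k|$. This immediately yields $\bE^2(t_1) \geq (\tau_l - \tau_k^*)\Delta_k^2/2$, which dominates $(\tau_l - \tau_k^*)\Delta_k^2/8$. For the $\bE_k^2$ term, the trivial bound $\bE_k^2 \leq \Delta_k^2(\tau^*_{k+1}-\tau_k^*)$ combined with $\tau^*_{k+1}-\tau_l \geq \tfrac34(\tau^*_{k+1}-\tau_k^*)$ gives $\bE_k^2/[16(\tau^*_{k+1}-\tau_l)] \leq \Delta_k^2/12$, which is also dominated by the above lower bound.

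The other case, $\tau_{l-1} < \tau^*_{k-1}$, is the main obstacle. Here $\bar\theta_{\tau_{l-1}:\tau_k^*} = \alpha\mu_k + (1-\alpha)\bar\theta_{\tau_{l-1}:\tau^*_{k-1}}$ with $\alpha = (\tau_k^* - \tau^*_{k-1})/(\tau_k^* - \tau_{l-1})$, and a priori the upstream mean could cancel $\mu_k$ and bring the combination close to $\mu_{k+1}$. To rule this out I would exploit the minimality of $\widehat{\btau}$: replacing $\tau_{l-1}$ by $\tau^*_{k-1}$ in $\widehat{\btau}$ (possibly together with removing neighbouring change-points, in the spirit of Lemmas \ref{lem:step1}--\ref{lem:step4}) yields a competitor whose criterion cannot be strictly smaller, and Lemma \ref{lem:comparison_one_change-point} combined with the uniform noise control provided by $\cA_{L,q}$ transforms each such inequality into an energy lower bound relating the configuration around $\tau^*_{k-1}$ to that around $\tau_{l-1}$. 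These constraints force $\tau_k^* - \tau_{l-1}$ to remain within a bounded multiple of $\tau_k^* - \tau^*_{k-1}$, so that $\alpha$ is bounded below by an absolute constant, while simultaneously keeping $|\bar\theta_{\tau_{l-1}:\tau^*_{k-1}} - \mu_{k+1}|$ away from $|\Delta_k|$ up to a controlled error. Plugging these two quantitative bounds into the decomposition above, algebraic manipulation then delivers both the $\Delta_k^2/8$ and $\bE_k^2/[16(\tau^*_{k+1}-\tau_l)]$ lower bounds. The hard part is chaining the optimality inequalities tightly enough to retain the correct constants and to preserve the clean multiplicative ratio $(\tau_k^*-\tau^*_{k-1})/(\tau^*_{k+1}-\tau^*_{k-1})$ that appears in $\bE_k^2$.
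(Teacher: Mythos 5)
Your reduction to bounding the weighted mean difference $|\underline{\Delta}_{t_1}|$ and your treatment of the case $\tau_{l-1}\geq\tau^*_{k-1}$ are both correct and coincide with the paper's argument. The gap is in the case $\tau_{l-1}<\tau^*_{k-1}$, where your sketched route diverges from the paper's proof in a way that cannot be repaired.

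First, your claimed intermediate step --- that the optimality inequalities ``force $\tau_k^*-\tau_{l-1}$ to remain within a bounded multiple of $\tau_k^*-\tau^*_{k-1}$'' --- is false. The only usable constraint is that $\tau^*_{k-1}$, being absent from $\widehat{\btau}$, is not a $(\sqrt{L}+\sqrt{\ell},q,\widehat{\btau}^{(k-1)})$-high-energy change-point (this is the second item of Lemma~\ref{lem:step1}; one does not need the more elaborate ``replace $\tau_{l-1}$ by $\tau^*_{k-1}$'' modification you propose). Writing $M=\overline{\theta}_{\tau_{l-1}:\tau^*_{k-1}}$, $a=\tau^*_{k-1}-\tau_{l-1}$, $b=\tau_l-\tau^*_{k-1}$, and $\gamma=(\tau_l-\tau^*_k)/b$, this constraint bounds only $|M-\mu_k-\gamma\Delta_k|\sqrt{ab/(a+b)}$. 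If $M-\mu_k$ happens to equal $\gamma\Delta_k$, the constraint is vacuous regardless of $a$, so $\tau_{l-1}$ may be arbitrarily far from $\tau^*_{k-1}$ and your proposed ``$\alpha$ bounded below'' step fails. Second, you do not invoke the high-energy hypothesis on $\tau^*_k$ at all; this hypothesis, i.e.\ $\bE_k\geq\kappa_L\,\psi_q[\delta_k^*,\delta^*_{k+1}]$, is indispensable to absorb the $\psi_q$ term coming from the non-high-energy constraint on $\tau^*_{k-1}$, and the final inequality \eqref{eq:objective} is obtained only after choosing $\kappa_L$ large enough.

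The correct mechanism, as in the paper, is to bound the \emph{weighted} deviation $|M-\mu_k|\cdot\tfrac{\tau^*_{k-1}-\tau_{l-1}}{\tau^*_k-\tau_{l-1}}$ directly, never $|M-\mu_k|$ or the weight separately. From the non-high-energy of $\tau^*_{k-1}$ one writes
\begin{align*}
\Big|M-\mu_k\Big|\frac{\tau^*_{k-1}-\tau_{l-1}}{\tau^*_k-\tau_{l-1}}
&\leq |\Delta_k|\,\frac{(\tau_l-\tau^*_k)(\tau^*_{k-1}-\tau_{l-1})}{(\tau_l-\tau^*_{k-1})(\tau^*_k-\tau_{l-1})}
+(\sqrt{L}+\sqrt{\ell})\psi_q[\delta(\tau_{l-1},\tau^*_{k-1},\tau_l)]\sqrt{\tfrac{(\tau_l-\tau_{l-1})(\tau^*_{k-1}-\tau_{l-1})}{(\tau_l-\tau^*_{k-1})(\tau^*_k-\tau_{l-1})^2}}\enspace,
\end{align*}
bounds the first term by $|\Delta_k|/3$ using $2(\tau_l-\tau^*_k)\leq\tau^*_k-\tau^*_{k-1}$, and bounds the second term by $2(\sqrt{L}+\sqrt{\ell})|\Delta_k|/\kappa_L$ via the chain $\psi_q[\delta(\tau_{l-1},\tau^*_{k-1},\tau_l)]\sqrt{\tfrac{\tau^*_{k-1}-\tau_{l-1}}{\tau_l-\tau_{l-1}}}\leq\psi_q[\delta^*_k,\delta^*_{k+1}]\leq\bE_k/\kappa_L$. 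Taking $\kappa_L$ large gives $|M-\mu_k|\cdot\tfrac{\tau^*_{k-1}-\tau_{l-1}}{\tau^*_k-\tau_{l-1}}\leq|\Delta_k|/2$, hence $|\underline{\Delta}_{t_1}|\geq|\Delta_k|/2$ and the bound $\bE^2(t_1)\geq(\tau_l-\tau^*_k)\Delta_k^2/8$. Your sketch misses both the weighting trick and the role of $\kappa_L$, which is why it would not close.
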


\noindent 
Turning  to $\bE(t_2)$, we have $\bE(t_2)=0$ if  $\tau_{l+1}\leq \tau^*_{k+1}$. If $\tau_{l+1} > \tau^*_{k+1}$, then, since   $\tau^*_{k+1}$ is not  a $(\sqrt{L}+\sqrt{\ell},q,\btau^{(k+1)})$-high  energy change-point, we get
\beqn 
\bE^2(t_2)&=&  [\tau_l-\tau_{k}^*] (\overline{\theta}_{\tau_{k+1}^*: \tau_{l+1}}-\mu_{k+1})^2  \frac{(\tau_{l+1}-\tau^*_{k+1})^2 }{(\tau_{l+1}-\tau_l)(\tau_{l+1}-\tau_{k}^*)}\\
&\leq &  \frac{(\tau_l-\tau_{k}^*)(\tau_{l+1}-\tau^*_{k+1})}{(\tau_{k+1}^* -\tau_l)(\tau_{l+1}-\tau_{k}^*)}(\sqrt{L}+\sqrt{\ell})^2 \psi^2_q(\delta(\tau_l,\tau^*_{k+1},\tau_{l+1}))\\
&\stackrel{(a)}{\leq} & \frac{\tau_l-\tau_{k}^*}{\tau_{k+1}^* -\tau_l}(\sqrt{L}+\sqrt{\ell})^2 \left[2\log\left(\frac{n(\tau_{l+1}-\tau_{l})}{(\tau_{l+1}-\tau^*_{k})(\tau^*_{k+1}-\tau_l)}\right)+q  \right]\\
&\stackrel{(b)}{\leq} & \frac{\tau_l-\tau_{k}^*}{\tau_{k+1}^* -\tau_l}(\sqrt{L}+\sqrt{\ell})^2 \left[2\log\left(\frac{n}{\tau_{l+1}-\tau_{k}^*}\right)+q  \right]\\
&\leq & \frac{\tau_l-\tau_{k}^*}{\tau_{k+1}^* -\tau_l}(\sqrt{L}+\sqrt{\ell})^2 \left[2\log\left(\frac{n}{\tau^*_{k+1}-\tau_{k}^*}\right)+q  \right]\\
&\stackrel{(c)}{\leq} & \frac{\tau_l-\tau_{k}^*}{\tau_{k+1}^* -\tau_l}(\sqrt{L}+\sqrt{\ell})^2 \psi^2_q\left[\delta(\tau_{k-1}^*,\tau_k^*, \tau_{k+1}^*)\right] \enspace ,
\eeqn
where we used in (a) that  $x(\log(t/x)+1)\leq \log(t)+1$ for $t\geq 1 $ and $x\leq 1$, in (b) that $\tau_{k+1}^*\leq \tau_l$  and in (c) that $\tau_l$ is closer to $\tau^*_k$ than to $\tau^*_{k+1}$. Together with Lemma \ref{lem:e1}, we obtain that 
$\bE^2(t_1)\geq 2\bE^2(t_2)$ provided $\tau_k^*$ is a ($\sqrt{32}(\sqrt{L}+\sqrt{\ell}),q$) high-energy change-point. Since $(x-y)_+^2 \geq 3x^2/4 -3y^2$ and $(x+y)^2\leq 5/4x^2 + 5 y^2$, we deduce from~\eqref{eq:condition_energie_noise} that 
\[
 \frac{\bE^2(t_1)}{8}\leq 3 \bN^2(t_1)+ 5\bN^2(t_2) + L(\psi^2_q(\delta(t_1)) - \psi^2_q(\delta(t_2)))\enspace . 
\]
Then, we use Lemma~\ref{lem:e1} to deduce that 
\beq\label{eq:condition_energie_noise2}
 [\tau_l-\tau_k^*]\Delta_k^2 \leq c\left[\bN^2(t_1)+ 5\bN^2(t_2) + L[\psi^2_q(\delta(t_1)) - \psi^2_q(\delta(t_2))]\right] \enspace .  
\eeq

\medskip 

Next, we control the random variables $|\bN(t_1)|$ and $|\bN(t_2)|$ relying on a non-asymptotic law of iterated logarithms. 

\noindent 
{\bf Case 1}: $\tau_{l+1}-\tau_{l}\geq \tau_{l}-\tau_{k}^*$. In that situation, the penalty difference satisfies
\beq\label{eq:condition_difference_penalite}
 \psi^2_q(\delta(t_1)) - \psi^2_q(\delta(t_2))= 2\log\left[\frac{(\tau_l-\tau_{l-1})(\tau_{l+1}-\tau_l)}{(\tau_k^*-\tau_{l-1})(\tau_{l+1}-\tau_{k}^*)}\right]\leq 2\log\left(\frac{\tau_l-\tau_{l-1}}{\tau_k^*-\tau_{l-1}}\right)\leq 2\log\left(1+\frac{\tau_l-\tau_k^*}{\tau_k^*-\tau_{l-1}}\right)\enspace ,
\eeq
since $\tau_l$ is closer to $\tau^*_k$ than $\tau_{l-1}$.

To control $|\bN(t_1)|$ and $|\bN(t_2)|$, we apply Lemma \ref{lem_lil_localiser} to the variables $(Z_{\tau_k^*: \tau'},$  $\tau'> \tau_{k}^*$) and  to the variables 
 $(Z_{\tau':\tau_k^* },$  $\tau'< \tau_{k}^*$). Since $|\tau_l-\tau_{k}^*|\leq |\tau_k^* - \tau_{l-1}|$, we have, with probability higher than $1-c e^{-s}$, 
\begin{eqnarray}
 |\bN(t_1)|1_{\Delta^{2}_k|\tau_l-\tau_k^*|\geq 1}&\leq&  c\left[\sqrt{\frac{\tau_k^* - \tau_{l-1}}{\tau_l -\tau_{l-1}}}+ \sqrt{\frac{\tau_l - \tau^*_{k}}{\tau_l -\tau_{l-1}}} \right]\left[\sqrt{s} + \sqrt{\log\log(\Delta^{2}_k |\tau_l-\tau_k^*|)}\right] \nonumber\\ &\leq&  c \left[\sqrt{s} + \sqrt{\log\log(\Delta^{2}_k |\tau_l-\tau_k^*|)}\right] \enspace .\label{eq:upper_nt_1}
\end{eqnarray}

Note that $Z_{\tau_l: \tau_{l+1}}= -Z_{\tau_k^*:\tau_l}+ Z_{\tau_{k}^*: \tau_{l+1}}$. Applying again Lemma~\ref{lem_lil_localiser}, we derive that, with probability higher than $1-ce^{-x}$, 
\beqn 
 |Z_{\tau_l: \tau_{l+1}}|1_{\Delta^{2}_k|\tau_l-\tau_{k}^*|\geq 1}&\leq& c \sqrt{\tau_{l+1}-\tau_k^*}\left[ \sqrt{\log\log( \Delta^{2}_k ( \tau_{l+1}-\tau_k^*))}+ \sqrt{x} \right]\\ 
 \\
 &\leq& 
 c' \sqrt{\tau_{l+1}-\tau_l}\left[ \sqrt{\log\log( \Delta^{2}_k ( \tau_{l+1}-\tau_l))}+ \sqrt{x} \right]
 \enspace , 
\eeqn 
since $\tau_{l+1}-\tau_{l}\geq \tau_{l}-\tau_{k}^*$. We have bounded $Z_{\tau^*_{k}:\tau_l}$ above. Combining these bounds, we get 
\beqn 
|\bN(t_2)|1_{\Delta^{2}_k|\tau_l-\tau_k^*|\geq 1}&\leq& c \sqrt{\frac{\tau_{l+1} - \tau_{l}}{\tau_{l+1} -\tau^*_{k}}}\left[\sqrt{x} + \sqrt{\log\log(\Delta^{2}_k|\tau_l-\tau_k^*|)}\right]\\ & &+ c' \sqrt{\frac{\tau_{l} - \tau_{k^*}}{\tau_{l+1} -\tau^*_{k}}}\left[\sqrt{x} + \sqrt{\log\log(\Delta^{2}_k(\tau_{l+1}-\tau_{l})|)}\right]
\\ &\leq &c \left[\sqrt{x} + \sqrt{\log\log(\Delta^{2}_k|\tau_l-\tau_k^*|)}\right] \enspace .
\eeqn 

Gathering \eqref{eq:condition_energie_noise} and \eqref{eq:condition_difference_penalite}, we deduce from~\eqref{eq:condition_energie_noise2} that
\beq\label{eq:conclusion_1_0}
 |\tau_l -\tau_k^*|\Delta^{2}_k 1_{\Delta^{2}_k|\tau_l-\tau_k^*|\geq 1}\leq c \left[x + \log\log(\Delta^{2}_k|\tau_l-\tau_k^*|) +L\log\left(1+ \frac{\tau_l-\tau_k^*}{\tau_k^*-\tau_{l-1}}\right) \right] \enspace .
\eeq
The expression inside the last logarithm is smaller or equal to $2$, which implies that 
$|\tau_l -\tau_k^*|\Delta^{2}_k \leq c' (x\vee L)$
with probability higher than $1-c''e^{-x}$. If we restrict ourselves to $L\leq L_0$ (where $L_0$ is the absolute constant introduced in Proposition~\ref{prp:rough_analysis}), the above inequality yields 
\[
|\tau_l -\tau_k^*|\Delta^{2}_k \leq c (x\vee 1)\enspace .
\]
When $L$ is larger than $L_0$ (as in Proposition~\ref{prp:rough_analysis}), we know from that proposition that ${\tau_{k}^*-\tau_{l-1}}\geq (\tau_{k}^*-\tau^*_{k-1})/2$.  Since $(\tau_{k+1}^*-\tau_k^*) \Delta^{2}_k\geq \bE_k^{2}\geq \kappa_L \psi^2_{q}[\delta(\tau_k^*,\tau_k^*,\tau^*_{k+1})]\geq 2\kappa_L$ since $q\geq 2$. Then, we deduce from \eqref{eq:conclusion_1_0} that
\[
|\tau_l -\tau_k^*|\Delta^{2}_k 1_{\Delta^{2}_k|\tau_l-\tau_k^*|\geq 1}\leq c \left[x  +L\log\left(1+ \frac{\Delta^2(\tau_l-\tau_k^*)}{2\kappa_L}\right) \right]\leq c\left[x  +\frac{L}{2\kappa_L}\Delta^2(\tau_l-\tau_k^*)\right]
\]
Provided that we fix $\kappa_L$ in such a way that $\kappa_L\geq cL$ (where $c$ is the same constant as in the above inequality), we conclude that 
\[
 |\tau_l -\tau_k^*|\Delta^{2}_k 1_{\Delta^{2}_k|\tau_l-\tau_k^*|\geq 1}\leq c'\left[x\vee 1\right]\enspace \ ,
\]
with probability higher than $1-c''e^{-x}$.

\bigskip

\noindent

\noindent 
{\bf Case 2}: $\tau_{l+1}-\tau_{l}\leq \tau_{l}-\tau_{k}^*$. This situation does not arise for large $L\geq L_0$ setting as justified in Proposition~\ref{prp:rough_analysis}. We still use the same deviation bound~\eqref{eq:upper_nt_1} for $|\bN(t_1)|$, but we need to rely on a different approach for $\bN(t_2)$.

Fix any $q\geq 1$ and $\tau'_q= \tau_k^* 2^q \lceil 1/\Delta_k^{2}\rceil$.  Applying Lemma \ref{lem:concentration:N_t} with $n$ replaced by $\tau'_q-\tau^*_k$, we derive that, with probability higher than $1-x$, we have simultaneously over all $\tau\in [ \tau'_q/2;\tau'_q]$
\[
 \bN^2(\tau_k^*,\tau,\tau'_q)\leq   2\log\left(\frac{(\tau-\tau^*_k)(\tau'_q-\tau^*_k)}{(\tau'_q-\tau)(\tau-\tau^*_k) }\right)+   c_1 \log\log\left(\frac{(\tau-\tau^*_k)(\tau'_q-\tau^*_k)}{(\tau'_q-\tau)(\tau-\tau^*_k) x}\right) + c_2\log\left(\frac{1}{x}\right) + c_3\enspace .  \ \]
Applying an union bound over all $q$, we deduce that, with probability higher than $1-x$, 
\begin{eqnarray}\nonumber
 \bN^2(t_2)\1_{(\tau_l-\tau^*_k)\Delta_k^{2*}\geq 1}&\leq &2\log\left(\frac{\tau_{l+1}-\tau^*_{k}}{\tau_{l+1}-\tau_l}\right)+ c_1 \log \log\left(\frac{\tau_{l+1}-\tau^*_{k}}{\tau_{l+1}-\tau_l}\right)\\ & & +c_2\log\log\left((\tau_l-\tau^*_k)\Delta_k^{2} \right)+ c_3 \log(\frac{1}{x}) + c_4 \label{eq:control_n_t2}
\end{eqnarray}

As for the penalty difference, we have 
\[
 \psi^2_q(\delta(t_1)) - \psi^2_q(\delta(t_2))= 2\log\left[\frac{(\tau_l-\tau_{l-1})(\tau_{l+1}-\tau_l)}{(\tau_k^*-\tau_{l-1})(\tau_{l+1}-\tau_{k}^*)}\right]\leq 2\log(2)-2\log\left( \frac{\tau_{l+1}-\tau_k^*}{\tau_{l+1}-\tau_{l}}\right)\enspace .
\]

Since $\bE(t_2)=0$ in that case, it follows from \eqref{eq:condition_energie_noise} and Lemma \ref{lem:e1}, we conclude that 
\beqn
 |\tau_l -\tau_k^*|\Delta^{2}_k 1_{\Delta^{2}_k|\tau_l-\tau_k^*|\geq 1}&\leq& c \left[1+ x+ L + \log\log(\Delta^{2}_k|\tau_l-\tau_k^*|)\right]\\
 &&+ c_1 \log \log\left(\frac{\tau_{l+1}-\tau^*_{k}}{\tau_{l+1}-\tau_l}\right) - 2(L-1) \log\left( \frac{\tau_{l+1}-\tau_k^*}{\tau_{l+1}-\tau_{l}}\right)\ . 
 \eeqn 
 Since $a\log(x) - bx \leq a\log(a/(eb))$ for any positive numbers $a$, $b$, $x$, the expression in the second line is a most $c[1+ \log((L-1)^{-1})_+$. Hence, we conclude that 
 \[
  |\tau_l -\tau_k^*|\Delta^{2}_k \leq c (x\vee 1+ \log((L-1)^{-1})_+ )\enspace , 
\]
with probability higher than $1-ce^{-x}$. 

\begin{proof}[Proof of Lemma \ref{lem:e1}]
If $\tau_{l-1} \geq \tau^*_{k-1}$, then the energy expression is simply 
\[
 \bE^2(t_1)= [\tau_l -\tau_{k}^*]\Delta_k^2 \frac{(\tau_{k}^* - \tau_{l-1})  }{(\tau_l -\tau_{l-1})}\geq  [\tau_l -\tau_{k}^*]\frac{\Delta_k^2}{2} \enspace , 
\]
since $\tau^*_{k}$ is closer to $\tau_l$ than $\tau_{l+1}$.
We claim that, even in the more involved situation where $\tau_{l-1}< \tau^*_{k-1}$, one has 
\beq\label{eq:first_expression_et1}
 \bE^2(t_1)\geq  [\tau_l -\tau_{k}^*] \frac{\Delta_k^2}{8}\enspace . 
\eeq
 Coming back to the definition of $\bE_{k}$, we have 
\beqn
\frac{\bE^2(t_1)}{[\tau_l -\tau_{k}^*]\bE_k^{2}}&\geq& \frac{(\tau^*_{k+1}-\tau^*_{k-1})  }{8(\tau^*_{k+1}-\tau^*_k)(\tau^*_k -\tau^*_{k-1})}  
\geq  \frac{1}{8(\tau^*_{k+1} -\tau^*_{k})}
\geq \frac{1}{16(\tau_{k+1}^*-\tau_l)}\enspace , 
\eeqn
where we used in the last inequality that $2(\tau_l-\tau_{k}^*)\leq (\tau_{k+1}^*-\tau_{k}^*)$.
\bigskip 

\noindent 
To finish this proof, it remains to prove \eqref{eq:first_expression_et1} when $\tau_{l-1}< \tau^*_{k-1}$. In that case, we have
\beqn 
 \frac{(\tau_l -\tau_{l-1}) \bE^2(t_1)}{[\tau_l -\tau_{k}^*]( \tau_{k}^* - \tau_{l-1}) }&=&  \left(\mu_{k+1} -\mu_k \frac{\tau^*_k-\tau^*_{k-1}}{\tau^*_k-\tau_{l-1}} -\overline{\theta}_{\tau_{l-1}: \tau^*_{k-1}}\frac{\tau^*_{k-1}- \tau_{l-1}}{\tau^*_k-\tau_{l-1}}\right)^2 \\
 &=  &  \left( \mu_{k+1} -\mu_k + (\mu_k -\overline{\theta}_{\tau_{l-1}: \tau^*_{k-1}})\frac{\tau^*_{k-1}-\tau_{l-1}}{\tau^*_k-\tau_{l-1}}\right)^2\ . 
\eeqn 
Since $\tau_l -\tau_{l-1}\leq 2(\tau_l-\tau^*_k)$ ($\tau_l$ is closer to $\tau^*_k$ than any other point), it suffices to prove that
\beq\label{eq:objective}
|\mu_k -\overline{\theta}_{\tau_{l-1}: \tau^*_{k-1}}|\frac{\tau^*_{k-1}-\tau_{l-1}}{\tau^*_k-\tau_{l-1}}\leq  \frac{1}{2}|\mu_{k+1} -\mu_k |\enspace . 
\eeq
Since $\tau^*_{k-1}$ is not a $(\sqrt{L}+\sqrt{\ell},q, \btau^{(k-1)})$-high  energy change-point, we have 
\[
 \big|\overline{\theta}_{\tau_{l-1}: \tau^*_{k-1}} -\mu_k \frac{\tau^*_{k} - \tau^*_{k-1}}{\tau_{l} - \tau^*_{k-1}} - \mu_{k+1}\frac{\tau_l -\tau^*_k}{\tau_{l} - \tau^*_{k-1}}  \big|\leq \psi_q[\delta(\tau_{l-1},\tau^*_{k-1},\tau_l)][\sqrt{L} + \sqrt{\ell}]\frac{\tau_l-\tau_{l-1}}{(\tau_l-\tau^*_{k-1})(\tau^*_{k-1}-\tau_{l-1})}\enspace ,
\]
which in turn implies 
\beqn 
 \big|\overline{\theta}_{\tau_{l-1}: \tau^*_{k-1}} -\mu_k\big|\frac{\tau^*_{k-1}-\tau_{l-1}}{\tau^*_k-\tau_{l-1}}&\leq& |\mu_{k+1}-\mu_{k}|\frac{(\tau_l -\tau^*_k)(\tau^*_{k-1}-\tau_{l-1})}{(\tau_{l} - \tau^*_{k-1})(\tau^*_k-\tau_{l-1})}\\ &&+ \psi_q[\delta(\tau_{l-1},\tau^*_{k-1},\tau_l)][\sqrt{L} + \sqrt{\ell}]\sqrt{\frac{(\tau_l-\tau_{l-1})(\tau^*_{k-1}-\tau_{l-1})}{(\tau_l-\tau^*_{k-1})(\tau^*_{k}-\tau_{l-1})^2}}\ . 
 \eeqn 
 The first expression in the rhs is smaller than $|\mu_{k+1}-\mu_k|/3$ because  $(\tau_l -\tau^*_k)\leq (\tau_k^* -\tau^*_{k-1})/2$ and $\tau^*_{k-1}\leq \tau^*_k$. Since $x\log(t)\leq \log(tx)$ for $t\geq1$ and $x\leq 1$, it follows that 
 \beqn 
 \psi_q[\delta(\tau_{l-1},\tau^*_{k-1},\tau_l)]\sqrt{\frac{\tau^*_{k-1}-\tau_{l-1}}{\tau_l-\tau_{l-1}} }&\leq &\sqrt{2\log\left(\frac{n}{\tau_l-\tau^*_{k-1}}\right) + q}\leq \sqrt{2\log\left(\frac{n}{\tau^*_k-\tau^*_{k-1}}\right) + q}\\&\leq& \psi_q[\delta(\tau_{k-1}^*,\tau_k^*,\tau_{k+1}^*)]\\ &\leq&  \frac{\bE_k}{\kappa_L}\leq \frac{|\mu_{k+1}-\mu_k|}{\kappa_L}(\tau^*_{k}-\tau^*_{k-1})^{1/2}\enspace , 
\eeqn 
since we assume that $\bE_k$ is a $\kappa_L$-high energy change-point. Coming back to $\overline{\theta}_{\tau_{l-1}: \tau^*_{k-1}} -\mu_k|$, this yields
 \beqn 
 \big|\overline{\theta}_{\tau_{l-1}: \tau^*_{k-1}} -\mu_k\big|\frac{\tau^*_{k-1}-\tau_{l-1}}{\tau^*_k-\tau_{l-1}}&\leq& \frac{|\mu_{k+1}-\mu_{k}|}{3}+\frac{\sqrt{L} + \sqrt{\ell}}{\kappa_L}|\mu_{k+1}-\mu_{k}|  \frac{(\tau_l-\tau_{l-1})\sqrt{\tau^*_k- \tau^*_{k-1}}}{(\tau^*_{k}-\tau_{l-1})\sqrt{\tau_l-\tau^*_{k-1}}}  \\
 &\leq & |\mu_{k+1}-\mu_{k}|\left[\frac{1}{3}+ 2\frac{\sqrt{L} + \sqrt{\ell}}{\kappa_L}
\right]\\
&\leq &  \frac{|\mu_{k+1}-\mu_{k}|}{2}\enspace , 
\eeqn 
where we used that $\tau_l$ is closer to $\tau^*_k$ than $\tau_{l-1}$ and that $\kappa_L$ is large enough.

\end{proof}

\subsection{Proofs for the post-processing steps}

 \begin{proof}[Proof of Proposition \ref{prp:pruned}]
 We recall the definition of that the event $\cB_{1-\alpha}$ of probability higher than $1-\alpha$ is such that 
\beq\label{eq:definition_event_B}
 |\bN(t)|\leq \sqrt{2 \log\left(\frac{n(t_3-t_1)}{(t_2-t_1)(t_3-t_2)}\right) }+ \zeta_{1-\alpha}\enspace , \quad \quad \forall t\in \cT_3\enspace .
\eeq
Under this event, for any $\tau$ such that $\widehat{r}_{\tau}<\infty$, the interval $[t^{(\tau, \widehat{r}_{\tau})}_1+1; t^{(\tau, \widehat{r}_{\tau})}_3-1]$ contains a least one true change-point, say $\tau^*_k$. By definition of the pruning step, the 
confidence intervals associated to the pruned change-points $\cP(\btau)$ do not intersect. Suppose that two change-points, say $\cP(\btau)_l$ and $\cP(\btau)_{l+1}$, in the segment  $\big(\frac{\tau^*_{k-1}+\tau^*_k}{2},\frac{\tau^*_{k}+\tau^*_{k+1}}{2}\big]$. Since both confidence intervals of $\cP(\btau)_l$ and $\cP(\btau)_{l+1}$ contain at least one change-point, this implies that $\tau^*_k$ belongs to these two intervals which contradicts the non intersection property.For $k=1$ and $k=K$, the same argument applies with $[2; \frac{\tau^*_{1}+\tau^*_2}{2}]$ and $[\frac{\tau^*_{K-1}+\tau^*_K}{2};n]$ respectively. 

\medskip

Let us turn to the second result. Consider a $(\kappa,\zeta^2_{1-\alpha})$-high-energy change-point $\tau^*_k$ and assume that there exists $\tau_l$ such that 
\[
|\tau_l-\tau^*_k|
<  \frac{(\tau_{k}^* - \tau_{k-1}^*)\wedge (\tau_{k+1}^* - \tau_k^*)}{8}\enspace . 
\]   
Consider any $r\geq |\tau_l-\tau_k^*|\wedge \tau_l \wedge (n+1-\tau_l)$. 
 Writing down the energy of $t^{(\tau_l,r) }$ and relying on \eqref{eq:definition_event_B}, we derive that 
\beqn 
\bE(t^{(\tau_l,r) })& =& |\Delta_k|\left(1- \frac{|\tau_l-\tau_k^*|}{r}\right)_+\sqrt{\frac{r}{2}}\enspace ,\quad \quad  \quad 
|\bN(t^{(\tau_l,r) })|\leq \sqrt{2\log\left(2\frac{n}{r} \right)}+ \zeta_{1-\alpha}\enspace . 
\eeqn 
Let $\overline{r}_l$ be the smallest $r>0$ such that  $\bE(t^{(\tau_l,r) })\geq 2\sqrt{2\log\left(2\frac{n}{r} \right)}+ 2\zeta_{1-\alpha} $. Since $|\bC(t^{(\tau_l,r) })|\geq \bE(t^{(\tau_l,r) })-|\bN(t^{(\tau_l,r) })|$, we have  $\overline{r}_l\geq \widehat{r}_{\tau_l}$. This implies that 
\beq\label{eq:definition_r_tau_good}
\widehat{r}_{\tau_l} \leq 2 |\tau_l -\tau^*_k|\vee \left\lceil \frac{64\left[\sqrt{\log\left(n\Delta_k^{2}\right)}+ \zeta_{1-\alpha}/\sqrt{2}\right]^2}{\Delta_k^{2}}\right\rceil < 1+ \frac{(\tau^*_k-\tau^*_{k-1})\wedge (\tau^*_{k+1}-\tau^*_{k})}{4}\enspace , 
\eeq
since $\tau^*_k$ is a $(\kappa,\zeta^2_{1-\alpha})$-high-energy change-point and since the constant $\kappa$ is large enough. In turn, this also implies that 
\beq\label{eq:subset_I_l}
\underline{I}_{\tau_l}\subset \left [\tau^*_k - \frac{(\tau^*_k-\tau^*_{k-1})\wedge (\tau^*_{k+1}-\tau^*_{k})}{4},\tau^*_k + \frac{(\tau^*_k-\tau^*_{k-1})\wedge (\tau^*_{k+1}-\tau^*_{k})}{4}\right]\enspace . 
\eeq

We prove below  that, in $\cP(\btau)$, there exists a least one change-point, say $\cP(\btau)_j$ satisfying $|\cP(\btau)_j-\tau^*_k|\leq \widehat{r}_{\tau_l}$. If $\tau_l$ belongs to $\cP(\btau)$, this is obviously true. Now assume that $\tau_l$ is pruned. Consequently, there exists $\tau_{m}$ such that $\widehat{r}_{\tau_m}\leq \widehat{r}_{\tau_l}$ and $\underline{I}_{\tau_m}\cap \underline{I}_{\tau_l}\neq \emptyset$. Since $\widehat{r}_{\tau_m}\leq \widehat{r}_{\tau_l}$  an by \eqref{eq:subset_I_l}, this implies that 
\[
|\tau_m-\tau^*_k|\leq     \frac{(\tau^*_k-\tau^*_{k-1})\wedge (\tau^*_{k+1}-\tau^*_{k})}{2}\enspace .
\]
Recall that $\underline{I}_{\tau_m}$ contains at least one true change-point. Hence,  $\tau^*_k\in \underline{I}_{\tau_m}$. Therefore, 
\[
|\tau_{m}-\tau^*_k|\leq \widehat{r}_{\tau_m}-1\leq \widehat{r}_{\tau_l}-1
\]
If $\tau_{m}$ itself is also pruned, then we show similarly that there exists a change-point $\tau_{m'}$ satisfying $|\tau_{m'}-\tau^*_k|\leq \widehat{r}_{\tau_m} -1 \leq \widehat{r}_{\tau_l}-1$. By recursion, this implies that, there exists one change-point $\cP(\btau)_j$ in $\cP(\btau)$ such that  $|\cP(\btau)_j-\tau^*_k|\leq \widehat{r}_{\tau_l}-1 $. The result follows.

\end{proof}

\begin{proof}[Proof of Proposition \ref{prp:post:localization}]
 On the event $\cB_{1-\alpha}$, we have 
 \beq\label{eq:encadrement_erreur}
 |\tau-\tau_k^*|\leq  \widehat{r}_{\tau} -1  < \frac{(\tau_{k}^* - \tau_{k-1}^*)\wedge (\tau_{k+1}^* - \tau_k^*)}{4}\enspace .
 \eeq
 The first inequality holds since at least one true change-point belongs to $\underline{I}_{\tau}$. The second inequality is proved as~\eqref{eq:definition_r_tau_good} in the proof of  Proposition~\ref{prp:pruned}. As a consequence, 
 \[
 |\tau-\tau^k_*|+ 2\widehat{r}_{\tau}-1 < 1+ \frac{3}{4} (\tau_{k}^* - \tau_{k-1}^*)\wedge (\tau_{k+1}^* - \tau_k^*)\enspace .
 \]
Since the lhs is an integer, this implies that 
\[
 |\tau-\tau^k_*|+ 2\widehat{r}_{\tau}-1 \leq \left\lceil \frac{3}{4} (\tau_{k}^* - \tau_{k-1}^*)\wedge (\tau_{k+1}^* - \tau_k^*)\right\rceil\leq (\tau_{k}^* - \tau_{k-1}^*)\wedge (\tau_{k+1}^* - \tau_k^*)\ . 
\]
Hence, the interval $[\tau-2\widehat{r}_{\tau}+1, \tau+2\widehat{r}_{\tau}-1)$ is included in $[\tau^*_{k-1}; \tau^*_{k+1})$ and only contain the true change-point $\tau^*_k$.

 Then, we argue as in the proof of Proposition~\ref{prp:local_analysis_2}. Fix any $x>0$. From Lemma \ref{lem_lil_localiser} (with $\nu=\Delta_k^{-2}$), we deduce that, with probability higher than $1-2e^{-x}$, we have 
 \begin{eqnarray}\label{eq:upper_Z_tau:post:localization}
| Z_{\tau_k^*-s:\tau_k^*}|\leq 2 \sqrt{2s}\sqrt{2\log(\log(3\Delta_k^{-2}s))+ x+2}\enspace ;\\
| Z_{\tau_k^*:\tau_k^*+s}|\leq 2 \sqrt{2s}\sqrt{2\log(\log(3\Delta_k^{-2}s))+ x+2}\enspace , \nonumber 
 \end{eqnarray}
 simultaneously over all integers $s\geq  \Delta_k^{-2}$.

 Consider any $\tau'\in \underline{I}_{\tau}$ such that $|\tau'-\tau|\geq \Delta_k^{-2}$. By symmetry we can assume that $\tau'\leq \tau^*_k$.  Define $t_1= (\tau', \tau^*_k, \tau+2\widehat{r}_{\tau}-1)$ and $t_2=(\tau-2\widehat{r}_{\tau}+1, \tau',\tau_k^*)$.
 \begin{eqnarray} \nonumber
 \|\bPi_{\tau'}\bY^{(\tau;2\widehat{r}_{\tau}-1)} \|^2- \|\bPi_{\tau^*_k}\bY^{(\tau;2\widehat{r}_{\tau}-1)} \|^2&=& -\bC^2[t_1] + \bC^2[t_2]\\ \nonumber
 &= &  -((-1)^{\sign(\Delta_k)}\bE[t_1]- \bN(t_1))^2 + \bN^2(t_2)\\&\geq& \frac{1}{2}\bE^2(t_1)-\bN^2(t_1)- \bN^2(t_2)\enspace . \label{eq:decomposition_locale1}
 \end{eqnarray}
 First, we work out the energy. 
\beqn 
 \bE^2(t_1)= |\tau_k^*-\tau'|\frac{|\tau+2\widehat{r}_{\tau}-1- \tau_k^*|}{|\tau+2\widehat{r}_{\tau}-1- \tau'|}\Delta_k^{2}\geq |\tau_k^*-\tau'|\frac{\Delta_k^{2}}{3}\enspace .
 \eeqn 
 Since $\tau^*_k\leq \tau+\widehat{r}_{\tau}-1$ and $\tau'\geq \tau-\widehat{r}_{\tau}+1$. Then, we rely on \eqref{eq:upper_Z_tau:post:localization} to control the stochastic terms $\bN(t_1)$  and $\bN(t_2)$. 
 \beqn 
 \frac{|\bN(t_1)|}{2\sqrt{2}}&\leq& \sqrt{\frac{\tau+2\widehat{r}_{\tau}-1- \tau_k^*}{\tau+2\widehat{r}_{\tau}-1- \tau'}} \sqrt{\log(\log(e\Delta_k^{-2}|\tau^*_k-\tau'|))+ x+2}  \\&+ & \sqrt{\frac{|\tau^*_k-\tau'|}{\tau+2\widehat{r}_{\tau}-1- \tau'}} \sqrt{\log(\log(e\Delta_k^{-2}|\tau+2\widehat{r}_{\tau}-1-\tau^*_k|))+ x+2}\\
 &\leq & \sqrt{\log(\log(e\Delta_k^{-2}|\tau^*_k-\tau'|))+ x+2}\\ &&   +  \sqrt{\frac{|\tau^*_k-\tau'|}{\tau+2\widehat{r}_{\tau}-1- \tau'}} \sqrt{\log(\log(e\Delta_k^{-2}|\tau+2\widehat{r}_{\tau}-1- \tau'|))+ x+2}\\
 &\leq &2\sqrt{\log(\log(e\Delta_k^{-2}|\tau^*_k-\tau'|))+ x+2}\enspace ,
 \eeqn  
  where we used in the last line that the function $x\mapsto x[\log\log(a/x)+ b]$ is increasing on $(0,1]$ for any $a\geq e$. As for $\bN(t_2)$, we decompose $Z_{\tau-2\widehat{r}_{\tau}+1:\tau'}$ into $Z_{\tau-2\widehat{r}_{\tau}+1:\tau_k^*}-Z_{\tau':\tau_k^*}$ to rely on \eqref{eq:upper_Z_tau:post:localization}
\beqn 
   |\bN(t_2)|&\leq& \frac{|Z_{\tau':\tau_k^*}|}{\sqrt{|\tau_k^*-\tau'|}}+ \frac{(|Z_{\tau-2\widehat{r}_{\tau}+1:\tau_k^*}|-|Z_{\tau':\tau_k^*}|) }{\sqrt{\tau'- \tau+2\widehat{r}_{\tau}-1}}\sqrt{\frac{|\tau_k^*-\tau|}{\tau^*_k- \tau+2\widehat{r}_{\tau}-1}} \\
   &\leq & 2\sqrt{2}\sqrt{\log(\log(e\Delta_k^{-2}|\tau^*_k-\tau'|))+ x+2}\\ & &
   + 4\sqrt{2}\sqrt{\log(\log(e\Delta_k^{-2}|\tau^*_k-\tau+2\widehat{r}_{\tau}-1|))+ x+2}\sqrt{\frac{|\tau^*_k-\tau'|}{\tau_k^*-\tau+2\widehat{r}_{\tau}-1}} \\
   &\leq & 6\sqrt{2}\sqrt{\log(\log(e\Delta_k^{-2}|\tau^*_k-\tau'|))+ x+2}\enspace .
 \eeqn 
 Coming back to \eqref{eq:decomposition_locale1} we conclude that $\cL(\tau)\neq \tau'$ if $|\tau'-\tau^*_k|\geq \Delta_k^{-2}$ and 
\[
 |\tau^*_k-\tau'|\geq c\frac{\log\log(e\Delta_k^{-2}|\tau^*_k-\tau'|)+ x}{\Delta_k^{2}}\enspace .
\]
This concludes the proof.

\end{proof}

\appendix

\section{Proof of the deviation inequalities}

\subsection{Proof of the law of iterated logarithms lemmas}~\label{sec:proof_lil}

\begin{proof}[Proof of Lemma~\ref{lem:supsumGauss_log_iterated_0}]

Define, for any integer $n\geqslant 1$, the random walk $S_n=\sum_{i=1}^n \epsilon_i$.
Let $k< n$ denote two positive integers. For any $s >0$, 
\[
\E[e^{s (S_n-S_k)}]=\prod_{i=k+1}^n\E[e^{s\epsilon_i}]\leq \prod_{i=k+1}^n\E[e^{s^2/2}]=e^{(n-k)s^2/2}\enspace.
\]
Let $\cF_k$ denote the sigma-algebra induced by $(\epsilon_1,\ldots, \epsilon_k)$. 
As $S_k$ is independent of $S_n-S_k$, this entails that  
\[
\E[e^{s S_n-\frac{s^2n}{2}}|\cF_k]=e^{s S_k-\frac{s^2k}{2}}\E[e^{s (S_n-S_k)-\frac{s^2(n-k)}{2}}]\leqslant e^{s S_k-\frac{s^2k}{2}}\enspace.
\]
Hence, $e^{s S_n-\frac{s^2n}{2}}$ is a super-martingale.
Fix $x>0$ and let $A$ be the stopping time defined by
$$A=\inf \{n\geq d, S_n\geq\sqrt{n}x\}\enspace .$$
By definition, $S_A\geq \sqrt{A} x \geq \sqrt{d} x$. 
Hence, 
\[
\E\big[e^{s \sqrt{d} x -\frac{s^2A}{2}}\big]\leq \E\big[e^{s S_A -\frac{s^2A}{2}}\big] \leq \E\big[e^{s S_1 -\frac{s^2}{2}}\big]\leq 1\enspace.
\]
Now, by definition,
\[
\P\left[\max_{k\in [d,(1+\alpha)d]} \frac{\sum_{i=1}^k Y_i}{\sqrt{k}} \geq x\right] = \P[A \leq (1+\alpha)d]\enspace.
\]
As the function $u\mapsto e^{\sqrt{d} x -\frac{s^2u}{2}}$ is non-increasing, it follows that
\[
\P\left[\max_{k\in [d,(1+\alpha)d]} \frac{\sum_{i=1}^k Y_i}{\sqrt{k}} \geq x\right] = \P\left[e^{s \sqrt{d} x -\frac{s^2A}{2}} \geq e^{s \sqrt{d} x -s^2(1+\alpha)d/2}\right]\enspace .
 \]
By Markov inequality, we obtain that
\[
\P\left[\max_{k\in [d,(1+\alpha)d]} \frac{\sum_{i=1}^k Y_i}{\sqrt{k}} \geq x\right] \leq e^{-s \sqrt{d} x +s^2(1+\alpha)d/2}\enspace.
\]
Choosing $s=x/[(1+\alpha)\sqrt{d}]$ concludes the proof of  Lemma~\ref{lem:supsumGauss_log_iterated_0}. 
\end{proof}

\begin{proof}[Proof of Lemma \ref{lem_lil_localiser}]
Let $\nu>0$, and $t_1$, $t_2$, two integers such that $t_1-1/\nu<t_2$. 
Without loss of generality, assume that $1/\nu\geq 1$ and that $t_1+1/\nu\leq t_2$.
Up to a renumbering of the indices, the sum $Z_{t_1:t_2}$ can be written 
\[
Z_{t_1:t_2}=\sum_{i=1}^{t_2-t_1}\epsilon_i\enspace.
\]
Consider any non-negative integer $s$. By Lemma~\ref{lem:supsumGauss_log_iterated_0}, we have 
\[
\forall x>0,\qquad \P\bigg(\sup_{t_2-t_1\in[2^s/\nu,2^{s+1}/\nu]}\frac{Z_{t_1:t_2}}{\sqrt{t_2-t_1}}>2x\bigg)\leq e^{-\frac{x^2}{4}}\enspace.
\]
Let $T_s=[t_1+2^s/\nu,t_1+2^{s+1}/\nu]$. By a union bound, this yields, for any $x>0$,
\begin{equation}\label{eq:UBLLN}
 \P\bigg(\exists s\geq 0 :\sup_{t_2\in T_s}\frac{Z_{t_1:t_2}}{\sqrt{t_2-t_1}}>2\sqrt{\log[(s+1)(s+2)]+x}\bigg)\leq \sum_{s=0}^{+\infty} \frac{e^{-x}}{(s+1)(s+2)}= e^{-x}\enspace.
\end{equation}
For any $t_1$, $t_2$ such that $t_2-t_1\geq 2^s/\nu$, $s\leq \log[(t_2-t_1)\nu]/\log 2$, hence
\[
(s+1)(s+2)\leq \frac{\log[2(t_2-t_1)\nu]\log[4(t_2-t_1)\nu]}{(\log 2)^2}\leq\bigg(\frac{\log[\sqrt{8}(t_2-t_1)\nu]}{\log 2}\bigg)^2\enspace.
\]
Thus,
\[
\log[(s+1)(s+2)]\leq 2\log\bigg(\frac{\log[3(t_2-t_1)\nu]}{\log 2}\bigg)\leq2\log\log[3(t_2-t_1)\nu]+1\enspace.
\]
Plugging this inequality into \eqref{eq:UBLLN} shows that, with probability larger than $1-e^{-x}$, for any $t_2\geq t_1+1/\nu$, 
\begin{align*}
Z_{t_1:t_2}&\leq  2\sqrt{t_2-t_1}(\sqrt{2\log\log[3(t_2-t_1)\nu]+1+x})\enspace .
\end{align*}
\end{proof}

\begin{proof}[Proof of Lemma \ref{lem:log_itere1}]
By definition, 
\[
\bN(t_{\tau})= \sqrt{\frac{n+1-\tau}{n(\tau-1)}}Z_{1:\tau} - Z_{\tau:n+1}\sqrt{\frac{\tau-1}{(n+1-\tau)n}}\enspace.
\] 
We control the deviations of $\bN(t_{\tau})$ simultaneously for all $\tau\leq n/2+1$ with probability higher than $1-6e^{-x}$. 
By symmetry between $\tau$ and $n+1-\tau$, the desired result follows. 
Applying Lemma~\ref{lem:supsumGauss_log_iterated_0} with $\alpha=1$ and $d=n/2$, we  show that, with probability at least $1-e^{-x}$, for all $\tau\leq n/2+1$, 
\beq\label{eq:control_1}
- Z_{\tau:n+1}\sqrt{\frac{\tau-1}{(n+1-\tau)n}}\leq 2\sqrt{\frac{\tau-1}{n}x} \enspace.
\eeq
Fix now $\alpha>0$ and $s\geq 0$.
By Lemma~\ref{lem:supsumGauss_log_iterated_0} applied with $d=(1+\alpha)^s$, we derive that, for any $x\geq 0$, 
\[
\P\left[\forall \tau\in [(1+\alpha)^{s},(1+\alpha)^{s+1}] ,\quad \frac{Z_{1:\tau}}{\sqrt{\tau-1}}\leq \sqrt{2(1+\alpha) x } \right]\geq 1 -e^{-x}\enspace.
\]
A union bound shows then that, for any $x>0$, 
\[
\forall s\geq 0,\ \forall \tau\in [(1+\alpha)^{s},(1+\alpha)^{s+1}] ,\qquad \frac{Z_{1:\tau}}{\sqrt{\tau-1}}\leq \sqrt{2(1+\alpha) (x+\log[(1+\alpha^{-1})(s+1)^{1+\alpha}]) } \enspace,
\]
with probability at least
\[
1 -\sum_{s\geq 0}e^{-(x+\log[(1+\alpha^{-1})(s+1)^{1+\alpha}])}=1-\frac{\alpha e^{-x}}{1+\alpha}\sum_{s\geq 0}\frac1{s^{1+\alpha}}\geq 1-e^{-x}\enspace.
\]
Now, for any $\tau\in[(1+\alpha)^{s},(1+\alpha)^{s+1}]$, we have $s\leq \log(\tau)/\log(1+\alpha)$. As a consequence, we have 
\[
s+1\leq \frac{\log[(1+\alpha)\tau]}{\log(1+\alpha)}\enspace.
\]
It follows that, with probability at least $1-e^{-x}$, for any $\tau\geq 1$,
\begin{align*}
 \frac{Z_{1:\tau}}{\sqrt{\tau-1}}&\leq \sqrt{2(1+\alpha) \bigg(x+\log\bigg[(1+\alpha^{-1})\bigg(\frac{\log[(1+\alpha)\tau]}{\log(1+\alpha)}\bigg)^{1+\alpha}\bigg]\bigg)}\\
&\leq \sqrt{2(1+\alpha) \bigg(x+\log(1+\alpha^{-1})+(1+\alpha)\log\bigg[\frac{\log[(1+\alpha)\tau]}{\log(1+\alpha)}\bigg]\bigg)}\\
&\leq (1+\alpha)\sqrt{2(\log\log [(1+\alpha)\tau]+x+C_\alpha)}\enspace.
\end{align*}
Here
\[
C_\alpha=\frac{\log(1+\alpha^{-1})}{1+\alpha}-\log\log[1+\alpha]\enspace.
\]
Combined with \eqref{eq:control_1}, with probability $1-2e^{-x}$, we proved that, for any $\tau\leq n/2+1$,
\begin{align*}
 \bN(t_\tau)&\leq (1+\alpha)\sqrt{2(\log\log [(1+\alpha)\tau]+x+C_\alpha)}\sqrt{\frac{n+1-\tau}{n}}+2\sqrt{\frac{\tau-1}nx}\\
 &\leq (1+\alpha)\sqrt{2(\log\log [(1+\alpha)\tau]+3x+C_\alpha)}\enspace.
\end{align*}
Here, we used the inequality
\[
(\sqrt{1-u}a+\sqrt{u}b)^2=(1-u)a^2+2\sqrt{u(1-u)}ab+u b^2\leq a^2+b^2,
\]
which holds for any $u\in [0,1]$, $a, b>0$ with 
\[
a=(1+\alpha)\sqrt{2(\log\log [(1+\alpha)\tau]+x+C_\alpha)},\quad b=2\sqrt{x},\quad u=\frac{\tau-1}n\enspace.
\]
The term $\log\log(en/\tau)$ is obtained similarly by applying Lemma~\ref{lem:supsumGauss_log_iterated_0} to bound the deviations of $Z_{1:\tau}$ on the intervals $[n/(1+\alpha)^{s+1}; n/(1+\alpha)^{s}]$. 
Combining both results shows that, with probability larger than $1-3e^{-x}$, for all $\tau\leq n/2+1$
\[
\bN(t_{\tau})\leq (1+\alpha)\sqrt{2\bigg(\log\log\bigg [(1+\alpha)\bigg(\tau\wedge\frac{n}\tau\bigg)\bigg]+3x+C_\alpha\bigg)} \enspace.
\]
The result follows. 

\end{proof}

 \begin{proof}[Proof of Lemma \ref{lem:controle_uniforme_N_ratio}]
Recall that 
\beq\label{eq:Ntau} 
\bN(t_{\tau})= \sqrt{\frac{(\tau-1)(n+1-\tau)}{n}}\left(\frac{Z_{1:\tau}}{\tau-1}+\frac{Z_{\tau:n+1}}{n+1-\tau}\right)\enspace .
\eeq 
Let us first apply Lemma~\ref{lem:supsumGauss_log_iterated_0}. For any $s\geq 0$, with probability $1-e^{-x}/(s+1)(s+2)$, we have 
\beq\label{eq:Z1}
\frac{Z_{1:\tau}}{(\tau-1)^{1/2}}\leq 2\sqrt{\log[(s+1)(s+2)]+x}\enspace , 
\eeq
uniformly over all $\tau \in[\tau^*/2^{s+1}, \tau^*/2^s]$.
For any $\tau\leq \tau^*/2^s$, $s\leq \log(\tau^*/\tau)/\log 2$, hence,
\[
(s+1)(s+2)\leq \frac{\log(e\tau^*/\tau)\log(e^2\tau^*/\tau)}{(\log 2)^2}\leq \bigg(\frac{\log(e^{3/2}\tau^*/\tau)}{\log 2}\bigg)^2\enspace.
\]
Hence, by the inequality $\log(a+x)\leq \log a+\log (1+x)$ valid for any $x\geq 0$ and $a\geq 1$,
\begin{align}
\notag\log[(s+1)(s+2)]&\leq 2\log\big(3/2+\log[\tau^*/\tau]\big)-2\log\log 2\\
\notag&\leq 2\log[3/(2\log 2)]+2\log\log(e\tau^*/\tau)\\
\label{eq:Z2}&\leq 1.6+2\log\log\bigg(e\frac{\tau^*-1}{\tau-1}\bigg)\enspace. 
\end{align}
Moreover, for any $\tau\leq \tau^*$, we have $n+1-\tau^*\leq n+1-\tau$ and 
\[
\frac{\tau^*-1}{\tau-1}=\gamma_\tau^{-1}\frac{n+1-\tau^*}{n+1-\tau}\leq\gamma_\tau^{-1}\enspace.
\]
Applying a union bound over all non-negative integers $s$, we derive that,  with probability higher than $1-e^{-x}$, simultaneously over all $\tau\leq \tau^*$,
\begin{eqnarray}
\frac{Z_{1:\tau}}{(\tau-1)^{1/2}}&\leq& 2 \sqrt{2\log\log\left(e\gamma_\tau^{-1}\right)+x+1.6}  \enspace.\label{eq:2}
\end{eqnarray}

\medskip

We proceed similarly for $Z_{\tau:n+1}/(n+1-\tau)$.
It follows from Lemma~\ref{lem:supsumGauss_log_iterated_0} that, for any $s\geq 0$, with probability $1-e^{-x}/(s+1)(s+2)$, uniformly over all $\tau \in[2^s(n+1-\tau^*),2^{s+1}(n+1- \tau^*)]$,
\beq\label{eq:Z3}
\frac{Z_{\tau:n+1}}{(n+1-\tau)^{1/2}}\leq 2\sqrt{\log[(s+1)(s+2)]+x}\enspace.
\eeq
Arguing exactly as when we deduced \eqref{eq:2} from \eqref{eq:Z1}, we obtain that, simultaneously over all $\tau\leq \tau^*$,
\begin{eqnarray}
\frac{Z_{\tau:n+1}}{(n+1-\tau)^{1/2}}&\leq& 2 \sqrt{2\log\log\left(e\gamma_\tau^{-1}\right)+x+1.6}  \label{eq:3}\enspace , 
\end{eqnarray}
with probability higher than $1-e^{-x}$. 
Plugging \eqref{eq:2} and \eqref{eq:3} into \eqref{eq:Ntau}, we get 
\[
\bN(t_\tau)\leq \frac{2}{\sqrt{n}} \sqrt{2\log\log\left(e\gamma_\tau^{-1}\right)+x+1.6}(\sqrt{\tau-1}+\sqrt{n+1-\tau})\leq 4\sqrt{\log\log\left(e\gamma_\tau^{-1}\right)+x+1}\enspace.
\]
Here, we used for the last inequality that $\sqrt{a}+\sqrt{n-a}\leq \sqrt{2n}$ for any $a\in [0,n]$.

 \end{proof}

\subsection{Proof of Lemmas \ref{lem:concentration:N_t} and \ref{lem:control_event_A}} \label{sec:proof:concentration:Nt}
We first establish Lemma~\ref{lem:concentration:N_t}. For any $t\in \cT_3$, define 
\[
 X_t= \frac{(t_3-t_2)(t_2-t_1)}{(t_3-t_1)}\left[\frac{Z_{t_1:t_2}}{t_2-t_1}  - \frac{Z_{t_2:t_3}}{t_3-t_2}\right]\enspace ,
\]
so that $Z_t= \sqrt{\frac{(t_3-t_2)(t_2-t_1)}{t_3-t_1}}X_t$. In the sequel, we write $\delta_1$ and $\delta_2$ for $t_2-t_1$ and $t_3-t_2$ respectively.

Given a centered sugGaussian random variable $Y$, it sub-Gaussian norm $\|Y\|_{\psi_2}$ is defined the smallest $\sigma$ such that $\E[e^{sY}]\leq e^{s^2\sigma^2/2}$ for all $s\in \mathbb{R}$. 
Defining $\sigma^2(t)=\frac{\delta_1\delta_2}{\delta_1+\delta_2} \in [(\delta_1\wedge \delta_2)/2,\delta_1\wedge \delta_2)$, we observe that 
$Z_t=X_t/\sigma(t)$ and that the sub-Gaussian norm of $X_t$ is less or equal to $\sigma(t)$.

We prove Lemma~\ref{lem:concentration:N_t} using an adaptive  peeling argument as in D\"umbgen and Spokoiny~\cite{dumbgen2001multiscale}.
Denote $\cT_{+}$ (resp. $\cT_{-}$)  the subset of vectors $t\in  \cT_3$ satisfying $\delta_1\leq \delta_2$ (resp. $\delta_2\leq \delta_1$).  We focus on triads $t\in \cT_{+}$, triads in $\cT_{-}$ being handled analogously.
 Given any $t\neq t'$ in $\cT_{+}$, define 
 \beq\label{eq:definition_rho}
 \rho^2(t,t')= |t_1-t'_1|+ |t_2-t'_2|+ |t_3-t'_3|\left	(\frac{\delta_1}{\delta_2}+ \frac{\delta'_1}{\delta'_2}\right)^2 \enspace .
 \eeq
 The next lemma bounds the deviations of $X_{t}-X_{t'}$ in terms of $\rho(t,t')$.
  \begin{lem}\label{lem:rho_var}
 For any $t$, $t'\in \cT_{+}$, we have 	
 \beq\label{eq:var_Xt}
 \|X_t-X_{t'}\|_{\psi_2}^2 \leq \frac{7}{2} \rho^2(t,t')\enspace . 
 \eeq
 \end{lem}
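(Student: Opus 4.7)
The plan is to express $X_t-X_{t'}$ as a deterministic linear combination of the independent noise variables and bound the corresponding $\ell^2$ norm of the coefficients. Writing
\[
 X_t=\sum_{i=1}^n\alpha_i(t)\epsilon_i,\qquad \alpha_i(t)=\frac{\delta_2(t)}{\delta_1(t)+\delta_2(t)}\mathbf{1}_{t_1\le i<t_2}-\frac{\delta_1(t)}{\delta_1(t)+\delta_2(t)}\mathbf{1}_{t_2\le i<t_3},
\]
the standard moment generating bound for sums of independent centered sub-Gaussians with norm $\le 1$ (used also in the proof of Proposition~\ref{prp:test_one_change-point}) gives
\[
\|X_t-X_{t'}\|_{\psi_2}^2\le \sum_{i=1}^n(\alpha_i(t)-\alpha_i(t'))^2,
\]
so it suffices to bound the deterministic quantity on the right-hand side by $(7/2)\rho^2(t,t')$.

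Next I would interpolate between $t$ and $t'$ by changing one coordinate at a time, through $t^{(1)}=(t_1',t_2,t_3)$ and $t^{(2)}=(t_1',t_2',t_3)$, and apply Minkowski's inequality in $\ell^2$:
\[
\sqrt{\sum_i(\alpha_i(t)-\alpha_i(t'))^2}\le \sum_{k=0}^{2}\sqrt{\sum_i(\alpha_i(t^{(k)})-\alpha_i(t^{(k+1)}))^2}
\]
with $t^{(0)}=t$, $t^{(3)}=t'$. Squaring and using $(a+b+c)^2\le 3(a^2+b^2+c^2)$ reduces the task to bounding each of the three single-coordinate moves by an absolute constant times the corresponding term in $\rho^2(t,t')$.

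For each single-coordinate move, $\alpha_i$ changes for two different reasons: (i) the support of the indicator shifts on an interval of length equal to the coordinate change, and (ii) the weights $\delta_2/(\delta_1+\delta_2)$ and $\delta_1/(\delta_1+\delta_2)$ change because $\delta_1(t)$ or $\delta_2(t)$ is modified. For the $t_1$-move and the $t_2$-move, all weights are bounded by $1$, so writing out the weight differences via identities like
\[
\frac{\delta_2}{\delta_1+\delta_2}-\frac{\delta_2}{\delta_1'+\delta_2}=\frac{\delta_2(\delta_1'-\delta_1)}{(\delta_1+\delta_2)(\delta_1'+\delta_2)},
\]
one obtains, after elementary algebra (using $|t_1-t_1'|\le \delta_1\wedge\delta_1'\le\delta_2\wedge\delta_2'$ since $t,t'\in\cT_+$), a bound of the form $c\cdot|t_1-t_1'|$ and $c\cdot|t_2-t_2'|$ for some absolute constant $c$. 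For the $t_3$-move the shifted indicator lives on $[t_2',\max(t_3,t_3'))$ with coefficient $-\delta_1'/(\delta_1'+\delta_2'')\le \delta_1'/\delta_2'\wedge\delta_1'/\delta_2$, and the weight change on $[t_1',t_2')$ and $[t_2',\min(t_3,t_3'))$ is also controlled by the same factor $\delta_1'/(\delta_1'+\delta_2')$. Collecting these estimates produces a bound $c\cdot|t_3-t_3'|(\delta_1'/\delta_2')^2$, which, together with the analogous bound after reindexing, gives the required $|t_3-t_3'|(\delta_1/\delta_2+\delta_1'/\delta_2')^2$ term in $\rho^2$.

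The main obstacle is purely bookkeeping: tracking the several sub-cases for the relative orderings of $t_j$ and $t_j'$ and summing the exact numerical contributions so that the three accumulated constants, combined with the factor $3$ from Minkowski, give the announced $7/2$. The only essentially new ingredient beyond direct algebra is noticing that the $t\in\cT_+$ assumption is used twice: once to ensure $|t_1-t_1'|,|t_2-t_2'|\le \delta_2\wedge\delta_2'$ (keeping weight-related denominators of order $\delta_2$), and once to justify the factor $\delta_1/\delta_2$ in the $t_3$-contribution, which is what makes $\rho^2$ asymmetric in its last coordinate.
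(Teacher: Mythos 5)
The high-level step you take first---write $X_t=\sum_i\alpha_i(t)\epsilon_i$ and reduce the sub-Gaussian norm of $X_t-X_{t'}$ to the $\ell^2$ norm of the coefficient difference---is exactly what the paper does, and is fine. The gap is in the interpolation strategy through $t^{(1)}=(t_1',t_2,t_3)$ and $t^{(2)}=(t_1',t_2',t_3)$, on which the rest of your argument rests.

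First, these intermediate triads need not exist. If, say, $t_1'\ge t_2$ (which happens as soon as $t'$ sits well to the right of $t$ while their supports still overlap, e.g.\ $t=(1,2,100)$, $t'=(50,52,90)$, both in $\cT_+$), then $(t_1',t_2,t_3)$ is not a valid element of $\cT_3$ and the chain breaks. You allude to ``sub-cases for the relative orderings'' but the plan as written has no branch for this; note the paper does carve out the disjoint-support case $[t_1,t_3)\cap[t_1',t_3')=\emptyset$ separately, and even that does not cover the overlapping configuration just described.

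Second, even when the intermediate triads are valid they generally leave $\cT_+$, and this breaks the bound you claim for the $t_3$-move. Writing $\delta_2^{(2)}=t_3-t_2'$, the symmetric-difference contribution of the $t_3$-move is controlled by $\delta_1'/\bigl(\delta_1'+(\delta_2^{(2)}\wedge\delta_2')\bigr)$. When $t_3<t_3'$ one has $\delta_2^{(2)}<\delta_2'$, and nothing bounds $\delta_1'/(\delta_1'+\delta_2^{(2)})$ in terms of $\delta_1/\delta_2+\delta_1'/\delta_2'$: take $t=(1,2,100)$ and $t'=(90,92,99)$, so $t^{(2)}=(90,92,100)$ with $\delta_1^{(2)}=2,\ \delta_2^{(2)}=8$, which is a valid triad in $\cT_+$, but if instead $t_3'$ is much larger than $t_3$ you can arrange $\delta_2^{(2)}$ as small as $1$ while $\delta_1'/\delta_2'$ and $\delta_1/\delta_2$ are both tiny; then $\delta_1'/(\delta_1'+\delta_2^{(2)})$ is of order $1$ while the term in $\rho^2(t,t')$ multiplying $|t_3-t_3'|$ is arbitrarily small. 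Concretely, the inequality $\delta_1'/(\delta_1'+\delta_2'')\le(\delta_1'/\delta_2')\wedge(\delta_1'/\delta_2)$ you invoke is simply false in this regime. So the move-by-move bookkeeping does not deliver a bound of the form $c\,|t_3-t_3'|\bigl(\delta_1/\delta_2+\delta_1'/\delta_2'\bigr)^2$, which is what you need; the constant degrades with the geometry of $t,t'$. (There is also a secondary concern: the Minkowski step costs a factor $3$, so each single-coordinate move would have to contribute at most $\tfrac76$ of the matching $\rho^2$-term to reach $7/2$; with the correct accounting the per-move constants can be pushed to $1$, so $3\rho^2$ is within reach, but only after the two issues above are fixed.)

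The paper sidesteps all of this by never introducing intermediate triads. It works with $X_t-X_{t'}=\sum_i\alpha_i\epsilon_i$ in one shot and partitions the index set according to which of the four intervals $[t_1,t_2),[t_2,t_3),[t_1',t_2'),[t_2',t_3')$ (and their complements) contain $i$; on each of the four classes (a)--(d) it bounds $|\alpha_i|$ directly, then multiplies by the cardinality of the class. Classes (a)--(c) give the terms $|t_1-t_1'|$, $|t_2-t_2'|$ and $|t_3-t_3'|(\delta_1/\delta_2+\delta_1'/\delta_2')^2$; class (d) (the overlap of the same-type intervals of $t$ and $t'$) is where the weight-difference algebra lives, and it produces $|\delta_1-\delta_1'|$ plus $|\delta_2-\delta_2'|\,(\delta_1'/(\delta_1'+\delta_2'))^2$, which are then absorbed using $|\delta_j-\delta_j'|\le|t_j-t_j'|+|t_{j+1}-t_{j+1}'|$ and $\delta_1'/(\delta_1'+\delta_2')\le 1/2$ on $\cT_+$. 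That one-pass accounting is what yields the exact constant $7/2$ and avoids any reference to a ratio of an intermediate triad.
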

In other words, $X_t-X_{t'}$ is small if $(t_1,t_2)$ is close to $(t'_1,t'_2)$ and either if $|t_3-t'_3|$ is small or if $\delta_1/\delta_2$ and $\delta'_1/\delta'_2$ are small.

To formalize our adaptive peeling argument, we partition $\cT_+$ into the collections
\[
\cT_+^{(k,q)}:=\{t\in \cT_{+}, \delta_1\in [n2^{-k},n2^{-k+1}),\quad \delta_2\in [n2^{q-k}n,2^{q-k+1}\} \enspace ,
\]
where  $k=1, \ldots, \lceil \log_2(n)\rceil$ and $q=0,\ldots, k-1$. Given any $\kappa\in (0, 1)$, we denote $\cS_{k,q,\kappa}$ a minimal covering of $\cT_+^{(k,q)}$ of radius $ r_{k,\kappa}:= \kappa \sqrt{n 2^{-k+1}}$ with respect to the semi-metric $\rho$. 
\begin{lem}\label{lem:covering_number_gamma}
 For any $\kappa \in (0,1)$ and positive integer $k\leq \lceil \log_2(n)\rceil$, we have 
 \beq
 |\cS_{k,q,\kappa}|\leq c \cdot\frac{2^{k-q}}{\kappa^6}\enspace , 
 \eeq
 for a numerical constant $c>0$. 
\end{lem}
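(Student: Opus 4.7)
The plan is to build an explicit grid on $\cT_+^{(k,q)}$ parametrised by the triples $(t_1,\delta_1,\delta_2)$ and to verify that its associated $\rho$-balls cover $\cT_+^{(k,q)}$ at radius $r_{k,\kappa}$. The first step is to record the basic orders of magnitude valid on $\cT_+^{(k,q)}$: since $\delta_1\in[n2^{-k},n2^{-k+1})$ and $\delta_2\in[n2^{q-k},n2^{q-k+1})$, we have $\delta_1/\delta_2\asymp 2^{-q}$ uniformly and, in particular, $(\delta_1/\delta_2+\delta'_1/\delta'_2)^2\leq 16\cdot 2^{-2q}$ for any two elements $t,t'$ of $\cT_+^{(k,q)}$.

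The second step is to turn the semi-metric $\rho$ into a (weighted) product semi-metric. Writing $t_3-t'_3=(t_2-t'_2)+(\delta_2-\delta'_2)$ and using the triangle inequality together with the bound just obtained on the weight, one gets
\[
\rho^2(t,t')\ \leq\ c\Bigl(|t_1-t'_1|+|\delta_1-\delta'_1|+2^{-2q}|\delta_2-\delta'_2|\Bigr)
\]
for some universal constant $c$, where I use the parametrisation $(t_1,\delta_1,\delta_2)$ rather than $(t_1,t_2,t_3)$. Consequently, it suffices to cover $\cT_+^{(k,q)}$ by an arithmetic grid of step $h_1=h_2=r_{k,\kappa}^2/(3c)$ in the $t_1$ and $\delta_1$ variables, and of step $h_3=2^{2q}r_{k,\kappa}^2/(3c)$ in the $\delta_2$ variable.

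The third step is to count. Using $r_{k,\kappa}^2=2\kappa^2 n 2^{-k}$, the range of each coordinate on $\cT_+^{(k,q)}$ gives
\[
N_{t_1}\ \leq\ 1+\tfrac{n}{h_1}\ \lesssim\ \tfrac{2^{k}}{\kappa^{2}},\qquad N_{\delta_1}\ \leq\ 1+\tfrac{n2^{-k}}{h_2}\ \lesssim\ \tfrac{1}{\kappa^{2}},\qquad N_{\delta_2}\ \leq\ 1+\tfrac{n2^{q-k}}{h_3}\ \lesssim\ \tfrac{2^{-q}}{\kappa^{2}},
\]
where I have used the constraint $t_1\leq n+1-\delta_1-\delta_2$ to bound the range of $t_1$ by $n$. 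The product of these three cardinalities gives $|\cS_{k,q,\kappa}|\lesssim 2^{k-q}/\kappa^{6}$, which is the announced inequality.

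The main obstacle is the second step: one must justify that the mixed term $|t_3-t'_3|(\delta_1/\delta_2+\delta'_1/\delta'_2)^2$ in $\rho^2$ can genuinely be replaced, on the single scale $\cT_+^{(k,q)}$, by the two uncoupled contributions $|t_2-t'_2|$ and $2^{-2q}|\delta_2-\delta'_2|$ up to a universal multiplicative constant; this is where the uniform two-sided control $\delta_1/\delta_2\asymp 2^{-q}$ on $\cT_+^{(k,q)}$ is crucial, and it is also the reason for splitting $\cT_+$ into the scales $\cT_+^{(k,q)}$ indexed by \emph{both} $k$ and $q$ rather than by the sole scale of $\delta_1$.
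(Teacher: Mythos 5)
Your proof takes essentially the same route as the paper: on each scale $\cT_+^{(k,q)}$, control the mixed term in $\rho^2$ by the uniform two-sided bound $\delta_1/\delta_2\asymp 2^{-q}$, build a regular product grid, and count. The only genuine difference is the reparametrisation by $(t_1,\delta_1,\delta_2)$; this is a modest simplification because the paper builds its $t_2$-grid conditionally on $t_1$ and its $t_3$-grid conditionally on $(t_1,t_2)$, which lets the error in the $t_2$-coordinate leak into $|t_3-t_3'|$ and forces a small case split ($q\geq 3$ versus $q\leq 2$); your decoupled parametrisation makes the three coordinate directions independent and avoids that split.

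That said, both arguments share the same small flaw in the final count, which your formula $N_{\delta_2}\leq 1+n2^{q-k}/h_3 \lesssim 2^{-q}/\kappa^2$ already hints at: the last $\lesssim$ requires $2^{-q}/\kappa^2\gtrsim 1$, since any covering needs at least one grid point in each direction. When $\kappa^2\gg 2^{-q}$ (which does occur — in the chaining argument $\kappa=k^{-i}$ is only polylogarithmically small while $q$ runs up to $k-1$), one has $N_{\delta_2}\geq 1 > 2^{-q}/\kappa^2$, and the product bound only yields $\lesssim 2^{k}/\kappa^4$ rather than $\lesssim 2^{k-q}/\kappa^6$. In fact a packing argument in the $t_1$-direction alone (using $\rho^2\geq|t_1-t_1'|$) shows $|\cS_{k,q,\kappa}|\gtrsim 2^{k}/\kappa^2$, which exceeds $c\,2^{k-q}/\kappa^6$ whenever $\kappa^4\gtrsim 2^{-q}$, so the stated bound cannot hold uniformly in $(\kappa,q)$; it should read $c\,2^{k}/\kappa^{6}$, or carry an additional $+\,c\,2^{k}/\kappa^4$ term. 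This inaccuracy is inherited verbatim from the paper's displayed inequality, and it is ultimately harmless because the chaining argument in the proof of Lemma~\ref{lem:concentration:N_t} relaxes $\log(2^{k-q})$ to $\log(2^k)$ before concluding, so the weakened $c\,2^{k}/\kappa^6$ suffices.
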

For $t\in \cT_+^{(k,q)}$, we write $\pi_{k,q,\kappa}(t)$ for any closest element of $t$ in $\cS_{k,q,\kappa}$. 
Given any  integer $k\leq \lceil \log_2(n)\rceil$, define $l(k) := \lceil \tfrac{\log(\log(n))}{\log(k)}\rceil$.  Using a chaining  argument with the  collections $(\cS_{k,q,k^{-i}})$, $i=1,\ldots, l(k)$, we arrive at the decomposition 
\beq \label{eq:chaining}
X_t = X_{\pi_{k,q,k^{-1}}(t)}+ \sum_{i=1}^{l(k)-1} \left(X_{\pi_{k,q,k^{-i-1}}(t)}- X_{\pi_{k,q,k^{-i}}(t)}\right) + X_{t}- X_{\pi_{k,q,k^{-l(k))}}(t)}\enspace .
\eeq
Let us first control the deviation of $X_{t}- X_{\pi_{k,q,k^{-l(k))}}(t)}$. Since $|\cT_{+}|\leq n^{3}/6$, it then follows from an union bound over all $t,t'\in \cT_{+}$ and from \eqref{eq:var_Xt} that, with probability higher than $1-x$,
\beq\label{eq:upper_difference_X}
|X_t-X_{t'}|\leq \rho(t,t')\sqrt{42\log(n)+ 7\log(1/x)}\enspace , 
\eeq 
simultaneously over all $t$ and $t'$ in $\cT_+$.  Next, the random variables $X_{s}$, with $s\in \cS_{k,q,k^{-1}}$ are simply controlled using an union bound. With probability higher than $1-x$, we have
\beq\label{eq:upper_difference_X_1}
 |X_{\pi_{k,q,k^{-1}}(t)} |\leq \sigma(\pi_{k,q,k^{-1}}(t))\sqrt{2\log\Big(\frac{2|S_{k,q,k^{-1}}|}{x}\Big)}\enspace ,
\eeq
simultaneously over all $t\in \cT_+^{(k)}$.  Finally, we consider the differences  $X_{\pi_{k,q,k^{-i-1}}(t)}- X_{\pi_{k,q,k^{-i}}(t)}$ for  $1\leq i\leq l(k)$. For a fixed $i$, there are at most $|\cS_{k,q,k^{-i-1}}| | \cS_{k,q,k^{-i}}|\leq |\cS_{k,q,k^{-i-1}}|^2$ such differences.
Taking an upper bound over all possible $X_{\pi_{k,q,k^{-i-1}}(t)}- X_{\pi_{k,q,k^{-i}}(t)}$ with $t\in \cT_+^{(k)}$, we conclude that, with probability higher than $1-x$
\beq
|X_{\pi_{k,q,k^{-i-1}}(t)}- X_{\pi_{k,q,k^{-i}}(t)}|\leq \rho(\pi_{k,q,k^{-i-1}}(t),\pi_{k,q,k^{-i}}(t)) \sqrt{7 \log\left(2\frac{|\cS_{k,q,k^{-i-1}}|^2}{x}\right)} \enspace ,
\eeq
simultaneously for all $t\in \cT_+^{(k)}$. Then taking an union bound over all $i\leq l(k)-1$ with weight $18x/(\pi^2i^2)$ and gathering it \eqref{eq:upper_difference_X} and \eqref{eq:upper_difference_X_1}, we conclude that with probability higher than $1-x$, we have 
\beqn 
|X_t|&\leq&  \sigma(\pi_{k,q,k^{-1}}(t))\sqrt{2\log\Big(\frac{6|S_{k,q,k^{-1}}|}{x}\Big)} +   \sum_{i=1}^{l(k)-1} \rho(\pi_{k,q,k^{-i-1}}(t),\pi_{k,q,k^{-i}}(t))\sqrt{7\log\Big(\frac{36|S_{k,q,k^{-i-1}}| i^2}{\pi^2x}\Big)} \\ && +  \rho(t,\pi_{k,q,k^{-l}}(t)) \sqrt{42\log(n)+7\log\left(\frac{3}{x}\right   )}\enspace ,
\eeqn 
simultaneously over all $t\in \cT_{+}^{(k,q)}$. For any such $t, t'\in \cT_{+}^{(k,q)}$, we have
\[
|t_1-t'_1|+ |t_2-t'_2|+ 2^{2-2q}|t_3-t'_3| \leq \rho^2(t,t')\leq  |t_1-t'_1|+ |t_2-t'_2|+ 2^{4-2q}|t_3-t'_3|
\]
Hence, we get 
\[
\rho^2(\pi_{k,q,k^{-i-1}}(t),\pi_{k,q,k^{-i}}(t))\leq 
4 \big[\rho^2(\pi_{k,q,k^{-i-1}}(t),t)+ \rho^2(\pi_{k,q,k^{-i}}(t),t)\big]\leq 8 \rho^2(\pi_{k,q,k^{-i}}(t),t)\leq 8k^{-2i}n2^{-k+1} \enspace , 
\]
  and $\sigma(t)\geq \sqrt{\delta_1/2}\geq \sqrt{n2^{-k-1}}$. Then, relying on Lemma~\ref{lem:covering_number_gamma}, we conclude that 
 \beqn 
 \frac{|X_t|}{\sigma(t)}&\leq & \frac{\sigma(\pi_{k,q,k^{-1}}(t))}{\sigma(t)}\sqrt{2\log(2^{k-q})+ 12\log(k) + c+ 2\log(1/x)}  \\ && + c'\sum_{i=1}^{\infty} k^{-i}\sqrt{k-q+ \log(i) + i\log(k)+ 1+\log(1/x)} + c'' \frac{\sqrt{\log(1/x)+1}}{\sqrt{\log(n)}}\\
 &\leq & \sqrt{2\log(2^{k-q})+ c_1\log(k)+ c_2 + c_3\log(\frac{1}{x})}\\ &&+ \frac{|\sigma(\pi_{k,k^{-1}}(t))-\sigma(t)|}{\sigma(t)}\sqrt{2k+ 12\log(k) + c+ 2\log(1/x)}\enspace .
 \eeqn 
\begin{lem}\label{lem:comparaison_variance}
For $\kappa\in (0,1)$ and $t\in \cT_+^{(k)}$, we have
\[
\frac{|\sigma(t) - \sigma(\pi_{k,q,\kappa}(t))|}{\sigma(t)} \leq \frac{2 r_{k,\kappa}^2}{\sigma^2(t)} \leq 8 \kappa^2\enspace .
\] 
\end{lem}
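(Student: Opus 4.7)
The plan is to prove the two inequalities in succession. The right inequality $2r_{k,\kappa}^2/\sigma^2(t) \leq 8\kappa^2$ is immediate from the geometry of the box $\cT_+^{(k,q)}$: since $\delta_2 \geq \delta_1 \geq n 2^{-k}$ for any $t$ in that box, one has
\[
\sigma^2(t) \;=\; \frac{\delta_1\delta_2}{\delta_1+\delta_2} \;\geq\; \frac{\delta_1}{2} \;\geq\; n 2^{-k-1},
\]
while $r_{k,\kappa}^2 = \kappa^2 n 2^{-k+1} = 4\kappa^2 \cdot n 2^{-k-1}$, so the ratio is at most $8\kappa^2$.

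For the left inequality, I first reduce to bounding $|\sigma^2(t) - \sigma^2(t')|$ via the elementary factorisation
\[
|\sigma(t) - \sigma(t')| \;=\; \frac{|\sigma^2(t) - \sigma^2(t')|}{\sigma(t)+\sigma(t')} \;\leq\; \frac{|\sigma^2(t) - \sigma^2(t')|}{\sigma(t)},
\]
so it suffices to show $|\sigma^2(t) - \sigma^2(t')| \leq 2 r_{k,\kappa}^2$ for $t' = \pi_{k,q,\kappa}(t)$. Viewing $\sigma^2(t_1,t_2,t_3) = (t_2-t_1)(t_3-t_2)/(t_3-t_1)$ as a smooth function, its partial derivatives satisfy
\[
|\partial_{t_1}\sigma^2| = \Bigl(\tfrac{\delta_2}{\delta_1+\delta_2}\Bigr)^{\!2} \leq 1, \quad |\partial_{t_2}\sigma^2| = \tfrac{|\delta_2-\delta_1|}{\delta_1+\delta_2} \leq 1, \quad |\partial_{t_3}\sigma^2| = \Bigl(\tfrac{\delta_1}{\delta_1+\delta_2}\Bigr)^{\!2} \leq \Bigl(\tfrac{\delta_1}{\delta_2}\Bigr)^{\!2}.
\]
The key observation is that the line segment joining $t$ and $t'$ stays entirely in $\cT_+^{(k,q)}$: indeed both $\delta_1$ and $\delta_2$ are linear, hence monotone, in the convex-combination parameter, so they remain in their respective intervals $[n 2^{-k}, n 2^{-k+1})$ and $[n 2^{q-k}, n 2^{q-k+1})$. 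Along this segment, $\delta_1/\delta_2 \leq 2^{1-q}$, so $|\partial_{t_3}\sigma^2| \leq 2^{2-2q}$ uniformly.

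The mean-value theorem then yields
\[
|\sigma^2(t) - \sigma^2(t')| \;\leq\; |t_1 - t'_1| + |t_2 - t'_2| + 2^{2-2q}\,|t_3 - t'_3| \;\leq\; \rho^2(t,t'),
\]
where the last inequality is exactly the lower bound on $\rho^2$ already recorded in the chaining argument of Lemma~\ref{lem:concentration:N_t}. Since $\rho(t,t') \leq r_{k,\kappa}$ by construction of the covering $\cS_{k,q,\kappa}$, this gives $|\sigma^2(t) - \sigma^2(t')| \leq r_{k,\kappa}^2 \leq 2 r_{k,\kappa}^2$, completing the proof.

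The only subtle point is the matching of the scale-dependent factor $2^{2-2q}$ appearing in the bound on $|\partial_{t_3}\sigma^2|$ with the same factor appearing in the lower bound on $\rho^2$; this balance is not a coincidence but reflects the fact that both the variance $\sigma^2$ and the semi-metric $\rho$ are tailored to the aspect ratio $\delta_1/\delta_2$ of the triad. Verifying that the segment remains in the box is routine but essential, since otherwise $\delta_1/\delta_2$ could blow up along the path and ruin the bound on $\partial_{t_3}\sigma^2$.
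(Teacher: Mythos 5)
Your route — viewing $\sigma^2$ as a smooth function on $\mathbb{R}^3$, computing its partials, and applying the mean-value theorem along the segment joining $t$ to $t'=\pi_{k,q,\kappa}(t)$ — is a genuinely different argument from the paper's (the paper expands $\sigma^2(t)-\sigma^2(t')$ algebraically term by term), and both the right inequality and the reduction to $|\sigma^2(t)-\sigma^2(t')|$ are correct. However, there is a gap in the second mean-value step. You bound $|\partial_{t_3}\sigma^2| \leq 2^{2-2q}$ uniformly over the box and then assert $|t_1-t'_1| + |t_2-t'_2| + 2^{2-2q}|t_3-t'_3| \leq \rho^2(t,t')$, citing the chaining argument. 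That inequality does not hold: the coefficient of $|t_3-t'_3|$ in $\rho^2$ is $(\delta_1/\delta_2 + \delta'_1/\delta'_2)^2$, and inside $\cT_+^{(k,q)}$ one only has $\delta_1/\delta_2 \in (2^{-q-1},\, 2^{1-q})$, so the squared sum is only guaranteed to exceed $2^{-2q}$, not $2^{2-2q}$. (The displayed inequality in the proof of Lemma~\ref{lem:concentration:N_t} appears to carry the same slip; it is harmless there.) As written your estimate can be as large as $4\rho^2(t,t')$, so you only get $\leq 4 r_{k,\kappa}^2$ and hence the ratio bound $16\kappa^2$, a factor of two short of the claim.

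The fix is cheap and actually sharpens your argument past the paper's. Parameterize the segment $t(s)=(1-s)t+st'$ and write $\delta_j(s)$ for the corresponding gaps; these are affine in $s$, and a short computation shows $\tfrac{d}{ds}\bigl(\delta_1(s)/\delta_2(s)\bigr)$ has constant sign (it is a Möbius function of $s$), so $\delta_1(s)/\delta_2(s)$ is monotone and hence
\[
\sup_{s\in[0,1]}\frac{\delta_1(s)}{\delta_2(s)} = \max\!\Big(\frac{\delta_1}{\delta_2},\frac{\delta'_1}{\delta'_2}\Big)\leq \frac{\delta_1}{\delta_2}+\frac{\delta'_1}{\delta'_2}.
\]
Therefore $\sup_s|\partial_{t_3}\sigma^2(t(s))|\leq \bigl(\delta_1/\delta_2+\delta'_1/\delta'_2\bigr)^2$, which is \emph{exactly} the $\rho^2$ coefficient, and the mean-value theorem now gives $|\sigma^2(t)-\sigma^2(t')| \leq \rho^2(t,t')\leq r_{k,\kappa}^2 \leq 2r_{k,\kappa}^2$ directly, without any appeal to a uniform box bound. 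With this substitution your proof is complete and in fact yields a constant of $4\kappa^2$ rather than the paper's $8\kappa^2$.
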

With this last lemma, we obtain
\[
 \frac{|X_t|}{\sigma(t)}
 \leq  \sqrt{2\log(2^{k-q})+ c_1\log(k)+ c_2 + c_3\log(\frac{1}{x})}\enspace , 
 \]
simultaneously over all $t\in \cT_+^{(k,q)}$ with probability higher than $1-x$. Taking an union bound over all $k$ and $q$ with weights $36x/[\pi^2 k^2q^2]$, we conclude that,  with probability higher than $1-x$,
\[
 \frac{|X_t|}{\sigma(t)}
 \leq  \sqrt{2\log(2^k)+ c'_1\log(k)+ c'_2 + c'_3\log(\frac{1}{x})}\enspace , 
 \]
simultaneously over all $t\in \cT_+^{(k,q)}$ with $k=1,\ldots, \lceil \log_2(n)\rceil$ and $q=1,\ldots, k$. 
Since $t\in \cT_+^{(k,q)}$, we have  $2^{k}\leq 2n/\delta_1$, which implies 
\[
 \frac{|X_t|}{\sigma(t)}
 \leq  \sqrt{2\log\left(\frac{n(\delta_1+\delta_2)}{\delta_1\delta_2 }\right)+ c'_1\log\log\left(\frac{n(\delta_1+\delta_2)}{\delta_1\delta_2 }\right)+ c'_2 + c'_3\log(\frac{1}{x})}\enspace .  
 \]

\begin{proof}[Proof of Lemma \ref{lem:rho_var}]
 By symmetry, we assume that $\delta_2\geq \delta'_2$.  Let us upper bound $\|X_t-X_{t'}\|_{\psi_2}^2$. Since $X_t-X_t'$ decomposes as $\sum_{i=t_1\wedge t'_1}^{(t_3\vee t'_3)-1}\alpha_i \epsilon_i$ for some $\alpha_i$'s, its squared sub-Gaussian norm is at most $\sum_{i}\alpha_i^2$. The value of the $\alpha_i$'s depends on  whether $i$ belongs to $[t_1,t_2)$, $[t_2,t_3)$, $[t'_1,t'_2]$ or $[t'_2,t'_3)$. More precisely, we have 
 \begin{itemize}
  \item[(a)]  $|\alpha_i|\leq 1$ if $i\in ([t_1,t_2)\cap [t'_1,t'_3)^c)\cup ([t'_1,t'_2)\cap [t_1,t_3)^c)$
  \item[(b)] $|\alpha_i|\leq (\delta_1/\delta_2+\delta'_1/\delta'_2)$ if $i\in ([t_2,t_3)\cap [t'_1,t'_3)^c)\cup ([t'_2,t'_3)\cap [t_1,t_3)^c)$
  \item[(c)] $|\alpha_i|\leq 3/2$ if $i\in ([t_1,t_2)\cap [t'_2,t'_3))\cup ([t'_1,t'_2)\cap [t_2,t_3))$
  \item[(d)] For $i\in ([t_1,t_2)\cap[t'_1,t'_2))\cup ([t_2,t_3)\cap[t'_2,t'_3))$, we have 
 \[
 \alpha_i^2 = \left[ \frac{\delta'_1}{\delta'_1+\delta'_2}- \frac{\delta_1}{\delta_1+\delta_2}\right]^2\leq \left[\frac{(\delta_2-\delta'_2)\delta'_1+\delta'_2|\delta'_1-\delta_1|}{(\delta'_1+\delta'_2)(\delta_1+\delta_2)}\right]^2\enspace . 
 \]
If $\delta_2=\delta'_2$, the above expression is less or equal to $|\delta_1-\delta'_1|$. Now assume that $\delta_2>\delta'_2$. 
Using $(a+b)^2\leq (1+x)a^2+(1+x^{-1})b^2$ with $x= \frac{\delta_1+\delta'_2}{\delta_2-\delta'_2}$ yields 
 \beqn 
 \alpha_i^2  &\leq& \frac{|\delta_2-\delta'_2|}{\delta_1+\delta_2}\left(\frac{\delta'_1}{\delta'_1+\delta'_2}\right)^2 + 
 \frac{|\delta_1-\delta'_1|^2 \delta^{'2}_2}{(\delta'_1+\delta'_2)^2(\delta_1+\delta_2)(\delta_1+\delta'_2)}\\
  &\leq & \frac{|\delta_2-\delta'_2|}{\delta_1+\delta_2}\left(\frac{\delta'_1}{\delta'_1+\delta'_2}\right)^2 + \frac{|\delta_1-\delta'_1|}{\delta_1+\delta_2} \enspace .
 \eeqn 
 \end{itemize}

There are at least $\delta_1+\delta_2$ indices of type (d). If $[t_1,t_3)\cap[t_1, t'_3)\neq \emptyset$, then there are at most $|t_1-t'_1|$ indices of type (a), $|t_3-t'_3|$ indices of type (b) and $|t_2-t'_2|$ indices of type (c). This leads us to 6
\beqn 
 \|X_t-X_{t'}\|_{\psi_2}^2&\leq&   |t_1-t'_1|+ \frac{9}{4}|t_2-t'_2|+ |t_3-t'_3|\left(\frac{\delta_1}{\delta_2}+ \frac{\delta'_1}{\delta'_2}\right)^2+ |\delta_2-\delta'_2|\left(\frac{\delta'_1}{\delta'_1+\delta'_2}\right)^2 + |\delta_1-\delta'_1|\\
 &\leq & \frac{7}{2}\rho^2(t,t')\enspace . 
\eeqn 
Now assume that $[t_1,t_3)\cap[t_1, t'_3)= \emptyset$. For instance, assume that $t_3 \leq t'_1$. Then, 
\beqn 
\|X_t-X_{t'}\|_{\psi_2}^2&\leq& \delta_1+\delta'_1+ (\delta_2+\delta'_2)\left(\frac{\delta_1}{\delta_2}+ \frac{\delta'_1}{\delta'_2}\right)^2\\
&\leq& \left[ \delta_1+ 2\delta_2\right] +\left[\delta'_1+ 2\delta_2\right] + (t_3-t'_3)\left(\frac{\delta_1}{\delta_2}+ \frac{\delta'_1}{\delta'_2}\right)^2\\
&\leq & 2\rho^2(t,t')\ , 
\eeqn 
where we used that $|t'_1-t_1|\geq \delta_1+\delta_2$ and $|t'_2-t_2|\geq \delta'_1+\delta_2$. 
\end{proof}

\begin{proof}[Proof of Lemma \ref{lem:covering_number_gamma}]
Let us upper bound $|\cS_{k,q,\kappa}|$ by building a covering subset $\cS'_{k,q,\kappa}$ of $\cT_+^{(k,q)}$. First, we consider the case where $q\geq 3$. 
Take a regular subgrid (containing $n$) of $\{1,\ldots, n\}$ with radius $\lceil  r_{k,\kappa}^2/6\rceil$. Then, given any $t_1$ on this grid, we build a regular grid (containing $t_1+\lfloor n2^{-k+1}\rfloor$) of $[t_1+ n2^{-k}, t_1+ n2^{-k+1}]$ with  radius $\lceil  r_{k,\kappa}^2/6\rceil$. Finally, given any such $t_1$ and $t_2$, we construct a regular grid (containing $t_2+\lfloor n 2^{q+1-k}\rfloor$ and $t_2+\lfloor n 2^{q-k}\rfloor$ )  of $[t_2+2^{q-k} n, t_2+ n 2^{q+1-k}]$ with radius $ \lceil r_{k,\kappa}^2 2^{2q-5}\rceil$. 
\[
|\cS'_{k,q,\kappa}|\leq \frac{n}{\lceil r^2_{k,\kappa}/6\rceil}\cdot \frac{n 2^{-k+1}}{\lceil r^2_{k,\kappa}/6\rceil } \cdot  \frac{n2^{q-k}}{\lceil r^2_{k,\kappa}  2^{2q-5}\rceil} \leq  c\frac{ 2^{k-q}}{\kappa^6}\enspace .
\]
It remains to check that $\cS'_{k,q,\kappa}$ is a covering subset. Consider any $t\in \cT_+^{(k,q)}$ and define $\overline{t}\in \cS'_{k,q,\kappa}$ in such a way that $\overline{t}_1- t_1>0$ is the smallest possible, then  $\overline{t}_2-t_2>0$ is the smallest possible and finally $\overline{t}_3\geq t_3$ is the smallest possible.  Obviously, we have  $|\overline{t}_1-t_1|\leq r^2_{k,\kappa}/6$, $|\overline{t}_2-t_2|\leq  r_{k,\kappa}^2/3$. Since $|t_3-t_2|\geq 2^{q-k} n\geq 2^{q}r^{2}_{k,\kappa}/2$, we have $t_3\geq \overline{t}_2$. 
 $t_3\in [\overline{t}_2+ 2^{q-k}n, \overline{t}_2+ 2^{q-k+1}n]$ so that $|\overline{t}_3-t_3|\leq  (r_{k,\kappa}^2 2^{2q-5}) \vee  (r_{k,\kappa}^2 /3)\leq r_{k,\kappa}^2 2^{2q-5}$ since $q\geq 3$. We have
\beqn 
\rho(t,\overline{t})&\leq& \frac{ r_{k,\kappa}^2}{2}+ r_{k,\kappa}^2 2^{2q-5}  \left[\frac{\delta_1}{\delta_2}+ \frac{\ol{\delta}_1}{\ol{\delta}_2}\right]^2\leq  \frac{ r_{k,\kappa}^2}{2}+ \frac{r_{k,\kappa}^2}{2} \leq r_{k,\kappa}^2\enspace ,
\eeqn 
since both $\ol{\delta}_1/\ol{\delta}_2$ and $\delta_1/\delta_2$ are at most $2^{-q+1}$. 
This proves that $|\cS_{k,q,\kappa}|\leq c\frac{2^{k-q}}{\kappa^6}$ for any $q\geq 3$. It remains to consider the case $q\leq 2$. We build the same covering subsets as $\cS'_{k,q,\kappa}$ above except the radius of each of the subgrid is at most $\lceil  r_{k,\kappa}^2/16\rceil$. One easily checks that $|\cS'_{k,q,\kappa}|\leq c' 2^{k}\kappa^{-6}\leq 4c'2^{k-q}\kappa^{-6}$. Then, given $t\in \cT_+^{(k,q)}$, we build $\ol{t}$ as above. One easily checks that $|t_1-\ol{t_1}|\leq r_{k,\kappa}^2/16$, $|t_2-\ol{t_2}|\leq r_{k,\kappa}^2/8$ and that $|t_2-\ol{t_2}|\leq r_{k,\kappa}^2/8$
Since $\rho(t,\overline{t})\leq |t_1-\ol{t_1}|+|t_2-\ol{t_2}|+4|t_3-\ol{t_3}|\leq r_{k,\kappa}^2$, we obtain the desired result. 

\end{proof}

\begin{proof}[Proof of Lemma \ref{lem:comparaison_variance}]
Recall that $ \sigma^2(t)= \delta_1\delta_2/(\delta_1+\delta_2)$ and write for short $\overline{t}= \pi_{k,\kappa}(t))$ and $(\ol{\delta}_1,\ol{\delta}_2)$ the corresponding segment length.
\beqn 
|\sigma^2(t) - \sigma^2(\overline{t})|&= &\Big|\delta_1\frac{\delta_2}{\delta_1+\delta_2} - \ol{\delta}_1\frac{\ol{\delta}_2}{\ol{\delta}_1+\ol{\delta}_2}\Big|\leq |\delta_1-\ol{\delta}_1|+ \delta_1 \frac{\ol{\delta} _1 |\delta_2 -\ol{\delta}_2 | + \ol{\delta}_2 |\delta_1 - \ol{\delta}_1|}{(\delta_1+\delta_2)(\ol{\delta}_1+\ol{\delta}_2)}\\
&\leq & 2\left[ |t_1- \overline{t}_1| + |t_2- \overline{t}_2|\right]+ \frac{\delta_1\ol{\delta}_1}{\delta_2\ol{\delta}_2} |t_3- \overline{t}_3|\\ 
&\leq &2\rho^2(t,\overline{t})\leq 2 r_{k,\kappa}^2\enspace ,
\eeqn 
where we used in the second line that $|\delta_j-\delta'_j|\leq |t_{j}-t'_{j}|+|t_{j}-t'_{j+1}|$ for $j=1,2$.
Since $\sigma^2(t)\geq (\delta_1\wedge \delta_2)/2\geq n2^{-k-1}$, we obtain
\[
\frac{|\sigma(t) - \sigma(\pi_{k,\kappa}(t))|}{\sigma(t)} \leq \frac{2 r_{k,\kappa}^2}{\sigma^2(t)} \leq 8 \kappa^2\enspace .
\]
\end{proof}

\begin{proof}[Proof of Lemma \ref{lem:control_event_A}]
Fix $\ell>1$.  For any $a>0$, $b>0$ and any $z>0$, we have $b\log(z)\leq  az + b\log[(b/ae)\vee e]$. Applying Lemma \ref{lem:concentration:N_t}, we deduce that, uniformly over all $t\in \cT$, one has
\[
 \bN^2(t)\leq   2\ell^2 \log\left(\frac{n(\delta_1(t)+\delta_2(t)}{\delta_1(t)\delta_2(t) }\right)+ (c_1+  c_2)\log\left(\frac{1}{x}\right) + c_3+ c_1\log\left(\frac{c_1}{\ell-1} \vee e\right)\enspace ,
\]
 with probability higher than $1-x$. Then, taking $x= \exp[-q/(2(c_1+c_2))]$ and assuming that $q\geq 2[c_3+ c_1\log(\tfrac{c_1}{\ell-1} \vee e]$, we conclude that $\mathbb{P}\left[\cA_{L,q}\right]\geq 1 - e^{-c'q}$. 
\end{proof}

\paragraph{Acknowledgements}
We are grateful to Guillem Rigaill for many stimulating discussions.

\bibliography{biblio}

\begin{thebibliography}{10}

\bibitem{MR2797847}
Ery Arias-Castro, Emmanuel~J. Cand\`es, and Arnaud Durand.
\newblock Detection of an anomalous cluster in a network.
\newblock {\em Ann. Statist.}, 39(1):278--304, 2011.

\bibitem{arias2005near}
Ery Arias-Castro, David~L Donoho, and Xiaoming Huo.
\newblock Near-optimal detection of geometric objects by fast multiscale
  methods.
\newblock {\em IEEE Transactions on Information Theory}, 51(7):2402--2425,
  2005.

\bibitem{arias2014community}
Ery Arias-Castro and Nicolas Verzelen.
\newblock Community detection in dense random networks.
\newblock {\em The Annals of Statistics}, 42(3):940--969, 2014.

\bibitem{arlot2019minimal}
Sylvain Arlot.
\newblock Minimal penalties and the slope heuristics: a survey.
\newblock {\em arXiv preprint arXiv:1901.07277}, 2019.

\bibitem{Arlot2019}
Sylvain Arlot, Alain Celisse, and Zaid Harchaoui.
\newblock A kernel multiple change-point algorithm via model selection.
\newblock {\em J. Mach. Learn. Res.}, 20:Paper No. 162, 56, 2019.

\bibitem{Bai1998}
Jushan Bai and Pierre Perron.
\newblock Estimating and testing linear models with multiple structural
  changes.
\newblock {\em Econometrica}, 66(1):47--78, 1998.

\bibitem{baranowski2019narrowest}
Rafal Baranowski, Yining Chen, and Piotr Fryzlewicz.
\newblock Narrowest-over-threshold detection of multiple change points and
  change-point-like features.
\newblock {\em Journal of the Royal Statistical Society: Series B (Statistical
  Methodology)}, 81(3):649--672, 2019.

\bibitem{baraud02}
Yannick Baraud.
\newblock {Non-asymptotic minimax rates of testing in signal detection}.
\newblock {\em Bernoulli}, 8(5):577--606, 2002.

\bibitem{Basseville1993}
Mich{\`e}le Basseville and Igor~V Nikiforov.
\newblock {\em Detection of abrupt changes: theory and application}, volume
  104.
\newblock prentice Hall Englewood Cliffs, 1993.

\bibitem{bellman1961approximation}
Richard Bellman.
\newblock On the approximation of curves by line segments using dynamic
  programming.
\newblock {\em Communications of the ACM}, 4(6):284, 1961.

\bibitem{birge2001gaussian}
Lucien Birg{\'e} and Pascal Massart.
\newblock Gaussian model selection.
\newblock {\em Journal of the European Mathematical Society}, 3(3):203--268,
  2001.

\bibitem{Boysen2009}
Leif Boysen, Angela Kempe, Volkmar Liebscher, Axel Munk, and Olaf Wittich.
\newblock Consistencies and rates of convergence of jump-penalized least
  squares estimators.
\newblock {\em Ann. Statist.}, 37(1):157--183, 2009.

\bibitem{Brodsky2013bis}
E~Brodsky and Boris~S Darkhovsky.
\newblock {\em Non-parametric statistical diagnosis: problems and methods},
  volume 509.
\newblock Springer Science \& Business Media, 2013.

\bibitem{Brodsky2013}
E~Brodsky and Boris~S Darkhovsky.
\newblock {\em Nonparametric methods in change point problems}, volume 243.
\newblock Springer Science \& Business Media, 2013.

\bibitem{Carlstein1994}
E~Carlstein, HG~M{\"u}ller, and D~Siegmund.
\newblock Change-point problems (vol. 23 of lecture notes—monograph series).
\newblock {\em Institute of Mathematical Statistics}, 1994.

\bibitem{Chan2013}
Hock~Peng Chan and Guenther Walther.
\newblock Detection with the scan and the average likelihood ratio.
\newblock {\em Statist. Sinica}, 23(1):409--428, 2013.

\bibitem{cho2019localised}
Haeran Cho and Claudia Kirch.
\newblock Localised pruning for data segmentation based on multiscale change
  point procedures.
\newblock {\em arXiv preprint arXiv:1910.12486}, 2019.

\bibitem{Csorgo1997}
Miklos Csorgo and Lajos Horv{\'a}th.
\newblock {\em Limit theorems in change-point analysis}.
\newblock John Wiley \& Sons Chichester, 1997.

\bibitem{dumbgen1991}
L~Dumbgen.
\newblock The asymptotic behavior of some nonparametric change-point
  estimators.
\newblock {\em The Annals of Statistics}, 19(3):1471--1495, 1991.

\bibitem{dumbgen2001multiscale}
Lutz Dumbgen and Vladimir~G Spokoiny.
\newblock Multiscale testing of qualitative hypotheses.
\newblock {\em Annals of Statistics}, pages 124--152, 2001.

\bibitem{eichinger2018mosum}
Birte Eichinger and Claudia Kirch.
\newblock A mosum procedure for the estimation of multiple random change
  points.
\newblock {\em Bernoulli}, 24(1):526--564, 2018.

\bibitem{enikeeva2018high}
Farida Enikeeva and Zaid Harchaoui.
\newblock {High-dimensional change-point detection under sparse alternatives}.
\newblock {\em Annals of statistics}, 46(5), 2018.

\bibitem{fan2018approximate}
Zhou Fan and Leying Guan.
\newblock Approximate $\ell_0$-penalized estimation of piecewise-constant
  signals on graphs.
\newblock {\em The Annals of Statistics}, 46(6B):3217--3245, 2018.

\bibitem{Fisher1958}
Walter~D. Fisher.
\newblock On grouping for maximum homogeneity.
\newblock {\em Journal of the American statistical Association},
  53(284):789--798, 1958.

\bibitem{frick2014multiscale}
Klaus Frick, Axel Munk, and Hannes Sieling.
\newblock Multiscale change point inference.
\newblock {\em Journal of the Royal Statistical Society: Series B (Statistical
  Methodology)}, 76(3):495--580, 2014.

\bibitem{fryzlewicz2014wild}
Piotr Fryzlewicz.
\newblock Wild binary segmentation for multiple change-point detection.
\newblock {\em The Annals of Statistics}, 42(6):2243--2281, 2014.

\bibitem{fryzlewicz2018tail}
Piotr Fryzlewicz.
\newblock Tail-greedy bottom-up data decompositions and fast multiple
  change-point detection.
\newblock {\em The Annals of Statistics}, 46(6B):3390--3421, 2018.

\bibitem{fryzlewicz2020detecting}
Piotr Fryzlewicz.
\newblock Detecting possibly frequent change-points: Wild binary segmentation 2
  and steepest-drop model selection.
\newblock {\em Journal of the Korean Statistical Society}, pages 1--44, 2020.

\bibitem{fryzlewicz2020narrowest}
Piotr Fryzlewicz.
\newblock Narrowest significance pursuit: inference for multiple change-points
  in linear models.
\newblock {\em arXiv preprint arXiv:2009.05431}, 2020.

\bibitem{Gao2020}
Chao Gao, Fang Han, and Cun-Hui Zhang.
\newblock On estimation of isotonic piecewise constant signals.
\newblock {\em Ann. Statist.}, 48(2):629--654, 2020.

\bibitem{Garreau2018}
Damien Garreau and Sylvain Arlot.
\newblock Consistent change-point detection with kernels.
\newblock {\em Electron. J. Stat.}, 12(2):4440--4486, 2018.

\bibitem{Girshick1952}
Meyer~A. Girshick and Herman Rubin.
\newblock A bayes approach to a quality control model.
\newblock {\em The Annals of Mathematical Statistics}, 23(1):114--125, 1952.

\bibitem{guntuboyina2020adaptive}
Adityanand Guntuboyina, Donovan Lieu, Sabyasachi Chatterjee, Bodhisattva Sen,
  et~al.
\newblock Adaptive risk bounds in univariate total variation denoising and
  trend filtering.
\newblock {\em The Annals of Statistics}, 48(1):205--229, 2020.

\bibitem{Harchaoui2010}
Zaid Harchaoui and C\'{e}line L\'{e}vy-Leduc.
\newblock Multiple change-point estimation with a total variation penalty.
\newblock {\em J. Amer. Statist. Assoc.}, 105(492):1480--1493, 2010.

\bibitem{Hawkins1977}
Douglas~M. Hawkins.
\newblock Testing a sequence of observations for a shift in location.
\newblock {\em J. Amer. Statist. Assoc.}, 72(357):180--186, 1977.

\bibitem{hinkley1970}
David~V Hinkley.
\newblock Inference about the change-point in a sequence of random variables.
\newblock {\em Biometrika}, pages 1--17, 1970.

\bibitem{hinkleyhinkley1970}
David~V. Hinkley and Elizabeth~A. Hinkley.
\newblock Inference about the change-point in a sequence of binomial variables.
\newblock {\em Biometrika}, 57:477--488, 1970.

\bibitem{hoefling2010path}
Holger Hoefling.
\newblock A path algorithm for the fused lasso signal approximator.
\newblock {\em Journal of Computational and Graphical Statistics},
  19(4):984--1006, 2010.

\bibitem{howard2018uniform}
Steven~R Howard, Aaditya Ramdas, Jon McAuliffe, and Jasjeet Sekhon.
\newblock {Uniform, nonparametric, non-asymptotic confidence sequences}.
\newblock {\em arXiv preprint arXiv:1810.08240}, 2018.

\bibitem{ingster_suslina}
Yu.~I. Ingster and I.~A. Suslina.
\newblock {\em {Nonparametric goodness-of-fit testing under {G}aussian
  models}}, volume 169 of {\em {Lecture Notes in Statistics}}.
\newblock Springer-Verlag, New York, 2003.

\bibitem{Ingster1993I}
Yuri~I. Ingster.
\newblock Asymptotically minimax hypothesis testing for nonparametric
  alternatives. {I}.
\newblock {\em Math. Methods Statist.}, 2(2):85--114, 1993.

\bibitem{killick2012optimal}
Rebecca Killick, Paul Fearnhead, and Idris~A Eckley.
\newblock Optimal detection of changepoints with a linear computational cost.
\newblock {\em Journal of the American Statistical Association},
  107(500):1590--1598, 2012.

\bibitem{Kou2017}
Jiyao Kou.
\newblock Identifying the support of rectangular signals in gaussian noise.
\newblock {\em arXiv preprint arXiv:1703.06226}, 2017.

\bibitem{kovacs2020seeded}
Solt Kov{\'a}cs, Housen Li, Peter B{\"u}hlmann, and Axel Munk.
\newblock Seeded binary segmentation: A general methodology for fast and
  optimal change point detection.
\newblock {\em arXiv preprint arXiv:2002.06633}, 2020.

\bibitem{lavielle2000least}
Marc Lavielle and Eric Moulines.
\newblock Least-squares estimation of an unknown number of shifts in a time
  series.
\newblock {\em Journal of time series analysis}, 21(1):33--59, 2000.

\bibitem{Lebarbier2005}
{\'E}milie Lebarbier.
\newblock Detecting multiple change-points in the mean of gaussian process by
  model selection.
\newblock {\em Signal processing}, 85(4):717--736, 2005.

\bibitem{lin2016approximate}
Kevin Lin, James Sharpnack, Alessandro Rinaldo, and Ryan~J Tibshirani.
\newblock Approximate recovery in changepoint problems, from $\backslash \ell_2
  $ estimation error rates.
\newblock {\em arXiv preprint arXiv:1606.06746}, 2016.

\bibitem{liu2019minimax}
Haoyang Liu, Chao Gao, and Richard~J Samworth.
\newblock Minimax rates in sparse, high-dimensional changepoint detection.
\newblock {\em arXiv preprint arXiv:1907.10012}, 2019.

\bibitem{Niu2016}
Yue~S Niu, Ning Hao, and Heping Zhang.
\newblock Multiple change-point detection: A selective overview.
\newblock {\em Statistical Science}, 31(4):611--623, 2016.

\bibitem{Page1954}
Ewan~S. Page.
\newblock Continuous inspection schemes.
\newblock {\em Biometrika}, 41(1/2):100--115, 1954.

\bibitem{Page1955}
Ewan~S. Page.
\newblock A test for a change in a parameter occurring at an unknown point.
\newblock {\em Biometrika}, 42(3/4):523--527, 1955.

\bibitem{Page1957}
Ewan~S. Page.
\newblock On problems in which a change in a parameter occurs at an unknown
  point.
\newblock {\em Biometrika}, 44(1/2):248--252, 1957.

\bibitem{rigaill2010pruned}
Guillem Rigaill.
\newblock {Pruned dynamic programming for optimal multiple change-point
  detection}.
\newblock {\em arXiv preprint arXiv:1004.0887}, 17, 2010.

\bibitem{scott1974cluster}
Andrew~Jhon Scott and M~Knott.
\newblock A cluster analysis method for grouping means in the analysis of
  variance.
\newblock {\em Biometrics}, pages 507--512, 1974.

\bibitem{Tartakovsky2014}
Alexander Tartakovsky, Igor Nikiforov, and Michele Basseville.
\newblock {\em Sequential analysis: Hypothesis testing and changepoint
  detection}.
\newblock Chapman and Hall/CRC, 2014.

\bibitem{tibshirani2005sparsity}
Robert Tibshirani, Michael Saunders, Saharon Rosset, Ji~Zhu, and Keith Knight.
\newblock Sparsity and smoothness via the fused lasso.
\newblock {\em Journal of the Royal Statistical Society: Series B (Statistical
  Methodology)}, 67(1):91--108, 2005.

\bibitem{truong2020selective}
Charles Truong, Laurent Oudre, and Nicolas Vayatis.
\newblock Selective review of offline change point detection methods.
\newblock {\em Signal Processing}, 167:107299, 2020.

\bibitem{Wald1945}
Abraham Wald.
\newblock Sequential tests of statistical hypotheses.
\newblock {\em The annals of mathematical statistics}, 16(2):117--186, 1945.

\bibitem{wang2020}
Daren Wang, Yi~Yu, and Alessandro Rinaldo.
\newblock Univariate mean change point detection: {P}enalization, {CUSUM} and
  optimality.
\newblock {\em Electron. J. Stat.}, 14(1):1917--1961, 2020.

\bibitem{Wang2018}
Tengyao Wang and Richard~J. Samworth.
\newblock High dimensional change point estimation via sparse projection.
\newblock {\em J. R. Stat. Soc. Ser. B. Stat. Methodol.}, 80(1):57--83, 2018.

\bibitem{yao1988estimating}
Yi-Ching Yao.
\newblock Estimating the number of change-points via schwarz'criterion.
\newblock {\em Statistics \& Probability Letters}, 6(3):181--189, 1988.

\bibitem{YaoAu1989}
Yi-Ching Yao and S.~T. Au.
\newblock Least-squares estimation of a step function.
\newblock {\em Sankhy\={a} Ser. A}, 51(3):370--381, 1989.

\bibitem{zhang2007modified}
Nancy~R Zhang and David~O Siegmund.
\newblock A modified bayes information criterion with applications to the
  analysis of comparative genomic hybridization data.
\newblock {\em Biometrics}, 63(1):22--32, 2007.

\end{thebibliography}
\bibliographystyle{plain}

\end{document}